\definecolor{darkgreen}{rgb}{0,0.5,0}
\newtheorem{theorem}{Theorem}[section]
\newtheorem{lemma}[theorem]{Lemma}
\theoremstyle{definition}
\newtheorem{definition}[theorem]{Definition}
\newtheorem{example}[theorem]{Example}
\newtheorem{remark}[theorem]{Remark}
\newcommand{\im}{\textrm{im}}
\newcommand{\ita}{\iota \tau}
\def\Inv{\mathfrak{I}}
\newcommand{\bunderline}[1]{\underline{#1\mkern-2mu}\mkern2mu }
\def\du {\bar{d}}
\def\dl {\bunderline{d}}
\def\H{\mathcal{H}}
\newcommand \bH {\bar{\H}}
\newcommand \brho {\bar{\rho}}
\def\spinc {{\operatorname{spin^c}}}
\def\s{\mathfrak s}
\def\bs{\bar{\mathfrak{\s}}}
\def\x{\mathbf{x}}
\def\CF {\mathit{CF}}
\def\HF {\mathit{HF}}
\newcommand \CFm {\CF^-}
\newcommand \HFm {\HF^-}
\def\HFK {\mathit{HFK}}
\newcommand\HFKhat{\widehat{\HFK}}
\def\CFI {\mathit{CFI}}
\newcommand\alphas{\boldsymbol\alpha}
\newcommand\betas{\boldsymbol\beta}
\newcommand{\Z}{\mathbb{Z}}
\newcommand{\Q}{\mathbb{Q}}
\newcommand{\F}{\mathbb{F}}
\let\int\relax
\newcommand{\int}{\mathring}
\DeclareMathSymbol{\wtilde}{\mathord}{largesymbols}{"65}
\mathchardef\mhyphen="2D
\newcommand{\obar}[1]{\mkern 1.5mu\overline{\mkern-1.5mu#1\mkern-1.5mu}\mkern 1.5mu}
\newcommand{\ubar}[1]{\mkern 1.5mu\underline{\mkern-1.5mu#1\mkern-1.5mu}\mkern 1.5mu}
\newcommand{\Vtu}{\smash{\obar{V}_0^{\tau}}}
\newcommand{\Vtl}{\smash{\ubar{V}_0^{\tau}}}
\newcommand{\Vitu}{\smash{\obar{V}_0^{\iota\tau}}}
\newcommand{\Vitl}{\smash{\ubar{V}_0^{\iota\tau}}}
\newcommand{\Vsu}{\smash{\obar{V}_0^{\circ}}}
\newcommand{\Vsl}{\smash{\ubar{V}_0^{\circ}}}
\newcommand{\eg}{\smash{\widetilde{g}_4}}
\newcommand{\bg}{\smash{\widetilde{bg}_4}}
\newcommand{\ieg}{\smash{\widetilde{ig}_4}}
\newcommand{\must}{\mu_{\text{st}}}
\newcommand{\mugst}{\mu_{\text{gst}}}
\newcommand{\tausum}{\tau_{\#}}
\newcommand{\tausw}{\tau_{sw}}
\newcommand{\CFK}{\mathcal{CFK}}
\newcommand{\K}{\mathfrak{K}}
\newcommand{\C}{\mathcal{C}}
\newcommand{\eC}{\smash{\widetilde{\C}}}
\newcommand{\gr}{\text{gr}}
\newcommand{\cR}{\mathcal{R}}
\newcommand{\grU}{\text{gr}_U}
\newcommand{\grV}{\text{gr}_V}
\newcommand{\id}{\mathrm{id}}
\newcommand{\Co}{C_0}
\newcommand{\scU}{\mathscr{U}}
\newcommand{\scV}{\mathscr{V}}
\newcommand{\cB}{\mathcal{B}}
\newcommand{\cC}{\mathcal{C}}
\newcommand{\cD}{\mathcal{D}}
\newcommand{\cE}{\mathcal{E}}
\newcommand{\cS}{\mathcal{S}}
\newcommand{\cU}{\mathscr{U}}
\newcommand{\cV}{\mathscr{V}}
\newcommand{\cW}{\mathcal{W}}
\newcommand{\cY}{\mathcal{Y}}
\newcommand{\Tr}{T^{\hspace{0.03cm}r}}
\title{Equivariant knots and knot Floer homology}
\author[I. Dai]{Irving Dai}
\thanks{The first author was partially supported by NSF grant DMS-1902746.}
\address {Department of Mathematics, Stanford University, Palo Alto, CA 94301}
\email{ifdai@stanford.edu}
\author[A. Mallick]{Abhishek Mallick}
\address{Department of Mathematics, Rutgers University, Piscataway, NJ 08854}
\email{abhishek.mallick@rutgers.edu}
\author[M. Stoffregen]{Matthew Stoffregen}
\thanks{The third author was partially supported by NSF grant DMS-1952755.}
\address {Department of Mathematics, Michigan State University, East Lansing, MI 48824}
\email{stoffre1@msu.edu}
\begin{document}
\vspace*{-1cm}
\maketitle
\vspace*{-0.4cm}
\begin{abstract}
We define several equivariant concordance invariants using knot Floer homology. We show that our invariants provide a lower bound for the equivariant slice genus and use this to give a family of strongly invertible slice knots whose equivariant slice genus grows arbitrarily large, answering a question of Boyle and Issa. We also apply our formalism to several seemingly non-equivariant questions. In particular, we show that knot Floer homology can be used to detect exotic pairs of slice disks, recovering an example due to Hayden, and extend a result due to Miller and Powell regarding stabilization distance. Our formalism suggests a possible route towards establishing the non-commutativity of the equivariant concordance group.
\end{abstract}

\section{Introduction}\label{sec:1}
Equivariant knots and concordance have been well-studied historically; see for example \cite{Murasugi, Sakuma, Naik, ChaKo, ND}. Recently, there has been a renewed interest in this topic from the viewpoint of more modern invariants, as evidenced by the works of Watson~\cite{Watson}, Lobb-Watson~\cite{LW} and Boyle-Issa~\cite{BI}. The aim of the present article is to investigate the theory of equivariant knots through the lens of knot Floer homology, an extensive package of invariants introduced independently by Ozsv\'ath-Szab\'o \cite{OSknots} and Rasmussen \cite{Rasmussen}. Our underlying approach is straightforward: given a strongly invertible knot $(K, \tau)$, we show that $\tau$ induces an appropriately well-defined automorphism of the knot Floer complex $\CFK(K)$. Using the induced action of $\tau$, we construct the following suite of numerical invariants:

\begin{theorem}\label{thm:1.1}
Let $(K, \tau)$ be a strongly invertible knot in $S^3$. Associated to $(K, \tau)$, we have four integer-valued equivariant concordance invariants
\[
\Vtu(K) \leq \Vtl(K) \quad \text{and} \quad \Vitu(K) \leq \Vitl(K).
\]
In fact, $\Vsu$ and $\Vsl$ $($where $\circ \in \{\tau, \iota\tau\})$ are invariant under the more general relation of isotopy-equivariant homology concordance.
\end{theorem}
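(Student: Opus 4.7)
The plan is to model the four invariants on the involutive concordance invariants $\underline{V}_0^\iota, \overline{V}_0^\iota$ of Hendricks-Manolescu-Zemke, replacing the conjugation involution $\iota$ on $\CFK(K)$ with $\tau$ or with the composite $\iota\tau$. The four invariants then correspond to two independent binary choices: which involution to use ($\tau$ or $\iota\tau$), and which of the two distinguished cycles in a certain mapping cone complex to interrogate (giving the upper/lower variants, with $\Vsu \leq \Vsl$ formal from the mapping cone structure).

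First I would show that for a strongly invertible knot $(K, \tau)$ the symmetry $\tau$ induces a well-defined filtered endomorphism of $\CFK(K)$ up to filtered chain homotopy. Concretely, one fixes a $\tau$-equivariant doubly-pointed Heegaard diagram $\cH$ for $K$; then $\tau$ acts on $\Sym^g(\Sigma)$, permutes generators, and pulls back disk counts to give a filtered chain map $\tau_\ast \colon \CFK(\cH) \to \CFK(\cH)$. Independence of the choice of equivariant diagram follows by connecting any two such diagrams via a sequence of equivariant Heegaard moves and invoking the Juh\'asz-Thurston-Zemke naturality machinery. The same argument applies to the composite $\iota\tau$, where $\iota$ is the knot-involutive involution. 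With this data, I define $\Vsu(K)$ and $\Vsl(K)$ (for $\circ \in \{\tau, \iota\tau\}$) as the minimal $U$-powers annihilating the two natural cycles in the mapping cone of $Q(1+\circ)$ acting on the $A_0^-$-subcomplex of $\CFK(K)$, exactly as in the involutive setup.

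Next, for equivariant concordance invariance, I would decompose an equivariant concordance $C \subset S^3 \times I$ from $(K,\tau)$ to $(K',\tau')$ into elementary equivariant cobordisms (equivariant births, saddles, and deaths). Each such elementary move induces a filtered chain map on $\CFK$ that commutes with $\tau$ (respectively $\iota\tau$) up to filtered chain homotopy, by the same kind of argument used to define $\tau_\ast$. Composing gives a bifiltered chain map $F_C \colon \CFK(K) \to \CFK(K')$ that intertwines the mapping cone construction up to homotopy, and tracking the grading of the distinguished cycles yields the desired inequalities on the $\Vsu, \Vsl$ invariants.

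The main obstacle is the strengthening to \emph{isotopy}-equivariant \emph{homology} concordance. Replacing $S^3 \times I$ by a rational homology cobordism is handled by appealing to Zemke's functorial cobordism maps for link Floer homology, which only require the ambient $4$-manifold to be a homology cobordism, together with the compatibility of the $\iota$-involution with such cobordisms. The truly delicate point is relaxing strict $\tau$-equivariance to equivariance up to ambient isotopy of the involution: the key claim is that if two involutions on $S^3 \times I$ are isotopic through diffeomorphisms fixing $K \times \{0,1\}$ pointwise, then the induced chain endomorphisms on $\CFK(K)$ are filtered chain homotopic. I would prove this by a continuity-of-naturality argument, tracking compatible Heegaard data along the isotopy and interpolating via the usual transition maps; this piece requires the most careful bookkeeping and is where I expect the bulk of the technical work to lie.
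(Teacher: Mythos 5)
Your high-level plan (replace $\iota$ by $\tau$ or $\iota\tau$ in the involutive-correction-term machinery, use the mapping cone of $Q(1+\circ)$ on the Alexander-grading-zero subcomplex, and track the concordance map) is the right shape, but there are three concrete gaps, one of which stems from a misreading of the definition.

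First, you have the wrong notion of isotopy-equivariance. In Definition~\ref{def:isotopyeqconc}, an isotopy-equivariant homology concordance consists of a homology cobordism $W$, an extension $\tau_W$ of the boundary involutions, and a surface $\Sigma$ such that $\tau_W(\Sigma)$ is isotopic to $\Sigma$ rel boundary. There is no requirement that $\tau_W$ itself be an involution or be isotopic to anything. Your claim that one must show ``if two involutions on $S^3\times I$ are isotopic through diffeomorphisms fixing $K\times\{0,1\}$ pointwise, then the induced chain endomorphisms on $\CFK(K)$ are filtered chain homotopic'' is addressing a different problem, and the ``continuity-of-naturality'' argument you sketch would not engage the actual difficulty. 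The actual difficulty is that the surface $\tau_W(\Sigma)$, while isotopic to $\Sigma$, need not carry dividing arcs isotopic to those on $\Sigma$; one must show that the concordance map intertwines $\tau_K$ up to a power of the Sarkar map $\varsigma_K$ (this is Theorem~\ref{thm:3.3B} in the paper, which uses Zemke's TQFT all at once rather than a decomposition into elementary moves).

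Second, you never address the decoration issue, which is in fact the crux of why these numbers are invariants at all. A strong inversion reverses orientation on $K$ and swaps the two basepoints, so $\tau_K$ is not a filtered $\cR$-equivariant endomorphism but a skew-graded, $\cR$-skew-equivariant one, and constructing it requires composing the pushforward with the switch map $sw$. More importantly, the homotopy type of the triple $(\CFK(K), \tau_K, \iota_K)$ genuinely depends on the choice of orientation and ordered pair of symmetric basepoints (a ``decoration''): reversing orientation or exchanging $w$ and $z$ alters the triple by a twist $\tau_K \mapsto \varsigma_K\circ\tau_K$ (Lemmas~\ref{lem:changeorientation} and~\ref{lem:changebasepoints}). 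Without the computation that the numerical invariants are blind to this twist (Lemma~\ref{lem:twistnochange}, via the fact that $\varsigma_K$ is null-homotopic on $\Co$), you have not shown the numbers $\Vsu(K)$, $\Vsl(K)$ are well-defined invariants of $(K,\tau)$, let alone concordance invariants. Your proposed argument using an equivariant Heegaard diagram and equivariant moves, if it could be carried out, would only establish independence of the diagram for a fixed decoration.

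Third, a smaller methodological point: the paper never requires or constructs a $\tau$-equivariant Heegaard diagram. Instead $\tau_K$ is defined on arbitrary Heegaard data as $sw\circ\Phi(\tau\H,\H^r)\circ t$, pushing the equivariance entirely into the naturality maps. Your route of choosing an equivariant diagram and then connecting any two by equivariant Heegaard moves would require first proving that equivariant diagrams and equivariant move sequences exist for strong inversions; this is plausible but nontrivial and not something the paper needs.
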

\noindent
Note that $\Vsu$ and $\Vsl$ vanish if $K$ is equivariantly slice. See Definition~\ref{def:isotopyeqconc} for the definition of isotopy-equivariant homology concordance. 

Obstructions to equivariant sliceness have been investigated by several authors, including Sakuma~\cite{Sakuma}, Cha-Ko~\cite{ChaKo}, and Naik-Davis~\cite{ND}. However, understanding the equivariant slice genus has only more recently been studied by Boyle-Issa~\cite{BI}. One of the main results of this paper will be to show that $\Vsu$ and $\Vsl$ provide lower bounds for the equivariant slice genus $\eg(K)$ of $(K, \tau)$. In fact, we show that they bound the \textit{isotopy-equivariant slice genus}; see Definition~\ref{def:isotopyeqgenus}. Using this, we provide a family of strongly invertible slice knots $(K_n, \tau_n)$ whose equivariant slice genus grows arbitrarily large, answering a question posed by Boyle-Issa. Prior to the current article, there were no known examples of strongly invertible knots with $\eg(K) - g_4(K)$ provably greater than one. 

Surprisingly, our invariants also have applications to several (seemingly) non-equivariant questions. We first show that our formalism can be used to detect exotic pairs of slice disks, recovering an example originally due to Hayden \cite{Hayden}. Note that while knot Floer homology has previously been used to detect exotic higher-genus surfaces (see the work of Juh\'asz-Miller-Zemke \cite{JMZ}), the current work represents the first such application of knot Floer homology in the genus-zero case. We also consider the question of bounding the stabilization distance between pairs of disks. Using the work of Juh\'asz-Zemke \cite{JZstabilization}, we show that our examples $K_n$ give a Floer-theoretic re-proof and extension of a result by Miller-Powell~\cite{MP}, which states that for each integer $m$, there is a knot $J_m$ with a pair of slice disks that require at least $m$ stabilizations to become isotopic.

The invariants of Theorem~\ref{thm:1.1} are correction terms derived from the action of $\tau$ on $\CFK(K)$ following the general algebraic program of Hendricks-Manolescu~\cite{HM}. Instead of working with numerical invariants, it is also possible to define a local equivalence group in the style of Hendricks-Manolescu-Zemke~\cite{HMZ} or Zemke~\cite{Zemkeconnected}. This follows the approach taken in Dai-Hedden-Mallick in \cite{DHM} to study cork involutions; and, indeed, the current article is closely related to \cite{DHM}. In this paper, we define the \textit{local equivalence group $\K_{\tau, \iota}$ of $(\tau_K, \iota_K)$-complexes} and show that there is a homomorphism from the equivariant concordance group $\eC$ (defined by Sakuma in \cite{Sakuma}) to $\K_{\tau, \iota}$: 
\[
h_{\tau, \iota} \colon \eC \rightarrow \K_{\tau, \iota}.
\]
Interestingly, it turns out that $\K_{\tau, \iota}$ is not \textit{a priori} abelian. It is an open problem whether $\eC$ is abelian; in principle, our invariants can thus be used to provide a negative answer to this question.\footnote{Recently, Di Prisa has shown that $\eC$ is indeed non-abelian \cite{DiPrisa}; see Remark~\ref{rem:currentdevelopments}.} As far as the authors are aware, this is the first example of a (possibly) non-abelian group arising in the setting of local equivalence. See Section~\ref{sec:2} for background and further discussion.

Although all of the examples in this paper will be strongly invertible, we also establish several analogous results for $2$-periodic knots. We discuss these in Section~\ref{sec:8}.

\subsection{Equivariant slice genus bounds}\label{sec:1.1}
Our first application will be to show that the invariants of Theorem~\ref{thm:1.1} bound the equivariant slice genus $\eg(K)$ of $K$ (see Definition~\ref{def:eqgenus}). In fact, we give a bound for a rather more general quantity, defined as follows. 

Let $(K, \tau)$ be a strongly invertible knot. Let $W$ be a (smooth) homology ball with boundary $S^3$, and consider any (smooth) self-diffeomorphism $\tau_W$ on $W$ which restricts to $\tau$ on $\partial W$. Note that we do not require $\tau_W$ itself to be an involution. We say that a slice surface $\Sigma$ in $W$ with $\partial \Sigma = K$ is an \textit{isotopy-equivariant slice surface} (for the given data) if $\tau_W(\Sigma)$ is isotopic to $\Sigma$ rel $K$. Define the \textit{isotopy-equivariant slice genus} of $(K, \tau)$ by:
\[
\ieg(K) = \min_{\substack{\text{all possible choices of $W$ and $\tau_W$} \\ \text{all isotopy-equivariant slice surfaces $\Sigma$}}} \{g(\Sigma)\}.
\]
Here $\ieg(K)$ depends on $\tau$, but we suppress this from the notation.
The quantity $\ieg(K)$ generalizes the obvious notion of equivariant slice genus in several ways. Firstly, we allow ourselves to consider any homology ball $W$ and any diffeomorphism which extends $\tau$, rather than restricting ourselves to $B^4$. Secondly, we do not require that $\Sigma$ be invariant under the extension of $\tau$, but instead only isotopic to its image. Obviously, 
\[
\ieg(K) \leq \eg(K).
\]
Although the authors do not have an example in which $\ieg(K)$ is distinct from $\eg(K)$, this more general quantity will turn out to be critical for several applications. There is also an obvious accompanying notion of isotopy-equivariant homology concordance; see Definition~\ref{def:isotopyeqconc}.


Although the notion of isotopy equivariance may initially seem rather contrived, a slight shift in perspective demonstrates its usefulness. To see this explicitly, let $(K, \tau)$ be a strongly invertible knot in $S^3$. Let $W$ be any (smooth) homology ball with boundary $S^3$ and $\tau_W$ be any extension of $\tau$ over $W$. If $\Sigma \subseteq W$ is any slice surface for $K$ with $g(\Sigma) < \ieg(K)$, then we may immediately conclude that the two surfaces $\Sigma$ and $\tau_W(\Sigma)$ are not isotopic rel $K$. The calculation of $\ieg(K)$ thus provides an easy method for generating non-isotopic slice surfaces in the presence of a symmetry on $K$. For example, if $K$ is an equivariant slice knot with $\ieg(K) > 0$, then we may take \textit{any} slice disk $\Sigma$ for $K$ and form its image under \textit{any} extension $\tau_W$ of $\tau$ (in \textit{any} homology ball $W$); the resulting pair of slice disks are then automatically non-isotopic rel $K$. We often refer to $\Sigma$ and $\tau_W(\Sigma)$ as a \textit{symmetric pair of slice disks}. This is in marked contrast to the usual approach taken in the literature, where in order to deploy various invariants, one (naturally) has in mind a specific family of slice disks (or surfaces) that are conjectured to be non-isotopic. The situation here is analogous to the notion of a strong cork introduced by Lin-Ruberman-Saveliev in \cite{LRS} and studied in \cite{DHM}.

Following the work of Juh\'asz-Zemke \cite{JZgenus}, we bound $\ieg(K)$ in terms of $\Vsu$ and $\Vsl$:

\begin{theorem}\label{thm:1.2}
Let $(K, \tau)$ be a strongly invertible knot in $S^3$. Then for $\circ \in \{\tau, \iota\tau\}$,
\[
- \left \lceil{\frac{1+ \ieg(K)}{2}} \right \rceil \leq \Vsu(K) \leq \Vsl(K) \leq \left \lceil{\frac{1+ \ieg(K)}{2}} \right \rceil.
\]
\end{theorem}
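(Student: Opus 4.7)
The plan is to adapt the Juh\'asz--Zemke cobordism-map bound on the $V_0$-type invariants \cite{JZgenus} to the equivariant setting, using the symmetry $\tau_W$ to promote the cobordism map to a local map of $\tau$-complexes (respectively $\iota\tau$-complexes). Throughout I will assume the formalism of Theorem~\ref{thm:1.1}, in particular that $\Vsu$ and $\Vsl$ are defined as correction terms extracted from the $\circ$-complex of $K$, and that they are monotone under $\circ$-equivariant local maps with a controlled grading shift.

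First, fix any isotopy-equivariant slice surface $\Sigma\subset W$ realizing $\ieg(K) = g$, together with the extension $\tau_W$. Deleting an open ball disjoint from $\Sigma$ converts $(W,\Sigma)$ into a decorated surface cobordism $(W^\circ,\Sigma^\circ)$ from $(S^3, U)$ to $(S^3, K)$ of genus $g$, where $U$ is the unknot. After choosing a properly embedded arc on $\Sigma$ from the boundary component at $K$ to the boundary component at $U$ and thickening it into a pair of dividing arcs, Juh\'asz--Zemke produce a chain map $F_{W^\circ,\Sigma^\circ}\colon \CFK(U)\to \CFK(K)$ whose grading shifts are explicitly controlled by $g$; indeed their bounds are exactly what produces the numerical estimate $\lceil(1+g)/2\rceil$ in the non-equivariant case.

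Next, I would upgrade this map to one that is $\tau$-equivariant (and similarly $\iota\tau$-equivariant) up to chain homotopy. The diffeomorphism $\tau_W$ carries the decorated cobordism $(W^\circ, \Sigma^\circ)$ to a new decorated cobordism $(W^\circ, \tau_W(\Sigma^\circ))$, and by naturality of the Juh\'asz--Zemke construction one has
\[
\tau_K \circ F_{W^\circ,\Sigma^\circ} \simeq F_{W^\circ,\tau_W(\Sigma^\circ)} \circ \tau_U.
\]
Since $\tau_W(\Sigma)$ is isotopic to $\Sigma$ rel $K$, and because the cobordism map depends only on the isotopy class of the decorated surface, $F_{W^\circ,\tau_W(\Sigma^\circ)} \simeq F_{W^\circ,\Sigma^\circ}$. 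Combining these identities yields that $F_{W^\circ,\Sigma^\circ}$ commutes with the $\tau$-actions up to chain homotopy, and the same argument (using that $\iota$ commutes with any cobordism map up to a conjugation-type shift, as in Hendricks--Manolescu~\cite{HM} and Zemke) handles $\iota\tau$. One has to be a little careful here: the arc used to decorate $\Sigma^\circ$ is not itself symmetric, so strictly speaking one should verify that different choices of dividing arc yield chain-homotopic maps, and then apply the naturality statement to the pair of arcs $\gamma$ and $\tau_W(\gamma)$; this is where the main technical work of the proof lies.

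Finally, I would invoke the algebraic machinery behind Theorem~\ref{thm:1.1} to convert the existence of such $\circ$-equivariant chain maps with grading shift at most $\lceil(1+g)/2\rceil$ into the desired bounds. Running the argument in both directions --- using $\Sigma$ to produce a shifted map from the $\circ$-complex of $U$ to that of $K$, and using the ``reverse'' cobordism (with $\Sigma$ read backwards, so its associated map goes from $K$ to $U$) to produce one in the other direction --- gives the two-sided inequality
\[
-\bigl\lceil\tfrac{1+g}{2}\bigr\rceil \leq \Vsu(K) \leq \Vsl(K) \leq \bigl\lceil\tfrac{1+g}{2}\bigr\rceil.
\]
The hardest step, as flagged above, is checking that the decorated-cobordism map can be made genuinely $\tau$-equivariant up to homotopy despite the asymmetric choice of dividing arcs, and that the Juh\'asz--Zemke grading bound survives passage to the $\circ$-complex; all other pieces should reduce to applying results already established in Sections~\ref{sec:2} and the construction of $\Vsu,\Vsl$.
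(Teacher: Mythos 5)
Your outline follows the same route as the paper (puncture $\Sigma$ to get a cobordism from the unknot to $K$, show the associated knot cobordism map intertwines the $\tau$- and $\iota\tau$-actions up to homotopy, then read off the correction-term bounds from the grading shift, and turn the cobordism around for the other inequality), but the step you flag as ``the main technical work'' is a genuine gap, and the mechanism you suggest for closing it does not work. The map $F_{W^\circ,\Sigma^\circ}$ depends on the decorated surface, i.e.\ on the choice of dividing arcs, and for a positive-genus surface it is \emph{not} true that different choices of dividing arcs yield chain-homotopic maps (nor homologous values on $1$). Since $\Sigma$ is only \emph{isotopy}-equivariant, pushing $\tau_W(\mathcal{F})$ back to $\Sigma$ through the isotopy and applying the switch can produce an essentially arbitrary decoration on $\Sigma$, so naturality alone does not give $\tau_K$-equivariance of the map; this is exactly the difficulty that distinguishes the higher-genus case from the disk case.

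The paper's fix is a stabilization trick: stabilize $\Sigma$ equivariantly near $\partial W$ once or twice so that the resulting surface $\Sigma'$ has even genus, choose dividing arcs splitting $\Sigma'$ into two regions of equal genus, and then invoke \cite[Proposition 5.5]{JZgenus}, which says that on a \emph{stabilized} surface the element $[F_{W,\mathcal{F}}(1)]$ is independent of the choice of dividing arcs with fixed Euler characteristics of the two regions (the equal-genus condition is needed because the switch map interchanges the black and white regions). This yields $\tau_K$- and $\iota_K\circ\tau_K$-equivariance of the map from the trivial complex, and the grading shift of the stabilized surface $\Sigma'$, with $g(\Sigma') \in \{g+1, g+2\}$, is precisely what produces the ceiling $\left\lceil\frac{1+g}{2}\right\rceil$ in the statement --- a bound you assert but never derive, and which does not come from the non-equivariant Juh\'asz--Zemke estimate alone. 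This also explains why the genus-zero bound coming from Theorem~\ref{thm:1.1} is stronger than Theorem~\ref{thm:1.2} (see Remark~\ref{rem:weakerbound}): for disks no stabilization is needed. Without the stabilization step and \cite[Proposition 5.5]{JZgenus} (or some substitute argument handling the decoration dependence), your proof is incomplete at its central point.
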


The computation of $\Vsu(K)$ and $\Vsl(K)$ can thus be used to help construct exotic pairs of slice surfaces for $K$, via the discussion above. In the current paper, we only give the most archetypal instance of this phenomenon; the authors plan to return to the task of finding a systematic range of examples in future work. Note that by Theorem~\ref{thm:1.1}, if $\ieg(K) = 0$ then $\Vsu$ and $\Vsl$ vanish. In the genus-zero case, Theorem~\ref{thm:1.1} thus gives a slightly stronger bound than that of Theorem~\ref{thm:1.2}. This discrepancy is explained in Remark~\ref{rem:weakerbound}.

\subsection{Applications}\label{sec:1.2}

We now give several computations and applications. Our main class of examples is quite straightforward. Let $T_{2n, 2n+1}$ be the right-handed torus knot and select any strong inversion $\tau$ on $T_{2n, 2n+1}$ (in fact, this is unique up to conjugation by \cite[Proposition 3.1]{Sakuma}). As in Figure~\ref{fig:11}, there are two obvious strong inversions on $T_{2n, 2n+1} \# T_{2n, 2n+1}$. On one hand, we may take the equivariant connected sum $\tau_\# = \tau \# \tau$ to obtain an inversion with one fixed point on each summand. On the other, we may consider the strong inversion $\tau_{sw}$ which interchanges the two factors. Strictly speaking, the latter is a strong inversion on $T_{2n, 2n+1} \# \Tr_{2n, 2n+1}$; however, since $T_{2n, 2n+1}$ admits an orientation-reversing symmetry, we will occasionally conflate this with $T_{2n, 2n+1} \# T_{2n, 2n+1}$.

We then consider the further equivariant connected sum
\[
K_n = (T_{2n, 2n+1} \# T_{2n, 2n+1}) \# - (T_{2n, 2n+1} \# T_{2n, 2n+1})
\]
equipped with the strong inversion
\[
\tau_n = \tausum \# -\tau_{sw}.
\]
That is, we consider the strong inversion $\tausum$ on the first copy of $T_{2n, 2n+1} \# T_{2n, 2n+1}$ and take the equivariant connected sum of this with the (orientation-reversed mirror of the) inversion $\tausw$ on $T_{2n, 2n+1} \# T_{2n, 2n+1}$. For a discussion of the equivariant connected sum of two strong inversions, see Section~\ref{sec:2.1}. In general, defining the equivariant connected sum requires some additional data, but the application we have in mind will be insensitive to this subtlety; see Remark~\ref{rem:suminsensitive}. In Figure~\ref{fig:11} we perform the equivariant connected sum by (roughly speaking) stacking successive axes end-to-end. Note that $K_n$ is slice.

\begin{figure}[h!]
\includegraphics[scale = 0.76]{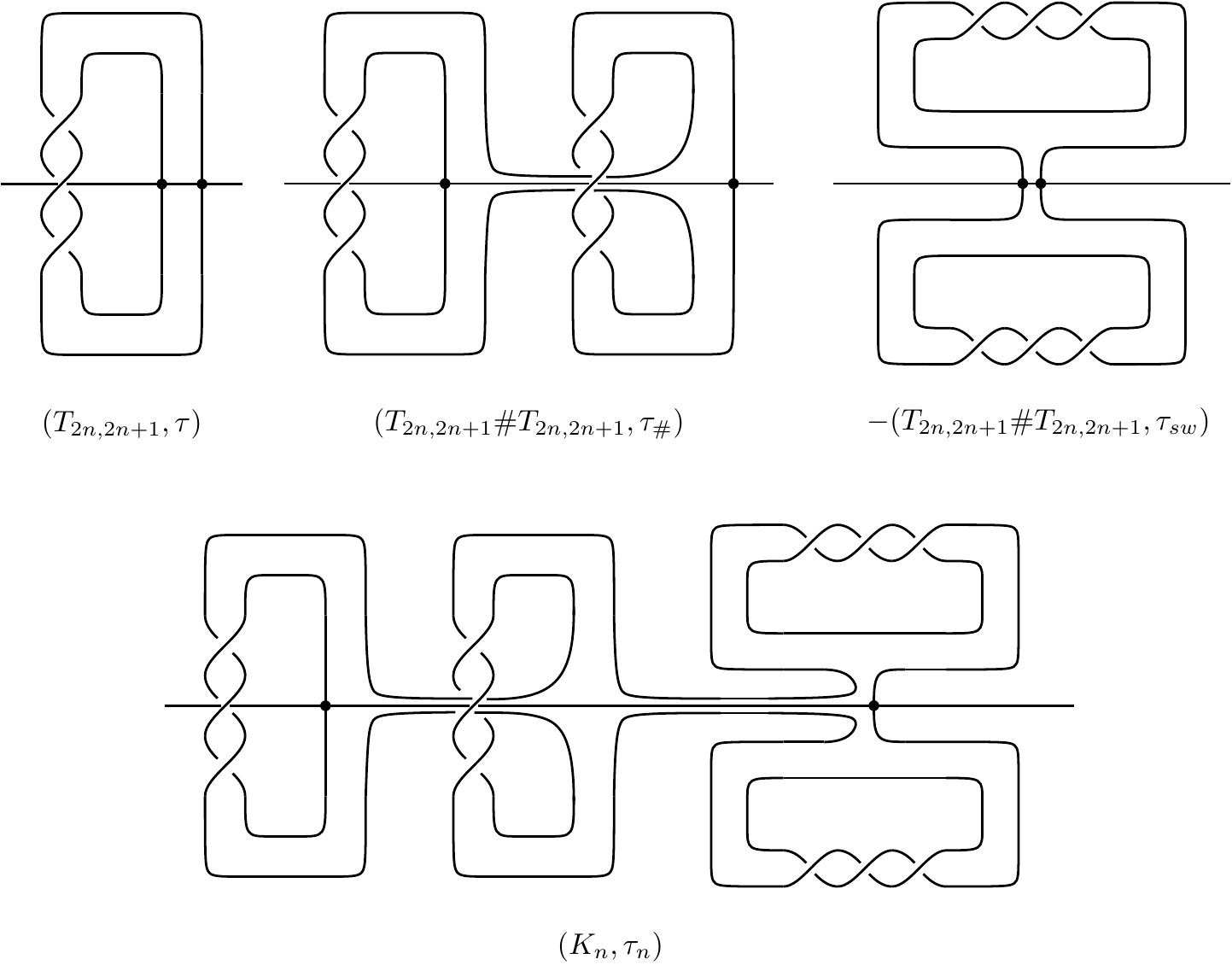}
\caption{Schematic depiction of the case $n = 1$. Top left: a strong inversion on $T_{2n, 2n+1}$. Top middle: the equivariant connected sum inversion $\tau_\#$ on $T_{2n, 2n+1} \# T_{2n, 2n+1}$. Top right: (the mirror of) the strong inversion $\tausw$ on $T_{2n, 2n+1} \# T_{2n, 2n+1}$. Bottom: construction of $K_n$ and $\tau_n$.}\label{fig:11}
\end{figure}

In Section~\ref{sec:6}, we establish the following fundamental calculation:
\begin{theorem}\label{thm:1.4}
For $n$ odd, the pair $(K_n, \tau_n)$ has $\Vtl(K_n) \geq n$.
\end{theorem}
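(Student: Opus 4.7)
The plan is to reduce Theorem~\ref{thm:1.4} to an explicit algebraic computation on tensor products of staircase complexes. I would proceed in three stages: first identify $\CFK(K_n)$ together with its $\tau_n$-action as a tensor product, then pin down the $\tau$-action on each torus knot factor, and finally extract the lower bound on $\Vtl$ by a direct model calculation.

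For stage (i), I would invoke the equivariant connected sum formula, which (in the spirit of the involutive statements of \cite{DHM}) identifies the equivariant knot Floer complex of a connected sum with the tensor product of the summand complexes carrying the induced action. Writing $C = \CFK(T_{2n,2n+1})$ and $\sigma$ for the chain endomorphism induced by $\tau$ on $T_{2n,2n+1}$, this yields $\CFK(T_{2n,2n+1} \# T_{2n,2n+1}, \tausum) \simeq (C \otimes C, \sigma \otimes \sigma)$ and $\CFK(T_{2n,2n+1} \# T_{2n,2n+1}, \tausw) \simeq (C \otimes C, \mathrm{sw})$, where $\mathrm{sw}$ is the swap of tensor factors (with the appropriate Koszul sign). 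Dualizing for the orientation-reversed mirror of the second summand and tensoring together then expresses $\CFK(K_n)$ with its $\tau_n$-action as an explicit tensor product built from four copies of $C$.

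For stage (ii), I would use the standard staircase presentation of $C$ for the torus knot $T_{2n,2n+1}$, together with the known computation of $\iota_K$ as a reflection on the staircase. The chain-level $\tau$-action $\sigma$ should be determined, up to equivariant homotopy, by its compatibility with $\iota_K$ and by direct inspection of the equivariant Heegaard diagram arising from the strong inversion; I expect $\sigma$ to act essentially as the identity on the staircase generators. The key algebraic observation driving the bound is that on $C \otimes C$ the maps $\sigma \otimes \sigma$ and $\mathrm{sw}$ are genuinely different: the former preserves the symmetric/antisymmetric decomposition, while the latter exchanges the two factors with signs, and this discrepancy is precisely what is detected by the concordance invariant.

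For stage (iii), I would compute $\Vtl(K_n)$ using its definition from Theorem~\ref{thm:1.1} as a correction term attached to the $(1+\tau_n)$-construction on $\CFK(K_n)$. Running through the model complex, I would aim to produce a surviving cycle supported on a distinguished diagonal generator of the four-fold tensor product in internal grading $-2n$, whose nontriviality modulo the image of $(1+\tau_n)$ forces $\Vtl(K_n) \geq n$. The parity hypothesis that $n$ is odd should enter exactly here: the middle staircase generator $x_n$ has symmetry behavior under $\sigma$ that changes sign with the parity of $n$, so that only in the odd case does the candidate bounding chain for the distinguished cycle fail to exist. The main obstacle will be stage (ii): setting up the chain-level $\tau$-action on $C$ with enough precision to carry out the tensor product computation, together with careful sign bookkeeping under orientation reversal of the second summand. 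Once the model is pinned down, the inequality reduces to a concrete finite-dimensional linear algebra assertion.
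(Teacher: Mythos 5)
Your overall outline (equivariant connected-sum formula, staircase models for the torus knot summands, then a direct correction-term computation) is the same general strategy as the paper's Section~\ref{sec:6}, and your treatment of the $\tausum$-factor as $(\cC_n\otimes\cC_n,\tau\otimes\tau)$ is correct via Theorem~\ref{thm:connectedsum}. However, several of your key expectations are wrong. First, the action of a strong inversion on $\CFK(T_{2n,2n+1})$ is skew-graded and $\F[\cU,\cV]$-skew-equivariant (Theorem~\ref{thm:1.8}), so on the staircase $\cC_n$ it is forced to be the reflection $x_k\mapsto x_{-k}$, $y_\ell\mapsto y_{-\ell}$ (the unique skew-graded homotopy involution), not ``essentially the identity,'' which is graded rather than skew-graded. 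Second, everything here is over $\F$ of characteristic two: there are no Koszul signs, no symmetric/antisymmetric splitting of $C\otimes C$, and no sign of a middle generator that could flip with the parity of $n$; the hypothesis that $n$ is odd enters through the grading arithmetic of the model complexes, not through signs. Third, the swap symmetry does \emph{not} induce the naive transposition on $\CFK(T_{2n,2n+1})\otimes\CFK(T_{2n,2n+1})$: by Theorem~\ref{thm:swapping} the induced map is $(\id\otimes\id+\Psi\otimes\Phi)\circ\tau_{exch}$, and establishing this requires the bypass-relation analysis of Section~\ref{sec:4.2}; in the paper the correction term is only discarded after restricting to a subcomplex $\cW_n$ on which $\Psi\otimes\Phi$ vanishes (Lemma~\ref{lem:tau-swap-torus}). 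Your proposal silently assumes the naive swap, which is exactly the point that needs proof.

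More seriously, your final step proves the wrong inequality. Exhibiting a surviving cycle in grading $-2n$ in the mapping cone of $Q(1+\tau_n)$ would show $\dl_\tau(\CFK(K_n)_0)\geq -2n$, i.e.\ $\Vtl(K_n)\leq n$. The statement $\Vtl(K_n)\geq n$ is equivalent to $\dl_\tau(\CFK(K_n)_0)\leq -2n$, a nonexistence assertion about \emph{all} classes in higher gradings, which cannot be extracted from a single cycle in the four-fold tensor product. The paper obtains it by constructing explicit local maps from $(\CFK(K_n),\tau_n)$ to a small model, the box complex $(\cB_n,\tau)$, via Lemmas~\ref{lem:CnCnI}, \ref{lem:CnCnII}, \ref{lem:tau-swap-torus}, and \ref{lem:Bn}, and then using monotonicity of $\dl_\tau$ under local maps together with the computation $\dl_\tau((\cB_n)_0)=-2n$. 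The dual-direction map $\psi$ of Lemma~\ref{lem:CnCnII} (a genuinely nontrivial equivariant chain map out of the tensor product side) is the technical heart of that reduction and has no counterpart in your outline; without it, or some equivalent complete control of the localized mapping cone, the claimed lower bound does not follow.
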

\noindent
Similar knots were investigated by Hendricks-Hom-Stoffregen-Zemke in \cite{HHSZ2} and the proof of Theorem~\ref{thm:1.4} relies on the computations of \cite{HHSZ2}. In fact, we also establish that $\Vtu(K_n) \geq 0$, although this is of limited use, and conjecture that the inequality appearing in Theorem~\ref{thm:1.4} is an equality. However, since we do not need this for any application, we leave the more detailed computation to the reader. 

In \cite[Question 1.1]{BI}, Boyle-Issa asked whether there exists a family of strongly invertible knots for which $\eg(K) - g_4(K)$ becomes arbitrarily large. Applying Theorem~\ref{thm:1.2}, we immediately obtain:

\begin{theorem}\label{thm:1.5}
For $n$ odd, the pair $(K_n, \tau_n)$ has
\[
2n- 2 \leq \ieg(K_n) \leq \eg(K_n).
\]
\end{theorem}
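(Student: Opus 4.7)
The plan is to derive Theorem~\ref{thm:1.5} as a direct combination of Theorem~\ref{thm:1.2} and Theorem~\ref{thm:1.4}; essentially no new work is needed. The upper bound $\ieg(K_n) \le \eg(K_n)$ is immediate from the definitions, since any equivariant slice surface in $B^4$ is in particular an isotopy-equivariant slice surface in a homology ball.

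For the lower bound, I would instantiate Theorem~\ref{thm:1.2} at $\circ = \tau$ and $K = K_n$ to obtain
\[
\Vtl(K_n) \leq \left\lceil \frac{1 + \ieg(K_n)}{2} \right\rceil.
\]
Combined with the computation $\Vtl(K_n) \ge n$ from Theorem~\ref{thm:1.4} (valid for $n$ odd), this forces
\[
\left\lceil \frac{1 + \ieg(K_n)}{2} \right\rceil \ge n.
\]
Unpacking the ceiling, this is equivalent to $\tfrac{1 + \ieg(K_n)}{2} > n - 1$, i.e.\ $\ieg(K_n) > 2n - 3$, so $\ieg(K_n) \ge 2n - 2$.

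There is no real obstacle here, since Theorem~\ref{thm:1.4} has already been stated and Theorem~\ref{thm:1.2} has been promised; the whole content of Theorem~\ref{thm:1.5} is the arithmetic above. The genuine work lies upstream: proving the genus bound in Theorem~\ref{thm:1.2} (which rests on an adaptation of the Juh\'asz--Zemke grading-shift machinery to the $\tau$-equivariant setting), and carrying out the concrete calculation $\Vtl(K_n) \ge n$ in Theorem~\ref{thm:1.4} (which invokes the connected-sum computations of Hendricks--Hom--Stoffregen--Zemke). Once those are in hand, the present theorem is a one-line corollary.
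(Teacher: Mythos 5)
Your proof is correct and matches the paper's own approach exactly: the paper states that Theorem~\ref{thm:1.5} follows "immediately" by applying Theorem~\ref{thm:1.2} (with $\circ = \tau$) to the bound $\Vtl(K_n)\geq n$ of Theorem~\ref{thm:1.4}, and your arithmetic unpacking of the ceiling is accurate.
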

\noindent
Since each $K_n$ is slice, this answers \cite[Question 1.1]{BI} in the affirmative. The topological intuition behind these examples is quite straightforward: the involutions $\tau_\#$ and $\tau_{sw}$ on $T_{2n, 2n+1} \# T_{2n, 2n+1}$ are very different, so one should expect the equivariant slice genus of $(K_n, \tau_n)$ to be large.

We also consider a particular knot $J$ due to Hayden \cite{Hayden}, displayed in Figure~\ref{fig:12}. In \cite{Hayden}, Hayden presents a certain pair of slice disks $D$ and $D'$ for $J$, each with complement having fundamental group $\Z$. By a result of Conway-Powell \cite[Theorem 1.2]{CP}, this implies that $D$ and $D'$ are topologically isotopic. However, in \cite[Section 2.1]{Hayden}, it is shown that $D$ and $D'$ are not smoothly isotopic (or even diffeomorphic) rel boundary. (See also \cite[Theorem 3.2]{HS}.) Note that $J$ admits a strong inversion $\tau$; a crucial part of the argument in \cite{Hayden} relies on the fact that $D$ and $D'$ are related by the obvious extension of $\tau$ over $B^4$. 


\begin{figure}[h!]
\includegraphics[scale = 0.7]{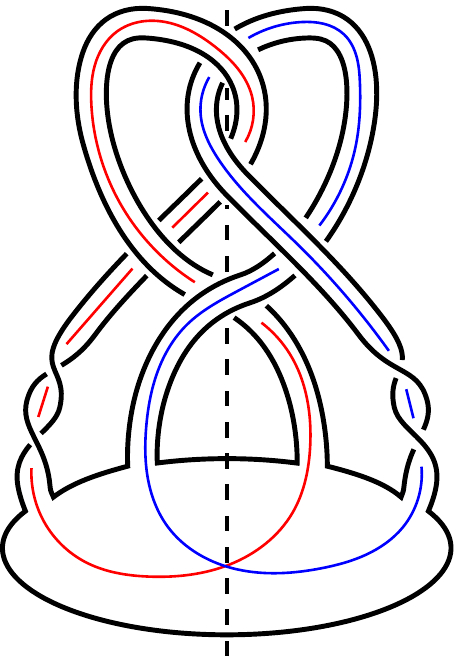}
\caption{An equivariant slice knot $J$ with symmetry $\tau$ given by reflection across the obvious vertical axis. The slice disks $D$ and $D'$ are obtained by compressing along the red and blue curves, respectively.}\label{fig:12}
\end{figure}


In \cite[Section 2.1]{Hayden}, it is noted that $J$ has a close connection to the positron cork $W_0$ of Akbulut-Matveyev~\cite{AkbulutMat}. In \cite[Theorem 1.15]{DHM}, the action of the cork involution on the Heegaard Floer homology of $\partial W_0$ was investigated. Re-casting these computations in the formalism of the current paper yields:



\begin{theorem}\label{thm:1.7}
Let $J$ be as in Figure~\ref{fig:12}. Then $\ieg(J) > 0$. In particular, no pair of symmetric slice disks $\Sigma$ and $\tau_W(\Sigma)$ are (smoothly) isotopic rel $J$. This holds for any (smooth) homology ball $W$ with $\partial W = S^3$ and any extension $\tau_W$ of $\tau$ over $W$.
\end{theorem}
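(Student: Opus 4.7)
The plan is to use the contrapositive of Theorem~\ref{thm:1.1}: since $\ieg(J) = 0$ would force $\Vsu(J) = \Vsl(J) = 0$ for both $\circ \in \{\tau, \iota\tau\}$, it suffices to exhibit a single nonvanishing value among these four invariants. Here Theorem~\ref{thm:1.1} gives a strictly stronger conclusion than Theorem~\ref{thm:1.2} in the genus-zero case (as the paper notes just after Theorem~\ref{thm:1.2}), so we do not even need the numerical bound--plain nonvanishing will suffice.

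To exhibit the nonvanishing invariant, I would import the computation of \cite[Theorem~1.15]{DHM} on the positron cork $W_0$ and recast it in the knot Floer setting. The first step is to make precise the connection alluded to in \cite[Section~2.1]{Hayden} between $J$ (with the strong inversion $\tau$ of Figure~\ref{fig:12}) and $W_0$ (with its cork involution): identify a surgery on $J$ yielding $\partial W_0$ in such a way that $\tau$ induces the boundary restriction of the cork involution. Once this equivariant identification is in hand, a version of the large surgery formula (applied equivariantly, tracking the $\tau$-action on $\CFK(J)$ and the induced action on the surgery complex) lets one transfer the $\HF$-computation of \cite{DHM} to an explicit description of the $\tau$-action on $\CFK(J)$. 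Combined with the standard knot involutive action $\iota_K$ for $J$, this reduces the evaluation of $\Vsu(J)$ and $\Vsl(J)$ to a finite algebraic computation in the Hendricks-Manolescu correction term framework, and one of these values should come out nonzero.

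The main obstacle I anticipate is the translation step rather than the final arithmetic: one must verify carefully that the involution whose action on $\HF(\partial W_0)$ was computed in \cite{DHM} really matches the action on the surgery complex induced by the strong inversion $\tau$ on $J$, so that the transfer through the large surgery formula is valid. Once this is established and $\ieg(J) > 0$ is confirmed, the second sentence of Theorem~\ref{thm:1.7} is automatic: if $\Sigma$ were isotopic to $\tau_W(\Sigma)$ rel $J$ for some extension $\tau_W$ over a homology ball $W$, then $\Sigma$ would be an isotopy-equivariant slice disk for $(J, \tau)$, forcing $\ieg(J) = 0$ and contradicting what was just shown.
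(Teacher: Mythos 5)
Your proposal is sound in outline and lands on the same final mechanism as the paper --- exhibiting one nonvanishing invariant among $\Vtl(J)$, $\Vitl(J)$ and then invoking the genus-zero vanishing (your use of the contrapositive of Theorem~\ref{thm:1.1} is exactly Remark~\ref{rem:weakerbound}, and your deduction of the second sentence is the paper's) --- but the route you take to the nonvanishing is genuinely different from the proof in Section~\ref{sec:7.3}. You propose to identify $S^3_{+1}(J)$ equivariantly with the boundary of the positron cork and import the computation behind \cite[Theorem 1.15]{DHM} through the large surgery formula (Theorem~\ref{thm:largesurgery}); the paper instead builds an explicit equivariant Type~II negative-definite cobordism from $S^3_{+1}(J)$ to $S^3_{-1}(6_2)$ (Figures~\ref{fig:72}--\ref{fig:73}), computes $\HFm(S^3_{-1}(6_2))$ with its $\iota$-action, and observes that whichever of the two possible symmetric actions $\tau$ induces there, one of $\dl_\tau$, $\dl_{\iota\tau}$ equals $-2$; Theorem~\ref{thm:inequality} and the relation~(\ref{eq:knotto3manifold}) (using $g_3(J)=1$, so $+1$ is large surgery) then force one of $\Vtl(J),\Vitl(J)>0$. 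The payoff of the paper's route is precisely that it sidesteps the "translation step" you flag as your main obstacle: no equivariant identification of the target (nor of $S^3_{+1}(J)$ with $\partial W_0$) is ever made, and the involution on the target is never pinned down --- only a two-case disjunction is needed. Your route is viable (the paper asserts in Remark~\ref{rem:immediateproof} that the equivariant identification with the cork boundary can be made, and that remark gives a purely topological shortcut to Theorem~\ref{thm:1.7} from \cite[Theorem 1.15]{DHM}), but carrying out that identification is real Kirby-calculus work the paper deliberately avoids; also note that you do not need an explicit description of $\tau_K$ on all of $\CFK(J)$ --- by Definition~\ref{def:numericalinvariants} only the induced data on the large surgery subcomplex $\CFK(J)_0$, equivalently the correction terms of the surgered manifold with its involution, enters, and even there a residual ambiguity in the action (as in the paper's $6_2$ computation) is harmless so long as both cases give a nonvanishing invariant.
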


Given the connection between $J$ and $W_0$, it is actually possible to use \cite[Theorem 1.15]{DHM} to provide an immediate proof of Theorem~\ref{thm:1.7}, as we explain in Remark~\ref{rem:immediateproof}. However, going through the proof in the current context explicitly gives:

\begin{theorem}\label{thm:knotfloermaps}
Let $J$ be as in Figure~\ref{fig:12} and let $\Sigma$ and $\tau_W(\Sigma)$ be any pair of symmetric slice disks for $J$. Then
\[
[F_{W, \Sigma}(1)] \neq [F_{W, \tau_W(\Sigma)}(1)]
\]
as elements in either $H_*(\CFK(J))$ or $\HFKhat(J)$. This holds for any (smooth) homology ball $W$ with $\partial W = S^3$ and any extension $\tau_W$ of $\tau$ over $W$.
\end{theorem}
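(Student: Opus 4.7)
The plan is to reduce Theorem~\ref{thm:knotfloermaps} to a non-invariance statement for $F_{W,\Sigma}(1)$ under the induced action $\tau_*$ on knot Floer homology, and then to invoke the non-vanishing of the equivariant invariants for $J$ that underlies the proof of Theorem~\ref{thm:1.7}.

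First, I would use the naturality of the Juh\'asz--Zemke link cobordism maps under pair-diffeomorphisms. Since $\tau_W\colon W \to W$ restricts to $\tau$ on $\partial W = S^3$, it defines a diffeomorphism of pairs $(W,\Sigma) \to (W,\tau_W(\Sigma))$ extending $\tau$ on $(S^3,J)$. A slice disk is naturally regarded as a decorated link cobordism from the empty link in $S^3$ to $J \subset S^3$, so after arranging compatible decorations, naturality yields
\[
F_{W,\tau_W(\Sigma)} \;=\; \tau_*\circ F_{W,\Sigma}
\]
as maps from the Floer homology of the empty link (canonically $\F$ with generator $1$, fixed by $\tau$) into $H_*(\CFK(J))$ or $\HFKhat(J)$. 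Evaluating on $1$ produces the key identity
\[
F_{W,\tau_W(\Sigma)}(1) \;=\; \tau_*\, F_{W,\Sigma}(1),
\]
so it suffices to prove that $F_{W,\Sigma}(1)$ is not $\tau_*$-invariant.

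Next, I would argue by contrapositive. If $F_{W,\Sigma}(1) = \tau_*F_{W,\Sigma}(1)$, then a mapping cone argument of the type used to set up the invariants $\Vtu,\Vtl$ in Section~\ref{sec:2} promotes the slice-disk map to the $\tau$-equivariant complex, which would force $\Vtu(J)=\Vtl(J)=0$. However, the proof of Theorem~\ref{thm:1.7}, obtained by transporting the computation of \cite[Theorem 1.15]{DHM} on the positron cork $W_0$ into the present setting, shows that these invariants do not all vanish. This contradicts the assumption and gives $[F_{W,\Sigma}(1)] \neq [F_{W,\tau_W(\Sigma)}(1)]$ in $H_*(\CFK(J))$. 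The analogous statement for $\HFKhat(J)$ follows because the relevant cycle representative lies in the appropriate Alexander/$U$-grading window, so its image under the natural comparison map $H_*(\CFK) \to \HFKhat$ remains non-zero and the $\tau_*$-action on $\HFKhat$ is compatible with that on $H_*(\CFK)$.

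The main obstacle I anticipate is organizational rather than conceptual: one must match the decoration on $\Sigma$ with that on $\tau_W(\Sigma)$, either by choosing a $\tau$-symmetric basepoint arrangement on $J$ or by controlling the change-of-decoration correction, so that naturality yields precisely $\tau_*\circ F_{W,\Sigma}$ with no extra terms. A secondary subtlety is making the import from \cite{DHM} precise: via the tautological correspondence between the relevant surgery on $J$ and $\partial W_0$, one needs to identify the $\tau$-action on $\CFK(J)$ constructed in this paper with the cork involution action on the Heegaard Floer complex of $\partial W_0$ used there. Once these compatibilities are checked, the identity above and the non-vanishing of $\Vtu(J)$ (or $\Vtl(J)$) combine immediately to give the theorem, uniformly in the choice of $W$ and $\tau_W$.
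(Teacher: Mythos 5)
Your overall strategy matches the paper's: reduce to the identity $\tau_*[F_{W,\Sigma}(1)]=[F_{W,\tau_W(\Sigma)}(1)]$ (the paper's Lemma~\ref{lem:tautological}), then argue by contradiction using the nonvanishing of the equivariant $V$-invariants for $J$. However, there are two genuine gaps.

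First, you invoke only $\Vtu(J)$ and $\Vtl(J)$, but the proof of Theorem~\ref{thm:1.7} does \emph{not} show that $\Vtl(J)>0$. It shows only that \emph{at least one} of $\Vtl(J)$ and $\Vitl(J)$ is strictly positive, with the dichotomy depending on whether the (uncomputed) induced involution on $\HFm(S^3_{-1}(6_2))$ fixes the central $Y$-shape or reflects it. So if you only derive $\Vtl(J)\le 0$ from a $\tau_K$-fixed class, you have no contradiction in the case where it is $\Vitl(J)$ that is positive. The missing step — which is present in the paper's proof — is to also use that $F_{W,\Sigma}$ commutes with $\iota_K$ up to homotopy, so a $\tau_K$-invariant class $[F_{W,\Sigma}(1)]$ is automatically $\iota_K\circ\tau_K$-invariant as well, forcing \emph{both} $\Vtl(J)\le 0$ and $\Vitl(J)\le 0$. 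Only with both inequalities does the dichotomy give a contradiction.

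Second, the passage to $\HFKhat(J)$ is not addressed by your argument. The point is not whether $[F_{W,\Sigma}(1)]$ survives to a nonzero class in $\HFKhat$; the issue is that two \emph{distinct} homology classes in $H_*(\CFK(J))$ could a priori have equal images in $H_*(\CFK(J)/(\cU,\cV))$. Ruling that out requires knowing that the difference $F_{W,\Sigma}(1)-F_{W,\tau_W(\Sigma)}(1)$ — a cycle in $(\grU,\grV)$-bigrading $(0,0)$ — cannot be homologous to a nonzero element of $\CFK(J)$ in the image of $(\cU,\cV)$. This is a concrete computation: the paper determines $\CFK(J)$ explicitly (Figure~\ref{fig:cfkj}) and observes that there are no elements in bigrading $(0,0)$ lying in the image of $(\cU,\cV)$. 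Without that structural input on $\CFK(J)$, the compatibility of the $\tau_*$-action with the quotient map gives no conclusion.Your overall strategy matches the paper's: establish $\tau_*[F_{W,\Sigma}(1)]=[F_{W,\tau_W(\Sigma)}(1)]$ via diffeomorphism naturality of the link cobordism maps (the paper's Lemma~\ref{lem:tautological}), then argue by contradiction using nonvanishing of the equivariant $V$-invariants from the proof of Theorem~\ref{thm:1.7}. However, there are two genuine gaps.

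First, you invoke only $\Vtu(J)$ and $\Vtl(J)$, but the proof of Theorem~\ref{thm:1.7} does \emph{not} show $\Vtl(J)>0$. It shows only that \emph{at least one} of $\Vtl(J)$ and $\Vitl(J)$ is strictly positive, with the dichotomy left unresolved since the argument does not determine whether the induced involution on $\HFm(S^3_{-1}(6_2))$ fixes the central $Y$-shape or reflects it. So deriving $\Vtl(J)\le 0$ from a $\tau_K$-fixed class produces no contradiction in the case where it is $\Vitl(J)$ that is positive. The missing step is to also use that $F_{W,\Sigma}$ homotopy commutes with $\iota_K$, so a $\tau_K$-invariant class $[F_{W,\Sigma}(1)]$ is automatically $\iota_K\circ\tau_K$-invariant, forcing \emph{both} $\Vtl(J)\le 0$ and $\Vitl(J)\le 0$; only with both inequalities does the dichotomy give a contradiction.

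Second, the passage to $\HFKhat(J)$ is not actually addressed by your argument. The issue is not whether $[F_{W,\Sigma}(1)]$ is nonzero in $\HFKhat$; it is that two \emph{distinct} classes in $H_*(\CFK(J))$ could have equal images in $H_*(\CFK(J)/(\cU,\cV))$. Ruling that out requires showing the cycle $F_{W,\Sigma}(1)-F_{W,\tau_W(\Sigma)}(1)$, which sits in $(\grU,\grV)$-bigrading $(0,0)$, cannot be homologous to a nonzero element of $\CFK(J)$ lying in the image of $(\cU,\cV)$. The paper does this via a concrete computation of $\CFK(J)$ (Figure~\ref{fig:cfkj}), observing that no elements of bigrading $(0,0)$ lie in that image. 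Without that structural input, compatibility of $\tau_*$ with the quotient map alone gives no conclusion.
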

\noindent
Here, $F_{W, \Sigma}$ and $F_{W, \tau_W(\Sigma)}$ are the knot Floer cobordism maps associated to (punctured copies of) $\Sigma$ and $\tau_W(\Sigma)$, respectively. We thus explicitly see that $\Sigma$ and $\tau_W(\Sigma)$ are distinguished by their maps on knot Floer homology. Specializing to $\Sigma = D$, this provides a knot Floer-theoretic analogue of the proof of \cite[Theorem 3.2]{HS}, in which $D$ and $D'$ are distinguished using their induced maps on Khovanov homology. Note that Juh\'asz-Miller-Zemke have used knot Floer homology to detect exotic higher-genus surfaces \cite{JMZ}. (The fact that the surfaces have genus greater than zero is essential to their argument.) However, the current work represents the first instance of knot Floer homology being applied to detect an exotic pair of disks. 

By taking the $n$-fold connected sum $\#_nJ$, it is also straightforward to construct an example of a slice knot with $2^n$ different exotic slice disks, which are distinguished by their concordance maps on $\smash{\HFKhat}$. We establish this in Theorem~\ref{thm:2ndisks}; see \cite[Corollary 6.6]{SundbergSwann} for a similar construction. In Theorem~\ref{thm:exoticfamily}, we extend Theorem~\ref{thm:1.7} to an infinite family of knots with exotic pairs of slice disks, which were likewise considered by Hayden in \cite{Hayden}.

\subsection{Algebraic formalism}\label{sec:1.3}

As discussed previously, our underlying goal will be to show that a strong inversion $\tau$ induces a well-defined action on the knot Floer complex of $K$. We also 
incorporate the involutive knot Floer automorphism of Hendricks-Manolescu \cite{HM} into our formalism, which will allow us to define the invariants $\Vitu$ and $\Vitl$. In order to construct the action of $\tau$, we first fix an orientation on $K$ and an ordered pair of basepoints $(w, z)$ which are interchanged by $\tau$. We refer to this data as a \textit{decoration} on $(K, \tau)$. In Section~\ref{sec:3.2}, we define the action of $\tau$ associated to a decorated strongly invertible knot:

\begin{theorem}\label{thm:1.8}
Let $(K, \tau)$ be a decorated strongly invertible knot. Let $\H$ be any choice of Heegaard data compatible with $(K, w, z)$. Then $\tau$ induces an automorphism
\[
\tau_\H \colon \CFK(\H) \rightarrow \CFK(\H)
\]
with the following properties:
\begin{enumerate}
\item $\tau_\H$ is skew-graded and $\F[\cU, \cV]$-skew-equivariant
\item $\tau_\H^2 \simeq \id$
\item $\tau_\H \circ \iota_\H \simeq \varsigma_\H \circ \iota_\H \circ \tau_\H$
\end{enumerate}
Here, $\iota_\H$ is the Hendricks-Manolescu knot Floer involution on $\CFK(\H)$ and $\varsigma_\H$ is the Sarkar map. Moreover, the homotopy type of the triple $(\CFK(\H), \tau_\H, \iota_\H)$ is independent of the choice of Heegaard data $\H$ for the doubly-based knot $(K, w, z)$.
\end{theorem}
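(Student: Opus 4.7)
The plan is to realize $\tau$ as a geometric symmetry of a carefully chosen Heegaard diagram and then use naturality of knot Floer homology to transport the resulting chain map to any other diagram, following the template established by Hendricks-Manolescu for $\iota$ and adapted for strong inversions in the spirit of Dai-Hedden-Mallick. The first step is to produce a doubly-based Heegaard diagram $\H_0 = (\Sigma, \alphas, \betas, w, z)$ compatible with $(K, w, z)$ that is equivariant under $\tau$. Since $\tau$ is an involution of $S^3$ fixing $K$ setwise, interchanging $w$ with $z$, and reversing the orientation of $K$, such an $\H_0$ can be built by choosing a $\tau$-equivariant self-indexing Morse function whose index-one and index-two critical points are matched by $\tau$; this yields a Heegaard surface on which $\tau$ acts as a diffeomorphism preserving the curve systems setwise and interchanging the two basepoints.

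From $\H_0$, define $\tau_{\H_0}$ on generators by $\x \mapsto \tau(\x)$, extended $\F$-linearly and twisted on the $\F[\cU, \cV]$-structure by exchanging $\cU \leftrightarrow \cV$ (since $\tau$ swaps $w$ with $z$, hence the $n_w$- and $n_z$-intersection counts). This makes property (1) essentially tautological: the basepoint swap exchanges the two local variables and the two component gradings, which is precisely skew-equivariance and skew-gradedness. Property (2) follows because $\tau^2 = \id$ as a geometric involution, so $\tau_{\H_0}^2$ is induced by the identity on $\H_0$ up to change of auxiliary data (almost-complex structure, perturbations), and the corresponding continuation map is chain-homotopic to the identity.

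To verify (3), I compare $\tau_\H \iota_\H$ with $\iota_\H \tau_\H$ on $\H_0$. The Hendricks-Manolescu map $\iota_\H$ arises from the symmetry $(\Sigma, \alphas, \betas, w, z) \mapsto (-\Sigma, \betas, \alphas, w, z)$ followed by a naturality transition; composing with $\tau$ in the two orders yields the same underlying geometric endomorphism of $\H_0$ but with different bookkeeping of the basepoint swap. The resulting discrepancy is a half-Dehn twist along a meridian of $K$ that drags $w$ around $z$, which by work of Sarkar and Hendricks-Manolescu-Zemke induces the Sarkar map $\varsigma_\H$, giving the relation $\tau_\H \iota_\H \simeq \varsigma_\H \iota_\H \tau_\H$.

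The principal obstacle is independence of the homotopy type of $(\CFK(\H), \tau_\H, \iota_\H)$ from the choice of Heegaard data. For arbitrary $\H$, set $\tau_\H = \Phi_{\H_0 \to \H} \circ \tau_{\H_0} \circ \Phi_{\H \to \H_0}$ using the strong naturality of Juh\'asz-Thurston-Zemke, and similarly for $\iota_\H$. The content is that changing $\H_0$ to another $\tau$-equivariant diagram $\H_0'$ modifies $\tau_{\H_0}$ only by a chain homotopy compatible with the transition map $\Phi_{\H_0 \to \H_0'}$. The expected strategy, paralleling the $\iota$-case of Hendricks-Manolescu and the involutions considered in Dai-Hedden-Mallick, is to connect $\H_0$ and $\H_0'$ through a sequence of Heegaard moves that are either individually $\tau$-equivariant or occur in cancelling pairs whose naturality contributions vanish up to homotopy, and to carry out the same argument simultaneously for $\iota_\H$ to obtain homotopy invariance of the full triple. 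This equivariant-naturality argument is where the bulk of the technical work sits, since one must track how homotopies in the $\tau$- and $\iota$-directions interact along the path of diagrams.
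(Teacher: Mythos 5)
Your plan diverges from the paper's construction in a way that leaves the hardest step unproved. The paper never uses a $\tau$-equivariant Heegaard diagram: for an \emph{arbitrary} choice of Heegaard data $\H$ for $(K,w,z)$ it defines $\tau_\H = sw \circ \Phi(\tau\H, \H^r)\circ t$, where $t$ is the tautological pushforward (landing in data for $(K^r,z,w)$), $\Phi$ is the Juh\'asz--Thurston--Zemke naturality map, and $sw$ is the tautological switch of basepoints. With that definition, (1) is immediate from the types of the three maps, (2) follows from $t^2=sw^2=\id$ together with transitivity of the $\Phi$'s, (3) reduces by a formal commutation computation to the fact that $\tau\circ\brho^2\circ\rho$ and $\tau\circ\brho$ are isotopic rel basepoints (so induce homotopic maps), and independence of $\H$ is automatic because the naturality maps commute up to homotopy with every constituent of $\tau_\H$ and $\iota_\H$. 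All the analytic input is quoted from the naturality package; no equivariant diagrams or equivariant Heegaard moves are needed.

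By contrast, your route has two genuine gaps. First, the existence of a doubly-based Heegaard diagram on which $\tau$ acts preserving $\alphas$ and $\betas$ setwise and exchanging $w$ and $z$ is asserted via an equivariant Morse function but not established, and even granting it you must handle the non-equivariance of the almost complex structure (continuation maps) at every stage, not only in the proof of (2). Second, and more seriously, the well-definedness statement you need is not ``independence of $\H$ given $\H_0$'' (which is trivial once you conjugate by $\Phi$), but independence of the chosen equivariant diagram $\H_0$ itself: you must connect two equivariant diagrams by moves and show the induced maps agree up to homotopies compatible with the transition maps, simultaneously for $\tau$ and $\iota$. You explicitly defer this ``equivariant naturality'' argument, but it is precisely the content of the theorem; without it you have a construction on one diagram, not an invariant of $(K,w,z)$. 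Relatedly, your verification of (3) is heuristic: the conjugation symmetry underlying $\iota_\H$ sends $(\Sigma,\alphas,\betas,w,z)$ to $(-\Sigma,\betas,\alphas,z,w)$ (basepoints swapped), so $\iota_\H$ already contains a half Dehn twist $\rho$, and the Sarkar map arises from a \emph{full} basepoint-moving loop; identifying the discrepancy between the two orderings with $\varsigma_\H$ requires the kind of bookkeeping the paper does via the relations $sw\circ\rho\simeq\brho\circ sw$ and Lemma~\ref{lem:isotopicmaps}, rather than the one-line assertion you give. If you want to salvage your approach, you would essentially be reproving equivariant naturality from scratch; adopting the paper's definition of $\tau_\H$ as a composite of tautological and naturality maps sidesteps all of it.
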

\noindent
This action was originally considered by the second author in the context of establishing a large surgery formula; see \cite{Mallick}. Note that $\tau_\H$ and $\iota_\H$ do not in general commute. This is in contrast to the 3-manifold setting; see \cite[Lemma 4.4]{DHM}.

In view of the last part of Theorem~\ref{thm:1.8}, we may suppress writing $\H$ and unambiguously refer to the homotopy type of $(\CFK(K), \tau_K, \iota_K)$ as an invariant of the decorated knot $(K, \tau)$. In Section~\ref{sec:2.2}, we formalize this algebraic data by defining the notion of an \textit{abstract $(\tau_K, \iota_K)$-complex}. We define an appropriate notion of local equivalence and form the quotient
\[
\K_{\tau, \iota} = \{\text{abstract }(\tau_K, \iota_K)\text{-complexes}\}\ / \ \text{local equivalence}.
\]
See Section~\ref{sec:2.2}.

The role of the decoration on $(K, \tau)$ turns out to be quite subtle. As we will see, this extra choice of data is needed to form the knot Floer complex of $K$ and is critical for discussing the invariance properties of $(\CFK(K), \tau_K, \iota_K)$. In Section~\ref{sec:3.4}, we introduce the notion of a \textit{decorated} isotopy-equivariant homology concordance and show that in the decorated category, we obtain a map
\[
h_{\tau, \iota} \colon \{\text{(decorated) strongly invertible knots}\} \ \big/\ \parbox{14em}{\centering (decorated) isotopy-equivariant \\ homology concordance} \rightarrow \K_{\tau, \iota}.
\]
However, this is (in principle) not quite true if the decorations are discarded: in the undecorated setting, an equivariant knot only defines a $(\tau_K, \iota_K)$-complex up to a certain ambiguity which we refer to as a \textit{twist by $\varsigma_K$}; see Definition~\ref{def:twist}. Nevertheless, we show that $\Vsu$ and $\Vsl$ remain invariants in the undecorated setting.

In Section~\ref{sec:2.2}, we further define a product operation on $\K_{\tau, \iota}$ which makes it into a group. We establish an equivariant connected sum formula in Theorem~\ref{thm:connectedsum}; this will allow us to prove that $h_{\tau, \iota}$ constitutes a homomorphism from the equivariant concordance group $\eC$ to $\K_{\tau, \iota}$.

\begin{theorem}\label{thm:1.9}
We have a homomorphism
\[
h_{\tau, \iota} \colon \eC \rightarrow \K_{\tau, \iota}.
\]
\end{theorem}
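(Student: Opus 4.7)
The plan is to assemble $h_{\tau, \iota}$ from two main ingredients: the invariance of $(\CFK(K), \tau_K, \iota_K)$ under (decorated) isotopy-equivariant homology concordance provided by Theorem~\ref{thm:1.8} together with the discussion of Section~\ref{sec:3.4}, and the equivariant connected sum formula of Theorem~\ref{thm:connectedsum}. By the last clause of Theorem~\ref{thm:1.8}, the homotopy type of the triple $(\CFK(K), \tau_K, \iota_K)$ depends only on the doubly-based knot $(K, w, z)$, so it produces a well-defined $(\tau_K, \iota_K)$-complex from any decorated strongly invertible knot. Combined with the concordance-invariance map of Section~\ref{sec:3.4}, this already yields a well-defined map on decorated isotopy-equivariant homology concordance classes, and hence, by restriction, on decorated equivariant concordance classes. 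To descend further to the undecorated equivariant concordance group $\eC$, I would verify that any two decorations on a fixed $(K, \tau)$ produce $(\tau_K, \iota_K)$-complexes that differ only by a twist by $\varsigma_K$ in the sense of Definition~\ref{def:twist}, and that such a twist preserves the local equivalence class in $\K_{\tau, \iota}$; this is the one subtlety distinguishing the decorated and undecorated categories, as flagged in Section~\ref{sec:1.3}.

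For the homomorphism property, I would invoke Theorem~\ref{thm:connectedsum}, which computes $(\CFK, \tau_K, \iota_K)$ for an equivariant connected sum $K_1 \# K_2$ in terms of the data of $K_1$ and $K_2$. The product on $\K_{\tau, \iota}$ defined in Section~\ref{sec:2.2} is set up precisely so that this formula reads
\[
h_{\tau, \iota}(K_1 \# K_2) = h_{\tau, \iota}(K_1) \cdot h_{\tau, \iota}(K_2).
\]
Any ambiguity in the equivariant connected sum coming from auxiliary choices (see Remark~\ref{rem:suminsensitive}) again manifests only as a twist by $\varsigma_K$ and so becomes invisible after passing to $\K_{\tau, \iota}$. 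The identity element of $\eC$, represented by the unknot with its trivial involution, has knot Floer complex $\F[\cU, \cV]$ with $\tau_K = \iota_K = \id$, which realizes the identity of $\K_{\tau, \iota}$; inverses are then forced by the homomorphism property, though one could alternatively verify directly that the mirror-reverse operation on equivariant knots corresponds to the group inverse on the $\K_{\tau, \iota}$ side.

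The main obstacle I expect is the connected sum formula itself. The analogous statements for $\CFK$ alone and for the pair $(\CFK, \iota_K)$ are essentially in the literature, but tracking $\tau_K$ under equivariant connected sum is genuinely delicate: $\tau_1 \# \tau_2$ is tied to the geometry of the equivariant connected sum region sitting on the symmetry axis, rather than a naive tensor product of the two symmetries, so one must pick equivariant Heegaard diagrams compatible with the sum and analyze how $\tau_1 \# \tau_2$ permutes intersection points across the two factors. The failure of $\tau_K$ and $\iota_K$ to commute (they only commute up to the Sarkar map $\varsigma_K$, by Theorem~\ref{thm:1.8}(3)) also means that the product structure on $\K_{\tau, \iota}$ must be defined with some care, and its compatibility with the chain-level formula has to be checked explicitly rather than deduced formally from the involutive case.
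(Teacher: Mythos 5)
There is a genuine gap in your proposal, centered on the step where you descend to $\eC$. You describe $\eC$ as the ``undecorated equivariant concordance group'' and propose to descend from the decorated category by arguing that a twist by $\varsigma_K$ preserves the local equivalence class in $\K_{\tau, \iota}$. Both parts of this are wrong. First, $\eC$ is not the undecorated group: by Definition~\ref{def:siconcordancegroup} it is the group of \emph{directed} strongly invertible knots. Second, and more seriously, the paper explicitly does \emph{not} establish (and states as an open question in Section~\ref{sec:2.4}) that a twist by $\varsigma_K$ preserves the class in $\K_{\tau, \iota}$; only the numerical invariants $\Vsu, \Vsl$ are shown to be twist-insensitive (Lemma~\ref{lem:twistnochange}). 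If the twist were known to preserve local equivalence, much of the care in Sections~\ref{sec:3.3}--\ref{sec:3.5} would be unnecessary. Your argument would therefore break precisely at the step where you try to make the $\K_{\tau, \iota}$-class an invariant of the undecorated knot.

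The paper's route, via Theorem~\ref{thm:3.5A}, exploits the directed structure in a different way: a direction determines exactly two compatible decorations, and these are related by \emph{simultaneously} reversing the orientation of $K$ and swapping $(w,z)$. By Theorem~\ref{thm:3.2B} each of these operations individually introduces a twist, so doing both introduces two twists, which cancel since $\varsigma_K^2 \simeq \id$. Hence the homotopy type of $(\CFK(K), \tau_K, \iota_K)$ is genuinely well-defined from the direction alone, with no need for twist-invariance of the local equivalence class. Likewise, Definition~\ref{def:direqconc} and the end of the proof of Theorem~\ref{thm:3.5A} show that a directed equivariant concordance always admits a system of dividing arcs making it decorated in the sense of Definition~\ref{def:decoratedeqconc}, so Theorem~\ref{thm:3.3A} applies directly; the undecorated Theorem~\ref{thm:3.3B}, which you implicitly rely on, loses exactly the information you would need. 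One further small correction: the $\tau_K$-action on $\CFK(\text{unknot}) \cong \cR$ is not the identity but the skew-equivariant map interchanging $\cU$ and $\cV$, as in Definition~\ref{def:tigroup}. Your discussion of the connected sum formula and its delicacy is accurate and matches the role Theorem~\ref{thm:connectedsum} plays in the paper's proof.
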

\noindent
The equivariant concordance group $\eC$ consists of the set of \textit{directed} strongly invertible knots; see Definition~\ref{def:direqconc}. In Section~\ref{sec:3.5}, we discuss the connection between $\eC$ and decorated isotopy-equivariant concordance. 

Somewhat surprisingly, it turns out that $\K_{\tau, \iota}$ is not \textit{a priori} abelian, although the authors have no explicit example of this. As we discuss in Section~\ref{sec:2.1}, it is currently unknown whether $\eC$ is abelian. Hence in principle Theorem~\ref{thm:1.9} can be used to provide examples demonstrating this claim; we plan to return to this question in future work. As far as the authors are aware, this is the first example of a (possibly) non-abelian group arising in the setting of local equivalence. Note that the $\iota_K$-local equivalence group of Zemke \cite{Zemkeconnected} is abelian.


\subsection{Relation to $3$-manifold invariants} \label{sec:1.4}

If $K$ is an equivariant knot, then any 3-manifold obtained by surgery on $K$ inherits an involution from the symmetry on $K$ (see for example \cite[Lemma 5.2]{DHM}). In \cite{Mallick}, the second author established a large surgery formula relating the action of $\tau_K$ to the corresponding Heegaard Floer action of the 3-manifold involution. This latter action was defined and studied in \cite{DHM} in the context of the theory of corks. It follows immediately from the large surgery formula that (with appropriate normalization) the invariants $\Vsu$ and $\Vsl$ are none other than the numerical involutive correction terms referenced in \cite[Remark 4.5]{DHM}. Explicitly, for $p \geq g_3(K)$, we have
\begin{equation}\label{eq:knotto3manifold}
\begin{split}
&-2\Vsu(K) + \dfrac{p-1}{4} = \du_\circ(S^3_p(K), [0]) \\
&-2\Vsl(K) + \dfrac{p-1}{4} = \dl_\circ(S^3_p(K), [0]).
\end{split}
\end{equation}
for $\circ \in \{\tau, \ita\}$. See \cite[Theorem 1.6]{HM} for the analogous statements concerning the usual involutive numerical invariants $\du$ and $\dl$.

The results of \cite{DHM} easily imply that $\Vsu$ and $\Vsl$ are invariant under equivariant concordance (essentially by surgering along the concordance annulus). Hence it is actually immediate that $\Vsu$ and $\Vsl$ obstruct equivariant sliceness. Indeed, \cite{DHM} already gives several examples of slice knots that are not equivariantly slice, as pointed out in \cite{BI}. The main import of the present paper is thus to show that $\Vsu$ and $\Vsl$ can be used to study higher-genus examples, which were not previously accessible.


In \cite[Theorem 1.5]{DHM}, it was shown that the invariants of \cite[Remark 4.5]{DHM} satisfy certain inequalities in the presence of negative-definite equivariant cobordisms. In our context, this specializes to inequalities of $\Vsu$ and $\Vsl$ involving equivariant crossing changes. In Section~\ref{sec:7}, we consider several kinds of equivariant crossing changes. We prove:

\begin{theorem}\label{thm:1.10}
Let $K$ be strongly invertible knot. Let $K'$ be obtained from $K$ via an equivariant positive-to-negative crossing change (or an equivariant pair of such crossing changes). Then:
\begin{enumerate}
\item If the crossing change is of Type Ia, we have 
\[
\Vtu(K) \geq \Vtu(K') \quad \text{and} \quad \Vtl(K) \geq \Vtl(K').
\]
\item If the crossing change is of Type Ib, we have 
\[
\Vitu(K) \geq \Vitu(K') \quad \text{and} \quad \Vitl(K) \geq \Vitl(K').
\]
\item If we have an equivariant pair of crossing changes (Type II), we have both 
\[
\Vtu(K) \geq \Vtu(K') \quad \text{and} \quad \Vtl(K) \geq \Vtl(K')
\]
and
\[
\Vitu(K) \geq \Vitu(K') \quad \text{and} \quad \Vitl(K) \geq \Vitl(K').
\]
\end{enumerate}
\end{theorem}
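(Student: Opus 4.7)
The plan is to reduce Theorem~\ref{thm:1.10} to the negative-definite cobordism inequalities for involutive correction terms established in \cite[Theorem~1.5]{DHM}, via the surgery identifications \eqref{eq:knotto3manifold} relating the knot invariants $\Vsu$, $\Vsl$ to the $3$-manifold involutive $d$-invariants $\du_\circ$, $\dl_\circ$ of large surgery on $K$. For any sufficiently large integer $p$ (say $p \geq \max\{g_3(K), g_3(K')\}$), an equivariant positive-to-negative crossing change is realized by attaching a 2-handle along a meridional unknot $U$ encircling the crossing, with framing giving a negative-definite cobordism from $S^3$ to $S^3$ that takes $K$ to $K'$. Concatenating with the surgery cobordisms on $K$ and $K'$ produces a negative-definite cobordism $W \colon S^3_p(K) \to S^3_p(K')$. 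The remaining task is to verify that $W$ carries the appropriate equivariant structure.

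The three cases are distinguished by how the axis of $\tau$ meets the crossing. For a Type Ia crossing change, the crossing is setwise preserved by $\tau$ with the axis meeting the crossing in the orientation-preserving configuration; the linking unknot $U$ can be chosen $\tau$-invariantly, and $\tau$ extends as an honest involution of the 2-handle cobordism, so that $W$ carries a $\tau$-equivariant structure. For a Type Ib crossing change the axis again fixes the crossing, but with the opposite orientation behavior relative to the strands; the natural extension of $\tau$ over $W$ is twisted by the conjugation factor responsible for $\iota$, so it is $\iota\tau$ rather than $\tau$ that extends equivariantly. For a Type II crossing change, $\tau$ swaps the two crossings and one attaches the corresponding pair of 2-handles simultaneously; the resulting cobordism $W$ admits equivariant extensions of both $\tau$ and $\iota\tau$, yielding both pairs of inequalities.

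With $W$ in hand as an equivariant negative-definite cobordism of the appropriate type, \cite[Theorem~1.5]{DHM} applied to the canonical spin$^c$ structure (for which $c_1^2 + b_2(W) = 0$) gives
\[
\du_\circ(S^3_p(K), [0]) \leq \du_\circ(S^3_p(K'), [0]) \quad \text{and} \quad \dl_\circ(S^3_p(K), [0]) \leq \dl_\circ(S^3_p(K'), [0]),
\]
with $\circ \in \{\tau, \iota\tau\}$ dictated by the type of the crossing change. Substituting into~\eqref{eq:knotto3manifold} and dividing by $-2$ reverses the inequalities and yields the asserted bounds on $\Vsu$ and $\Vsl$. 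The main obstacle lies in the geometric bookkeeping required to distinguish Type Ia from Type Ib, i.e., correctly identifying which of $\tau$ and $\iota\tau$ extends over the blow-up cobordism in each case. This amounts to tracking the action of the axis on the Heegaard data near the crossing and its interplay with the basepoint-swap underlying the Hendricks--Manolescu involution $\iota$, mirroring the analogous case analysis carried out in \cite{DHM} in the setting of $3$-manifold involutions.
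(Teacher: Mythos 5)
Your proposal follows essentially the same route as the paper's proof: pass to large surgery, realize the crossing change by an equivariant $(-1)$-framed $2$-handle cobordism, invoke the DHM negative-definite equivariant inequality (the paper's Theorem~\ref{thm:inequality}), and translate back to $\Vsu$, $\Vsl$ via the relation (\ref{eq:knotto3manifold}), noting the $(p-1)/4$ shifts cancel. The one imprecision worth flagging is the claim that for Type Ib it is ``$\iota\tau$ that extends equivariantly'' over $W$ -- $\iota$ is a Floer chain map, not a diffeomorphism, so this cannot be taken literally; in the paper's argument $\tau$ extends as a diffeomorphism $f$ over $W$ in \emph{all} three cases, and the Ia/Ib dichotomy is instead detected by whether the pushforward $f_*$ fixes the chosen $\spinc$-structure $\s$ (Type Ia, $f_*\s=\s$) or conjugates it (Type Ib, $f_*\s=\bs$), which is exactly the hypothesis that selects the $\tau$- versus $\iota\tau$-inequality in Theorem~\ref{thm:inequality}.
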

\noindent
See Definition~\ref{def:eqcrossingchanges} for a definition of these terms.

A generalization of these ideas will be used to establish Theorem~\ref{thm:1.7}. 


\subsection{Relation to secondary invariants} \label{sec:1.5}
In \cite{JZstabilization}, Juh\'asz and Zemke construct several secondary invariants associated to a pair of slice surfaces $\Sigma$ and $\Sigma'$ for the same knot. These are shown to give lower bounds for various quantities such as the stabilization distance between $\Sigma$ and $\Sigma'$ (see below). Here, we focus on the invariant $V_0(\Sigma, \Sigma')$ of \cite[Section 4.5]{JZstabilization}. It is easy to show:

\begin{theorem}\label{thm:relativeV0}
Let $(K, \tau)$ be a strongly invertible knot in $S^3$. Let $W$ be any (smooth) homology ball with boundary $S^3$, and let $\tau_W$ be any extension of $\tau$ over $W$. If $\Sigma$ is any slice disk for $K$ in $W$, then
\[
\max\{\Vtl(K), \Vitl(K)\} \leq V_0(\Sigma, \tau_W(\Sigma)).
\]
\end{theorem}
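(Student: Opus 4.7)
The plan is to compare the two cobordism-induced cycles $F_{W,\Sigma}(1)$ and $F_{W,\tau_W(\Sigma)}(1)$ using naturality of the knot Floer cobordism map under the diffeomorphism $\tau_W$, and then to match the Juh\'asz-Zemke relative invariant $V_0(\Sigma, \tau_W(\Sigma))$ with a $U$-depth that dominates both $\Vtl(K)$ and $\Vitl(K)$.

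First, I would view $\Sigma$ and $\tau_W(\Sigma)$ as punctured knot cobordisms from the unknot in $S^3$ to $K \subset S^3$, and apply functoriality of the knot Floer cobordism maps to the diffeomorphism
\[
\tau_W \colon (W, \Sigma) \longrightarrow (W, \tau_W(\Sigma))
\]
which restricts to $\tau$ on $\partial W = S^3$. This yields the naturality relation
\[
F_{W, \tau_W(\Sigma)}(1) \simeq \tau_K \cdot F_{W, \Sigma}(1)
\]
in $\CFK^-(K)$, with $\tau_K$ the action from Theorem~\ref{thm:1.8}. Working over $\F = \Z/2$, the difference of the two cobordism classes becomes
\[
F_{W, \Sigma}(1) + F_{W, \tau_W(\Sigma)}(1) \simeq (1 + \tau_K)\,F_{W, \Sigma}(1).
\]

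Second, by the definition of the Juh\'asz-Zemke invariant in \cite[Section 4.5]{JZstabilization}, $V_0(\Sigma, \tau_W(\Sigma))$ is precisely the smallest $k \geq 0$ such that $U^k$ times this difference is null-homologous in $H_*(\CFK^-(K))$. Since $F_{W, \Sigma}(1)$ is a grading-zero cycle representing a generator of the infinity tower, and $\Vtl(K)$ is by construction the minimum such $U$-depth taken over tower representatives of $(1+\tau_K)$-image, we immediately obtain $\Vtl(K) \leq V_0(\Sigma, \tau_W(\Sigma))$.

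Third, the analogous bound for $\Vitl(K)$ follows by transferring the same argument to the composite action $\iota_K \circ \tau_K$. Applying $\iota_K$ to the naturality relation of the first step and invoking the commutation identity $\tau_K \circ \iota_K \simeq \varsigma_K \circ \iota_K \circ \tau_K$ from Theorem~\ref{thm:1.8}(3) produces, up to a Sarkar correction that is null-homotopic on the $U$-tower, a cycle of the form $(1 + \iota_K \tau_K)\,y$ for a tower generator $y$ built from $F_{W, \Sigma}(1)$ and having the same $U$-depth as the original difference. This gives $\Vitl(K) \leq V_0(\Sigma, \tau_W(\Sigma))$, and combining the two inequalities completes the proof. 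The main obstacle is Step three: one must verify that the homotopy from Theorem~\ref{thm:1.8}(3) translates cleanly into a chain-level identity that preserves $U$-depth, without incurring a grading loss when passing between $\tau_K$ and $\iota_K \tau_K$. I expect this to reduce to a direct chain-level manipulation, exploiting the fact that $\varsigma_K$ acts trivially on the $U$-tower of $\HFK^\infty(K)$.
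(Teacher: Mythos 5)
Your Steps 1 and 2 are essentially the paper's proof: Lemma~\ref{lem:tautological} gives $\tau_K[F_{W,\Sigma}(1)] = [F_{W,\tau_W(\Sigma)}(1)]$, and then $V_0(\Sigma,\tau_W(\Sigma)) = n$ forces $U^n[F_{W,\Sigma}(1)]$ to be a nontorsion, $\tau_K$-invariant class in $H_*(\CFK(K)_0)$ sitting in grading $-2n$, which bounds $\dl_\tau(\CFK(K)_0) \geq -2n$ and hence $\Vtl(K) \leq n$. (Small slips: $V_0$ is measured in $H_*(\CFK(K)_0)$, not $H_*(\CFK^-(K))$, and your description of $\Vtl$ as ``the minimum such $U$-depth over tower representatives of the $(1+\tau_K)$-image'' is loose, but the underlying argument is correct.)

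Step 3 has a genuine gap, and you in fact flag it yourself. Trying to derive the $\Vitl$ bound by applying $\iota_K$ to the naturality relation and then invoking the commutation identity $\tau_K\circ\iota_K \simeq \varsigma_K\circ\iota_K\circ\tau_K$ from Theorem~\ref{thm:1.8}(3) cannot succeed on its own: that identity is a statement about the two endomorphisms of $\CFK(K)$, and it tells you nothing about how $\iota_K$ acts on the specific class $[F_{W,\Sigma}(1)]$. After applying $\iota_K$ to your relation you are left with $\iota_K\tau_K[F_{W,\Sigma}(1)] = \iota_K[F_{W,\tau_W(\Sigma)}(1)]$, and there is no way to convert the right-hand side into $[F_{W,\tau_W(\Sigma)}(1)]$ (or the left-hand side into $\iota_K\tau_K$ applied to a known tower element) without a separate input. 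The missing input is the $\iota_K$-equivariance of the knot Floer cobordism map itself: $F_{W,\Sigma}\circ\iota_{U} \simeq \iota_K\circ F_{W,\Sigma}$, established by Hendricks--Manolescu \cite[Section 4.5]{HM} and Zemke \cite[Theorem 1.5]{Zemkeconnected} (this is what the paper's proof of Theorem~\ref{thm:3.3A} also cites). Since $\iota$ on $\CFK(U)$ is trivial in homology, this gives $\iota_K[F_{W,\Sigma}(1)] = [F_{W,\Sigma}(1)]$ outright, and then $U^n[F_{W,\Sigma}(1)]$ is simultaneously $\iota_K$- and $\tau_K$-invariant, hence $\iota_K\circ\tau_K$-invariant, giving $\Vitl(K) \leq n$ by the identical mechanism as in Step 2. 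No Sarkar correction, no commutation relation, and no $U$-depth bookkeeping is needed; the commutation identity and the triviality of $\varsigma_K$ on $A_0$ are red herrings here.
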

\noindent
In \cite{JZstabilization}, $V_0(\Sigma, \Sigma')$ is defined for surfaces in $B^4$, but the extension to general integer homology balls is straightforward. The authors expect further connections with the results of \cite{JZstabilization}, which we intend to investigate in future work.

Let $W$ be a homology ball with $\partial W = S^3$, and let $\Sigma, \Sigma' \subseteq W$ be two slice surfaces for $K$. Recall that the \textit{stabilization distance} $\must(\Sigma, \Sigma')$ is defined to be the minimum of
\[
\max\{g(\Sigma_1), \ldots, g(\Sigma_n)\}
\]
over sequences of slice surfaces $\Sigma_i \subseteq W$ from $\Sigma$ to $\Sigma'$ such that consecutive surfaces are related by either a stabilization/destabilization or an isotopy rel $K$. (We take the 4-manifold $W$ as being implicit in the setup and suppress it from the notation.) In \cite[Theorem 1.1]{JZstabilization}, Juh\'asz and Zemke show that if $\Sigma, \Sigma' \subseteq W$ are two slice disks for the same knot, then
\[
V_0(\Sigma, \Sigma') \leq \left \lceil \dfrac{\must(\Sigma, \Sigma')}{2} \right \rceil.
\]

It follows from this that $\Vsu$ and $\Vsl$ can be used to construct pairs of disks with large stabilization distance. Applying Theorem~\ref{thm:relativeV0} and \cite[Theorem 1.1]{JZstabilization} to the examples $(K_n, \tau_n)$ of Section~\ref{sec:1.2}, we immediately obtain:


\begin{theorem}\label{thm:1.6}
Let $n$ be odd. Let $W$ be any (smooth) homology ball with boundary $S^3$, and let $\tau_W$ be any extension of $\tau_n$ over $W$. Suppose $K_n$ is slice in $W$. Then for any slice disk $\Sigma$,
\[
2n - 1 \leq \must(\Sigma, \tau_W(\Sigma)).
\]
\end{theorem}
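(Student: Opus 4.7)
The plan is to chain together three already-established ingredients: the computation of $\Vtl(K_n)$ from Theorem~\ref{thm:1.4}, the comparison with the Juh\'asz--Zemke secondary invariant from Theorem~\ref{thm:relativeV0}, and the Juh\'asz--Zemke stabilization inequality $V_0(\Sigma,\Sigma') \leq \lceil \must(\Sigma,\Sigma')/2\rceil$ from \cite[Theorem 1.1]{JZstabilization}. Each of these is quoted above, so the argument is essentially a one-line chase through the definitions.

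Concretely, first I would fix an odd $n$, a homology ball $W$ with $\partial W = S^3$, an extension $\tau_W$ of $\tau_n$ over $W$, and a slice disk $\Sigma \subseteq W$ for $K_n$. Then $\tau_W(\Sigma)$ is another slice disk for $K_n$ in $W$, so $\must(\Sigma,\tau_W(\Sigma))$ is defined. Applying Theorem~\ref{thm:relativeV0} to $K_n$ yields
\[
\Vtl(K_n) \;\leq\; \max\{\Vtl(K_n),\Vitl(K_n)\} \;\leq\; V_0(\Sigma,\tau_W(\Sigma)),
\]
and applying Theorem~\ref{thm:1.4} gives $\Vtl(K_n) \geq n$. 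Juh\'asz--Zemke's bound then says
\[
V_0(\Sigma,\tau_W(\Sigma)) \;\leq\; \left\lceil \frac{\must(\Sigma,\tau_W(\Sigma))}{2}\right\rceil.
\]
Combining, $\lceil \must(\Sigma,\tau_W(\Sigma))/2\rceil \geq n$, and since $\must$ is a non-negative integer this forces $\must(\Sigma,\tau_W(\Sigma)) \geq 2n-1$, as claimed.

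There is no real obstacle here: every nontrivial input (especially the lower bound $\Vtl(K_n)\geq n$, which rests on the $\iota_K$-complex computations of \cite{HHSZ2} used in Section~\ref{sec:6}, and the comparison map of Theorem~\ref{thm:relativeV0}) has been packaged earlier in the paper. The only small thing to be careful about is to verify that the Juh\'asz--Zemke stabilization inequality, as stated in \cite{JZstabilization} for slice disks in $B^4$, extends to slice disks in an arbitrary integer homology ball $W$; this is the same mild generalization already noted immediately after Theorem~\ref{thm:relativeV0}, and the proof in \cite{JZstabilization} goes through verbatim once one uses the cobordism maps on $\smash{\HFKhat}$ associated to surfaces in a homology ball rather than in $B^4$. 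With this observation in place, the theorem follows.
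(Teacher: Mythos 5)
Your proposal is correct and follows exactly the paper's argument: the theorem is obtained by chaining $n \leq \Vtl(K_n)$ (Theorem~\ref{thm:1.4}) with $\max\{\Vtl,\Vitl\} \leq V_0(\Sigma,\tau_W(\Sigma))$ (Theorem~\ref{thm:relativeV0}) and the Juh\'asz--Zemke bound $V_0 \leq \lceil \must/2 \rceil$, then solving $\lceil \must/2\rceil \geq n$ to get $\must \geq 2n-1$. Your remark about extending the $B^4$ statement to homology balls matches the paper's own comment following Theorem~\ref{thm:relativeV0}.
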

\noindent
Since $K_n$ is slice in $B^4$, this shows that for any integer $m$, there is some knot with a pair of slice disks that require at least $m$ stabilizations to become isotopic. This provides an alternate proof of a result of Miller-Powell~\cite[Theorem B]{MP}. In fact, Theorem~\ref{thm:1.6} is slightly stronger, as the stabilization distance between two surfaces can be strictly less than the number of stabilizations needed to make them isotopic. Moreover, the examples of Theorem~\ref{thm:1.6} are inherent to the knots $K_n$, rather than the actual disks: we may start with \textit{any} slice disk for $K_n$ (in $B^4$ or otherwise) and compute its stabilization distance to its reflection (again, under \textit{any} extension of $\tau_n$). 

\begin{remark}
During the completion of this project, Ian Zemke informed us of another family of examples, now independently included in \cite[Section 10.3]{JZstabilization}. Our examples use a similar family of knots as in \cite[Section 10.3]{JZstabilization}, but the slice disks in question are rather different. (In particular, our approach de-emphasizes the construction of the actual disks, and instead requires only that the pair of disks are related by $\tau_W$.)
\end{remark}

\begin{remark}\label{rem:currentdevelopments}
Recently, several related results have emerged which have a strong bearing on the work presented here. Although these appeared some time after the initial posting of this paper, we describe them briefly to provide some context:
\begin{enumerate}
\item Di Prisa \cite{DiPrisa} has shown that the equivariant concordance group is indeed non-abelian. The authors do not believe that knot Floer homology detects these examples; it is still unclear whether $\K_{\tau, \iota}$ is abelian.
\item Building on the Floer-theoretic formalism of the present work (in particular, Theorem~\ref{thm:swapping}), the authors of this paper (in joint work with Kang and Park) have recently shown that the $(2,1)$-cable of the figure-eight is not slice \cite{DKMPS}. This was previously an open question, and as such may provide some motivation for the extensive framework we establish here.
\item Miller and Powell \cite{MPinvertible} have recently provided a second (more topological) proof of \cite[Question 1.1]{BI} by utilizing Blanchfield forms. Curiously, the two approaches do not overlap: the examples presented here are not accessible via the methods of \cite{MPinvertible}; conversely, Floer homology does not give growing genus bounds for the examples of \cite{MPinvertible}. 
\end{enumerate}
\end{remark}

\subsection*{Acknowledgements}
The authors would like to thank Kyle Hayden for bringing the example of Theorem~\ref{thm:1.7} to their attention, and would also like to thank Matthew Hedden and Ian Zemke for helpful conversations. The second author would like to thank the Max Planck Institute of Mathematics for their hospitality and support.

\subsection*{Organization}
In Section~\ref{sec:2}, we give a brief introduction to equivariant concordance and introduce the topological objects that we study in this paper. We then establish the algebraic formalism of local equivalence and define the local equivalence group $\K_{\tau, \iota}$. In Section~\ref{sec:3}, we construct the Floer-theoretic action associated to a strong inversion and prove Theorem~\ref{thm:1.8}. We then define $\Vsu$ and $\Vsl$ and prove Theorem~\ref{thm:1.1}. In Section~\ref{sec:4}, we establish several computational tools involving the action of $\tau$, including a connected sum formula. This leads to the proof of Theorem~\ref{thm:1.9}. We establish the equivariant slice genus bound of Theorem~\ref{thm:1.2} in Section~\ref{sec:5}. Then, in Section~\ref{sec:6}, we carry out the calculation of Theorem~\ref{thm:1.4} regarding the examples $(K_n, \tau_n)$. Finally, in Section~\ref{sec:7}, we explicitly discuss the relation between our invariants and the work of Dai-Hedden-Mallick \cite{DHM} and Juh\'asz-Zemke \cite{JZstabilization}; we prove Theorems~\ref{thm:1.7}, \ref{thm:knotfloermaps}, \ref{thm:1.10}, and \ref{thm:relativeV0}. Section~\ref{sec:8} gives an overview of several analogous results for $2$-periodic knots.


\section{Background and algebraic formalism}\label{sec:2}
 
In this section, we give a brief review of equivariant knots. We then establish the algebraic formalism of local equivalence and define and discuss the group $\K_{\tau, \iota}$. Throughout, we assume a general familiarity with the knot Floer and involutive knot Floer packages; see \cite{HM} and \cite{Zemkeconnected}.
 
\subsection{Equivariant knots} \label{sec:2.1}
Let $K$ be a knot in $S^3$ and let $\tau$ be an orientation-preserving involution on $S^3$ that sends $K$ to itself and has nonempty fixed-point set. By the resolution of the Smith Conjecture, we may assume that the fixed-point set of $\tau$ is an unknot and that $\tau$ is rotation about this axis \cite{Waldhausen}, \cite{MorganBass}. If $\tau$ preserves orientation on $K$, then we say that $(K, \tau)$ is \textit{$2$-periodic} (or often just \textit{periodic}) and refer to $\tau$ as a \textit{periodic involution}. If $\tau$ reverses orientation on $K$, then we say that $(K, \tau)$ is \textit{strongly invertible} and refer to $\tau$ as a \textit{strong inversion}. These correspond to the situations where $\tau$ has zero or two fixed points on $K$, respectively. In this paper we focus on strongly invertible knots; the periodic case is discussed in Section~\ref{sec:8}. Although an \textit{equivariant knot} may refer to either a strongly invertible or a periodic knot $(K, \tau)$, we will often use this terminology with the strongly invertible case in mind.

We say that $(K_1, \tau_1)$ and $(K_2, \tau_2)$ are \textit{equivariantly diffeomorphic} if there is an orientation-preserving diffeomorphism $f \colon S^3 \rightarrow S^3$ which sends $K_1$ to $K_2$ and has $\tau_2 \circ f = f \circ \tau_1$.


\begin{definition}\label{def:eqgenus}
Let $(K, \tau)$ be an equivariant knot. A slice surface $\Sigma \subseteq B^4$ for $K$ is \textit{equivariant} if there exists an involution $\tau_{B^4} \colon B^4 \rightarrow B^4$ which extends $\tau$ and has $\tau_{B^4}(\Sigma) = \Sigma$. The \textit{equivariant slice genus} of $(K, \tau)$ is defined to be the minimum genus over all equivariant slice surfaces for $(K, \tau)$ in $B^4$. We denote this quantity by $\eg(K)$, suppressing the involution $\tau$.
\end{definition}

Equivariant sliceness has been studied by several authors; see for example \cite{Sakuma}, \cite{ChaKo}, and \cite{ND}. Recently, Boyle-Issa~\cite{BI} studied the equivariant slice genus and were able to present several methods for bounding the equivariant slice genus from below. They moreover construct a family of periodic knots for which $\eg(K) - g_4(K)$ becomes arbitrarily large. Prior to the current article, there were no known examples of strongly invertible knots with $\eg(K) - g_4(K)$ provably greater than one. 

There is also an obvious notion of equivariant concordance, given by:

\begin{definition}\label{def:eqconc}
Let $(K_1, \tau_1)$ and $(K_2, \tau_2)$ be two equivariant knots. We say that a concordance $\Sigma \subseteq S^3 \times I$ from $K_1$ to $K_2$ is \textit{equivariant} if there exists an involution $\tau_{S^3 \times I} \colon S^3 \times I \rightarrow S^3 \times I$ which extends $\tau_1$ and $\tau_2$ and has $\tau_{S^3 \times I}(\Sigma) = \Sigma$.
\end{definition}

In the strongly invertible setting, it turns out that it is useful to have a refinement of Definition~\ref{def:eqgenus}, which we now explain. If $(K, \tau)$ is a strongly invertible knot, note that $K$ separates the fixed-point axis of $\tau$ into two halves. 

\begin{definition}\label{def:direction}
A \textit{direction} on $(K, \tau)$ is a choice of half-axis, together with an orientation on this half-axis. 
\end{definition}

\begin{definition}\label{def:eqconnectedsum}
Given two directed strongly invertible knots $(K_1, \tau_1)$ and $(K_2, \tau_2)$, we may form their \textit{equivariant connected sum}, as defined in \cite{Sakuma}. This is another directed strongly invertible knot, constructed as follows.  Place $K_1$ and $K_2$ along the same oriented axis, such that the oriented half-axis for $K_1$ occurs before the oriented half-axis for $K_2$. Attach an equivariant band with one foot at the head of the half-axis for $K_1$ and the other foot at the tail of the half-axis for $K_2$, as in Figure~\ref{fig:21}. Define the oriented half-axis for $K_1 \# K_2$ to run from the tail of the half-axis for $K_1$ to the head of the half-axis for $K_2$.
\end{definition}

\begin{figure}[h!]
\includegraphics[scale = 1]{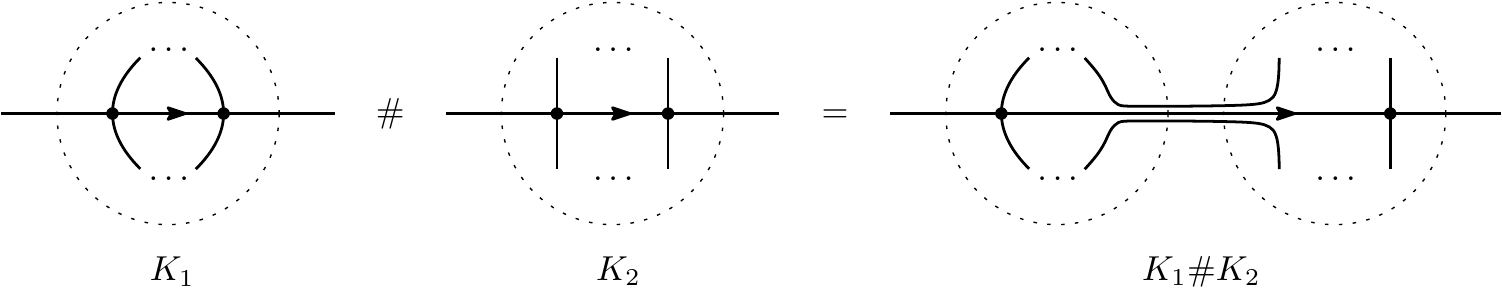}
\caption{Forming the equivariant connected sum. The connected sum band should be thought of as running along the axis of symmetry, and may have other strands of $K_1$ or $K_2$ running over/under it in the knot projection. Alternatively, the ends of the half-axes can be isotoped so that they are the leftmost and rightmost points of each knot.}\label{fig:21}
\end{figure}

We stress that a choice of direction is necessary to define the equivariant connected sum. Moreover, the equivariant connected sum is not a commutative operation: the strongly invertible knots $(K_1 \# K_2, \tau_1 \# \tau_2)$ and $(K_2 \# K_1, \tau_2 \# \tau_1)$ are not usually equivariantly diffeomorphic (even forgetting about the data of the direction). For further discussion, see \cite{Sakuma} or \cite[Section 2]{BI}.

In order to construct an equivariant concordance group, we consider the set of \textit{directed} strongly invertible knots. This necessitates a refinement of Definition~\ref{def:eqconc} which takes into account the extra data of the direction. 

\begin{definition}\cite[Definiton 2.4]{BI}\label{def:direqconc}
Let $(K_1, \tau_1)$ and $(K_2, \tau_2)$ be two directed strongly invertible knots. We say that an equivariant concordance $(\Sigma, \tau_{S^3 \times I})$ between $(K_1, \tau_1)$ and $(K_2, \tau_2)$ is \textit{equivariant in a directed sense} if the orientations of the half-axes induce the same orientation on the fixed-point annulus $F$ of $\tau_{S^3 \times I}$ and the half-axes are contained in the same component of $F - \Sigma$.
\end{definition}

\begin{definition}\cite{Sakuma}\label{def:siconcordancegroup}
The \textit{equivariant concordance group} is formed by quotienting out the set of directed strongly invertible knots by (directed) equivariant concordance. The group operation is given by equivariant connected sum. The inverse of $(K, \tau)$ is constructed by mirroring $K$ and reversing orientation on the (mirrored) half-axis. We denote this group by $\eC$. 
\end{definition}

It is currently an open question whether $\eC$ is abelian. The equivariant concordance group was studied at length by Sakuma~\cite{Sakuma}, who constructed a homomorphism from $\eC$ to the additive group $\Z[t]$ in the form of the $\eta$-polynomial.

\begin{remark}\label{rem:notoriented}
As discussed in \cite{Sakuma} and \cite{BI}, when studying $\eC$, it is often the convention \textit{not} to fix an orientation on $K$, due to the fact that the action of $\tau$ reverses orientation on $K$. In order to follow this convention, we will similarly not require a fixed orientation. However, since most invariants derived from knot Floer homology implicitly \textit{do} require $K$ to be oriented, in each case we will be careful to check whether the choice of orientation is important.
\end{remark}

Finally, we note (as discussed in the introduction) that we bound a rather more general notion than the equivariant slice genus. For completeness, we formally record:

\begin{definition}\label{def:isotopyeqgenus}
Let $(K, \tau)$ be an equivariant knot. Let $W$ be a (smooth) homology ball with boundary $S^3$, and consider any (smooth) self-diffeomorphism $\tau_W$ on $W$ which extends $\tau$. Note that we do not require $\tau_W$ itself to be an involution. We say that a slice surface $\Sigma$ in $W$ with $\partial \Sigma = K$ is an \textit{isotopy-equivariant slice surface} (for the given data) if $\tau_W(\Sigma)$ is isotopic to $\Sigma$ rel $K$. Define the \textit{isotopy-equivariant slice genus} of $(K, \tau)$ by:
\[
\ieg(K) = \min_{\substack{\text{all possible choices of $W$ and $\tau_W$} \\ \text{all isotopy-equivariant slice surfaces $\Sigma$}}} \{g(\Sigma)\}.
\]
Here $\ieg(K)$ depends on $\tau$, but we suppress this from the notation.
\end{definition}

There is also an accompanying notion in the setting of concordance:

\begin{definition}\label{def:isotopyeqconc}
Let $(K_1, \tau_1)$ and $(K_2, \tau_2)$ be two equivariant knots. An \textit{isotopy-equivariant homology concordance} between $(K_1, \tau_1)$ and $(K_2, \tau_2)$ consists of a homology cobordism $W$ from $S^3$ to itself, a (smooth) self-diffemorphism $\tau_W \colon W \rightarrow W$ which extends $\tau_1$ and $\tau_2$, and a concordance $\Sigma \subseteq W$ between $K_1$ and $K_2$ such that $\tau_W(\Sigma)$ is isotopic to $\Sigma$ rel boundary.
\end{definition}




\subsection{Local equivalence and $\K_{\tau, \iota}$}\label{sec:2.2}
We now give an overview of the framework of local equivalence and define $\K_{\tau, \iota}$. We assume the reader has a general familiarity with the ideas of \cite{HM} and \cite{Zemkeconnected}. Let $C$ be a bigraded, free, finitely-generated chain complex over $\cR = \F[\cU, \cV]$ such that 
\begin{enumerate}
\item $\gr(\partial) = (-1, -1)$, $\gr(\cU) = (-2, 0)$, and $\gr(\cV) = (0, -2)$ 
\item $C \otimes \F[\cU, \cV, \cU^{-1}, \cV^{-1}]$ is homotopy equivalent to $\F[\cU, \cV, \cU^{-1}, \cV^{-1}]$
\end{enumerate}
We refer to $C$ as an \textit{abstract knot complex}, occasionally denoting the two components of the grading by $\grU$ and $\grV$. As explained in \cite[Section 3]{Zemkequasistab}, given an abstract knot complex, we can formally differentiate the matrix of $\partial$ with respect to $\cU$ and $\cV$ to obtain $\mathcal{R}$-equivariant maps $\Phi$ and $\Psi$.\footnote{Technically, $\Phi$ and $\Psi$ are only defined after fixing a basis for $C$. However, the homotopy equivalence classes of $\Phi$ and $\Psi$ are well-defined without a choice of basis; see \cite[Corollary 2.9]{Zemkeconnected}.} We define the \textit{Sarkar map} in this context to be $\varsigma_K = \id + \Phi \circ \Psi \simeq \id + \Psi \circ \Phi$. It is a standard fact that $\varsigma_K^2 \simeq \id$.

\begin{definition}\label{def:ticomplex}
A \textit{abstract $(\tau_K, \iota_K)$-complex} is a triple $(C, \tau_K, \iota_K)$ such that:
\begin{enumerate}
\item $C$ is an abstract knot complex
\item $\iota_K \colon C \rightarrow C$ is a skew-graded, $\mathcal{R}$-skew-equivariant chain map such that
\[
\iota_K^2 \simeq \varsigma_K
\]
\item $\tau_K$ is a skew-graded, $\mathcal{R}$-skew-equivariant chain map such that
\[
\tau_K^2 \simeq \id \quad \text{and} \quad \tau_K \circ \iota_K \simeq \varsigma_K \circ \iota_K \circ \tau_K.
\]
\end{enumerate}
\end{definition}
\noindent
Recall that a map $f \colon C \rightarrow C$ is skew-graded if $\gr(f(x)) = (\gr_V(x), \gr_U(x))$ and is $\mathcal{R}$-skew-equivariant if $f(\mathcal{U}^i \mathcal{V}^j x ) = \mathcal{V}^i \mathcal{U}^j x$.

Definition~\ref{def:ticomplex} simply says that the pair $(C, \iota_K)$ is an abstract $\iota_K$-complex in the sense of Zemke \cite[Definition 2.2]{Zemkeconnected}. The conditions on $\tau_K$ are from Theorem~\ref{thm:1.8}. We also note the following extremely important consequence of the commutation condition:

\begin{lemma}\label{lem:sarkarcommutes}
Let $(C, \tau_K, \iota_K)$ be an abstract $(\tau_K, \iota_K)$-complex. Then $\varsigma_K$ commutes with both $\tau_K$ and $\iota_K$ up to homotopy.
\end{lemma}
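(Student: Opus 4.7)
The plan is to dispatch the $\iota_K$ case first and then leverage it, together with the standard identity $\varsigma_K^2 \simeq \id$, to handle $\tau_K$.

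For the $\iota_K$ commutation, the argument is essentially a one-liner: using $\iota_K^2 \simeq \varsigma_K$ from part (2) of Definition~\ref{def:ticomplex}, I would substitute on either side to get
\[
\iota_K \circ \varsigma_K \;\simeq\; \iota_K \circ \iota_K^2 \;=\; \iota_K^2 \circ \iota_K \;\simeq\; \varsigma_K \circ \iota_K.
\]
So $\varsigma_K$ and $\iota_K$ commute up to homotopy purely from associativity.

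For $\tau_K$, the plan is to first distill the commutation condition (3) into a conjugation identity of the form $\iota_K \circ \tau_K \circ \iota_K \simeq \tau_K$. Precomposing the given relation $\tau_K \circ \iota_K \simeq \varsigma_K \circ \iota_K \circ \tau_K$ on the left by $\iota_K$, moving $\iota_K$ past $\varsigma_K$ using the previous paragraph, replacing $\iota_K^2$ by $\varsigma_K$, and then invoking the standard fact $\varsigma_K^2 \simeq \id$, one obtains
\[
\iota_K \circ \tau_K \circ \iota_K \;\simeq\; \iota_K \circ \varsigma_K \circ \iota_K \circ \tau_K \;\simeq\; \varsigma_K \circ \iota_K^2 \circ \tau_K \;\simeq\; \varsigma_K^2 \circ \tau_K \;\simeq\; \tau_K.
\]

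With that conjugation identity in hand, I would postcompose the original commutation relation by $\iota_K$ on the right:
\[
\tau_K \circ \iota_K^2 \;\simeq\; \varsigma_K \circ \iota_K \circ \tau_K \circ \iota_K \;\simeq\; \varsigma_K \circ \tau_K,
\]
using the conjugation identity in the second step. Since $\iota_K^2 \simeq \varsigma_K$, the left side is $\tau_K \circ \varsigma_K$, yielding $\tau_K \circ \varsigma_K \simeq \varsigma_K \circ \tau_K$ as desired.

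There is not really a hard step here; the only subtlety to flag is that $\varsigma_K$ is only well-defined up to homotopy (it depends on a choice of basis to write down $\Phi$, $\Psi$ explicitly), but since all of our equalities are up to chain homotopy this is not an issue. All manipulations above are simply repeated use of the three axioms of an abstract $(\tau_K,\iota_K)$-complex plus $\varsigma_K^2 \simeq \id$.
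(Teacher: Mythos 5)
Your proof is correct and is essentially the same argument the paper has in mind: the paper simply declares the claim "immediate from the commutation relation between $\tau_K$ and $\iota_K$, the fact that $\iota_K^2 \simeq \varsigma_K$, and the fact that $\varsigma_K^2 \simeq \id$," and your computation is precisely the unpacking of that one-liner (including getting the $\iota_K$ case for free from $\iota_K^2 \simeq \varsigma_K$). The paper also parenthetically notes the alternative, more conceptual route via $\varsigma_K = \id + \Phi\Psi$ and the fact that $\Phi,\Psi$ (skew-)commute with every chain map, but your elementary manipulation of the three axioms is exactly what the stated proof appeals to.
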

\begin{proof}
The proof is immediate from the commutation relation between $\tau_K$ and $\iota_K$, the fact that $\iota_K^2 \simeq \varsigma_K$, and the fact that $\varsigma_K^2 \simeq \id$. (In fact, it is possible to show that $\varsigma_K$ commutes with any chain map from $C$ to itself, using the equality $\varsigma_K = \id + \Phi \circ \Psi$.)
\end{proof}

There is a natural notion of homotopy equivalence:

\begin{definition}\label{def:homotopyequivalence}
Two $(\tau_K, \iota_K)$-complexes $(C_1, \tau_{K_1}, \iota_{K_1})$ and $(C_2, \tau_{K_2}, \iota_{K_2})$ are \textit{homotopy equivalent} if there exist graded, $\cR$-equivariant homotopy inverses $f$ and $g$ between $C_1$ and $C_2$ such that
\[
f \circ \tau_{K_1} \simeq \tau_{K_2} \circ f \text{ and } g \circ \tau_{K_2} \simeq \tau_{K_1} \circ g
\]
and 
\[
f \circ \iota_{K_1} \simeq \iota_{K_2} \circ f \text{ and } g \circ \iota_{K_2} \simeq \iota_{K_1} \circ g.
\]
In this case we write $(C_1, \tau_{K_1}, \iota_{K_1}) \simeq (C_2, \tau_{K_2}, \iota_{K_2})$.\footnote{If $f$ and $g$ are graded, $\cR$-equivariant chain maps, we write $f \simeq g$ to mean that $f$ and $g$ are homotopic via an $\cR$-equivariant homotopy. This means that $f + g = \partial H + H \partial$ for some $\cR$-equivariant $H$. If $f$ and $g$ are skew-graded, skew-$\cR$-equivariant chain maps, we again write $f \simeq g$ to mean that $f$ and $g$ are homotopic via a skew-$\cR$-equivariant homotopy. This means that $f + g = \partial H + H \partial$ for some $\cR$-skew-equivariant $H$. Our notation differs slightly from \cite{Zemkeconnected}, where the convention is to write $\mathrel{\rotatebox[origin=c]{-180}{$\simeq$}}$ in the latter case.}
\end{definition}

We also have the analogue of local equivalence from \cite[Definition 2.4]{Zemkeconnected}:

\begin{definition}\label{def:localequivalence}
Two $(\tau_K, \iota_K)$-complexes $(C_1, \tau_{K_1}, \iota_{K_1})$ and $(C_2, \tau_{K_2}, \iota_{K_2})$ are \textit{locally equivalent} if there exist graded, $\cR$-equivariant chain maps $f \colon C_1 \rightarrow C_2$ and $g \colon C_2 \rightarrow C_1$ such that
\[
f \circ \tau_{K_1} \simeq \tau_{K_2} \circ f \text{ and } g \circ \tau_{K_2} \simeq \tau_{K_1} \circ g
\]
and 
\[
f \circ \iota_{K_1} \simeq \iota_{K_2} \circ f \text{ and } g \circ \iota_{K_2} \simeq \iota_{K_1} \circ g.
\]
and $f$ and $g$ induce homotopy equivalences $C_1 \otimes \F[\cU, \cV, \cU^{-1}, \cV^{-1}] \simeq C_2 \otimes \F[\cU, \cV, \cU^{-1}, \cV^{-1}]$. In this case we write $(C_1, \tau_{K_1}, \iota_{K_1}) \sim (C_2, \tau_{K_2}, \iota_{K_2})$. We refer to the maps $f$ and $g$ as \textit{local maps}. 
\end{definition}

Using the notion of local equivalence, we now define:

\begin{definition}\label{def:tigroup}
We define the \textit{$(\tau_K, \iota_K)$-local equivalence group} to be
\[
\K_{\tau, \iota} = \{\text{abstract }(\tau_K, \iota_K)\text{-complexes}\}\ / \ \text{local equivalence}.
\]
The group operation is defined as follows. Given $(C_1, \tau_{K_1}, \iota_{K_1})$ and $(C_2, \tau_{K_2}, \iota_{K_2})$, define automorphisms $\tau_\otimes$ and $\iota_\otimes$ on $C_1 \otimes C_2$ as follows. Let
\[
\tau_\otimes = \tau_{K_1} \otimes \tau_{K_2}
\]
and
\[
\iota_{\otimes} = (\id \otimes \id + \Phi \otimes \Psi) \circ (\iota_{K_1} \otimes \iota_{K_2}).
\]
We define the \textit{product} of two abstract $(\tau_K, \iota_K)$-complexes $(C_1, \tau_{K_1}, \iota_{K_1})$ and $(C_2, \tau_{K_2}, \iota_{K_2})$ to be $(C_1 \otimes C_2, \tau_\otimes, \iota_\otimes)$. This operation gives another abstract $(\tau_K, \iota_K)$-complex and is well-defined with respect to local equivalence. The identity is given by the trivial complex $(\cR, \tau_e, \iota_e)$, where $\tau_e = \iota_e$ is the map on $\cR$ which interchanges $\cU$ and $\cV$. Inverses are given by dualizing with respect to $\cR$; that is, $(C, \tau_K, \iota_K)^\vee = (C^\vee, \tau_K^\vee, \iota_K^\vee)$. See Lemmas~\ref{lem:gpA} and \ref{lem:gpB}.
\end{definition}

It will also be convenient for us to consider the Sarkar map $\varsigma_\otimes$ on the product of two complexes. Note that 
\begin{align*}
\varsigma_\otimes &\simeq \id_\otimes + \Phi_\otimes \Psi_\otimes \\
&\simeq \id \otimes \id + (\id \otimes \Phi + \Phi \otimes \id)(\id \otimes \Psi + \Psi \otimes \id) \\
&\simeq \id \otimes \id + \id \otimes \Phi \Psi + \Phi \Psi \otimes \id + \Phi \otimes \Psi + \Psi \otimes \Phi.
\end{align*}


\begin{lemma}\label{lem:gpA}
The tensor product induces an associative binary operation on $\K_{\tau, \iota}$. 
\end{lemma}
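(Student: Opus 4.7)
The statement packages three separate claims: that $(C_1 \otimes C_2, \tau_\otimes, \iota_\otimes)$ is itself an abstract $(\tau_K, \iota_K)$-complex, that the construction descends to local equivalence classes, and that it is associative up to local equivalence. The plan is to handle each in turn, reducing to Zemke's tensor product formalism for $\iota_K$-complexes from \cite{Zemkeconnected} wherever possible; the new content lies entirely in the behavior of $\tau_\otimes$ and its interaction with $\iota_\otimes$.

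To verify that $(C_1 \otimes C_2, \tau_\otimes, \iota_\otimes)$ satisfies the axioms of Definition~\ref{def:ticomplex}, the grading and skew-$\cR$-equivariance of $\tau_\otimes$ and $\iota_\otimes$ follow formally from those of $\tau_{K_i}$ and $\iota_{K_i}$, together with the fact that $\Phi$ and $\Psi$ carry bidegrees $(1,-1)$ and $(-1,1)$. The identity $\tau_\otimes^2 \simeq \id$ is immediate, and $\iota_\otimes^2 \simeq \varsigma_\otimes$ is Zemke's key calculation. The real content is the commutation $\tau_\otimes \iota_\otimes \simeq \varsigma_\otimes \iota_\otimes \tau_\otimes$. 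I would first establish the swap relations $\tau_{K_i}\Phi_i \simeq \Psi_i\tau_{K_i}$ and $\tau_{K_i}\Psi_i \simeq \Phi_i \tau_{K_i}$, reflecting that $\tau_{K_i}$ exchanges $\cU$ and $\cV$ while $\Phi, \Psi$ are obtained by formally differentiating $\partial$ with respect to these variables. Using these one can slide $(\tau_{K_1} \otimes \tau_{K_2})$ past $(\id + \Phi \otimes \Psi)$, turning it into $(\id + \Psi \otimes \Phi)$ acting from the left. Then applying the single-complex relation $\tau_{K_i}\iota_{K_i} \simeq (\id + \Phi_i\Psi_i)\iota_{K_i}\tau_{K_i}$ on each factor, and expanding $\varsigma_\otimes \iota_\otimes \tau_\otimes$ using the explicit formula for $\varsigma_\otimes$ recorded just below Definition~\ref{def:tigroup}, the two sides should reduce to the same sum of $\Phi$-$\Psi$ monomials.

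For well-definedness on local equivalence classes, suppose $(C_i, \tau_{K_i}, \iota_{K_i}) \sim (C_i', \tau_{K_i}', \iota_{K_i}')$ with local maps $f_i$. I would verify that $f_1 \otimes f_2$ intertwines the tensor-product structures on both sides. Compatibility with $\tau_\otimes$ is termwise from the $\tau$-equivariance of each $f_i$; compatibility with $\iota_\otimes$ follows from Zemke, using the fact that local maps commute with $\Phi$ and $\Psi$ up to $\cR$-equivariant homotopy. That $f_1 \otimes f_2$ remains a homotopy equivalence after inverting $\cU$ and $\cV$ is automatic. For associativity, both $(C_1 \otimes C_2) \otimes C_3$ and $C_1 \otimes (C_2 \otimes C_3)$ share the underlying complex $C_1 \otimes C_2 \otimes C_3$; the $\tau$-structures agree strictly as $\tau_{K_1} \otimes \tau_{K_2} \otimes \tau_{K_3}$; and Zemke's result implies that the two $\iota$-structures agree up to $\cR$-equivariant homotopy, hence are identified in $\K_{\tau, \iota}$.

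The main obstacle will be the commutation calculation above. While the manipulation is purely formal, one must track several $\Phi$-$\Psi$ monomials on each side and confirm their equality up to $\cR$-equivariant homotopy; the key simplifications come from $\Phi_i \Psi_i \simeq \Psi_i \Phi_i$ and the characteristic-$2$ collapse of Koszul signs. If the direct match proves unwieldy, an alternative is to observe that both composites agree modulo $(\cU, \cV)$ and both commute with $\varsigma_\otimes$, which via a standard rigidity argument for skew-equivariant chain maps forces the desired homotopy and short-circuits the explicit bookkeeping.
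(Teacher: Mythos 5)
Your proposal follows the paper's own proof essentially step for step: reduce everything but the commutation relation to Zemke's tensor-product formalism, then verify $\tau_\otimes\iota_\otimes \simeq \varsigma_\otimes\iota_\otimes\tau_\otimes$ by using the fact that skew-equivariant maps swap $\Phi$ and $\Psi$ (Zemke's Lemma~2.8) to slide $\tau_{K_1}\otimes\tau_{K_2}$ past $\id\otimes\id + \Phi\otimes\Psi$, apply the single-factor commutation, and expand $\varsigma_\otimes$ into $\Phi$-$\Psi$ monomials using $\Phi^2\simeq\Psi^2\simeq 0$ and $\Phi\Psi\simeq\Psi\Phi$; associativity and descent to local equivalence classes are handled the same way. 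One caveat: the ``standard rigidity argument'' you float as a fallback (agreement mod $(\cU,\cV)$ plus commutation with $\varsigma_\otimes$ forcing a homotopy) is not a standard fact in this setting, so you should not lean on it — but since your primary computation is the correct one, this doesn't affect the validity of the plan.
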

\begin{proof}
We will be brief, since the majority of the claim is immediate from \cite[Section 2.3]{Zemkeconnected}. We first verify that the product complex is an abstract $(\tau_K, \iota_K)$-complex. The only condition which is not either obvious or contained in \cite{Zemkeconnected} is the commutation relation
\[
\tau_\otimes \circ \iota_\otimes \simeq \varsigma_\otimes \circ \iota_\otimes \circ \tau_\otimes.
\]
To see this, let us expand the left-hand side. Supressing the subscripts on $\tau$ and $\iota$, we obtain
\begin{align*}
&\ \ \ \ (\tau \otimes \tau)(\id \otimes \id + \Phi \otimes \Psi)(\iota \otimes \iota) \\
&\simeq (\id \otimes \id + \Psi \otimes \Phi)(\tau \otimes \tau)(\iota \otimes \iota) \\
&\simeq (\id \otimes \id + \Psi \otimes \Phi)(\varsigma \iota \tau \otimes \varsigma \iota \tau) \\
&\simeq (\id \otimes \id + \Psi \otimes \Phi) (\id \otimes \id + \id \otimes \Phi \Psi + \Phi \Psi \otimes \id + \Phi \Psi \otimes \Phi \Psi)(\iota \otimes \iota)(\tau \otimes \tau) \\
&\simeq (\id \otimes \id + \id \otimes \Phi \Psi + \Phi \Psi \otimes \id + \Phi \Psi \otimes \Phi \Psi + \Psi \otimes \Phi)(\iota \otimes \iota)(\tau \otimes \tau).
\end{align*}
Here, in the second line we have used \cite[Lemma 2.8]{Zemkeconnected}, which states that a skew-equivariant map intertwines $\Phi$ and $\Psi$ up to homotopy. In the third line, we have used the commutation property of $\tau$ and $\iota$ in each factor. In the fourth line, we have used the fact that $\varsigma = \id + \Phi \Psi$. Finally, in the last line we have used the fact that $\Phi$ and $\Psi$ homotopy commute and that $\Phi^2 \simeq \Psi^2 \simeq 0$; see \cite[Lemma 2.10]{Zemkeconnected} and \cite[Lemma 2.11]{Zemkeconnected}.

On the other hand, the right-hand side is given by
\begin{align*}
&\ \ \ \ (\id_\otimes + \Phi_\otimes \Psi_\otimes)(\id \otimes \id + \Phi \otimes \Psi)(\iota \otimes \iota)(\tau \otimes \tau) \\
&\simeq (\id \otimes \id + \id \otimes \Phi \Psi + \Phi \Psi \otimes \id + \Phi \otimes \Psi + \Psi \otimes \Phi)(\id \otimes \id + \Phi \otimes \Psi)(\iota \otimes \iota)(\tau \otimes \tau) \\
&\simeq (\id \otimes \id + \id \otimes \Phi \Psi + \Phi \Psi \otimes \id + \Psi \Phi \otimes \Phi \Psi + \Psi \otimes \Phi)(\iota \otimes \iota)(\tau \otimes \tau).
\end{align*}
Here, in the second line we have used the fact that $\Phi_\otimes = \id \otimes \Phi + \Phi \otimes \id$ (and similarly for $\Psi_\otimes$). In the fourth line, we again use the fact that $\Phi$ and $\Psi$ homotopy commute and that $\Phi^2 \simeq \Psi^2 \simeq 0$. The resulting expression is homotopic to the previous. 

Checking associativity is straightforward. Indeed, in \cite[Section 2.3]{Zemkeconnected} it is shown that the obvious identity map from $(C_1 \otimes C_2) \otimes C_3$ to $C_1 \otimes (C_2 \otimes C_3)$ intertwines the $\iota_K$-actions up to homotopy; this clearly intertwines the $\tau_K$-actions. Checking that the tensor product respects local equivalence is likewise immediate.
\end{proof}

\begin{lemma}\label{lem:gpB}
The tensor product operation above gives $\K_{\tau, \iota}$ the structure of a group.
\end{lemma}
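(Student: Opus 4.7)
The plan is to verify the three remaining group axioms: that the trivial complex $(\cR, \tau_e, \iota_e)$ is a valid abstract $(\tau_K, \iota_K)$-complex and acts as a two-sided identity, that the dual $(C^\vee, \tau_K^\vee, \iota_K^\vee)$ is again a valid abstract $(\tau_K, \iota_K)$-complex, and that $C \otimes C^\vee$ is locally equivalent to the trivial complex. Associativity was handled in Lemma~\ref{lem:gpA}, so these three points suffice.

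First I would check the identity. On $\cR = \F[\cU, \cV]$ with zero differential, the maps $\Phi$ and $\Psi$ vanish, so $\varsigma_e \simeq \id$, the squares $\tau_e^2 = \iota_e^2 = \id$ hold on the nose, and $\tau_e \circ \iota_e = \id = \varsigma_e \circ \iota_e \circ \tau_e$, verifying that $(\cR, \tau_e, \iota_e)$ is an abstract $(\tau_K, \iota_K)$-complex. For the unit property, the canonical $\cR$-linear isomorphism $C \otimes \cR \cong C$ is graded and intertwines $\tau_K \otimes \tau_e$ with $\tau_K$; since $\Psi$ vanishes on $\cR$, the map $\iota_\otimes = (\id \otimes \id + \Phi \otimes \Psi)(\iota_K \otimes \iota_e)$ simply reduces to $\iota_K$ under this identification. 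Hence $(C \otimes \cR, \tau_\otimes, \iota_\otimes) \simeq (C, \tau_K, \iota_K)$, and likewise on the other side.

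Next I would handle duals. Set $C^\vee = \mathrm{Hom}_\cR(C, \cR)$ with differential $\partial^\vee(\phi) = \phi \circ \partial$ and with $\cR$-action via the swap $\cU \leftrightarrow \cV$, so that $C^\vee$ is again an abstract knot complex. Define $\tau_K^\vee$ and $\iota_K^\vee$ as the transposes of $\tau_K$ and $\iota_K$ (composed with the swap to make them skew-$\cR$-equivariant), and observe that dualization is a contravariant functor on the homotopy category that sends skew-equivariant maps to skew-equivariant maps. The relations $(\tau_K^\vee)^2 \simeq \id$, $(\iota_K^\vee)^2 \simeq \varsigma_{K^\vee}$, and the commutation relation then follow by dualizing the corresponding relations for $(C, \tau_K, \iota_K)$, using that dualization commutes with $\Phi$ and $\Psi$ up to the obvious swap. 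This is the step I expect to be the main obstacle: tracking the skew structure and ensuring that the Sarkar map on $C^\vee$ is truly the dual of $\varsigma_K$, so that the commutation identity survives after taking transposes.

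Finally, I would construct the local maps between $C \otimes C^\vee$ and $\cR$. Take $f \colon C \otimes C^\vee \to \cR$ to be evaluation $x \otimes \phi \mapsto \phi(x)$ and $g \colon \cR \to C \otimes C^\vee$ to be coevaluation $1 \mapsto \sum_i e_i \otimes e_i^\vee$ for any homogeneous basis $\{e_i\}$. Both maps are $\cR$-equivariant chain maps of the correct grading, and after inverting $\cU$ and $\cV$ they realize the standard duality isomorphism, so they are local. It remains to verify that these maps intertwine the involutions up to homotopy. For $\tau$ this is formal from $\tau_\otimes = \tau_K \otimes \tau_K^\vee$ and the definition of the transpose. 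For $\iota$, the twist by $\Phi \otimes \Psi$ in $\iota_\otimes$ must be absorbed by a chain homotopy; since the evaluation and coevaluation send the products $\Phi \otimes \Psi$ and $\Psi \otimes \Phi$ to trace-type expressions that vanish on a free complex (or, failing that, are visibly null-homotopic after localization), this gives the desired homotopies. Combining these three steps shows that $[C^\vee]$ is a two-sided inverse to $[C]$ in $\K_{\tau, \iota}$, completing the proof.
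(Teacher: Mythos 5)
Your overall strategy matches the paper's: you verify that the trivial complex is a unit, that dualization produces valid $(\tau_K, \iota_K)$-complexes, and that the trace/cotrace (evaluation/coevaluation) maps furnish a local equivalence $C \otimes C^\vee \sim \cR$. The paper compresses the first two steps into a citation of \cite[Section 2.3]{Zemkeconnected} and focuses entirely on the new content, which is the $\tau$-intertwining of the trace and cotrace maps; that it establishes by a short computation using \cite[Equation~(14)]{Zemkeconnected}, closely paralleling your "formal from the definition of the transpose" step.

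The genuine gap is in your treatment of the $\iota$-intertwining for the trace/cotrace maps. You write that the evaluation and coevaluation send $\Phi \otimes \Psi$ and $\Psi \otimes \Phi$ to "trace-type expressions that vanish on a free complex (or, failing that, are visibly null-homotopic after localization)." The first clause is simply not true: the trace of $\Phi \circ \Psi$ is a nontrivial map in general (it is essentially the Sarkar map), and the cotrace of $\Phi \otimes \Psi$ likewise does not vanish. The hedge is also insufficient: Definition~\ref{def:localequivalence} requires the intertwining homotopies to hold on the \emph{non-localized} complexes, so "null-homotopic after localization" would not suffice even if it held. The correct statement --- that the cotrace and trace maps intertwine $\iota_e$ with $\iota_\otimes$ up to homotopy --- is a theorem of Zemke (\cite[Lemma~2.18]{Zemkeconnected}), whose proof exploits a naturality property of the cotrace with respect to $\Phi$ and $\Psi$ to absorb the twist $\id \otimes \id + \Phi \otimes \Psi$. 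The paper simply invokes this result rather than attempting to re-derive it, and you should do the same; as written, that step of your argument does not go through.
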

\begin{proof}
Again, the majority of the claim is immediate from \cite[Section 2.3]{Zemkeconnected}. The only nontrivial claim is to establish that inverses are given by dualizing, which follows the proof of \cite[Lemma 2.18]{Zemkeconnected}. Zemke shows that the cotrace and trace maps
\[
F \colon \mathcal{R} \rightarrow C \otimes C^\vee \quad \text{and} \quad G \colon C \otimes C^\vee \rightarrow \mathcal{R}
\]
have the requisite behavior with respect to localizing and intertwine the $\iota_K$-actions. We check that $F$ intertwines the actions of $\tau_e$ and $\tau_K \otimes \tau_K^\vee$. Since $\tau_e$ squares to the identity, it suffices to show
\[
(\tau_K \otimes \tau_K^\vee) \circ F \circ \tau_e \simeq F.
\]
But \cite[Equation (14)]{Zemkeconnected} implies
\[
(\tau_K \otimes \tau_K^\vee) \circ F \circ \tau_e \simeq (\tau_K^2 \otimes \id) \circ F \simeq (\id \otimes \id) \circ F = F,
\]
establishing the claim. The proof for $G$ is similar. Hence $(C, \tau_K, \iota_K) \otimes (C, \tau_K, \iota_K)^\vee$ is locally equivalent to the trivial $(\tau_K, \iota_K)$-complex. Checking triviality of the product in the opposite order is similar; use the obvious maps
\[
F^r \colon \mathcal{R} \rightarrow C^\vee \otimes C \quad \text{and} \quad G^r \colon C^\vee \otimes C \rightarrow \mathcal{R}.
\]
This completes the proof.
\end{proof}

Note that instead of considering triples $(C, \tau_K, \iota_K)$, we may forget $\iota_K$ or $\tau_K$ and consider pairs $(C, \tau_K)$ or $(C, \iota_K)$, respectively. The reader will have no trouble in defining appropriate notions of local equivalence for these and forming the analogous local equivalence groups. 

\begin{definition}
We denote the local equivalence group of $\tau_K$-complexes by $\K_\tau$; this consists of pairs $(C, \tau_K)$ such that $\tau_K$ is a skew-graded, $\cR$-equivariant chain map with $\tau_K^2 \simeq \id$. We denote the local equivalence group of $\iota_K$-complexes by $\K_\iota$; this consists of pairs $(C, \iota_K)$ such that $\iota_K$ is a skew-graded, $\cR$-equivariant chain map with $\iota_K^2 \simeq \varsigma$. The latter is just the usual local equivalence group of \cite[Proposition 2.6]{Zemkeconnected}. We obtain forgetful maps from $\K_{\tau, \iota}$ to $\K_\tau$ and $\K_\iota$ by discarding $\iota_K$ and $\tau_K$, respectively.
\end{definition}

\begin{remark}
It is possible to have triples $(C_1, \tau_{K_1}, \iota_{K_1})$ and $(C_2, \tau_{K_2}, \iota_{K_2})$ such that $(C_1, \tau_{K_1}) \sim (C_2, \tau_{K_2})$ and $(C_1, \iota_{K_1}) \sim (C_2, \iota_{K_2})$, but still $(C_1, \tau_{K_1}, \iota_{K_1}) \not\sim (C_2, \tau_{K_2}, \iota_{K_2})$. This is because in Definition~\ref{def:localequivalence}, we require $\tau_{K_i}$ and $\iota_{K_i}$ to be \textit{simultaneously} intertwined by $f$ and $g$. This will be an important distinction which leads to a great deal of (possible) subtlety in the structure of $\K_{\tau, \iota}$. For an explicit example of the above phenomenon, see Example~\ref{ex:stevedore}. Compare \cite[Example 4.7]{DHM}, which establishes a similar phenomenon in the 3-manifold setting.
\end{remark}

\subsection{(Possible) non-commutativity of $\K_{\tau, \iota}$} \label{sec:2.3}
We now discuss some subtleties of $\K_{\tau, \iota}$. The first of these involves a seeming asymmetry in the product operation. Recall that we defined the product $\iota_K$-action on $C_1 \otimes C_2$ to be $\iota_\otimes = \iota_{\otimes, A}$, where
\[
\iota_{\otimes, A} = (\id \otimes \id + \Phi \otimes \Psi) \circ (\iota_{K_1} \otimes \iota_{K_2}).
\]
There is of course a slightly different $\iota_K$-action on $C_1 \otimes C_2$, given by 
\[
\iota_{\otimes, B} = (\id \otimes \id + \Psi \otimes \Phi) \circ (\iota_{K_1} \otimes \iota_{K_2}).
\]
It is straightforward to check that using $\iota_{\otimes, B}$ in Definition~\ref{def:tigroup} also gives a well-defined operation on $\K_{\tau, \iota}$. Rather surprisingly, it turns out that these operations are not \textit{a priori} the same.

\begin{remark}\label{rem:notabelian}
In \cite[Lemma 2.14]{Zemkeconnected}, Zemke considers the map
\[
F = \id \otimes \id + \Psi \otimes \Phi.
\]
This is a homotopy equivalence from $C_1 \otimes C_2$ to itself such that $F \circ \iota_{\otimes, A} \simeq \iota_{\otimes, B} \circ F$. Hence $F$ mediates a homotopy equivalence of pairs $(C_1 \otimes C_2, \iota_{\otimes, A}) \simeq (C_1 \otimes C_2, \iota_{\otimes, B})$. For this reason, $\iota_{\otimes, A}$ and $\iota_{\otimes, B}$ both give the same product structure on $\K_\iota$. However, the map $F$ above does \textit{not} provide a homotopy equivalence between the triples $(C_1 \otimes C_2, \tau_{K_1} \otimes \tau_{K_2}, \iota_{\otimes, A})$ and $(C_1 \otimes C_2, \tau_{K_1} \otimes \tau_{K_2}, \iota_{\otimes, B})$. Indeed, $F$ is not $\tau_{K_1} \otimes \tau_{K_2}$-equivariant. We have
\[
F \circ (\tau_{K_1} \otimes \tau_{K_2}) = (\id \otimes \id + \Psi \otimes \Phi)(\tau_{K_1} \otimes \tau_{K_2})
\]
while
\[
(\tau_{K_1} \otimes \tau_{K_2}) \circ F = (\tau_{K_1} \otimes \tau_{K_2})(\id \otimes \id + \Psi \otimes \Phi) \simeq (\id \otimes \id + \Phi \otimes \Psi)(\tau_{K_1} \otimes \tau_{K_2}).
\]
In general, it is not true that these are chain homotopic maps. 
\end{remark}

This discrepancy is closely related to the possible non-commutativity of $\K_{\tau, \iota}$. Indeed, consider the two products $C_1 \otimes C_2$ and $C_2 \otimes C_1$. There is an obvious isomorphism from $C_1 \otimes C_2$ to $C_2 \otimes C_1$ given by transposition of factors; this clearly intertwines the two $\tau_K$-actions $\tau_{K_1} \otimes \tau_{K_2}$ and $\tau_{K_2} \otimes \tau_{K_1}$. However, it does not intertwine the $\iota_K$-actions: instead, it sends $\iota_{\otimes, A}$ on $C_1 \otimes C_2$ to $\iota_{\otimes, B}$ on $C_2 \otimes C_1$. Hence $\K_{\tau, \iota}$ is not necessarily abelian, and in fact this question is equivalent to whether the operations on $\K_{\tau, \iota}$ induced by $\iota_{\otimes, A}$ and $\iota_{\otimes, B}$ are the same (up to local equivalence). In Theorem~\ref{thm:connectedsum}, we establish a connected sum formula showing that using $\iota_{\otimes, A}$ corresponds to taking the equivariant connected sum as in Definition~\ref{def:eqconnectedsum}. Using $\iota_{\otimes, B}$ thus corresponds to modifying Definition~\ref{def:eqconnectedsum} by placing the half-axis of the first knot above the half-axis of the second.

Unfortunately, the authors do not have an explicit example demonstrating that $\K_{\tau, \iota}$ is not abelian. Indeed, in all of the examples that the authors have tried, it is possible to find an \textit{ad hoc} construction of a local equivalence (in fact, even a homotopy equivalence) between $(C_1 \otimes C_2, \tau_{K_1} \otimes \tau_{K_2}, \iota_{\otimes, A})$ and $(C_1 \otimes C_2, \tau_{K_1} \otimes \tau_{K_2}, \iota_{\otimes, B})$. Note that $\K_{\tau, \iota}$ admits forgetful maps to both $\K_\tau$ and $\K_\iota$, which are both abelian.




\subsection{Twisting by $\varsigma_K$}\label{sec:2.4}

As discussed in the introduction, our goal will be to associate to a strongly invertible knot a $(\tau_K, \iota_K)$-complex which is well-defined up to homotopy equivalence. Moreover, we wish to show that this local equivalence class is invariant under isotopy-equivariant homology concordance. Unfortunately, it turns out that both of these statements are technically only true if we pass to the \textit{decorated} category (see Sections~\ref{sec:3.3} and \ref{sec:3.4}). In order to capture this subtlety, we introduce the following notion:

\begin{definition}\label{def:twist}
Let $(C, \tau_K, \iota_K)$ be an abstract $(\tau_K, \iota_K)$-complex. Compose $\tau_K$, $\iota_K$, or both with any number of copies of $\varsigma_K$. By Lemma~\ref{lem:sarkarcommutes}, this produces another $(\tau_K, \iota_K)$-complex. We refer to this new complex as being obtained from $(C, \tau_K, \iota_K)$ via a \textit{twist by $\varsigma_K$}.
\end{definition}


\begin{lemma}\label{lem:onetwist}
Let $(C, \tau_K, \iota_K)$ be an abstract $(\tau_K, \iota_K)$-complex. Then
\[
(C, \varsigma_K \circ \tau_K, \iota_K) \simeq (C, \tau_K, \varsigma_K \circ \iota_K)
\]
and
\[
(C, \tau_K, \iota_K) \simeq (C, \varsigma_K \circ \tau_K, \varsigma_K \circ \iota_K).
\]
\end{lemma}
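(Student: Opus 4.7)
The plan is to exhibit explicit homotopy equivalences between the triples in question. Before doing so, I would first verify that each of $(C, \varsigma_K \circ \tau_K, \iota_K)$, $(C, \tau_K, \varsigma_K \circ \iota_K)$, and $(C, \varsigma_K \circ \tau_K, \varsigma_K \circ \iota_K)$ genuinely satisfies the axioms of Definition~\ref{def:ticomplex}. This is straightforward: using that $\varsigma_K$ commutes up to homotopy with both $\tau_K$ and $\iota_K$ (Lemma~\ref{lem:sarkarcommutes}) and that $\varsigma_K^2 \simeq \id$, one has for instance $(\varsigma_K \tau_K)^2 \simeq \varsigma_K^2 \tau_K^2 \simeq \id$, and the commutation relation between $\varsigma_K \tau_K$ and $\iota_K$ reduces to the original commutation relation. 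The other two triples are checked analogously.

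For the first equivalence $(C, \varsigma_K \tau_K, \iota_K) \simeq (C, \tau_K, \varsigma_K \iota_K)$, I would take the mediating map to be $f = \iota_K \circ \tau_K$, with $g = f$ as its own homotopy inverse. Since $\iota_K$ and $\tau_K$ are each skew-graded and skew-$\cR$-equivariant, the composition $f$ is graded and $\cR$-equivariant, so it lives in the correct category of maps. The key algebraic identity underlying all of the intertwining relations is
\[
\iota_K \tau_K \iota_K \simeq \tau_K,
\]
which I would derive from $\tau_K \iota_K \simeq \varsigma_K \iota_K \tau_K$ by multiplying on the right by $\iota_K$, applying $\iota_K^2 \simeq \varsigma_K$, and using $\varsigma_K^2 \simeq \id$ together with Lemma~\ref{lem:sarkarcommutes}. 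Given this identity, verifying the four required intertwinings (two for $f$, two for $g$) becomes a routine manipulation using $\tau_K^2 \simeq \id$ and the commutation of $\varsigma_K$ with $\tau_K$ and $\iota_K$. The same identity also yields $f \circ g \simeq g \circ f \simeq \id$, so $f$ and $g$ are honest homotopy equivalences.

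For the second equivalence $(C, \tau_K, \iota_K) \simeq (C, \varsigma_K \tau_K, \varsigma_K \iota_K)$, I would bootstrap from the first. Applying the first equivalence to the abstract $(\tau_K, \iota_K)$-complex $(C, \varsigma_K \tau_K, \iota_K)$ produces
\[
(C, \varsigma_K(\varsigma_K \tau_K), \iota_K) \simeq (C, \varsigma_K \tau_K, \varsigma_K \iota_K),
\]
and the left-hand side is homotopy equivalent to $(C, \tau_K, \iota_K)$ via the identity map, since $\varsigma_K^2 \tau_K \simeq \tau_K$. Composing the two equivalences gives the claim.

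The most delicate step will be verifying the intertwining relations for $f = \iota_K \tau_K$. Although each individual computation is elementary, $\varsigma_K$ and $\iota_K$ tend to accumulate in unwanted positions, so it is important to normalize each expression (for instance, by moving all copies of $\varsigma_K$ to the far left using Lemma~\ref{lem:sarkarcommutes}) before attempting to compare the two sides.
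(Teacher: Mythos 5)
Your argument is correct, and it is essentially the paper's argument with the order of the two claims reversed: the paper proves the second equivalence directly using the automorphism $\tau_K \circ \iota_K$ and then deduces the first from it via $\varsigma_K^2 \simeq \id$, while you prove the first directly using $\iota_K \circ \tau_K$ and then bootstrap to the second. The mediating map and the key identity (your $\iota_K\tau_K\iota_K \simeq \tau_K$, which is equivalent to the commutation relation combined with $\iota_K^2 \simeq \varsigma_K$) are the same in substance, so this is the same proof up to rearrangement.
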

\begin{proof}
To prove the second claim, we use the graded, $\cR$-equivariant automorphism $\tau_K \circ \iota_K$. A quick computation using the relation $\tau_K \circ \iota_K \simeq \varsigma_K \circ \iota_K \circ \tau_K$ shows that this constitutes a homotopy equivalence between $(C, \tau_K, \iota_K)$ and $(C, \varsigma_K \circ \tau_K, \varsigma_K \circ \iota_K)$. The first claim follows immediately from the second, noting that $\varsigma_K^2 \simeq \id$.
\end{proof}

Up to homotopy equivalence, a $(\tau_K, \iota_K)$-complex thus has only one twist, which is represented by $(C, \varsigma_K \circ \tau_K, \iota_K) \simeq (C, \tau_K, \varsigma_K \circ \iota_K)$. In general, the authors know of no reason this should be homotopy (or even locally) equivalent to its original, and it is possible that the requisite computation of $\iota_K$ does not currently exist in the literature. Note that Lemma~\ref{lem:onetwist} also implies $(C, \tau_K) \simeq (C, \varsigma_K \circ \tau_K)$ and $(C, \iota_K) \simeq (C, \varsigma_K \circ \iota_K)$ as pairs. Hence the distinction between a complex and its twist is a phenomenon that is only present when considering $\tau_K$ and $\iota_K$ simultaneously.

As we will see in Section~\ref{sec:3.3} and Section~\ref{sec:3.4}, twisted complexes will play an important role when we move from the decorated to the undecorated setting. Roughly speaking, if $(K, \tau)$ is an strongly invertible knot without a choice of decoration, then we can only define the homotopy equivalence class of $(C, \tau_K, \iota_K)$ up to a twist by $\varsigma_K$. Nevertheless, we show presently that the numerical invariants $\Vsu$ and $\Vsl$ are unchanged by twisting.

The distinction between a complex and its twist also has an interpretation in terms of the direction on a strongly invertible knot (see Definition~\ref{def:direction}). In Section~\ref{sec:3.5}, we show that a choice of direction on $(K, \tau)$ can be used to determine a homotopy equivalence class of $(\tau_K, \iota_K)$-complex. Reversing the direction on $(K, \tau)$ corresponds to applying a twist by $\varsigma_K$. In general, reversing the direction on $(K, \tau)$ alters its class in $\eC$. Boyle-Issa \cite{BI} and Alfieri-Boyle \cite{AB} show that several invariants are sensitive to this operation; $\K_{\tau, \iota}$ is thus (in principle) similar, although this fails for the simple examples at our disposal.


\subsection{Extracting numerical invariants}\label{sec:2.5}
We now give a brief review of extracting numerical invariants from the local equivalence class of $(C, \tau_K, \iota_K)$. Recall that given an abstract knot complex $C$, we may form the \textit{large surgery subcomplex}, which we denote by $\Co$. 

\begin{definition}\label{def:largesurgery}
Let $(C, \tau_K, \iota_K)$ be a $(\tau_K, \iota_K)$-complex. The \textit{large surgery subcomplex of $C$} is the subset $\Co$ of $C$ lying in Alexander grading zero; that is, the set of elements $x$ with $\grU(x) = \grV(x)$. (This is often denoted by by $A_0$ elsewhere in the literature.) Strictly speaking, this is not a subcomplex of $C$; although $\Co$ is preserved by $\partial$, it is not a submodule over $\cR$. Instead, we view $\Co$ as a singly-graded complex over the ring $\F[U]$, where
\[
U = \cU \cV.
\]
The Maslov grading of an element is given by $\grU = \grV$. When we write $\Co$, we will mean this singly-graded complex over $\F[U]$.

Note that although $\tau_K$ and $\iota_K$ are skew-graded, the condition $\grU = \grV$ means that $\tau_K$ and $\iota_K$ induce grading-preserving automorphisms on $\Co$, which we also denote by $\tau_K$ and $\iota_K$. Moreover, although $\tau_K$ and $\iota_K$ are $\cR$-skew-equivariant, their actions on $\Co$ are equivariant with respect to $U = \cU \cV$. It follows from \cite[Lemma 3.16]{HHSZ} that as an automorphism of $\Co$, the Sarkar map $\varsigma_K$ is homotopic to the identity. It is then easily checked that
\[
(\Co, \tau_K) \quad \text{and} \quad (\Co, \iota_K \circ \tau_K)
\]
are $\iota$-complexes in the sense of \cite[Definition 8.1]{HMZ}. 
\end{definition}

We now follow the construction of the involutive numerical invariants $\du$ and $\dl$ from \cite{HM}, except that we replace the Heegaard Floer $\iota$-action with the action of $\tau_K$ on $\Co$, where $\Co$ is viewed as a singly-graded complex over $\F[U]$. Explicitly, let
\[
\CFI^{\tau}(\Co) = \text{Cone}\left(\Co \xrightarrow{Q(1 + \tau_K)} Q \cdot \Co\right)
\]
where $Q$ is a formal variable of grading $-1$. Define
\begin{align*}
\dl_\tau(\Co) = \max\{r \ \vert \ &\exists x \in \CFI^\tau_r(\Co) \text{ such that } \\ 
&U^n x \neq 0 \text{ and } U^n x \notin \im(Q) \text{ for all } n\}
\end{align*}
and
\begin{align*}
\du_\tau(\Co) = \max\{r \ \vert \ &\exists x \in \CFI^\tau_r(\Co) \text{ such that } \\
&U^n x \neq 0 \text{ for all } n \text{ and } U^m x \in \im(Q) \text{ for some } m\} + 1.
\end{align*}
We define the mapping cone $\CFI^{\ita}(\Co)$ by replacing $\tau_K$ with $\iota_K \circ \tau_K$, and define the numerical invariants $\dl_{\ita}(\Co)$ and $\du_{\ita}(\Co)$ similarly. Our conventions here are such that if $C$ is the trivial complex $\cR$, then $\dl_\circ = \du_\circ = 0$. We now have:

\begin{definition}\label{def:numericalinvariants}
Let $(C, \tau_K, \iota_K)$ be an abstract $(\tau_K, \iota_K)$-complex. Define
\[
\Vtu(C) = -\frac{1}{2} \du_\tau(\Co) \text{ and } \Vtl(C) = -\frac{1}{2} \dl_\tau(\Co)
\]
and
\[
\Vitu(C) = -\frac{1}{2} \du_{\ita}(\Co) \text{ and } \Vitl(C) = -\frac{1}{2} \dl_{\ita}(\Co).
\]
\end{definition}

\begin{lemma}
The invariants $\Vsu(C)$ and $\Vsl(C)$ are local equivalence invariants; that is, they factor through $\K_{\tau, \iota}$.
\end{lemma}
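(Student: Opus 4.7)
The plan is to reduce the statement to the corresponding invariance of the involutive numerical invariants $\du$ and $\dl$ for ordinary $\iota$-complexes, as proved by Hendricks--Manolescu--Zemke \cite{HMZ}. The key observation is that passing to the large surgery subcomplex $\Co$ in Alexander grading zero turns the local equivalence of $(\tau_K, \iota_K)$-complexes into a pair of local equivalences of $\iota$-complexes (one for $\tau_K$ and one for $\iota_K \circ \tau_K$).

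First, given a local equivalence $f \colon C_1 \to C_2$ (with local inverse $g$) of $(\tau_K, \iota_K)$-complexes, I would observe that since $f$ is graded and $\cR$-equivariant, it preserves the Alexander grading $\grU - \grV$ and commutes with $U = \cU \cV$. Hence $f$ restricts to an $\F[U]$-equivariant, grading-preserving chain map $\Co_1 \to \Co_2$, and similarly for $g$. The intertwining homotopies furnished by Definition~\ref{def:localequivalence} are skew-$\cR$-equivariant, so they also preserve $\grU - \grV$ and restrict to $\F[U]$-equivariant homotopies on $\Co$. Thus $f$ and $g$ are local maps of the pairs $(\Co_i, \tau_{K_i})$ in the sense of \cite[Definition 8.X]{HMZ}.

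Second, I would check that $f$ also intertwines the $\iota_K \circ \tau_K$ actions up to $\F[U]$-equivariant homotopy on $\Co$. Composing the intertwining homotopy for $\iota_K$ with $\tau_K$ and then with the intertwining homotopy for $\tau_K$ gives $f \circ (\iota_{K_1} \tau_{K_1}) \simeq (\iota_{K_2} \tau_{K_2}) \circ f$ on the ambient complex up to an error of the form $\iota_K \tau_K + \tau_K \iota_K$, which is controlled by the commutation relation $\tau_K \circ \iota_K \simeq \varsigma_K \circ \iota_K \circ \tau_K$. Restricting to $\Co$ and using the fact (cited in Definition~\ref{def:largesurgery}) that $\varsigma_K \simeq \id$ on $\Co$, this error vanishes up to homotopy. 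Hence $f$ and $g$ also constitute local maps between $(\Co_1, \iota_{K_1} \tau_{K_1})$ and $(\Co_2, \iota_{K_2} \tau_{K_2})$.

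Finally, I would invoke the standard fact that $\du$ and $\dl$ are invariants of the local equivalence class of an $\iota$-complex \cite[Section 8]{HMZ}: a local map of $\iota$-complexes induces a grading-preserving, $\F[U]$-equivariant chain map of the mapping cones $\mathrm{CFI}^\tau(\Co_1) \to \mathrm{CFI}^\tau(\Co_2)$ (and likewise for $\iota_K \tau_K$) that sends $U$-nontorsion elements to $U$-nontorsion elements and respects the $Q$-image filtration. Applying this to both $\tau_K$ and $\iota_K \tau_K$ simultaneously on each side yields equalities of all four quantities $\du_\circ, \dl_\circ$ for $\circ \in \{\tau, \iota\tau\}$, and hence of $\Vsu$ and $\Vsl$. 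The only nontrivial step is the reduction in the second paragraph, where $\varsigma_K \simeq \id$ on $\Co$ is essential; the rest is a direct transcription of the standard $\iota$-complex argument.
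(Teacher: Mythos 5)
Your overall approach matches the paper's: restrict the local maps $f$ and $g$ to the large surgery subcomplexes $\Co$, observe that they become $\F[U]$-equivariant local maps of $\iota$-complexes (in the sense of \cite[Definition 8.5]{HMZ}) for both the $\tau_K$ and $\iota_K \circ \tau_K$ actions, and then invoke the HMZ invariance of $\du$ and $\dl$. Your first and third paragraphs are exactly this.

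However, your second paragraph contains a red herring. You claim that chaining the two intertwining homotopies produces an error term ``of the form $\iota_K \tau_K + \tau_K \iota_K$'' that must be controlled by the commutation relation and by $\varsigma_K \simeq \id$ on $\Co$. There is no such error: if $H_\iota$ and $H_\tau$ are the homotopies witnessing $f \circ \iota_{K_1} \simeq \iota_{K_2} \circ f$ and $f \circ \tau_{K_1} \simeq \tau_{K_2} \circ f$, then
\[
f \circ \iota_{K_1}\tau_{K_1} - \iota_{K_2}\tau_{K_2} \circ f = (f\iota_{K_1} - \iota_{K_2} f)\tau_{K_1} + \iota_{K_2}(f\tau_{K_1} - \tau_{K_2} f) = \partial\bigl(H_\iota \tau_{K_1} + \iota_{K_2} H_\tau\bigr) + \bigl(H_\iota \tau_{K_1} + \iota_{K_2} H_\tau\bigr)\partial,
\]
so $f$ intertwines $\iota_K \circ \tau_K$ directly, with no appeal to the commutation relation or to the Sarkar map on $\Co$. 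The key point here is that the definition of local equivalence requires the same maps $f, g$ to simultaneously intertwine $\tau_K$ and $\iota_K$; the composite intertwining is then automatic. Your conclusion is still correct, and the rest of the argument is the paper's, but this step is muddled and would mislead a reader into thinking the $\varsigma_K \simeq \id$ fact carries more weight here than it does.
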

\begin{proof}
Let $(C_1, \tau_{K_1}, \iota_{K_1})$ and $(C_2, \tau_{K_2}, \iota_{K_2})$ be two $(\tau_K, \iota_K)$-complexes and let $f$ and $g$ be local equivalences between them. Since $f$ and $g$ are graded and $\cR$-equivariant, they induce graded, $\F[U]$-equivariant chain maps between $(C_1)_0$ and $(C_2)_0$, which are easily checked to be local in the sense of \cite[Definition 8.5]{HMZ}. The claim follows. 
\end{proof}


\begin{lemma}\label{lem:twistnochange}
The invariants $\Vsu(C)$ and $\Vsl(C)$ are insensitive to twisting by $\varsigma_K$.
\end{lemma}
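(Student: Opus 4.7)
The plan is to reduce everything to the observation (recalled in Definition~\ref{def:largesurgery}) that on the large surgery subcomplex $\Co$, the Sarkar map $\varsigma_K$ is $\F[U]$-equivariantly homotopic to the identity, via \cite[Lemma 3.16]{HHSZ}. By Lemma~\ref{lem:onetwist}, the only nontrivial twist to consider (up to homotopy equivalence of $(\tau_K,\iota_K)$-complexes) is replacing $(C,\tau_K,\iota_K)$ by $(C,\varsigma_K\circ\tau_K,\iota_K)$; equivalently, by $(C,\tau_K,\varsigma_K\circ\iota_K)$.

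First, I would note that all three maps $\tau_K$, $\iota_K$, $\varsigma_K$ are bigraded and (skew-)$\cR$-equivariant, and hence preserve the Alexander-grading-zero subset $\Co\subset C$. Restricted to $\Co$, both $\tau_K$ and $\iota_K$ become grading-preserving, $\F[U]$-equivariant chain maps (with $U=\cU\cV$), and $\varsigma_K|_{\Co}\simeq\id$ via an $\F[U]$-equivariant homotopy. Next I would compute the induced actions on $\Co$ after the twist. For the $\tau$-version, we have
\[
(\varsigma_K\circ\tau_K)|_{\Co}\;\simeq\;\tau_K|_{\Co},
\]
so the $\iota$-complex $(\Co,\tau_K)$ is unchanged up to $\iota$-complex homotopy equivalence. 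For the $\iota\tau$-version, using Lemma~\ref{lem:sarkarcommutes} (which tells us that $\varsigma_K$ homotopy commutes with $\iota_K$) together with $\varsigma_K|_{\Co}\simeq\id$, I would write
\[
\bigl(\iota_K\circ(\varsigma_K\circ\tau_K)\bigr)\big|_{\Co}\;\simeq\;(\varsigma_K\circ\iota_K\circ\tau_K)|_{\Co}\;\simeq\;(\iota_K\circ\tau_K)|_{\Co},
\]
so the $\iota$-complex $(\Co,\iota_K\circ\tau_K)$ is likewise unchanged up to $\iota$-complex homotopy equivalence. The same argument applies verbatim to the twist $(C,\tau_K,\varsigma_K\circ\iota_K)$.

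Finally, I would invoke the fact that $\Vsu$ and $\Vsl$ are defined purely from the mapping cones $\CFI^{\tau}(\Co)$ and $\CFI^{\iota\tau}(\Co)$ of Section~\ref{sec:2.5}, which are manifestly invariants of the $\iota$-complex homotopy equivalence classes of the pairs $(\Co,\tau_K)$ and $(\Co,\iota_K\circ\tau_K)$. Since those homotopy equivalence classes are unchanged by a twist, the numerical invariants are too.

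The only step requiring any care is the middle one: one must check that the interaction between $\varsigma_K$ and $\iota_K$ on $\Co$ really does collapse to the identity, and this uses both the global homotopy commutation from Lemma~\ref{lem:sarkarcommutes} and the local triviality of $\varsigma_K$ on $\Co$. Everything else is formal.
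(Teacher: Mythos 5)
Your proof is correct and follows essentially the same route as the paper: both arguments come down to the single fact that $\varsigma_K$ restricted to $\Co$ is ($\F[U]$-equivariantly) homotopic to the identity, as in \cite[Lemma 3.16]{HHSZ}, so that the twisted actions induce the same (homotopy classes of) maps on $\Co$ and hence the same correction terms. Your version simply spells out the bookkeeping (preservation of $\Co$, the reduction via Lemma~\ref{lem:onetwist}, and the commutation from Lemma~\ref{lem:sarkarcommutes}) that the paper leaves implicit.
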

\begin{proof}
This follows immediately from the fact that $\varsigma_K$ is homotopic to the identity as a map on $\Co$; see the proof of \cite[Lemma 3.16]{HHSZ}.
\end{proof}
\noindent
Note that the same argument indicates that no additional numerical invariants can be defined by considering (for example) $\tau_K \circ \iota_K$ in place of $\iota_K \circ \tau_K$. 

\subsection{Examples}
We now list the $(\tau_K, \iota_K)$-complexes corresponding to different strong inversions on the figure-eight and the stevedore. These may be derived from the results of \cite[Section 4.2]{DHM} in the following manner. Fix a basis in which the action of $\iota_K$ is standard, as in \cite[Section 8]{HM}. For the pairs $(K, \tau)$ at hand, the 3-manifold action of $\tau$ on $\HFm(S^3_{+1}(K))$ was calculated in \cite[Section 4.2]{DHM}. By the discussion of Section~\ref{sec:7.1}, this determines the action of $\tau_K$ on the homology of $\CFK(K)_0$. We then list all automorphisms $\tau_K$ of $\CFK(K)$ which induce this action and satisfy the axioms of Definition~\ref{def:ticomplex}. (In particular, note that $\tau_K$ is required to satisfy $\tau_K \circ \iota_K \simeq \varsigma_K \circ \iota_K \circ \tau_K$.) It turns out that in each example, the resulting automorphism is unique up to $\iota_K$-equivariant basis change. The proof of this is left to the reader and is an exercise in tedium. Compare \cite[Section 4.2]{DHM}.

\begin{example}\label{ex:figureeight}
There are two strong inversions $\tau$ and $\sigma$ on the figure-eight, which are displayed in Figure~\ref{fig:22}. In Figure~\ref{fig:23}, we display their corresponding actions $\tau_K$ and $\sigma_K$ on $C = \CFK(4_1)$ (calculated in the basis with the indicated action of $\iota_K$). See \cite[Example 4.6]{DHM}.
\end{example}

\begin{figure}[h!]
\includegraphics[scale = 0.65]{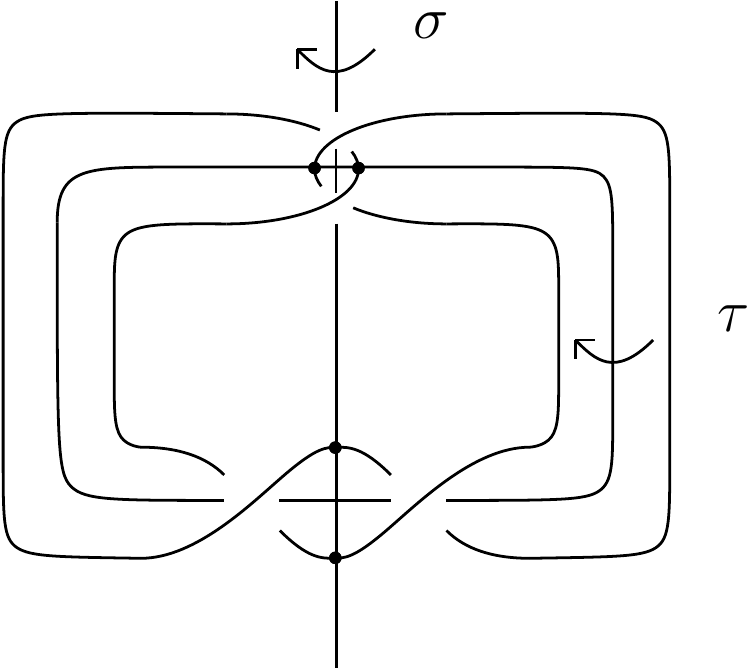}
\caption{The figure-eight $4_1$ with two strong inversions $\tau$ and $\sigma$.}\label{fig:22}
\end{figure}

\begin{figure}[h!]
\includegraphics[scale = 0.95]{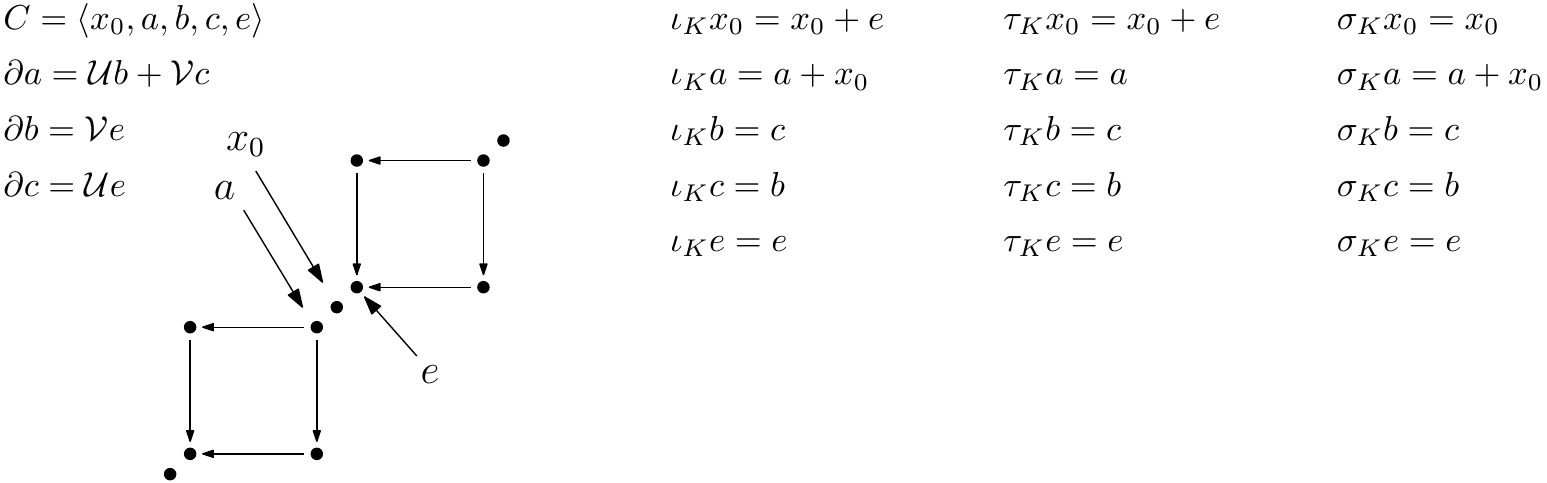}
\caption{The $(\tau_K, \iota_K)$-complexes associated to $\tau$ and $\sigma$ on $4_1$.}\label{fig:23}
\end{figure}

\begin{example}\label{ex:stevedore}
There are two strong inversions $\tau$ and $\sigma$ on the stevedore, which are displayed in Figure~\ref{fig:24}. In Figure~\ref{fig:25}, we display their corresponding actions $\tau_K$ and $\sigma_K$ on $C = \CFK(6_1)$ (calculated in the basis with the indicated action of $\iota_K$). See \cite[Example 4.7]{DHM}. Note that the pairs $(C, \iota_K)$ and $(C, \sigma_K)$ are individually trivial. In both cases, the local map to the trivial complex is given by sending all generators except for $x_0$ to zero. The local map in the other direction has image $x_0$ in the former case but image $x_0 + e_1$ in the latter. However, there is no local map from the trivial complex that simultaneously commutes with both $\sigma_K$ and $\iota_K$ (up to homotopy), so the triple $(C, \sigma_K, \iota_K)$ is nontrivial. This can be checked via exhaustive casework, or by computing $\Vitl(K) = 1$.
\end{example}

\begin{figure}[h!]
\includegraphics[scale = 0.65]{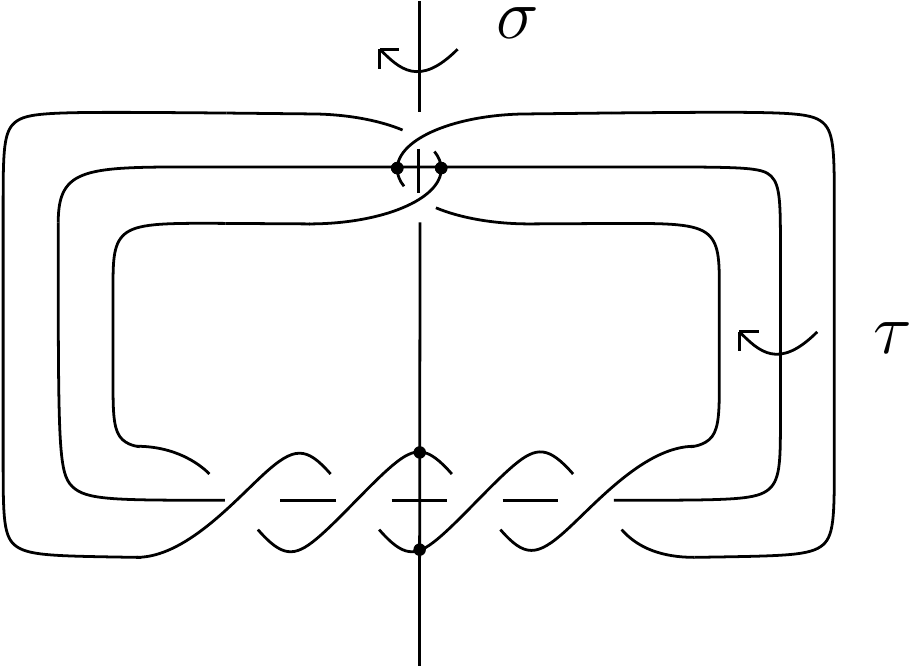}
\caption{The stevedore $6_1$ with two strong inversions $\tau$ and $\sigma$.}\label{fig:24}
\end{figure}

\begin{figure}[h!]
\includegraphics[scale = 0.95]{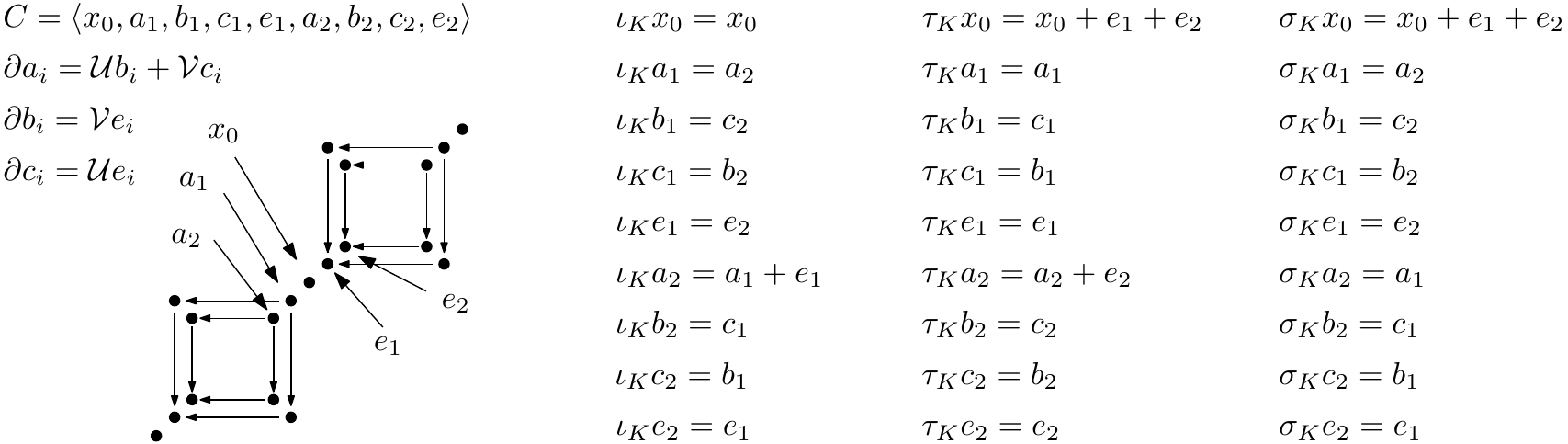}
\caption{The $(\tau_K, \iota_K)$-complexes associated to $\tau$ and $\sigma$ on $6_1$.}\label{fig:25}
\end{figure}

\begin{remark}
The reader may verify that in each of the above examples, performing a twist by $\varsigma_K$ does not change the homotopy equivalence class of the relevant triple. We thus suppress writing a choice of decoration or direction in both Example~\ref{ex:figureeight} and Example~\ref{ex:stevedore}.
\end{remark}

\section{Construction of $\tau_K$ and equivariant concordance}\label{sec:3}

In this section, we construct the action $\tau_K \colon \CFK(K) \rightarrow \CFK(K)$ of $\tau$ on the knot Floer complex of $K$. In order to do this, we first equip $K$ with an orientation and a symmetric pair of basepoints, which we collectively refer to as a \textit{decoration} on $K$. We then explain in what sense $\tau_K$ is independent of the choice of decoration. This turns out to be rather subtle, and will require an extended discussion about identifying different knot Floer complexes for the same knot in the case that the orientation or basepoints are changed. In particular, we show that if $(K, \tau)$ is a decorated strongly invertible knot, then the triple $(\CFK(K), \tau_K , \iota_K)$ is well-defined up to homotopy equivalence of $(\tau_K, \iota_K)$-complexes. If $(K, \tau)$ does not come with a decoration, then the homotopy equivalence class of $(\CFK(K), \tau_K , \iota_K)$ is only defined up to a twist by $\varsigma_K$, although the homotopy equivalence class of the \textit{pair} $(\CFK(K), \tau_K)$ is still well-defined. See Theorems~\ref{thm:3.2A} and \ref{thm:3.2B}.

We then turn to the behavior of $\tau_K$ under equivariant concordance. Here, we similarly modify the notion of an isotopy-equivariant homology concordance to hold in the decorated setting. We show that a decorated equivariant concordance induces a local equivalence of $(\tau_K, \iota_K)$-complexes. In the undecorated setting, this only holds up to a twist applied to one end of the concordance, although we still obtain a local equivalence of $\tau_K$-complexes. See Theorems~\ref{thm:3.3A} and \ref{thm:3.3B}. 

Finally, we discuss the connection between the decorated and directed categories. We show that a choice of direction similarly determines a homotopy equivalence class of $(\tau_K, \iota_K)$-complex, and that a concordance in the directed category again induces a local equivalence. We then put everything together and establish Theorem~\ref{thm:1.1}.

\subsection{Preliminaries}\label{sec:3.1}

Defining the action of $\tau$ will rely on a large number of auxiliary maps. In order to establish notation, we collect these below. We assume that the reader has some familiarity with the ideas of \cite{Zemkelinkcobord} and \cite{Zemkeconnected}.

\begin{definition}\label{def:mapset1}
Let $(K, w, z)$ be an oriented, doubly-based knot. 
\begin{enumerate}
\item Let $f$ be a diffeomorphism moving $(K, w, z)$ into $(f(K), f(w), f(z))$. If $\H$ is any choice of Heegaard data for $(K, w, z)$, then we obtain a pushforward set of Heegaard data $f\H$ for $(f(K), f(w), f(z))$. Moreover, $f$ induces a tautological chain isomorphism
\[
f \colon \CFK(\H) \rightarrow \CFK(f\H).
\]
which by abuse of notation we also denote by $f$. We call this the \textit{tautological pushforward}.
\item If $\H_1$ and $\H_2$ are two choices of Heegaard data for $(K, w, z)$, then there is a preferred homotopy equivalence
\[
\Phi(\H_1, \H_2) \colon \CFK(\H_1) \rightarrow \CFK(\H_2).
\]
This is unique (up to homotopy). We refer to $\Phi(\H_1, \H_2)$ as the \textit{naturality map}. The set of $\Phi$ form a transitive system.
\item Let $\H = ((\Sigma, \alphas, \betas, w, z), J)$ be a choice of Heegaard data for $(K, w, z)$. Then 
\[
\H^r = ((\Sigma, \alphas, \betas, z, w), J)
\]
is a choice of Heegaard data for $(K^r, z, w)$. Note that we interchange the roles of the basepoints $w$ and $z$, but we do not reverse orientation on $\Sigma$ or interchange the roles of $\alphas$ and $\betas$. The resulting diagram describes the knot $K$ with reversed orientation. There is a tautological skew-graded isomorphism
\begin{gather*}
sw : \CFK(\H) \rightarrow \CFK(\H^r)
\end{gather*}
with $\cU^i\cV^j \x \longmapsto \cU^{j}\cV^{i} \x$, given by mapping each intersection tuple to itself and interchanging the roles of $\cU$ and $\cV$. We call $sw$ the \textit{switch map}. 
\item Let $\H =( (\Sigma, \alphas, \betas, w, z), J)$ a choice of Heegaard data for $(K, w, z)$. Then 
\[
\bH = ((-\Sigma, \betas, \alphas, z, w), \bar{J})
\]
is a choice of Heegaard data for $(K, z, w)$. There is a tautological skew-graded isomorphism
\begin{gather*}
\eta \colon \CFK(\H) \rightarrow \CFK(\bH)
\end{gather*}
with $\cU^i\cV^j \x \longmapsto \cU^{j}\cV^{i} \x$, given by mapping each intersection tuple to itself and interchanging the roles of $\cU$ and $\cV$. We call $\eta$ the \textit{involutive conjugation map}. We stress that although $sw$ and $\eta$ might appear to be the same map, their codomains are different: the former represents $(K^r, z, w)$, while the latter represents  $(K, z, w)$.
\end{enumerate}
\end{definition}
\noindent
With the exception of the naturality map, we will usually suppress the data of $\H$ and thus the domain of the map in question.


\begin{lemma}\label{lem:mapset1}
The maps $f$, $\Phi$, $sw$, and $\eta$ all commute up to homotopy. Moreover, if $f$ and $g$ are two diffeomorphisms which commute, then their pushforwards commute up to homotopy.
\end{lemma}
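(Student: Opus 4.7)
The plan is to split the four maps into two categories: the \emph{tautological} ones ($f$, $sw$, $\eta$), which are chain-level isomorphisms specified by an explicit rule on generators and on monomials in $\cU,\cV$, and the naturality maps $\Phi$, which are defined only up to homotopy but are characterized by uniqueness as (skew-)grading-preserving, $\cR$-(skew-)equivariant homotopy equivalences between complexes attached to the same Heegaard data. The commutations then fall into three regimes.

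First I would handle pairs among the tautological maps. Each of $f$, $sw$, $\eta$ acts on an orthogonal piece of Heegaard data: $f$ pushes forward the ambient $S^3$ (and with it every component of the diagram), $sw$ relabels the basepoints while leaving $\Sigma$, $\alphas$, $\betas$ fixed, and $\eta$ reverses orientation on $\Sigma$, interchanges $\alphas$ with $\betas$, and swaps basepoints. Because each acts by a relabeling on intersection points together with either the identity or a swap $\cU \leftrightarrow \cV$ on coefficients, any two commute on the nose, provided one checks the codomain identifications at the level of Heegaard data, namely $\overline{f\H} = f\bH$ (using that $f$ is orientation-preserving on $S^3$), $(f\H)^r = f(\H^r)$, and $\overline{\H^r} = (\bH)^r$. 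The same reasoning shows that the pushforwards of two strictly commuting diffeomorphisms agree as chain maps.

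For commutations involving $\Phi$, I would invoke the uniqueness clause in part (2) of Definition~\ref{def:mapset1}. For any tautological map $\phi \in \{f, sw, \eta\}$, both $\phi \circ \Phi(\H_1, \H_2)$ and $\Phi(\phi\H_1, \phi\H_2) \circ \phi$ are (skew-)grading-preserving, $\cR$-(skew-)equivariant homotopy equivalences from $\CFK(\H_1)$ to $\CFK(\phi\H_2)$; after matching the targets as two sets of Heegaard data for the same underlying doubly-based knot, essential uniqueness forces the two compositions to be homotopic. The commutation of two different naturality maps is the transitive system property built into the definition of $\Phi$. The main obstacle, and the only genuinely fiddly piece, is the orientation and basepoint bookkeeping required to identify the iterated Heegaard data $\phi\psi\H$ from either order of composition, particularly when mixing $\eta$ (which reverses $\Sigma$) with the diffeomorphism pushforward $f$; once those equalities of Heegaard data are pinned down, every case reduces either to a strict equality on generators or to an appeal to the uniqueness of $\Phi$.
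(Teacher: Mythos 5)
Your first regime is fine: $f$, $sw$, and $\eta$ are all defined by explicit rules on intersection points and monomials in $\cU, \cV$, and — once the bookkeeping identifications of iterated Heegaard data ($\overline{f\H} = f\bH$, $(f\H)^r = f(\H^r)$, etc.) are checked — those maps literally commute at the chain level, as does the pair $f_*, g_*$ when $f,g$ commute as diffeomorphisms.

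The step that does not hold up is your handling of commutations with $\Phi$. You argue that $\phi \circ \Phi(\H_1, \H_2)$ and $\Phi(\phi\H_1, \phi\H_2) \circ \phi$ are both grading-preserving, $\cR$-(skew-)equivariant homotopy equivalences between the same pair of complexes, and conclude by ``essential uniqueness'' that they must be homotopic. But the uniqueness clause in Definition~\ref{def:mapset1}(2) asserts only that the \emph{distinguished} naturality map within the transitive system is well-defined up to homotopy; it is not the statement that every grading-preserving, $\cR$-equivariant homotopy equivalence between two Heegaard presentations of the same doubly-based knot is homotopic to $\Phi$. That stronger statement is false, and the paper itself contains the counterexample: the Sarkar map $\varsigma_K = \id + \Phi\Psi$ is a grading-preserving, $\cR$-equivariant self–homotopy-equivalence of $\CFK(\H)$ that is generally not homotopic to the identity $\Phi(\H,\H)$. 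Nor is $\phi \circ \Phi(\H_1,\H_2)$ itself a naturality map in the transitive system for a single knot, so uniqueness-within-the-system does not apply to it directly.

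What the lemma actually needs is the genuinely substantive content of the Juh\'asz--Thurston--Zemke/Zemke naturality package: that the transitive system of maps $\Phi$ is \emph{equivariant} under diffeomorphism pushforwards and under the tautological re-identifications such as $sw$ and $\eta$. Concretely, this is proved by observing that $\Phi(\H_1, \H_2)$ is represented by a sequence of elementary Heegaard moves, that pushing the sequence forward by $\phi$ (or transporting it through $sw$, $\eta$) gives a sequence from $\phi\H_1$ to $\phi\H_2$ whose elementary maps agree term by term with those of $\Phi(\H_1, \H_2)$ conjugated by $\phi$, and then invoking well-definedness of $\Phi$ to conclude that the resulting composite is the naturality map $\Phi(\phi\H_1, \phi\H_2)$. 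That well-definedness of $\Phi$ \emph{is} the naturality theorem, and it is the theorem — not a formal uniqueness principle — that closes the gap. The paper's one-line proof simply cites these results; your write-up should do the same rather than substitute an uniqueness argument that the Sarkar map shows is unsound.
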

\begin{proof}
Follows from naturality results established by Juh\'asz-Thurston-Zemke \cite{JTZ} and Zemke \cite{Zemkelinkcobord}.
\end{proof}

The maps in Lemma~\ref{lem:mapset1} should be interpreted as having the proper domain(s). For example, when we say that $f$ and $\Phi$ commute, we mean that we have a (homotopy) commutative square

\[\begin{tikzcd}
	{\CFK(\H_1)} && {\CFK(\H_2)} \\
	\\
	{\CFK(f\H_1)} && {\CFK(f\H_2)}
	\arrow["{f}"', from=1-1, to=3-1]
	\arrow["{f}", from=1-3, to=3-3]
	\arrow["{\Phi(\H_1, \H_2)}", from=1-1, to=1-3]
	\arrow["{\Phi(f\H_1, f\H_2)}", from=3-1, to=3-3]
\end{tikzcd}\]

\noindent
and similarly for the other maps. We thus write (for instance) $f \circ \Phi(\H_1, \H_2) \simeq \Phi(f\H_1, f\H_2) \circ f$ with the understanding that the two instances of $f$ have different domains. Note that implicitly, we are also claiming these operations commute when applied to Heegaard diagrams. For example, when we write $f \circ sw \simeq sw \circ f$, we are necessarily claiming that $f \H^r = (f \H)^r$, so that the codomains of both sides may be identified.

There are two other important maps that are derived from those in Definition~\ref{def:mapset1}:
\begin{definition}\label{def:mapset2}
Let $(K, w, z)$ be an oriented, doubly-based knot. 
\begin{enumerate}
\item Let $\H$ be any choice of Heegaard data for $(K, w, z)$. Then
\[
\eta \circ sw = sw \circ \eta \colon \CFK(\H) \rightarrow \CFK(\bH^r)
\]
provides a filtered isomorphism between $\CFK(\H)$ and $\CFK(\bH^r)$. Note that $\CFK(\bH^r)$ is a choice of Heegaard data for $(K^r, w, z)$; this has the reversed orientation but the same pair of basepoints. We call $\eta \circ sw$ the \textit{orientation-reversal map}. 
\item Let $\H$ be any choice of Heegaard data for $(K, w, z)$. Let $\rho$ be the half Dehn twist along the orientation of $K$ which moves $w$ into $z$ and $z$ into $w$. This induces a tautological pushforward
\[
\rho \colon \CFK(\H) \rightarrow \CFK(\rho\H).
\]
Note that $\rho \H$ represents the doubly-based knot $(K, z, w)$. 
We denote the half Dehn twist \textit{against} the orientation of $K$ by $\brho$, and denote the induced pushforward similarly. 
\end{enumerate}
\end{definition}

Since the definition of $\rho$ depends on the choice of orientation on $K$, the commutation relations for $\rho$ are slightly more subtle than those in Lemma~\ref{lem:mapset1}. In particular, since $sw$ reverses orientation on $K$, we have the following:

\begin{lemma}\label{lem:mapset2}
The map $\rho$ commutes with $\Phi$ and $\eta$ up to homotopy. However, the maps $\rho$ and $sw$ do not (in general) commute. Instead, we have
\[
(\rho \H)^r = \bar{\rho} \H^r \quad \text{and} \quad (\brho \H)^r = \rho \H^r
\]
and
\[
sw \circ \rho \simeq \brho \circ sw \quad \text{and} \quad sw \circ \brho \simeq \rho \circ sw.
\]
\end{lemma}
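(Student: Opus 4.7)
The plan is to treat each commutation relation in turn. The commutation of $\rho$ with $\Phi$ is an immediate instance of Lemma~\ref{lem:mapset1}, since $\rho$ is a specific diffeomorphism pushforward: applying $\rho$ to a naturality square between any two choices of Heegaard data for $(K, w, z)$ yields the corresponding naturality square between the pushed-forward data.

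For the commutation of $\rho$ with $\eta$, the strategy is to observe that both operations act tautologically at the chain level. The map $\eta$ fixes each intersection tuple while swapping the $\cU$ and $\cV$ variables; the map $\rho$ is a diffeomorphism pushforward. At the level of Heegaard data one checks directly that $\overline{\rho \H} = \rho \bH$, since reversing the orientation of $\Sigma$ and interchanging the $w, z$ labels commutes with the diffeomorphism action. The two orders of composition then agree on generators.

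The main work lies in analyzing the interaction with $sw$. The key observation is that $\rho$ depends on the orientation of $K$ underlying $\H$: it is the half twist in the direction specified by this orientation. Since $\H^r$ presents the orientation-reversed knot $K^r$, the half twist that plays the role of $\rho$ for $\H^r$ is $\brho$, and vice versa. I would verify the identities $(\rho \H)^r = \brho \H^r$ and $(\brho \H)^r = \rho \H^r$ directly at the level of Heegaard data by tracking the action of each half twist on the Heegaard surface near $K$ together with the effect of swapping the $w, z$ labels; both sides end up producing the same underlying data for $(K^r, w, z)$. The chain-level relations $sw \circ \rho \simeq \brho \circ sw$ and $sw \circ \brho \simeq \rho \circ sw$ then follow: both sides act tautologically on intersection tuples, and by the data identities just established, their source and target complexes coincide. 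The main obstacle is largely notational --- carefully unwinding the orientation conventions to confirm the Heegaard data identities --- but no new Floer-theoretic input is required beyond the naturality results already cited in Lemma~\ref{lem:mapset1}.
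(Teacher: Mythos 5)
Your proposal is correct, and it fills in the details that the paper defers to a citation of Juh\'asz--Thurston--Zemke and Zemke. The crucial observation you make — that the notation $\rho$ (resp. $\brho$) is relativized to the knot orientation underlying whichever Heegaard data it is applied to, so that the diffeomorphism realizing ``$\rho$ on $\H$'' is literally the same map as ``$\brho$ on $\H^r$'' — is exactly the point that makes $(\rho\H)^r$ and $\brho\H^r$ coincide as Heegaard data and makes the two compositions $sw\circ\rho$ and $\brho\circ sw$ agree on generators. Your treatment of $\Phi$ (an instance of Lemma~\ref{lem:mapset1} since $\rho$ is a diffeomorphism pushforward) and of $\eta$ (both maps tautological, and $\overline{\rho\H}=\rho\bH$ because conjugation of Heegaard data commutes with pushforward) is also right. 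One small clarification worth adding to your final step: rather than merely observing that the source and target complexes coincide and that both maps are ``tautological,'' it is cleaner to note that the two compositions are pushforwards by the \emph{same} underlying diffeomorphism, composed with the same $\cU\leftrightarrow\cV$ swap, so they are equal on the nose rather than just up to homotopy — the homotopy symbol in the statement is there because one may wish to insert naturality maps between different choices of Heegaard data, not because the raw maps differ.
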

\begin{proof}
Follows from naturality results established by Juh\'asz-Thurston-Zemke \cite{JTZ} and Zemke \cite{Zemkelinkcobord}.
\end{proof}

Finally, we will often employ the following:

\begin{lemma}\label{lem:isotopicmaps}
Let $(K, w, z)$ be a doubly-based knot. Let $f$ and $g$ be two diffeomorphisms of $S^3$ such that $f(w) = g(w)$ and $f(z) = g(z)$, and suppose that $f$ and $g$ are isotopic rel $\{w, z\}$.\footnote{That is, there is an isotopy $H_t$ which sends $w$ to $f(w) = g(w)$ and $z$ to $f(z) = g(z)$ for all $t$.} Let $\H$ be any Heegaard data for $(K, w, z)$ and let $\H'$ be any choice of Heegaard data for $(f(K), f(w), f(z)) \simeq (g(K), g(w), g(z))$. Then
\[
\Phi(f\H, \H') \circ f \simeq \Phi(g\H, \H') \circ g.
\]
\end{lemma}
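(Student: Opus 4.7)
The plan is to reduce the statement to the assertion that a diffeomorphism which is isotopic to the identity rel the basepoints induces (up to homotopy) the corresponding Heegaard naturality map, and then invoke transitivity of naturality. Set $\phi = g \circ f^{-1}$, a diffeomorphism of $S^3$ fixing $f(w)$ and $f(z)$; the hypothesis that $f$ and $g$ are isotopic rel $\{w, z\}$ translates to $\phi$ being isotopic to the identity rel $\{f(w), f(z)\}$. Rewriting the desired equality gives
\[
\Phi(g\H, \H') \circ g \simeq \Phi(g\H, \H') \circ \phi \circ f,
\]
so by transitivity of the naturality maps $\Phi$, it suffices to show that the tautological pushforward
\[
\phi \colon \CFK(f\H) \longrightarrow \CFK(\phi \cdot f\H) = \CFK(g\H)
\]
is homotopic to the naturality map $\Phi(f\H, g\H)$.

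The main step is therefore to show: if $\phi$ is a diffeomorphism of $S^3$ isotopic to the identity rel the basepoints of a doubly-based knot, then its tautological pushforward on knot Floer complexes agrees with the naturality map relating the two Heegaard data. To prove this, choose an ambient isotopy $\phi_t$ with $\phi_0 = \mathrm{id}$ and $\phi_1 = \phi$, each fixing the basepoints and (the image of) the knot pointwise at $\{f(w), f(z)\}$. Applying $\phi_t$ to the Heegaard data $f\H$ yields a continuous one-parameter family $\phi_t \cdot f\H$ of Heegaard data, all compatible with the \emph{same} doubly-based oriented knot $(\phi(f(K)), f(w), f(z))$ at the endpoints (and more generally representing $(\phi_t(f(K)), f(w), f(z))$ in between). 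By the naturality results of Juh\'asz–Thurston–Zemke \cite{JTZ} (as organized in the doubly-based setting by Zemke \cite{Zemkelinkcobord}), the map associated to such a family by composing infinitesimal Heegaard moves is homotopic to the naturality map $\Phi(f\H, \phi\cdot f\H) = \Phi(f\H, g\H)$; on the other hand, tracing the intersection points along the ambient isotopy is precisely the tautological identification, hence is the pushforward $\phi$. This identifies the two maps up to homotopy.

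Plugging this back in,
\[
\Phi(g\H, \H') \circ g \simeq \Phi(g\H, \H') \circ \Phi(f\H, g\H) \circ f \simeq \Phi(f\H, \H') \circ f,
\]
where the last step uses transitivity of the naturality maps. The key subtlety—and the main obstacle—is the requirement that the isotopy fix \emph{both} basepoints throughout; without this hypothesis, one would pick up a contribution from half Dehn twists about $K$ (reflecting the failure of $\rho$ and $\bar{\rho}$ to be trivial at the chain level), which is precisely why the lemma is stated rel $\{w, z\}$. Once the basepoint-fixing condition is used, the identification of the pushforward with the naturality map is exactly what the Heegaard-Floer naturality package of \cite{JTZ, Zemkelinkcobord} provides.
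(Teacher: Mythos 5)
Your proposal is correct and follows essentially the same approach as the paper. The paper's proof of this lemma is a one-line citation to Juh\'asz--Thurston--Zemke and Zemke, and what you have done is unpack exactly what that citation buys: you reduce the statement to the fact that the tautological pushforward by $\phi = g \circ f^{-1}$ (a diffeomorphism isotopic to the identity rel the basepoints) is chain homotopic to the naturality map $\Phi(f\H, g\H)$, and that fact is precisely what the naturality package of \cite{JTZ} and \cite{Zemkelinkcobord} provides. The reduction via $g = \phi \circ f$ and the final chain of homotopies using transitivity of $\Phi$ are both correct. One minor point worth tightening: the phrase ``tracing the intersection points along the ambient isotopy'' is heuristic rather than a precise description of how the naturality map is built from Heegaard moves, but this matches the level of detail of the paper's own citation, and the identification you assert is the content of the cited naturality results. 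Your closing remark correctly isolates why the lemma is stated rel both basepoints: dropping that hypothesis would introduce a basepoint-moving (half Dehn twist) contribution, which is exactly the Sarkar-map subtlety the paper tracks elsewhere.
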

\begin{proof}
Follows from naturality results established by Juh\'asz-Thurston-Zemke \cite{JTZ} and Zemke \cite{Zemkelinkcobord}.
\end{proof}

\subsection{Construction of $\tau_K$}\label{sec:3.2} 

We now construct $\tau_K$. As usual, we begin by defining $\tau_K$ with respect to a fixed choice of Heegaard data for $K$. 

\begin{definition}\label{def:decoration}
Let $(K, \tau)$ be a strongly invertible knot. A \textit{decoration} on $(K, \tau)$ is a choice of orientation for $K$, together with an ordered pair of distinct basepoints $(w, z)$ on $K$ which are interchanged by $\tau$. Following the usual notation for a doubly-based knot, we denote this data by $(K, w, z)$. Here we introduce a slight abuse of notation, in that $K$ is not considered to have a fixed orientation as part of $(K, \tau)$, but is considered to have a fixed orientation as part of the data $(K, w, z)$.
\end{definition}
\noindent
A decorated knot is just an oriented, doubly-based knot in the usual sense, with the caveat that $w$ and $z$ are symmetric under the action of $\tau$. However, because the choice of extra data will be important, we formally emphasize this in Definition~\ref{def:decoration}.

Once a decoration for $(K, \tau)$ is chosen, we may select any set of Heegaard data $\H$ for $(K, w, z)$. Define an automorphism
\[
\tau_{\H} \colon \CFK(\H) \rightarrow \CFK(\H)
\]
as follows. We first apply the tautological pushforward
\[
t \colon \CFK(\H) \rightarrow \CFK(\tau\H).
\]
Here, we denote the pushforward by $t$ so as to not create confusion with the overall action $\tau_K$. Note that $\tau \H$ represents $(K^r, z, w)$, since $\tau$ is an orientation-reversing involution on $K$ and interchanges $w$ and $z$. Since $\H^r$ also represents $(K^r, z, w)$, we have a naturality map
\[
\Phi(\tau \H, \H^r) \colon \CFK(\tau \H) \rightarrow CFK(\H^r).
\]
Finally, we apply the switch map
\[
sw : \CFK(\H^r) \rightarrow \CFK(\H).
\]
\begin{definition}
The action $\tau_{\H} \colon \CFK(\H) \rightarrow \CFK(\H)$ is given by the composition
\[
\tau_{\H}: \CFK(\mathcal{H}) \xrightarrow{t} \CFK(\tau \mathcal{H}) \xrightarrow{\Phi} \CFK(\mathcal{H}^r) \xrightarrow{sw} \CFK(\mathcal{H}).
\]
Technically, the middle map $\Phi$ is only defined up to homotopy, but this clearly does not affect the homotopy class of $\tau_\H$.
\end{definition}

Theorem~\ref{thm:1.8} summarizes the salient features of $\tau_{\H}$:

\begin{proof}[Proof of Theorem~\ref{thm:1.8}] For $(1)$, applying Lemma~\ref{lem:mapset1} and keeping track of the appropriate domains gives the following chain of homotopies:
\begin{align*}
\tau^{2}_{\H}&= (sw \circ \Phi(\tau \H, \H^r) \circ t) \circ (sw \circ \Phi(\tau \H, \H^r) \circ t) \\
&= \Phi(\tau \H^r, \H) \circ \Phi(\H, \tau \H^r) \circ sw \circ t \circ sw \circ t \\
&= \Phi(\tau \H^r, \H) \circ \Phi(\H, \tau \H^r) \circ sw \circ sw \circ t \circ t \\
&\simeq \id.
\end{align*}
Here, in the last line we have used the fact that $t^2 = sw^2 = \id$, together with the fact that the set of $\Phi$ form a transitive system. Claim (2) follows from observing that the pushforward map $t$ and the naturality map $\Phi(\tau \H, \H^r)$ are graded and $\cR$-equivariant, whereas the map $sw$ is skew-graded and $\cR$-skew-equivariant. For (3), we apply Lemmas~\ref{lem:mapset1} and \ref{lem:mapset2} to $\tau_\H \circ \iota_\H$. We move all of the naturality maps to the left, simplify, and then collect the pushforward maps together:
\begin{align*}
\tau_\H \circ \iota_\H &= (sw \circ \Phi(\tau \H, \H^r) \circ t) \circ (\Phi(\rho \bH, \H) \circ \rho \circ \eta) \\
&\simeq \Phi(\tau \H^r, \H) \circ \Phi((\tau \rho \bH)^r, \tau \H^r) \circ sw \circ t \circ \rho \circ \eta \\
&\simeq \Phi(\tau \H^r, \H) \circ \Phi(\tau \brho \bH^r, \tau \H^r) \circ t \circ \brho \circ sw \circ \eta \\
&\simeq \Phi(\tau \brho \bH^r, \H) \circ t \circ \brho \circ sw \circ \eta.
\end{align*}
See \cite[Section 6.1]{HM} for the definition of $\iota_\H$. It will be convenient for us to replace $\varsigma_\H$ with $\varsigma_\H^{-1}$; this is allowed since $\varsigma_{\H}^2 \simeq \id$. Note that $\varsigma_\H^{-1}$ is represented by the basepoint-moving map against the orientation of $K$. Doing this, we obtain
\begin{align*}
\varsigma_\H^{-1} \circ \iota_\H \circ \tau_\H &= (\Phi(\brho^2 \H, \H) \circ \brho^2) \circ (\Phi(\rho\bH, \H) \circ \rho \circ \eta) \circ (sw \circ \Phi(\tau \H, \H^r) \circ t) \\
& \simeq \Phi(\brho^2 \H, \H)\circ \Phi(\brho^2\rho\bH, \brho^2\H) \circ \Phi(\brho^2\rho \tau \bH^r, \brho^2\rho \bH) \circ \brho^2 \circ \rho \circ \eta \circ sw \circ t \\
& \simeq \Phi(\brho^2 \rho \tau \bH^r, \H) \circ \brho^2 \circ \rho \circ \eta \circ sw \circ t \\
& \simeq \Phi(\tau \brho^2 \rho \bH^r, \H) \circ t \circ \brho^2 \circ \rho \circ sw \circ \eta. 
\end{align*}
The claim then follows from Lemma~\ref{lem:isotopicmaps} and the fact that $\tau \circ \brho^2 \circ \rho \simeq \tau \circ \brho$. Finally, the last part of the theorem follows from the fact that the naturality maps commute with each of the factors used in the definitions of $\tau_\H$ and $\iota_\H$.
\end{proof}

\subsection{Naturality of $\tau_K$}\label{sec:3.3}
Theorem~\ref{thm:1.8} shows that $(\CFK(K), \tau_\H, \iota_\H)$ is a $(\tau_K, \iota_K)$-complex whose homotopy type is independent of the choice of Heegaard data for the oriented, doubly-based knot $(K, w, z)$. Moreover, the homotopy equivalences between such triples are precisely the naturality maps of Definition~\ref{def:mapset1}. It thus remains to show that $\tau_{\H}$ is independent of the choice of decoration on $K$. The reason we have separated this from the claim of Theorem~\ref{thm:1.8} is that in general, there is no canonical identification between two knot Floer complexes for $K$ in the case that the orientation on $K$ is reversed or the basepoints are changed. For example, although one can write down complexes for $K$ and $K^r$ which are isomorphic, such an isomorphism is not via a naturality map $\Phi$.

We begin with the choice of orientation on $K$. Let $\H$ be any choice of Heegaard data for $(K, w, z)$. As discussed previously, we have the orientation-reversing isomorphism
\[
\eta \circ sw \colon \CFK(\H) \rightarrow \CFK(\bH^r).
\]
Note that the right-hand side represents $(K^r, w, z)$. We now have:

\begin{lemma}\label{lem:changeorientation}
Let $\H$ be any choice of Heegaard data for $(K, w, z)$ and $\bH^r$ be the corresponding Heegaard data for $(K^r, w, z)$. Then:
\begin{enumerate}
\item $(\eta \circ sw) \circ \tau_{\H} \simeq \tau_{\bH^r} \circ (\eta \circ sw)$
\item $(\eta \circ sw) \circ \iota_{\H} \simeq \varsigma_{\bH^r} \circ \iota_{\bH^r} \circ (\eta \circ sw)$
\end{enumerate}
That is, $\eta \circ sw$ provides a homotopy equivalence
\[
(\CFK(\H), \tau_\H, \iota_\H) \simeq (\CFK(\bH^r), \tau_{\bH^r}, \varsigma_{\bH^r} \circ \iota_{\bH^r}).
\]
\end{lemma}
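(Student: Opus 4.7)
The plan is to directly unfold both sides of each claimed homotopy using the formulas $\tau_\H = sw \circ \Phi(\tau\H, \H^r) \circ t$ from Section~\ref{sec:3.2} and $\iota_\H = \Phi(\rho\bH, \H) \circ \rho \circ \eta$ (as in the proof of Theorem~\ref{thm:1.8}), then to shift the prefactor $(\eta \circ sw)$ from one end of the composition to the other using the commutation relations of Lemmas~\ref{lem:mapset1} and \ref{lem:mapset2}. Throughout, one also uses the elementary identities $\eta^2 \simeq sw^2 \simeq \id$ and $\eta \circ sw \simeq sw \circ \eta$, which follow from the tautological descriptions of these maps together with the fact that $\overline{\bH} = \H$ and $(\H^r)^r = \H$ as Heegaard data.

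For (1), the bookkeeping is quite clean. Applying $(\eta \circ sw)$ to the left of $\tau_\H$, the outer $sw$ cancels the inner $sw$ of $\tau_\H$; pushing the remaining $\eta$ past $\Phi$ and $t$ by naturality transforms the left-hand side into $\Phi(\tau\bH, \bH^r) \circ t \circ \eta$ as a map $\CFK(\H) \to \CFK(\bH^r)$. The right-hand side is handled symmetrically: one unfolds $\tau_{\bH^r} = sw \circ \Phi(\tau\bH^r, \bH) \circ t$ (using $(\bH^r)^r = \bH$), pushes the outer $sw$ past $\Phi$ and $t$ to produce a pair of $sw$'s flanking $\eta$, then collapses this via $\eta \circ sw \simeq sw \circ \eta$ and $sw^2 \simeq \id$. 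Both sides then match, with no Sarkar factor needed.

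For (2), the analogous manipulation of $(\eta \circ sw) \circ \iota_\H$ invokes the key relation $sw \circ \rho \simeq \brho \circ sw$ from Lemma~\ref{lem:mapset2}, which converts the $\rho$ appearing in $\iota_\H$ into a $\brho$. On the other hand, $\iota_{\bH^r}$ is defined using the same half Dehn twist $\rho$ (not $\brho$); unfolding it in the same way gives an analogous expression with $\rho$ in place of $\brho$. The discrepancy is exactly the difference between a half Dehn twist and its inverse, which composes to a full Dehn twist $\rho^2$; by \cite[Section 3]{Zemkequasistab} this pushforward agrees up to naturality with the Sarkar map $\varsigma_{\bH^r}$. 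Applying Lemma~\ref{lem:sarkarcommutes} to move $\varsigma_{\bH^r}$ past $\iota_{\bH^r}$ then yields the claimed identity.

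The main obstacle is not any individual algebraic step but rather the careful bookkeeping: tracking which Heegaard diagram each map is acting on, consistently invoking the Lemma~\ref{lem:mapset1} and Lemma~\ref{lem:mapset2} commutations in their correct directional form, and verifying that the discrepancy in (2) is precisely a single Sarkar factor rather than its inverse. The latter ambiguity is ultimately rendered immaterial by $\varsigma^2 \simeq \id$, but one must still identify that $\rho^2$, and not some unrelated map, is the source of the defect.
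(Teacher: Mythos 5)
Your proposal is correct and follows essentially the same strategy as the paper: unfold $\tau_\H$ and $\iota_\H$ into their component maps, then push $\eta \circ sw$ through using the commutation relations of Lemmas~\ref{lem:mapset1} and \ref{lem:mapset2}, identifying the $\rho$ versus $\brho$ defect in part (2) as a Sarkar factor whose direction is irrelevant since $\varsigma^2 \simeq \id$. The only minor inefficiency is the invocation of Lemma~\ref{lem:sarkarcommutes} at the end: with careful bookkeeping the discrepancy $\brho \simeq \brho^2 \circ \rho$ places $\varsigma_{\bH^r}^{-1}$ (hence $\varsigma_{\bH^r}$) directly at the leftmost position where the claimed identity requires it, so no additional commutation is needed.
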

\begin{proof}
Claim (1) follows immediately from Lemma~\ref{lem:mapset1}, as both $\eta$ and $sw$ commute with all of the individual factors of $\tau_\H$. To see Claim (2), it is more convenient to replace $\varsigma_{\bH^r}$ with $\varsigma_{\bH^r}^{-1}$. Applying Lemmas~\ref{lem:mapset1} and \ref{lem:mapset2}, we obtain
\begin{align*}
(\eta \circ sw) \circ \iota_{\H} &= (\eta \circ sw) \circ (\Phi(\rho \bH, \H) \circ \rho \circ \eta) \\
&\simeq \Phi((\rho \H)^r, \bH^r) \circ \eta \circ sw \circ \rho \circ \eta \\
&\simeq \Phi(\brho \H^r, \bH^r) \circ \brho \circ sw
\end{align*}
and
\begin{align*}
\varsigma_{\bH^r}^{-1} \circ \iota_{\bH^r} \circ (\eta \circ sw)&= (\Phi(\brho^2 \bH^r, \bH^r) \circ \brho^2) \circ (\Phi(\rho \H^r, \bH^r) \circ \rho \circ \eta) \circ (\eta \circ sw) \\
&\simeq \Phi(\brho^2 \bH^r, \bH^r) \circ \Phi (\brho^2\rho \H^r,  \brho^2\bH^r) \circ  \brho^2 \circ \rho \circ \eta \circ \eta \circ sw \\
&\simeq \Phi(\brho^2\rho \H^r, \bH^r) \circ \brho^2 \circ \rho \circ sw.
\end{align*}
The claim then follows from the fact $\brho^2 \circ \rho \simeq \brho$.
\end{proof}

\noindent
Lemma~\ref{lem:changeorientation} thus says that the homotopy equivalence class of $\tau_\H$ is independent of the choice of orientation on $K$. However, note that the homotopy equivalence used in Lemma~\ref{lem:changeorientation} does \textit{not} not establish this for $\iota_{\H}$: the graded isomorphism $\eta \circ sw$ between $\CFK(\H)$ and $\CFK(\bH^r)$ intertwines $\iota_\H$ and $\varsigma_{\bH^r} \circ \iota_{\bH^r}$. We thus obtain a homotopy equivalence between the $(\tau_K, \iota_K)$-triple associated to $(K, w, z)$, and the \textit{twist} of the $(\tau_K, \iota_K)$-triple associated to $(K^r, w, z)$.

We now investigate the dependence of $\tau_\H$ on the choice of basepoints. Let $(w, z)$ and $(w', z')$ be two symmetric pairs of basepoints for $(K, \tau)$. The fixed-point axis of $\tau$ separates $K$ into two arcs, both of which contain a single basepoint from each pair. There are two possibilities: either $w$ and $w'$ lie in the same subarc of $K$, or they lie in opposite subarcs. If $w$ and $w'$ lie in the same subarc, then there is an obvious equivariant diffeomorphism of $S^3$ which moves $w$ into $w'$ and $z$ into $z'$; this is formed by pushing $w$ and $z$ along $K$ in a symmetric fashion, as shown in Figure~\ref{fig:31}.
\begin{figure}[h!]
\includegraphics[scale = 1]{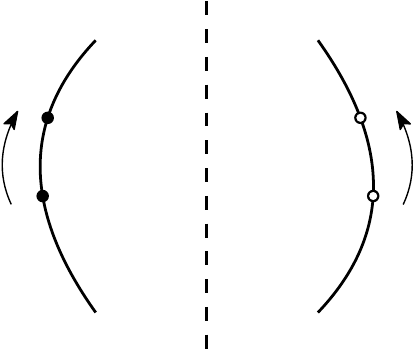}
\caption{Equivariant basepoint-pushing diffeomorphism.}\label{fig:31}
\end{figure}
\noindent
The desired naturality statement in this case is then subsumed by a more general claim regarding equivariant diffeomorphisms of $S^3$. In general, if $f \colon S^3 \rightarrow S^3$ is an equivariant diffeomorphism, then the image $(f(K), \tau)$ of $(K, \tau)$ is another strongly invertible knot. We have:

\begin{lemma}\label{lem:equivariantdiffeomorphism}
Let $f \colon S^3 \rightarrow S^3$ be an equivariant diffeomorphism. Let $\H$ be any choice of Heegaard data for $(K, w, z)$ and $f\H$ be the corresponding pushforward data for $(f(K), f(w), f(z))$. Then
\begin{enumerate}
\item $f \circ \tau_{\H} \simeq \tau_{f\H} \circ f$
\item $f \circ \iota_{\H} \simeq \iota_{f\H} \circ f$
\end{enumerate}
That is, $f$ provides a homotopy equivalence of triples
\[
(\CFK(\H), \tau_\H, \iota_\H) \simeq (\CFK(f\H), \tau_{f\H}, \iota_{f\H}).
\]
\end{lemma}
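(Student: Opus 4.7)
The plan is to observe that the pushforward by $f$ commutes (up to homotopy) with every elementary factor from which $\tau_\H$ and $\iota_\H$ are built, and then to chain these commutations through the definitions of the two maps. Recall that
\[
\tau_\H = sw \circ \Phi(\tau\H, \H^r) \circ t
\qquad\text{and}\qquad
\iota_\H = \Phi(\rho\bH, \H) \circ \rho \circ \eta,
\]
where $t$ denotes the tautological pushforward by $\tau$. By Lemma~\ref{lem:mapset1} the pushforward by $f$ already commutes (up to homotopy) with every naturality map $\Phi$, with $sw$, and with $\eta$, so the only nontrivial point is to check that $f$ also commutes (up to homotopy) with the pushforwards by $\tau$ and by $\rho$.

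For claim (1), the hypothesis that $f$ is equivariant says exactly that $f\tau = \tau f$ as diffeomorphisms of $S^3$, so the tautological pushforwards by $f$ and $\tau$ commute up to homotopy by the last sentence of Lemma~\ref{lem:mapset1}. Chaining this with the commutation of $f$ with $\Phi$ and $sw$ gives
\[
f \circ \tau_\H \;\simeq\; sw \circ \Phi(f\tau\H, f\H^r) \circ f \circ t \;\simeq\; sw \circ \Phi(\tau f\H, (f\H)^r) \circ t \circ f \;=\; \tau_{f\H} \circ f,
\]
where we have used $f\tau\H = \tau f\H$ (from $f\tau = \tau f$) and $f\H^r = (f\H)^r$ (orientation reversal is effected by swapping the $w$- and $z$-labels on the diagram, which commutes with pushing forward by $f$).

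For claim (2), the half Dehn twist $\rho$ along the oriented knot $K$ is supported near $K$, and the corresponding half Dehn twist for $(f(K), f(w), f(z))$ is the conjugate $f\rho f^{-1}$; thus $f\circ \rho = (f\rho f^{-1})\circ f$ as diffeomorphisms of $S^3$, so their pushforwards commute up to homotopy. A parallel chase gives
\[
f \circ \iota_\H \;\simeq\; \Phi(\rho_{f(K)} \overline{f\H}, f\H) \circ \rho_{f(K)} \circ \eta \circ f \;=\; \iota_{f\H} \circ f,
\]
using $f\bH = \overline{f\H}$ (conjugation reverses $\Sigma$ and interchanges $\alphas$ and $\betas$, independently of $f$). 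The main obstacle is purely bookkeeping: at each step one must verify that the Heegaard data produced by composing pushforwards is precisely the data expected as the domain of the next naturality map. All such identifications are routine consequences of the compatibility statements in Lemmas~\ref{lem:mapset1} and \ref{lem:mapset2}, so no deeper input is required.
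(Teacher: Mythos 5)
Your proof is correct and takes the same route as the paper's one-line argument (``$f$ commutes with each of the components of $\tau_\H$ and $\iota_\H$''), just with the commutations spelled out step by step, including the observation that equivariance gives $f\tau = \tau f$ and that the pushed-forward half Dehn twist is the conjugate $f\rho f^{-1}$. Nothing is missing.
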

\begin{proof}
This follows from the fact that $f$ commutes with each of the components of $\tau_\H$ and $\iota_\H$.
\end{proof}
\noindent
Lemma~\ref{lem:equivariantdiffeomorphism} says that up to homotopy equivalence, the triple $(\CFK(K), \tau_\H, \iota_\H)$ is a well-defined invariant up to equivariant diffeomorphism (in the decorated setting). In particular, by using Figure~\ref{fig:31} we may move $(w, z)$ to any other symmetric pair $(w', z')$ so long as $w$ and $w'$ lie in the same subarc of $K$. 

Now consider the case in which $w'$ is chosen to lie in the opposite subarc from $w$. Due to our analysis of the previous case, we may in fact assume that $w' = z$ and $z' = w$. There is then an obvious diffeomorphism which moves $(K, w, z)$ into $(K, z, w)$; namely, the half Dehn twist $\rho$ along the oriented knot $K$. However, $\rho$ does \textit{not} commute with all the components of $\tau_\H$. We instead have:

\begin{lemma}\label{lem:changebasepoints}
Let $\H$ be any choice of Heegaard data for $(K, w, z)$ and $\rho \H$ be the corresponding pushforward data for $(K, z, w)$ under the half Dehn twist $\rho$. Then 
\begin{enumerate}
\item $\rho \circ \tau_{\H} \simeq \varsigma_{\rho\H} \circ \tau_{\rho\H} \circ \rho$
\item $\rho \circ \iota_{\H} \simeq \iota_{\rho\H} \circ \rho$
\end{enumerate}
That is, $\rho$ provides a homotopy equivalence of triples
\[
(\CFK(\H), \tau_\H, \iota_\H) \simeq (\CFK(\rho\H), \varsigma_{\rho\H} \circ \tau_{\rho\H}, \iota_{\rho\H}).
\]
\end{lemma}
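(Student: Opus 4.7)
The plan is to expand $\tau_\H$ and $\iota_\H$ into their atomic pieces and push the outer $\rho$ past each factor using the commutation relations from Lemmas~\ref{lem:mapset1} and \ref{lem:mapset2}. The principal global input is the conjugation identity $\tau \circ \rho = \brho \circ \tau$ as diffeomorphisms of $S^3$, which holds because $\tau$ reverses the orientation on $K$ and hence sends a half Dehn twist along the orientation to one against it. A second key observation is that $\rho$ and $\brho$ differ by the full Dehn twist $\rho^2$ in a tubular neighborhood of $K$; this full Dehn twist acts on $\CFK$ as the Sarkar map $\varsigma_K$, which is precisely the mechanism that will produce the twist in part (1).

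Part (2) is the easier of the two. Expanding $\iota_\H = \Phi(\rho\bH, \H) \circ \rho \circ \eta$ and computing $\rho \circ \iota_\H$, I would push the outermost $\rho$ through the naturality map (Lemma~\ref{lem:mapset1}) and through $\eta$ (Lemma~\ref{lem:mapset2}), using the identification $\overline{\rho\H} = \rho\bH$. This yields $\Phi(\rho^2\bH, \rho\H) \circ \rho^2 \circ \eta$. A parallel expansion of $\iota_{\rho\H} \circ \rho$ together with a single application of $\eta \circ \rho \simeq \rho \circ \eta$ produces the identical expression. Because $\rho$ commutes cleanly with both $\eta$ and the naturality maps, no Sarkar factor ever appears. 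This mirrors the analogous computation in Lemma~\ref{lem:changeorientation}(1).

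Part (1) proceeds in the same spirit but features one essential subtlety. Expanding $\tau_\H = sw \circ \Phi(\tau\H, \H^r) \circ t$ and invoking $\rho \circ sw \simeq sw \circ \brho$ converts the outer $\rho$ into an inner $\brho$. After pushing this $\brho$ through the naturality map and using the pushforward identity $\brho \circ t = t \circ \rho$ (immediate from $\tau\rho = \brho\tau$), together with $\brho\tau\H = \tau\rho\H$ and $\brho\H^r = (\rho\H)^r$, one obtains an expression that recombines into $\tau_{\rho\H} \circ \rho$ except for the residual difference between $\rho$-based and $\brho$-based pushforwards in the tubular neighborhood of $K$. This residual discrepancy is exactly a full Dehn twist, whose Floer-theoretic image is $\varsigma_{\rho\H}$, producing the stated Sarkar twist.

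The main obstacle is tracking this Sarkar-producing identification rigorously. The cleanest route is to rewrite $\brho = \rho \circ \rho^{-2}$ at the outset so that the factor of $\rho^{-2}$ may be transported through the derivation and collected into $\varsigma_{\rho\H}^{-1} \simeq \varsigma_{\rho\H}$ at the end. The asymmetry between parts (1) and (2) then has a clean structural explanation: $\iota_\H$ does not involve $sw$, so the $\rho$/$\brho$ exchange is never invoked and no Sarkar arises; by contrast $\tau_\H$ does involve $sw$, so the exchange is forced and the Sarkar is the unavoidable residue. Combining parts (1) and (2) exhibits $\rho$ as the claimed homotopy equivalence of triples $(\CFK(\H), \tau_\H, \iota_\H) \simeq (\CFK(\rho\H), \varsigma_{\rho\H} \circ \tau_{\rho\H}, \iota_{\rho\H})$.
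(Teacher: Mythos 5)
Your overall strategy is the paper's: expand $\tau_\H$ and $\iota_\H$ into pushforwards, naturality maps, $sw$ and $\eta$, push the outer $\rho$ inward using Lemmas~\ref{lem:mapset1} and \ref{lem:mapset2}, use the conjugation $\tau\rho\simeq\brho\tau$, and realize $\varsigma$ as the full-twist basepoint-moving map. Your part (2) is correct and is exactly the paper's (one-line) argument. Part (1), however, is internally inconsistent as written: if one really uses the three identifications you list --- $\brho\circ t = t\circ\rho$, $\brho\tau\H=\tau\rho\H$, and $\brho\H^r=(\rho\H)^r$ --- the expression recombines into $\tau_{\rho\H}\circ\rho$ on the nose, with no Sarkar factor, i.e.\ they ``prove'' the untwisted statement, which is false in general. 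The twist asserted by the lemma is precisely the failure of one of these identifications: $\brho$ is isotopic rel $\{w,z\}$ to $\rho^{-1}$, not to $\rho$, so while $\brho\circ t$ and $t\circ\rho$ may be compared via Lemma~\ref{lem:isotopicmaps} (using $\brho\tau\simeq\tau\rho$), the subsequent identification of the target data $\brho\H^r$ with $(\rho\H)^r=\rho\H^r$ costs exactly one full twist $\rho^2$, i.e.\ one factor of $\varsigma_{\rho\H}$. Writing it as an equality silently discards that factor, and the ``residual difference between $\rho$-based and $\brho$-based pushforwards'' you then reintroduce is the very discrepancy those equalities erased. (The root of the trouble is that $\rho,\brho$ can be read either as fixed diffeomorphisms, in which case $\tau\rho\simeq\brho\tau$ is literal but the identities of Lemma~\ref{lem:mapset2} are not, or as labels bound to the orientation of the represented knot, in which case the reverse holds; mixing the two readings in one computation is what makes the twist appear and disappear.)

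Your proposed repair --- rewrite $\brho\simeq\rho\circ\rho^{-2}$, transport the $\rho^{\mp 2}$ through the derivation, and collect it into $\varsigma_{\rho\H}^{\pm 1}\simeq\varsigma_{\rho\H}$ --- is the correct one, and carried out carefully it reproduces the paper's bookkeeping: the paper expands \emph{both} $\rho\circ\tau_\H$ and $\varsigma_{\rho\H}\circ\tau_{\rho\H}\circ\rho$ down to a single naturality map composed with a single pushforward (and $sw$), represents $\varsigma_{\rho\H}$ by the full-twist basepoint-moving map, and concludes from Lemma~\ref{lem:isotopicmaps} together with the isotopy $\rho^2\circ\brho\circ\tau\simeq\rho\circ\tau$ rel basepoints. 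So what is missing is not an idea but the careful tracking of which intermediate ``equalities'' hold on the nose, which hold only after composing with naturality maps, and which hold only up to $\varsigma$; the crucial one, $(\rho\H)^r$ versus $\brho\H^r$, is of the last kind. One further small correction: attributing the twist to the presence of $sw$ is a convention artifact, since the tautological switch commutes with everything; geometrically the twist is forced because the pushforward of $\tau$ conjugates $\rho$ to $\brho$, which is why $\iota_\H$ (built from $\eta$, $\rho$ and naturality maps, all of which honestly commute with $\rho$) acquires no Sarkar factor while $\tau_\H$ does.
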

\begin{proof}
To prove the first claim, we compute
\begin{align*}
\rho \circ \tau_{\H} &= \rho \circ (sw \circ \Phi(\tau\H, \H^r) \circ t) \\
&\simeq \Phi(\rho\tau\H^r, \rho\H) \circ \rho \circ sw \circ t \\
&\simeq \Phi(\rho\tau\H^r, \rho\H) \circ \rho \circ t \circ sw
\end{align*}
and
\begin{align*}
\varsigma_{\rho\H} \circ \tau_{\rho\H} \circ \rho &= (\Phi(\rho^2\rho\H, \rho\H) \circ \rho^2) \circ (sw \circ \Phi(\tau\rho\H, (\rho\H)^r) \circ t) \circ \rho \\
&\simeq \Phi(\rho^2\rho\H, \rho\H) \circ \Phi(\rho^2(\tau\rho\H)^r, \rho^2\rho\H) \circ \rho^2 \circ sw \circ t \circ \rho \\
&\simeq \Phi(\rho^2(\tau\rho\H)^r, \rho\H) \circ \rho^2 \circ sw \circ t \circ \rho \\
&\simeq \Phi(\rho^2 \brho \tau \H^r, \rho\H) \circ \rho^2 \circ \brho \circ t \circ sw.
\end{align*}
The claim then follows from Lemma~\ref{lem:isotopicmaps} and the fact that $\rho^2 \circ \brho \circ \tau \simeq \rho \circ \tau$. The second assertion of the lemma follows from the fact that $\rho$ commutes with all the individual components of $\iota_\H$.
\end{proof}

Lemma~\ref{lem:changebasepoints} might seem to imply that the homotopy equivalence class of $\tau_\H$ is dependent on the order of the basepoints $w$ and $z$. Indeed, without a choice of decoration, it initially appears that $\tau_\H$ is only well-defined up to composition with the Sarkar map. This is a reasonable heuristic, but not quite correct: it is important to stress that there is no canonical way to compare two knot Floer complexes for $K$ with different pairs of basepoints. Lemma~\ref{lem:changebasepoints} should thus be interpreted as a statement specifically regarding the choice of homotopy equivalence $\rho$ between a choice of Heegaard data for $(K, w, z)$ and a choice of Heegaard data for $(K, z, w)$. \textit{A priori}, it is possible that a different choice of homotopy equivalence might intertwine $\tau_\H$ and $\tau_{\rho\H}$. Indeed, recall from Lemma~\ref{lem:onetwist} that $\varsigma_{\H} \circ \tau_\H$ and $\tau_\H$ are conjugate up to homotopy. More precisely,
\[
(\CFK(\H), \tau_\H, \iota_\H) \simeq (\CFK(\H), \varsigma_{\H} \circ \tau_\H, \varsigma_{\H} \circ \iota_\H).
\]
\noindent
Hence Lemma~\ref{lem:changebasepoints} combined with Lemma~\ref{lem:onetwist} shows that the homotopy equivalence class of $\tau_\H$ \textit{is} invariant under exchanging the roles of $w$ and $z$, while the homotopy equivalence class of the \textit{triple} $(\CFK(\H), \tau_\H, \iota_\H)$ is not, at least \textit{a priori}. Instead, we see that $(\CFK(\H), \tau_\H, \iota_\H)$ is homotopy equivalent to either of the classes
\[
(\CFK(\rho\H), \varsigma_{\rho \H} \circ \tau_{\rho\H}, \iota_{\rho\H}) \simeq (\CFK(\rho\H), \tau_{\rho\H}, \varsigma_{\rho\H} \circ  \iota_{\rho\H}).
\]


The situation is summarized in the following pair of theorems:

\begin{theorem}\label{thm:3.2A}
Let $(K, \tau)$ be a decorated strongly invertible knot. The triple $(\CFK(\H), \tau_\H, \iota_\H)$ is independent, up to homotopy equivalence, of the choice of $\H$ so long as $\H$ is compatible with the chosen decoration; moreover, it is an invariant of $(K, \tau)$ up to equivariant diffeomorphism, interpreted in the decorated setting.
\end{theorem}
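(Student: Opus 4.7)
The plan is to establish the two claims separately. For independence of the Heegaard data, I will show that for any two choices $\H_1,\H_2$ compatible with the decoration $(K,w,z)$, the naturality map $\Phi(\H_1,\H_2)$ itself provides a homotopy equivalence of $(\tau_K,\iota_K)$-complexes; i.e.\ it intertwines $\iota_{\H_1}$ with $\iota_{\H_2}$ and $\tau_{\H_1}$ with $\tau_{\H_2}$ up to homotopy. The $\iota$-intertwining is essentially the standard argument from Hendricks--Manolescu: since $\iota_\H$ is assembled from $\Phi$, $\rho$, and $\eta$, and naturality maps commute with all three by Lemmas~\ref{lem:mapset1} and \ref{lem:mapset2}, both $\Phi(\H_1,\H_2)\circ\iota_{\H_1}$ and $\iota_{\H_2}\circ\Phi(\H_1,\H_2)$ reduce via the transitive system property to the same composite $\Phi(\rho\bH_1,\H_2)\circ\rho\circ\eta$.

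For the $\tau$-intertwining, the same philosophy applies. Expanding $\tau_{\H_1}=sw\circ\Phi(\tau\H_1,\H_1^r)\circ t$, I will push $\Phi(\H_1,\H_2)$ from the left through $sw$ and then through $\Phi(\tau\H_1,\H_1^r)$ using Lemma~\ref{lem:mapset1}, and collapse the resulting chain of $\Phi$'s by transitivity to obtain $sw\circ\Phi(\tau\H_1,\H_2^r)\circ t$. Symmetrically, expanding $\tau_{\H_2}\circ\Phi(\H_1,\H_2)$ and commuting $\Phi(\H_1,\H_2)$ past $t$ (again by Lemma~\ref{lem:mapset1}) yields the same composite up to homotopy. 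Since $\Phi(\H_1,\H_2)$ is automatically a graded, $\cR$-equivariant homotopy equivalence, this gives the desired equivalence of triples.

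For the second assertion, let $f\colon S^3\to S^3$ be a diffeomorphism carrying a decoration $(K_1,w_1,z_1)$ with involution $\tau_1$ to a decoration $(K_2,w_2,z_2)$ with involution $\tau_2$, so that $f\circ\tau_1=\tau_2\circ f$. When $(K_1,\tau_1)=(K_2,\tau_2)$ and $f$ commutes with $\tau$, this is precisely Lemma~\ref{lem:equivariantdiffeomorphism}. The general case reduces to this one: the tautological pushforward $f\colon\CFK(\H_1)\to\CFK(f\H_1)$ intertwines the components of $\tau_{\H_1}$ with those of $\tau_{f\H_1}$ via the hypothesis $f\circ\tau_1=\tau_2\circ f$, and an identical check handles $\iota$. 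Composing with a naturality map from $f\H_1$ to any chosen Heegaard data for $(K_2,w_2,z_2)$ compatible with $\tau_2$ and invoking part (i) completes the argument.

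I expect the main obstacle to be purely organizational: carefully tracking the domains and codomains of the various naturality maps and their pushforwards, and ensuring that each invocation of Lemma~\ref{lem:mapset1} or Lemma~\ref{lem:mapset2} is applied to the correct pair of Heegaard data (for instance, remembering that $\Phi$ commutes with $sw$ at the cost of replacing $\H$ with $\H^r$). The substantive content is already contained in the commutation lemmas of Section~\ref{sec:3.1}; no genuinely new input is required.
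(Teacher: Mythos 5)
Your proof is correct and follows the same route as the paper: the paper's own proof is simply ``Follows from Theorem~\ref{thm:1.8} and Lemma~\ref{lem:equivariantdiffeomorphism},'' and the justifications there are exactly the two commutation-with-naturality arguments you spell out. Your extra observation that Lemma~\ref{lem:equivariantdiffeomorphism}, as literally stated, treats $f$ commuting with a single $\tau$ while the theorem needs the general $\tau_2\circ f=f\circ\tau_1$ case is a fair point of care, and your reduction (via $t_2\circ f=f\circ t_1$ when $\tau_2\circ f=f\circ\tau_1$) is exactly what is implicitly being used.
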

\begin{proof}
Follows from Theorem~\ref{thm:1.8} and Lemma~\ref{lem:equivariantdiffeomorphism}.
\end{proof}

In the decorated setting, we thus suppress the choice of Heegaard data and refer to the homotopy equivalence class of the triple $(\CFK(K), \tau_K, \iota_K)$ unambiguously. In the undecorated setting, we instead have the following:

\begin{theorem}\label{thm:3.2B}
The homotopy equivalence class of $(\CFK(\H), \tau_\H)$ is independent of the choice of decoration on $(K, \tau)$. Reversing orientation or interchanging the basepoints each alters the homotopy equivalence class of $(\CFK(\H), \tau_\H, \iota_\H)$ by a twist.
\end{theorem}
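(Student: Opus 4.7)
The plan is to assemble the two preceding naturality lemmas (Lemma~\ref{lem:changeorientation} and Lemma~\ref{lem:changebasepoints}) into the claimed statement, together with the basepoint-pushing result of Lemma~\ref{lem:equivariantdiffeomorphism}. Any two decorations on $(K,\tau)$ differ by a composition of three elementary moves: (i) an equivariant isotopy that slides each basepoint along its arc of $K$, (ii) a reversal of orientation on $K$, and (iii) a swap of the ordered pair $(w,z)$ to $(z,w)$, realized by the half Dehn twist $\rho$. So it suffices to verify both assertions on each type of move in turn.

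For the second (triple) assertion, moves of type (i) are covered by Lemma~\ref{lem:equivariantdiffeomorphism}, which produces a homotopy equivalence of triples without any modification by $\varsigma_K$. A move of type (ii) is handled by Lemma~\ref{lem:changeorientation}, which directly produces a homotopy equivalence
\[
(\CFK(\H),\tau_\H,\iota_\H)\simeq(\CFK(\bH^r),\tau_{\bH^r},\varsigma_{\bH^r}\circ\iota_{\bH^r}),
\]
which is a twist of the right-hand triple by $\varsigma_K$ in the $\iota_K$-slot. A move of type (iii) is handled by Lemma~\ref{lem:changebasepoints}, which produces
\[
(\CFK(\H),\tau_\H,\iota_\H)\simeq(\CFK(\rho\H),\varsigma_{\rho\H}\circ\tau_{\rho\H},\iota_{\rho\H}),
\]
a twist by $\varsigma_K$ in the $\tau_K$-slot. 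By Lemma~\ref{lem:onetwist}, twists in the two slots are interchangeable up to homotopy equivalence of triples, so both operations qualify as ``a twist by $\varsigma_K$'' in the sense of Definition~\ref{def:twist}.

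For the first (pair) assertion, we discard $\iota_K$ and track only $(\CFK(\H),\tau_\H)$. The move of type (ii) produces, by part (1) of Lemma~\ref{lem:changeorientation}, a genuine homotopy equivalence of pairs $(\CFK(\H),\tau_\H)\simeq(\CFK(\bH^r),\tau_{\bH^r})$ with no twist. The move of type (iii) produces, by part (1) of Lemma~\ref{lem:changebasepoints}, an equivalence
\[
(\CFK(\H),\tau_\H)\simeq(\CFK(\rho\H),\varsigma_{\rho\H}\circ\tau_{\rho\H});
\]
but the pair-version of Lemma~\ref{lem:onetwist} noted immediately after its proof says $(C,\tau_K)\simeq(C,\varsigma_K\circ\tau_K)$, so this equivalence collapses to $(\CFK(\H),\tau_\H)\simeq(\CFK(\rho\H),\tau_{\rho\H})$. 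Moves of type (i) are again immediate from Lemma~\ref{lem:equivariantdiffeomorphism}.

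This reduces the entire theorem to the three lemmas already in hand; no new homotopies need to be constructed. The only mildly subtle point is the observation that the apparent asymmetry between Lemma~\ref{lem:changeorientation} (twisting $\iota$) and Lemma~\ref{lem:changebasepoints} (twisting $\tau$) is genuinely cosmetic, since by Lemma~\ref{lem:onetwist} the single-slot and opposite-slot twists represent the same equivalence class of twisted triple. I expect this to be the only place where care is needed; everything else is a bookkeeping exercise in chasing the three naturality lemmas.
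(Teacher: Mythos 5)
Your proof is correct and matches the paper's approach: the paper also establishes Theorem~\ref{thm:3.2B} by appealing to Lemma~\ref{lem:changeorientation} and Lemma~\ref{lem:changebasepoints} (with equivariant basepoint pushing already absorbed by Lemma~\ref{lem:equivariantdiffeomorphism}/Theorem~\ref{thm:3.2A}), and the reconciliation of the $\tau$-slot and $\iota$-slot twists is exactly Lemma~\ref{lem:onetwist}. You have just written out the bookkeeping the paper leaves implicit.
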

\begin{proof}
Follows from Lemma~\ref{lem:changeorientation} and Lemma~\ref{lem:changebasepoints}.
\end{proof}

In the undecorated setting, we thus refer to $(\CFK(K), \tau_K)$ unambiguously, although this is not entirely natural. However, we must take care when discussing $(\CFK(K), \tau_K, \iota_K)$ in the undecorated setting. Explicitly, we have constructed homotopy equivalences:

\begin{itemize}
\item $(\CFK(K, w, z), \tau_K, \iota_K) \simeq (\CFK(K^r, w, z), \tau_{K^r}, \varsigma \iota_{K^r})$ via Lemma~\ref{lem:changeorientation}
\item $(\CFK(K, w, z), \tau_K, \iota_K) \simeq (\CFK(K, z, w), \varsigma \tau_{K}, \iota_K)$ via Lemma~\ref{lem:changebasepoints}
\item $(\CFK(K, w, z), \tau_K, \iota_K) \simeq (\CFK(K, w, z), \varsigma \tau_{K}, \varsigma \iota_{K})$ via Lemma~\ref{lem:onetwist}.
\end{itemize}
\noindent
Again, however, note that these should not be treated as canonical.

\subsection{Equivariant concordance}\label{sec:3.4}
We now turn to the behavior of $\tau_K$ under equivariant concordance. As in the previous section, we first need to define a notion of equivariant concordance in the decorated setting. 

\begin{definition}\label{def:decoratedeqconc}
Let $(K_1, \tau_1)$ and $(K_2, \tau_2)$ be two decorated strongly invertible knots and let $(W, \tau_W, \Sigma)$ be an isotopy-equivariant homology concordance between them. We say that $(W, \tau_W, \Sigma)$ \textit{respects the decorations} (alternatively, \textit{is equivariant in the decorated sense}) if:
\begin{enumerate}
\item $\Sigma$ is an oriented knot concordance; and,
\item\label{itm:2} We can find a pair of properly embedded arcs $\gamma_1, \gamma_2 \subseteq \Sigma$ such that:
\begin{enumerate}
\item Each $\gamma_i$ has one end point on $K_1$ and one endpoint on $K_2$, and these endpoints are fixed by $\tau_1$ and $\tau_2$, respectively.
\item We have an isotopy (rel boundary) moving $(\tau_W(\Sigma), \tau_W(\gamma_1), \tau_W(\gamma_2))$ into $(\Sigma, \gamma_1, \gamma_2)$.
\item The arcs divide $\Sigma$ into two rectangular regions, one of which contains both $w_1$ and $w_2$ (we call this the \textit{black region}), and the other of which contains both $z_1$ and $z_2$ (we call this the \textit{white region}).
\end{enumerate}
\end{enumerate}
\end{definition}

\noindent
When the context is clear, we refer to such a $\Sigma$ as a \textit{decorated isotopy-equivariant concordance}. Note that $\Sigma$ is just an isotopy-equivariant cobordism for which we can find an appropriate set of isotopy-equivariant dividing curves, in the sense of \cite{Zemkelinkcobord}.

\begin{theorem}\label{thm:3.3A}
Let $(K_1, \tau_1)$ and $(K_2, \tau_2)$ be two decorated strongly invertible knots. A decorated isotopy-equivariant concordance between $(K_1, \tau_1)$ and $(K_2, \tau_2)$ induces a local equivalence
\[
(\CFK(K_1), \tau_{K_1}, \iota_{K_1}) \sim (\CFK(K_2), \tau_{K_2}, \iota_{K_2}).
\]
\end{theorem}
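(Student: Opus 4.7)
The plan is to construct an explicit local equivalence from the Zemke cobordism map associated to the decorated concordance, and then verify it intertwines both $\iota_K$ and $\tau_K$ up to homotopy, using functoriality properties parallel to those established in Lemmas \ref{lem:mapset1} and \ref{lem:mapset2}. Concretely, let $(W, \tau_W, \Sigma, \gamma_1, \gamma_2)$ be a decorated isotopy-equivariant homology concordance between $(K_1, \tau_1)$ and $(K_2, \tau_2)$. Viewing $(\Sigma, \gamma_1, \gamma_2)$ as a decorated link cobordism (with the black/white regions prescribed by Definition \ref{def:decoratedeqconc}), Zemke's link cobordism formalism \cite{Zemkelinkcobord} furnishes a graded, $\cR$-equivariant chain map
\[
F_\Sigma \colon \CFK(K_1) \to \CFK(K_2).
\]
Because $\Sigma$ is a concordance in a homology cobordism, $F_\Sigma$ is a local map in the sense of Definition~\ref{def:localequivalence}, and similarly for the analogous map associated to the reversed concordance. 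So the only thing to check is that $F_\Sigma$ intertwines $\tau_K$ and $\iota_K$ up to homotopy.

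For $\iota_K$, this is the knot Floer analogue of Hendricks--Manolescu involutive concordance invariance and is essentially in the literature (see \cite{HM, Zemkeconnected}): the conjugation diagram $\eta$ and the basepoint-moving map $\rho$ both commute with $F_\Sigma$ up to homotopy, and $\iota_K$ is built out of exactly these pieces together with naturality maps $\Phi$ (which are functorial for Zemke's cobordism maps). I would simply quote this.

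The core new content is $\tau_K$-equivariance. Recall that $\tau_{K_i} = sw \circ \Phi \circ t_i$, where $t_i$ is the tautological pushforward by $\tau_i$. I would move each factor across $F_\Sigma$ in turn. First, the pushforward: the self-diffeomorphism $\tau_W$ of $W$ carries the decorated concordance $(\Sigma, \gamma_1, \gamma_2)$ to $(\tau_W(\Sigma), \tau_W(\gamma_1), \tau_W(\gamma_2))$, so functoriality of Zemke's cobordism maps under diffeomorphisms gives $t_2 \circ F_\Sigma \simeq F_{\tau_W(\Sigma)} \circ t_1$ (up to the appropriate naturality maps $\Phi$). By hypothesis, there is an isotopy rel $K_1 \cup K_2$ carrying $(\tau_W(\Sigma), \tau_W(\gamma_i))$ to $(\Sigma, \gamma_i)$, and the decorations on the black/white regions are also preserved, so the analogue of Lemma~\ref{lem:isotopicmaps} for decorated link cobordisms yields $F_{\tau_W(\Sigma)} \simeq F_\Sigma$ (up to naturality). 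Next, the naturality factor $\Phi$ commutes with $F_\Sigma$ by the transitive system property. Finally, the switch map $sw$ commutes with $F_\Sigma$ because interchanging $\cU \leftrightarrow \cV$ is precisely the operation of swapping the black and white regions of the decoration, and by condition (c) of Definition~\ref{def:decoratedeqconc} this swap is symmetric on both ends of $\Sigma$. Stringing these homotopies together produces $\tau_{K_2} \circ F_\Sigma \simeq F_\Sigma \circ \tau_{K_1}$.

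The main obstacle will be bookkeeping: each commutation lives between different Heegaard data for different decorated knots, and one has to keep track of the domain and codomain of every pushforward, switch, and naturality map to ensure they compose correctly (exactly as in the proof of Theorem~\ref{thm:1.8}). The role of the two decoration hypotheses is what makes this bookkeeping close up — the oriented concordance guarantees the pushforward argument is well-defined, while the dividing arcs and black/white regions are what make the switch map argument go through. Applying the same argument to a local inverse map (obtained by turning the concordance around) gives the local equivalence.
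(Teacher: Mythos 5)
Your overall approach is the same as the paper's: push the Zemke cobordism map $F_{\mathcal{F}}$ across each factor of $\tau_K = sw \circ \Phi \circ t$ using diffeomorphism invariance, naturality, and the tautological behavior of $sw$, and then close the loop using the decorated isotopy-equivariance hypothesis.

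However, two of your intermediate claims are false as written, and the fact that they happen to cancel is the subtlety you flagged as ``bookkeeping.'' First, the claim that $F_{\tau_W(\Sigma)} \simeq F_\Sigma$ ``because the decorations on the black/white regions are also preserved'' is wrong: $\tau_W$ interchanges $w_i$ and $z_i$, so the pushforward decoration $\tau_W(\mathcal{F})$ has its black and white regions \emph{swapped} relative to $\mathcal{F}$ (the region that $\tau_W$ labels black, because it contains $\tau_W(w_i) = z_i$, is isotopic to the white region of $\mathcal{F}$). What the isotopy hypothesis actually buys you is $F_{\tau_W(\mathcal{F})} \simeq F_{sw(\mathcal{F})}$, not $F_{\mathcal{F}}$. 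Second, ``$sw$ commutes with $F_\Sigma$'' is also not quite right: the tautological commutation is $sw \circ F_{\mathcal{F}} = F_{sw(\mathcal{F})} \circ sw$ (the codomain cobordism map carries the decoration with colors swapped and orientation reversed), which is only the same as ``$sw$ commutes with $F_\Sigma$'' if $sw(\mathcal{F}) = \mathcal{F}$, which is false. The correct version of the argument, and the one the paper uses, combines these: $sw \circ F_{\tau_W(\mathcal{F})} = F_{sw(\tau_W(\mathcal{F}))} \circ sw$ tautologically, and it is the \emph{composite} decoration $sw(\tau_W(\mathcal{F}))$ -- which has had its colors swapped back -- that is isotopic to $\mathcal{F}$ rel boundary by Definition~\ref{def:decoratedeqconc}. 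In your sketch, you'd get the right end result only because both sign errors appear, and they appear an even number of times; if you were to run this argument carefully with just one of the two claims as stated, it would fail. So: right structure, but the factorization of the final two steps should be reorganized as in the paper's commutative diagram, isolating $sw(\tau_W(\mathcal{F}))$ as the object that's isotopic to $\mathcal{F}$.
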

\begin{proof}
By work of Zemke \cite{Zemkelinkcobord}, we obtain a concordance map 
\[
F_{W, \mathcal{F}}: \CFK(K_1) \rightarrow \CFK(K_2).
\]
Here, $\mathcal{F}$ represents the concordance $\Sigma$ with the choice of dividing curves $\gamma_1$ and $\gamma_2$. It is standard that $F_{W, \mathcal{F}}$ is grading-preserving and has the requisite behavior under localization. In \cite[Section 4.5]{HM} and \cite[Theorem 1.5]{Zemkeconnected}, it is shown that $F_{W, \mathcal{F}}$ is $\iota_K$-equivariant (up to homotopy). It thus suffices to show that it is $\tau_K$-equivariant.

Consider the diagram:

\[\begin{tikzcd}
	{\mathcal{CFK}(K_1, w_1, z_1) } && {\mathcal{CFK}(K_2, w_2, z_2)} \\
	\\
	{\mathcal{CFK}(K_1^r, z_1, w_1) } && {\mathcal{CFK}(K_2^r, z_2, w_2)}\\
	\\
	{\mathcal{CFK}(K_1, w_1, z_1) } && {\mathcal{CFK}(K_2, w_2, z_2)}
	\arrow["{F_{W,\mathcal{F}}}", from=1-1, to=1-3]
	\arrow["{t}"', from=1-1, to=3-1]
	\arrow["{t}", from=1-3, to=3-3]
	\arrow["{F_{W,\tau_W(\mathcal{F})}}", from=3-1, to=3-3]
	\arrow["sw"', from=3-1, to=5-1]
	\arrow["sw", from=3-3, to=5-3]
	\arrow["{F_{W,sw(\tau_W(\mathcal{F}))}}", from=5-1, to=5-3]
\end{tikzcd}\]
\noindent
Here, by $\mathcal{CFK}(K_1, w_1, z_1)$, we mean any representative for the complex of $(K_1, w_1, z_1)$ in the transitive system of complexes for doubly-basepointed knots. (Similarly for the other entries in the diagram; we have thus suppressed writing the naturality maps $\Phi$ as part of the vertical arrows.) 

The first square of this diagram commutes due to the diffeomorphism invariance of link cobordisms \cite[Section 1.1]{Zemkelinkcobord}. By $\tau_W(\mathcal{F})$, we mean the image of the decoration of $\mathcal{F}$ under $\tau_W$. The second square of the diagram also tautologically commutes; here, $sw(\tau_W(\mathcal{F}))$ is obtained from $\tau_W(\mathcal{F})$ by interchanging the roles of the black and white regions on $\tau_W(\mathcal{F})$ and reversing orientation. The fact that our concordance is equivariant in the decorated sense shows that $sw(\tau_W(\mathcal{F}))$ is isotopic to $\mathcal{F}$ rel boundary, including the dividing curves and coloring of regions on $\Sigma$. The isotopy invariance of link cobordisms then implies that
\[
F_{W,sw(\tau_W(\mathcal{F}))} \simeq F_{W, \mathcal{F}}.
\]
This shows that $F_{W, \mathcal{F}}$ homotopy commutes with $\tau_K$ and hence constitutes a local map from $(\CFK(K_1), \tau_{K_1}, \iota_{K_1})$ to $(\CFK(K_2), \tau_{K_2}, \iota_{K_2})$. Turning the concordance around gives the local map in the other direction and completes the proof.
\end{proof}

In the decorated setting, the local equivalence class of the triple $(\CFK(K), \tau_K, \iota_K)$ is thus an invariant of isotopy-equivariant concordance. If $(K_1, \tau_1)$ and $(K_2, \tau_2)$ do not come equipped with decorations, then (according to Theorem~\ref{thm:3.2B}) we may still unambiguously speak of the homotopy equivalence classes of $(\CFK(K_1), \tau_{K_1})$ and $(\CFK(K_2), \tau_{K_2})$. We claim that in the presence of an (undecorated) isotopy-equivariant concordance (as in Definition~\ref{def:isotopyeqconc}), these are again guaranteed to be locally equivalent:

\begin{theorem}\label{thm:3.3B}
Let $(K_1, \tau_1)$ and $(K_2, \tau_2)$ be two strongly invertible knots. An isotopy-equivariant concordance between $(K_1, \tau_1)$ and $(K_2, \tau_2)$ gives a local equivalence of pairs
\[
(\CFK(K_1), \tau_{K_1}) \simeq (\CFK(K_2), \tau_{K_2}).
\]
Moreover, suppose we equip $(K_1, \tau_1)$ and $(K_2, \tau_2)$ with decorations, so that the homotopy equivalence classes of their associated $(\tau_K, \iota_K)$-complexes are defined. Then $(\CFK(K_1), \tau_{K_1}, \iota_{K_1})$ is locally equivalent to either $(\CFK(K_2), \tau_{K_2}, \iota_{K_2})$ or the twist of $(\CFK(K_2), \tau_{K_2}, \iota_{K_2})$.
\end{theorem}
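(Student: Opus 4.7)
The plan is to reduce to Theorem~\ref{thm:3.3A} by upgrading the given isotopy-equivariant concordance to a \emph{decorated} one, possibly after swapping $w_2$ and $z_2$ in the decoration of $K_2$. By Lemma~\ref{lem:changebasepoints} combined with Lemma~\ref{lem:onetwist}, such a swap replaces the $(\tau_K, \iota_K)$-complex associated to $(K_2, \tau_2)$ by its twist while leaving the pair $(\CFK(K_2), \tau_{K_2})$ unchanged (cf.\ Theorem~\ref{thm:3.2B} and Section~\ref{sec:2.4}). Hence Theorem~\ref{thm:3.3A} will directly yield both assertions of the present theorem: an unambiguous local equivalence of pairs for the first claim, and a local equivalence of triples up to a possible twist for the second.

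First I fix arbitrary decorations on $(K_1, \tau_1)$ and $(K_2, \tau_2)$. Next I choose disjoint, properly embedded arcs $\gamma_1, \gamma_2 \subseteq \Sigma$ with endpoints at the four fixed points of $\tau_1, \tau_2$ on $K_1 \cup K_2$; these automatically divide $\Sigma$ into two rectangles. The main technical step is to select $\gamma_1, \gamma_2$ so that the unordered pair $\{\gamma_1, \gamma_2\}$ is preserved up to isotopy rel $\partial \Sigma$ by the self-diffeomorphism $\phi \colon \Sigma \to \Sigma$ induced by composing $\tau_W$ with an ambient isotopy of $W$ carrying $\tau_W(\Sigma)$ back to $\Sigma$. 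Since $\Sigma$ is an annulus with four marked boundary points fixed by $\phi$, the ambient mapping class group is generated by the core Dehn twist, so the only obstruction to invariance of the unordered pair is a single integer winding number, which can be absorbed by adjusting the arcs within their isotopy classes (and, if necessary, by modifying the ambient isotopy through Dehn twists of $W$ supported in a neighborhood of $\Sigma$).

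Once such arcs are in hand, the two rectangles inherit a coloring via the decoration of $K_1$: the rectangle whose $K_1$-boundary contains $w_1$ is the ``black'' one. If the $K_2$-boundary of the black rectangle contains $w_2$, then $(W, \tau_W, \Sigma)$ with the chosen decorations and arcs is already a decorated isotopy-equivariant concordance, and Theorem~\ref{thm:3.3A} gives the desired local equivalence. Otherwise, I swap $w_2 \leftrightarrow z_2$ in the decoration of $K_2$; this makes the coloring consistent and, as explained above, replaces the $(\tau_K, \iota_K)$-complex of $(K_2, \tau_2)$ by its twist. Theorem~\ref{thm:3.3A} then gives a local equivalence to the twisted complex, yielding both claims. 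The main obstacle is precisely the construction of the $\tau_W$-invariant dividing arcs, which reduces to a mapping class group calculation on the annulus; once this is settled, the remainder of the argument is bookkeeping.
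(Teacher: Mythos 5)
Your reduction to Theorem~\ref{thm:3.3A} contains a genuine gap at the step where you claim the winding-number obstruction ``can be absorbed by adjusting the arcs within their isotopy classes.'' In fact this obstruction cannot be removed by re-choosing the arcs. If $T$ denotes the core Dehn twist and $\phi$ is isotopic (fixing the four marked boundary points) to $T^n$, then for any other valid choice of dividing arcs $\gamma_i' = T^m\gamma_i$ one still has $\phi(\gamma_i') \simeq T^n\gamma_i'$, since $T$ is central; the integer $n$ is an invariant of the data $(\tau_W, \Sigma, \text{isotopy})$, not of the choice of arcs. Your fallback --- modifying the ambient isotopy ``through Dehn twists of $W$ supported in a neighborhood of $\Sigma$'' --- is also not justified: it is not clear that a Dehn twist of the annulus $\Sigma$ extends to a compactly supported ambient isotopy of a $4$-dimensional homology cobordism $W$ rel $\partial W$, and even granting such an extension one would need to verify that it does not alter the homotopy class of the induced cobordism map. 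Swapping $w_2 \leftrightarrow z_2$ only contributes a $\Z/2$ worth of flexibility and cannot cancel an arbitrary integer winding number.

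The paper sidesteps this entirely: after normalizing to an oriented concordance via Lemma~\ref{lem:changeorientation}, it picks \emph{arbitrary} dividing arcs satisfying only conditions (a) and (c) of Definition~\ref{def:decoratedeqconc}, then observes that $sw(\tau_W(\mathcal{F}))$ differs from $\mathcal{F}$ by some number of full Dehn twists. Each full Dehn twist contributes a factor of the Sarkar map $\varsigma_{K_1}$ to the cobordism map, yielding $\tau_{K_2}\circ F_{W,\mathcal{F}} \simeq F_{W,\mathcal{F}}\circ(\varsigma_{K_1}^n\circ\tau_{K_1})$. Since $\varsigma_{K_1}^2\simeq\id$, one gets a local equivalence to the original triple or to its twist, depending only on the parity of $n$ --- precisely the dichotomy in the statement. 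No control over $n$ is needed and none is attempted. I recommend abandoning the arc-rigidity step and instead proving that the Dehn-twist discrepancy manifests as a power of the Sarkar map on the cobordism map, as the paper does.
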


\begin{proof}
Because of the discussion following Lemma~\ref{lem:onetwist}, the first claim follows from the second. Thus, let $(K_1, \tau_1)$ and $(K_2, \tau_2)$ be two decorated strongly invertible knots. Let $(W, \tau_W, \Sigma)$ be an equivariant concordance between them which may not be equivariant in the decorated sense. Due to Lemma~\ref{lem:changeorientation}, up to twisting the $(\tau_K, \iota_K)$-complexes at either end, we may assume that $\Sigma$ is an oriented concordance. 

Now choose any pair of properly embedded arcs $\gamma_1, \gamma_2 \subseteq \Sigma$ satisfying (a) and (c) of Definition~\ref{def:decoratedeqconc}(\ref{itm:2}). That is, each $\gamma_i$ has one end point on $K_1$ and one endpoint on $K_2$, and these endpoints are fixed by $\tau_1$ and $\tau_2$, respectively. Moreover, the curves $\gamma_1$ and $\gamma_2$ divide $\Sigma$ into two rectangular regions, one of which contains the $w_i$ basepoints and the other of which contains the $z_i$ basepoints. Let $\mathcal{F}$ denote this concordance with the choice of dividing arcs $\gamma_1$ and $\gamma_2$. As usual, $F_{W, \mathcal{F}}$ commutes with $\iota_K$ (up to homotopy). Following the proof of Theorem~\ref{thm:3.3A}, we see that since $\Sigma$ may not be isotopy equivariant in the decorated sense, we no longer have that $sw(\tau_W(\mathcal{F}))$ is isotopic to $\mathcal{F}$. Instead, $sw(\tau_W(\mathcal{F}))$ is necessarily isotopic to a decorated concordance obtained by applying some number of Dehn twists to $\mathcal{F}$.

The concordance map associated to this altered decoration is given by precomposing the concordance map for $sw(\tau_W(\mathcal{F}))$ with a power of the Sarkar map. Following the proof of Theorem~\ref{thm:3.3A}, we thus see that
\[
\tau_{K_2} \circ F_{W, \mathcal{F}} \simeq F_{W, \mathcal{F}} \circ (\varsigma_{K_1}^n \circ\tau_{K_1}).
\]
Hence $F_{W, \mathcal{F}}$ intertwines $\tau_{K_1}$ and $\tau_{K_2}$ up to composition with some power of the Sarkar map. As the Sarkar map is a homotopy involution, the claim follows.
\end{proof}

In the undecorated setting, an equivariant concordance thus only induces a local equivalence of $(\tau_K, \iota_K)$-triples up to twist. (Of course, note that if our knots are not decorated, then these triples are only defined up to twist anyway.) However, we still obtain a local equivalence between their $\tau_K$-complexes.

Having established the necessary naturality results, we now conclude with the construction of the numerical invariants of Theorem~\ref{thm:1.1}:

\begin{definition}\label{def:Vinvariants}
Let $(K, \tau)$ be a strongly invertible knot, which may be neither directed nor decorated. Fix any decoration on $(K, \tau)$ and consider the resulting $(\tau_K, \iota_K)$-complex $(\CFK(K), \tau_K, \iota_K)$. Following Section~\ref{sec:2.5}, define:
\[
\Vtu(K) = \Vtu(\CFK(K), \tau_K, \iota_K) \text{ and } \Vtl(K) = \Vtl(\CFK(K), \tau_K, \iota_K)
\]
and
\[
\Vitu(K) = \Vitu(\CFK(K), \tau_K, \iota_K) \text{ and } \Vitl(K) = \Vitl(\CFK(K), \tau_K, \iota_K).
\]
This is independent of the choice of decoration. Indeed, according to Theorem~\ref{thm:3.2B}, changing the decoration on $(K, \tau)$ corresponds to applying a twist by $\varsigma_K$. However, due to Lemma~\ref{lem:twistnochange}, our numerical invariants are not altered by this operation.
\end{definition}

Putting everything together, we obtain:

\begin{proof}[Proof of Theorem~\ref{thm:1.1}]
By Theorem~\ref{thm:3.3B}, an isotopy-equivariant homology concordance (in the undecorated category) induces a local equivalence of $(\tau_K, \iota_K)$-complexes, up to a twist by $\varsigma_K$. By Lemma~\ref{lem:twistnochange}, this leaves our numerical invariants unchanged.
\end{proof}

\subsection{Directed knots}\label{sec:3.5}

We now turn to the connection between the decorated and directed settings.

\begin{definition}\label{def:compatibledir}
Let $(K, \tau)$ be a directed strongly invertible knot, in the sense of Definition~\ref{def:direction}. We say that a decoration $(K, w, z)$ is \textit{compatible} with this choice of direction if the oriented subarc of $K$ containing the $z$-basepoint induces the same orientation on its boundary as the chosen half-axis. See Figure~\ref{fig:32}.
\end{definition}
\begin{figure}[h!]
\includegraphics[scale = 0.8]{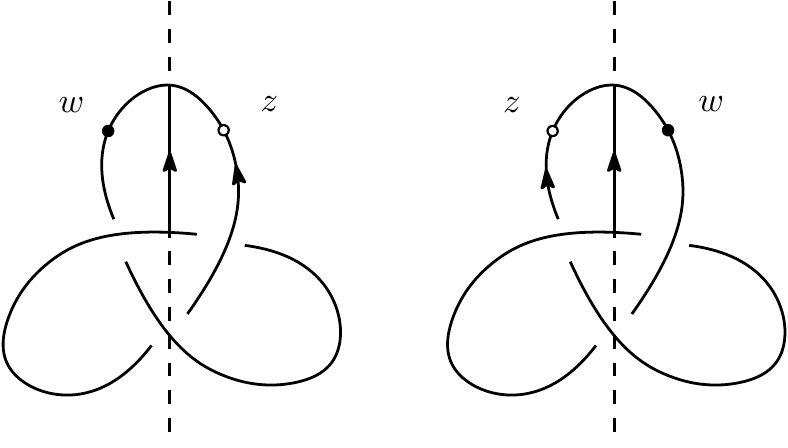}
\caption{Two decorations compatible with a fixed choice of direction.}\label{fig:32}
\end{figure}

If $(K, \tau)$ is a directed strongly invertible knot, then (up to orientation-preserving equivariant diffeomorphism) there are two compatible decorations on $(K, \tau)$. These are related to each other by simultaneously reversing orientation on $K$ and interchanging the roles of $w$ and $z$. See Figure~\ref{fig:32}. Note that as discussed in Remark~\ref{rem:notoriented}, a directed strongly invertible knot does not generally come with a specified orientation. Conversely, suppose that $(K, \tau)$ is a decorated strongly invertible knot. Then there are two possible choices of direction which are compatible with this decoration; they are related by simultaneously switching the half-axis and reversing the axis orientation. 

\begin{theorem}\label{thm:3.5A}
We have a well-defined set map from the directed equivariant concordance group to the local equivalence group of $(\tau_K, \iota_K)$-complexes.
\[
h \colon \eC \rightarrow \K_{\tau, \iota}
\]
\end{theorem}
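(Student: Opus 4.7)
The plan is to construct $h$ in two stages: first lift each directed strongly invertible knot to a decorated one by choosing a compatible decoration, then invoke Theorem~\ref{thm:3.2A} to obtain a well-defined homotopy-equivalence class $(\CFK(K), \tau_K, \iota_K)$ and pass to its local equivalence class in $\K_{\tau, \iota}$. The content of the theorem is then (i) independence of the choice of compatible decoration and (ii) invariance under directed equivariant concordance. Since only a set map is asserted, no group structure needs to be verified here.

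For (i), any two decorations compatible with the same direction are related by simultaneously reversing the orientation on $K$ and swapping $w \leftrightarrow z$. Applying Lemma~\ref{lem:changeorientation} and Lemma~\ref{lem:changebasepoints} in succession yields a homotopy equivalence
\[
(\CFK(K,w,z), \tau_K, \iota_K) \simeq (\CFK(K^r,z,w), \varsigma_K \circ \tau_{K^r}, \varsigma_K \circ \iota_{K^r}),
\]
and Lemma~\ref{lem:onetwist} identifies the right-hand side with $(\CFK(K^r,z,w), \tau_{K^r}, \iota_{K^r})$. The two compatible decorations thus produce homotopy-equivalent (hence locally equivalent) $(\tau_K, \iota_K)$-complexes.

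For (ii), I would promote a directed equivariant concordance $(W, \tau_W, \Sigma)$ to one that is equivariant in the decorated sense (Definition~\ref{def:decoratedeqconc}) and then appeal to Theorem~\ref{thm:3.3A}. Fix compatible decorations on the two ends, and let $F$ be the fixed-point annulus of $\tau_W$. The directedness hypothesis of Definition~\ref{def:direqconc} places both half-axes in a single component of $F \setminus \Sigma$ with matching induced orientations; generically $\Sigma$ meets $F$ transversely in exactly two properly embedded arcs $\gamma_1, \gamma_2$ joining the fixed points on $K_1$ to those on $K_2$. These arcs are pointwise fixed by $\tau_W$ and divide $\Sigma$ into two rectangular regions interchanged by $\tau_W$; compatibility of the decorations with the common oriented half-axis forces the $w_i$-basepoints into one rectangle and the $z_i$-basepoints into the other. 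This realizes $(W, \tau_W, \Sigma)$ as a decorated isotopy-equivariant concordance, and Theorem~\ref{thm:3.3A} delivers the required local equivalence in $\K_{\tau, \iota}$.

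The main obstacle is the bookkeeping in step (ii): one must verify that the $w/z$-labeling of the two rectangular regions of $\Sigma$ produced by $\gamma_1$ and $\gamma_2$ is consistent with the decorations chosen at both ends. This is precisely where the directedness condition, together with the notion of a compatible decoration from Definition~\ref{def:compatibledir}, is essential; without the ``same component of $F \setminus \Sigma$'' hypothesis of Definition~\ref{def:direqconc}, the natural dividing arcs could swap the $w$- and $z$-regions across the cobordism, and one would obtain only the weaker twisted conclusion of Theorem~\ref{thm:3.3B} rather than a genuine local equivalence of $(\tau_K, \iota_K)$-complexes.
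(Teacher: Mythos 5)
Your proof is correct and takes essentially the same two-step approach as the paper: independence of the choice of compatible decoration by composing the orientation-reversal and basepoint-swap $\varsigma_K$-twists (the paper simply cites Theorem~\ref{thm:3.2B} for this cancellation), and concordance invariance by promoting a directed equivariant concordance to a decorated one and appealing to Theorem~\ref{thm:3.3A}. You supply somewhat more geometric detail on extracting the fixed arcs $\gamma_1, \gamma_2$ from $\Sigma \cap F$ and on how the directedness hypothesis of Definition~\ref{def:direqconc} together with Definition~\ref{def:compatibledir} aligns the $w/z$-labeling, but the underlying argument matches the paper's.
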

\begin{proof}
Let $(K, \tau)$ be a directed strongly invertible knot. As explained above, the choice of direction determines two compatible decorations on $(K, \tau)$, which are related to each other by simultaneously reversing orientation on $K$ and interchanging the roles of $w$ and $z$. By Theorem~\ref{thm:3.2B}, applying both of these operations in succession does not change the homotopy type of the associated $(\tau_K, \iota_K)$-complex. Hence using the convention of Definition~\ref{def:compatibledir}, we may unambiguously talk of the $(\tau_K, \iota_K)$-complex of a directed knot.

Moreover, suppose that we have a directed equivariant concordance $(\Sigma, \tau_{S^3 \times I})$ from $(K_1, \tau_1)$ to $(K_2, \tau_2)$. Definition~\ref{def:direqconc} implies that we can find a pair of arcs $\gamma_1$ and $\gamma_2$ which run along the length of $\Sigma$ and are fixed by $\tau_{S^3 \times I}$. We may choose our compatible decorations on $K_1$ and $K_2$ such that $\Sigma$ is an oriented concordance. Then $\Sigma$ (with the arcs $\gamma_1$ and $\gamma_2)$ forms a decorated concordance in the sense of Definition~\ref{def:decoratedeqconc}.
\end{proof}


\section{Connected sums}\label{sec:4}

In this section, we establish further fundamental results regarding the action of $\tau_K$ and show that the map $h$ from Theorem~\ref{thm:3.5A} is a group homomorphism.

\subsection{Connected sums}\label{sec:4.1}
We begin with the connected sum formula. Let $(K_1, \tau_1)$ and $(K_2, \tau_2)$ be two directed strongly invertible knots. As discussed in Section~\ref{sec:2.1}, we may form the equivariant connected sum $(K_1 \# K_2, \tau_1 \# \tau_2)$, which is another directed strongly invertible knot. Note that according to Theorem~\ref{thm:3.5A}, we have a well-defined (up to homotopy equivalence) $(\tau_K, \iota_K)$-complex for each of the directed pairs $(K_1, \tau_1)$, $(K_2, \tau_2)$, and $(K_1 \# K_2, \tau_1 \# \tau_2)$, obtained by choosing a compatible decoration in each case.

\begin{theorem}\label{thm:connectedsum}
Let $(K_1, \tau_1)$ and $(K_2, \tau_2)$ be directed strongly invertible knots and $K_1 \# K_2$ be their equivariant connected sum. Then
\[
(\CFK(K_1 \# K_2), \tau_{K_1 \# K_2}, \iota_{K_1 \# K_2}) \quad \text{and} \quad (\CFK(K_1) \otimes \CFK(K_2), \tau_\otimes, \iota_\otimes)
\]
are homotopy equivalent, where
\[
\tau_\otimes = \tau_{K_1} \otimes \tau_{K_2}
\]
and
\[
\iota_\otimes = (\id \otimes \id + \Phi \otimes \Psi)(\iota_{K_1} \otimes \iota_{K_2}).
\]
\end{theorem}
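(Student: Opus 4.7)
The plan is to fix a tensor product decomposition at the Heegaard diagram level and then verify that each of the three maps making up $\tau_\H$ splits as a tensor product. First I would fix compatible decorations on the directed knots $(K_1,\tau_1)$ and $(K_2,\tau_2)$ and choose $\tau_i$-equivariant Heegaard diagrams $\H_1$, $\H_2$ for the decorated doubly-based knots $(K_i,w_i,z_i)$. As in Figure~\ref{fig:21}, the equivariant connected sum band lies along the axis and is attached at fixed points of $\tau_1$ and $\tau_2$, with one of the symmetric pairs of basepoints for $K_1 \# K_2$ sitting on this band. Performing the standard Heegaard-diagrammatic connected sum at the chosen basepoints produces an equivariant diagram $\H$ for the decorated knot $(K_1 \# K_2, w, z)$ with involution $\tau_1 \# \tau_2$, and the usual connected sum formula for knot Floer homology gives a canonical identification $\CFK(\H) \cong \CFK(\H_1) \otimes \CFK(\H_2)$ of bigraded $\cR$-modules.

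With this identification fixed, the formula $\iota_\otimes \simeq (\id \otimes \id + \Phi \otimes \Psi)(\iota_{K_1} \otimes \iota_{K_2})$ is precisely Zemke's connected sum formula for $\iota_K$ from \cite[Section 4]{Zemkeconnected}, which I would invoke directly. The main remaining work is the analogous formula $\tau_\otimes \simeq \tau_{K_1} \otimes \tau_{K_2}$. Recall from Section~\ref{sec:3.2} that $\tau_\H = sw \circ \Phi(\tau\H, \H^r) \circ t$. I would check that each of these three factors respects the tensor decomposition: the tautological pushforward $t$ induced by $\tau_1 \# \tau_2$ satisfies $t = t_1 \otimes t_2$ under the canonical identification $\CFK(\tau\H) \cong \CFK(\tau_1\H_1) \otimes \CFK(\tau_2\H_2)$, since $\tau_1 \# \tau_2$ acts by $\tau_i$ on the $\H_i$-portion of the diagram; the switch map $sw$ tautologically decomposes as $sw_1 \otimes sw_2$ because it merely interchanges $\cU$ and $\cV$ on each intersection tuple; and the naturality map $\Phi(\tau\H, \H^r)$ is homotopic to $\Phi(\tau_1\H_1, \H_1^r) \otimes \Phi(\tau_2\H_2, \H_2^r)$ once one chooses an intermediate sequence of Heegaard moves between $\tau\H$ and $\H^r$ that respects the connected sum decomposition and appeals to transitivity of the naturality system. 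Composing the three decompositions gives $\tau_\H \simeq \tau_{\H_1} \otimes \tau_{\H_2}$, as claimed.

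Finally, the commutation relation $\tau_\otimes \circ \iota_\otimes \simeq \varsigma_\otimes \circ \iota_\otimes \circ \tau_\otimes$ required by Definition~\ref{def:ticomplex} holds automatically by the algebraic computation carried out in Lemma~\ref{lem:gpA}, so $(\CFK(\H_1) \otimes \CFK(\H_2), \tau_\otimes, \iota_\otimes)$ is a genuine $(\tau_K,\iota_K)$-complex and the identification at the diagram level gives the desired homotopy equivalence of triples. The main obstacle I anticipate is the naturality map step: because both $\tau\H$ and $\H^r$ are themselves connected sums, one must show that the preferred homotopy class of $\Phi(\tau\H, \H^r)$ can be represented by a tensor product of naturality maps. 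This is not conceptually difficult but requires careful bookkeeping to argue that any intermediate stabilization/isotopy sequence can be chosen compatibly with the split, after which transitivity of the naturality system finishes the argument.
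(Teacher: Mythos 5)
Your plan has a genuine gap at its very first step, and it propagates. A decoration for $(K_1 \# K_2, \tau_1 \# \tau_2)$ requires an ordered pair of basepoints \emph{interchanged} by the involution. The standard Heegaard-diagrammatic connected sum that yields the tensor identification $\CFK(\H) \simeq \CFK(\H_1)\otimes\CFK(\H_2)$ is performed at basepoints of the summands, and the surviving basepoints are inherited one from each summand; since $\tau_1\#\tau_2$ preserves each summand's portion of the knot, these two points are \emph{not} a symmetric pair, so the resulting diagram is not compatible with any decoration and $\tau_\H$ is not even defined on it by the recipe of Section~\ref{sec:3.2} (the pushforward $t$ would not land on a diagram with the basepoints swapped). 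If instead you place the symmetric pair on the connected-sum band, as you suggest, then the diagram no longer manifestly splits, and relating it to the split diagram requires basepoint-moving maps (half Dehn twists), which is exactly where Sarkar-map/twist corrections enter (Sections~\ref{sec:2.4} and \ref{sec:3.3}); these corrections are harmless for the pair $(C,\tau_K)$ but are precisely what can change the homotopy type of the \emph{triple}. Relatedly, your claim that $\Phi(\tau\H,\H^r)$ splits as a tensor of naturality maps cannot be settled by ``transitivity of the naturality system'': the tensor identification is not itself a naturality map (naturality maps compare diagrams for one fixed doubly-based knot), so transitivity does not bridge the two sides.

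The second gap is the sentence ``With this identification fixed, the formula for $\iota_\otimes$ is precisely Zemke's connected sum formula, which I would invoke directly.'' Zemke's theorem is a statement about a \emph{specific} homotopy equivalence — the link-cobordism map $G_1 = F_{W,\mathcal{F}}$ for a particular decorated pair-of-pants — not about an arbitrary identification of $\CFK(K_1\# K_2)$ with the tensor product. Whether $\iota_{K_1\# K_2}$ corresponds to $(\id\otimes\id+\Phi\otimes\Psi)(\iota_{K_1}\otimes\iota_{K_2})$ or to $(\id\otimes\id+\Psi\otimes\Phi)(\iota_{K_1}\otimes\iota_{K_2})$ depends on that choice (see Remark~\ref{rem:notabelian} and the remark after Theorem~\ref{thm:connectedsum}), and for the theorem you must exhibit \emph{one and the same} map intertwining both the $\tau$'s and the $\iota$'s. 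This is exactly what the paper's proof arranges: it takes the equivariant 1-handle-plus-band cobordism with the geometrically equivariant decoration $\mathcal{F}$ of Figure~\ref{fig:41}, quotes Zemke for the $\iota$-equivariance of $F_{W,\mathcal{F}}$, and then observes $sw(\tau_W(\mathcal{F}))=\mathcal{F}$, so $\tau$-equivariance of the same map follows tautologically from diffeomorphism invariance of link cobordism maps. To rescue your diagram-level route you would have to prove that your identification agrees, $\tau$- and $\iota$-equivariantly up to homotopy, with Zemke's $G_1$ — which is essentially the content you were hoping to avoid.
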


\begin{proof}
Define an equivariant cobordism from $(S^{3}, K_1, \tau_1) \sqcup (S^{3}, K_2, \tau_2)$ to $(S^3, K_1 \# K_2, \tau_1 \# \tau_2)$ by attaching a 1-handle and then a band in the obvious manner. Denote this by $(W, \tau_W, \Sigma)$; the surface $\Sigma$ is schematically depicted in Figure~\ref{fig:41}. The knots $K_1$ and $K_2$ are represented by the two inner circles and have half-axes given by their respective horizontal diameters (oriented from left-to-right). Their connected sum $K_1 \# K_2$ is represented by the outer ellipse and has half-axis defined similarly. We place $w$- and $z$-basepoints on $K_1$, $K_2$, and $K_1 \# K_2$ as indicated in Figure~\ref{fig:41}; note that these are compatible with each of the chosen directions. Let $\mathcal{F}$ be the set of dividing arcs on $\Sigma$ consisting of the three indicated horizontal arcs. This makes $\Sigma$ into a cobordism which is equivariant in the decorated sense.
\begin{figure}[h!]
\includegraphics[scale = 1]{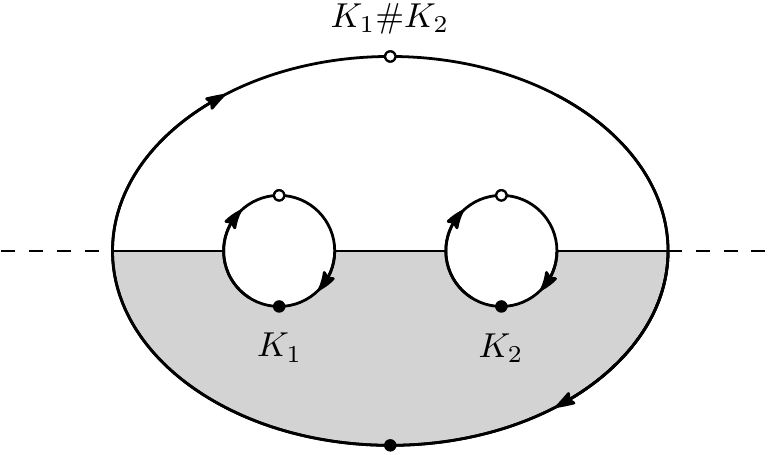}
\caption{Decorated equivariant cobordism from $K_1 \sqcup K_2$ to $K_1 \# K_2$. Black dots represent $w$-basepoints; white dots represent $z$-basepoints. The action of $\tau_W$ is given by reflection across the horizontal axis. See \cite[Figure 6.1]{Zemkeconnected}.}\label{fig:41}
\end{figure}

In \cite[Theorem 1.1]{Zemkeconnected}, Zemke shows that the map 
\[
F_{W, \mathcal{F}} \colon \CFK(K_1) \otimes \CFK(K_2) \rightarrow \CFK(K_1 \# K_2)
\]
defined by the link cobordism with decoration $\mathcal{F}$ is a homotopy equivalence, together with the map in the other direction constructed by turning the cobordism around. (Indeed, Figure~\ref{fig:41} is just \cite[Figure 6.1]{Zemkeconnected}, which corresponds to the map $G_1$ in \cite[Theorem 1.1]{Zemkeconnected}.) Moreover, according to \cite[Theorem 1.1]{Zemkeconnected}, this homotopy equivalence intertwines $(\id \otimes \id + \Phi \otimes \Psi)(\iota_{K_1} \otimes \iota_{K_2})$ on the incoming end with the connected sum involution $\iota_{K_1 \# K_2}$ on the outgoing end. We thus simply need to show that $F_{W, \mathcal{F}}$ intertwines $\tau_{K_1} \otimes \tau_{K_2}$ with $\tau_{K_1 \# K_2}$. This follows from the same argument as in Theorem~\ref{thm:3.3A}. We have the commutative diagram:

\[\begin{tikzcd}
	\CFK(K_1) \otimes \CFK(K_2) && \CFK(K_1 \# K_2) \\
	\\
	\CFK(K_1^r) \otimes \CFK(K_2^r) && \CFK(K_1^r \# K_2^r) \\
	\\
	\CFK(K_1) \otimes \CFK(K_2) && \CFK(K_1 \# K_2)
	\arrow["F_{W, \mathcal{F}}", from=1-1, to=1-3]
	\arrow["t \otimes t"', from=1-1, to=3-1]
	\arrow["t", from=1-3, to=3-3]
	\arrow["{sw \otimes sw}"', from=3-1, to=5-1]
	\arrow["sw", from=3-3, to=5-3]
	\arrow["F_{W, sw(\tau_W(\mathcal{F}))}", from=5-1, to=5-3]
	\arrow["{F_{W, \tau_W(\mathcal{F})}}", from=3-1, to=3-3]
\end{tikzcd}\]
\noindent
Each of the two squares commutes tautologically. It is clear from Figure~\ref{fig:41} that $sw(\tau_W(\mathcal{F}))$ coincides with $\mathcal{F}$; hence $F_{W, \mathcal{F}}$ intertwines $\tau_{K_1} \otimes \tau_{K_2}$ with $\tau_{K_1 \# K_2}$. The proof for the reversed cobordism map is similar. 
\end{proof}

\begin{remark}
Note that the above proof does not allow us to use $\Psi \otimes \Phi$ in place of $\Phi \otimes \Psi$ in the statement of Theorem~\ref{thm:connectedsum}, unless the conventions of Definition~\ref{def:eqconnectedsum} are also changed. This asymmetry is due to the fact that we have specifically used the map $G_1$ in \cite[Theorem 1.1]{Zemkeconnected}. The map $G_2$  in \cite[Theorem 1.1]{Zemkeconnected} intertwines $(\id \otimes \id + \Psi \otimes \Phi)(\iota_{K_1} \otimes \iota_{K_2})$ with $\iota_{K_1 \# K_2}$. However, $G_2$ does \textit{not} correspond to a decoration which is geometrically equivariant; see \cite[Figure 5.1]{Zemkeconnected}. See the discussion in Section~\ref{sec:2.3}.
\end{remark}

This completes the proof of Theorem~\ref{thm:1.9}:

\begin{proof}[Proof of Theorem~\ref{thm:1.9}]
Follows from Theorem~\ref{thm:3.5A} and Theorem~\ref{thm:connectedsum}.
\end{proof}


\subsection{The swapping involution}\label{sec:4.2}

We now compute the action of the swapping involution described in Section~\ref{sec:1.2}. In general, given a knot $K$ in $S^3$, we can form the connected sum $K \# K^r$ as in Figure~\ref{fig:42}. As discussed in \cite[Section 2]{BI}, this admits an obvious strong inversion. In fact, as discussed in \cite[Section 2]{BI}, we obtain a homomorphism from the usual concordance group to $\eC$ by choosing the half-axis depicted in Figure~\ref{fig:42}. We call this the \textit{swapping involution} on $K \# K^r$ and denote it by $\tau_{sw}$. Our goal will be to calculate the $(\tau_K, \iota_K)$-complex of $(K \# K^r, \tau_{sw})$ (with this choice of direction). 
\begin{figure}[h!]
\includegraphics[scale = 1.5]{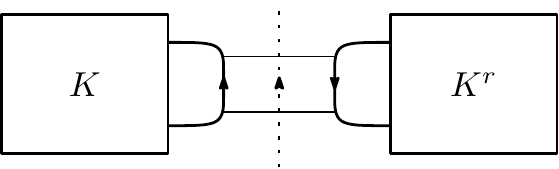}
\caption{The connected sum $K \# K^r$. The half-axis runs vertically across the band and is oriented to coincide with the orientation on $K$. See \cite[Figure 5]{BI}.}\label{fig:42}
\end{figure}

In order to compute the action of $\tau_{sw}$, we need to discuss the construction of $K \# K^r$ more precisely. Assume that $(K, w, z)$ is an oriented, doubly-based knot in $S^3$. We think of the projection of $K$ as lying entirely to the the left of a vertical axis. Denote 180-degree rotation about this axis by $\tau$. We obtain another doubly-based knot $(\tau K, \tau w, \tau z)$ by applying $\tau$ to $K$. Although this can of course be identified with $K$, it will be helpful for us to emphasize the second copy of $K$ as being the image of the first under $\tau$; we thus henceforth write $\tau K$ rather than $K$. We moreover modify the decoration on $\tau K$ by applying $sw$; this gives $(\tau K^r, \tau z, \tau w)$. 

As in Figure~\ref{fig:42}, we now attach a $\tau$-equivariant band to form the connected sum of $K$ and $\tau K^r$. It will be convenient for us to assume that this band has a particular arrangement with respect to the basepoints on $K$ and $\tau K^r$. Specifically, we require the foot of our band on $K$ to lie on the oriented subarc of $K$ running from $z$ to $w$, and the foot of our band on $\tau K^r$ to lie on the oriented subarc running from $\tau w$ to $\tau z$. We furthermore place a pair of symmetric basepoints $w'$ and $z'$ on $K \# \tau K^r$ in such a way so that $w'$ lies on $K$ and $z'$ lies on $\tau K^r$. See Figure~\ref{fig:43}. Note that this makes $(K \# \tau K^r, w', z')$ into a decorated strongly invertible knot, and this choice of decoration is compatible with the direction chosen in Figure~\ref{fig:42}.

\begin{figure}[h!]
\includegraphics[scale = 1]{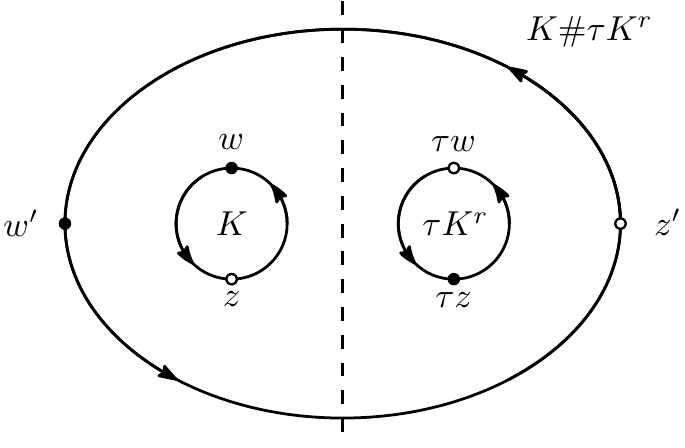}
\caption{Schematic depiction of $(K, w, z) \sqcup (\tau K^r, \tau z, \tau w)$ and $(K \# \tau K^r, w', z')$, together with a pair-of-pants cobordism between them. The actions of $\tau$, $\tau_{sw}$, and $\tau_W$ on the pair-of-pants is given by reflection across the vertical axis.}\label{fig:43}
\end{figure}

Before proceeding further, we first construct the induced action of $\tau$ on the disjoint union $(K, w, z) \sqcup (\tau K^r, \tau z, \tau w)$. Define a chain map
\[
\tau_{exch}: \CFK(K,w,z) \otimes \CFK(\tau K^{r}, \tau z, \tau w)  \rightarrow \CFK(K,w,z) \otimes \CFK(\tau K^{r},\tau z, \tau w)
\]
as follows. First apply the tautological pushforward associated to $\tau$. This induces an isomorphism from $\CFK(K, w, z)$ to $\CFK(\tau K, \tau w, \tau z)$, and also an isomorphism from $\CFK(\tau K^r, \tau z, \tau w)$ to $\CFK(K^r, z, w)$. We thus obtain a map
\[
t:\CFK(K,w,z) \otimes \CFK(\tau K^r,\tau z, \tau w)  \rightarrow \CFK(K^{r},z,w) \otimes \CFK(\tau K, \tau w, \tau z)
\]
which sends the first factor on the left isomorphically onto the second factor on the right, and the second factor on the left isomorphically onto the first factor on the right. We then apply the map $sw$ from Definition~\ref{def:mapset1} in each factor:
\[
sw \otimes sw: \CFK(K^{r},z,w) \otimes \CFK(\tau K,\tau w,\tau z) \rightarrow \CFK(K,w,z) \otimes \CFK(\tau K^{r},\tau z,\tau w).
\]
The action of $\tau_{exch}$ is thus defined by the composition 
\[
\tau_{exch} = (sw \otimes sw) \circ t.
\]
Note that this is just the action of $\tau_K$ defined in Section~\ref{sec:3.2}, generalized to the symmetric link $(K, w, z) \sqcup (\tau K^r, \tau z, \tau w)$.

We now establish the main theorem of this subsection:
\begin{theorem}\label{thm:swapping}
Denote the induced action of $\tau_{sw}$ also by $\tau_{sw}$. Then
\[
(\CFK(K \# \tau K^r), \tau_{sw}, \iota_{K \# \tau K^r}) \quad \text{and} \quad (\CFK(K) \otimes \CFK(\tau K^r), \tau_\otimes, \iota_\otimes)
\]
are homotopy equivalent, where
\[
\tau_\otimes = (\id \otimes \id + \Psi \otimes \Phi) \circ \tau_{exch}
\]
and
\[
\iota_\otimes = \varsigma_\otimes \circ (\id \otimes \id + \Psi \otimes \Phi) \circ (\iota_K \otimes \iota_{\tau K^r}).
\]
\end{theorem}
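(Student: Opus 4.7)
The plan is to imitate the strategy of Theorem~\ref{thm:connectedsum}, replacing the stacked-axes equivariant cobordism with the pair-of-pants cobordism $(W, \tau_W, \Sigma)$ depicted in Figure~\ref{fig:43}; here $\tau_W$ is reflection about the vertical axis, which on the incoming end exchanges the two copies of $S^3$. First, I would equip $\Sigma$ with a $\tau_W$-invariant system of dividing arcs $\mathcal{F}$ compatible with the chosen basepoints $(w,z) \sqcup (\tau z, \tau w)$ on the incoming end and $(w',z')$ on the outgoing end. Crucially, because $w'$ lies on $K$ while $z'$ lies on $\tau K^r$ (see Figure~\ref{fig:43}), the natural equivariant decoration here corresponds to Zemke's ``$G_2$-type'' connected sum cobordism (cf.~\cite[Theorem 1.1]{Zemkeconnected}) rather than the ``$G_1$-type'' used in Theorem~\ref{thm:connectedsum}; this is the geometric origin of the $(\id \otimes \id + \Psi \otimes \Phi)$ factor in place of $(\id \otimes \id + \Phi \otimes \Psi)$.

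Next, I would apply \cite[Theorem 1.1]{Zemkeconnected} to identify $F_{W, \mathcal{F}}$ as a homotopy equivalence that intertwines $(\id \otimes \id + \Psi \otimes \Phi)(\iota_K \otimes \iota_{\tau K^r})$ with the canonical Zemke $G_2$-version of $\iota_{K \# \tau K^r}$ up to homotopy. The additional $\varsigma_\otimes$ factor appearing in the stated formula for $\iota_\otimes$ will arise from a discrepancy between this canonical $G_2$-decoration and the actual equivariant decoration $\mathcal{F}$: these differ by a basepoint-pushing half-twist on one of the boundary components, and Zemke's recipe for modifying cobordism maps under such twists produces exactly a factor of $\varsigma$. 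To handle the $\tau$-intertwining, I would run the diagram analogous to the one in the proof of Theorem~\ref{thm:connectedsum}:
\[
\begin{tikzcd}
\CFK(K) \otimes \CFK(\tau K^r) \arrow[r, "F_{W, \mathcal{F}}"] \arrow[d, "t"'] & \CFK(K \# \tau K^r) \arrow[d, "t"] \\
\CFK(K^r) \otimes \CFK(\tau K) \arrow[r, "F_{W, \tau_W(\mathcal{F})}"] \arrow[d, "sw \otimes sw"'] & \CFK(K^r \# \tau K) \arrow[d, "sw"] \\
\CFK(K) \otimes \CFK(\tau K^r) \arrow[r, "F_{W, sw(\tau_W(\mathcal{F}))}"] & \CFK(K \# \tau K^r)
\end{tikzcd}
\]
where each square commutes tautologically by diffeomorphism-invariance. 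The essential difference from the Theorem~\ref{thm:connectedsum} setup is that the pushforward $t$ on the left now interchanges the two tensor factors, so that the left-hand composite reproduces the exchange map $\tau_{exch}$ rather than a tensor product of individual $\tau$'s; the right-hand composite is $\tau_{sw}$ by definition. A direct inspection of Figure~\ref{fig:43} shows that $sw(\tau_W(\mathcal{F}))$ is isotopic to $\mathcal{F}$ only after performing a single basepoint-moving twist on the outgoing side, and the associated cobordism-map correction is exactly the $(\id \otimes \id + \Psi \otimes \Phi)$ factor in the formula for $\tau_\otimes$.

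The hardest part of the argument will be tracking these basepoint conventions with enough precision to produce both the $(\id + \Psi \otimes \Phi)$ factor in $\tau_\otimes$ and the $\varsigma_\otimes$ factor in $\iota_\otimes$ without sign or convention confusion. As noted in Section~\ref{sec:2.3}, the swapped and unswapped versions of the product on $\K_{\tau,\iota}$ are not \emph{a priori} equivalent, so we do not have the freedom to absorb such extra factors into a basis change. I anticipate that verifying the exact formulas will require an explicit local computation on a model pair-of-pants Heegaard diagram, in the spirit of the proof of \cite[Theorem 1.1]{Zemkeconnected}, together with a careful application of the basepoint-moving naturality results (Lemmas~\ref{lem:mapset1}, \ref{lem:mapset2}, and \ref{lem:isotopicmaps}) to reconcile the two decorations.
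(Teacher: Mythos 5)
Your high-level outline is on the right track—the pair-of-pants cobordism and the three-row commutative diagram are indeed the skeleton of the paper's argument—but the key mechanism is missing and one starting assumption is wrong.

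First, the paper does \emph{not} (and cannot usefully) equip $\Sigma$ with a $\tau_W$-invariant decoration. The proof explicitly states that the decoration $\mathcal{F}$ (Figure~\ref{fig:44}) is \emph{not} equivariant with respect to $\tau_W$. Indeed, if $\mathcal{F}$ were $\tau_W$-invariant, then $sw(\tau_W(\mathcal{F}))$ would coincide with $\mathcal{F}$ exactly, as in the proof of Theorem~\ref{thm:connectedsum}, and the three-row diagram would give $\tau_\otimes = \tau_{exch}$ with no correction term. The very presence of the factor $(\id \otimes \id + \Psi \otimes \Phi)$ in the statement is a signal that no equivariant decoration is being used. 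Also note (see the remark after Theorem~\ref{thm:connectedsum}) that Zemke's $G_2$-type decoration is precisely the one that is \emph{not} geometrically equivariant, which contradicts your identification of the equivariant decoration with $G_2$.

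Second, the correction factor $(\id \otimes \id + \Psi \otimes \Phi)$ does not come from a basepoint-moving twist. It comes from the \emph{bypass relation} (\cite[Lemma 1.4]{Zemkeconnected}, Figure~\ref{fig:bypass}), which is the crucial ingredient you do not invoke. The paper shows $F_{W,sw(\tau_W (\mathcal{F}))} \simeq F_{W, \mathcal{F}} \circ (\id \otimes \id + \Psi \otimes \Phi)$ by applying the bypass relation to a specific disk (Figure~\ref{fig:45}); this is a genuine relation among link-cobordism maps with differing dividing sets, and there is no way to replicate it by a single basepoint-pushing half-twist.

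Third, your account of the $\varsigma_\otimes$ factor in $\iota_\otimes$ is also off. You attribute it to a half-twist discrepancy between $G_2$ and the chosen decoration, but a half-Dehn twist changes the basepoints rather than producing the Sarkar map; it is only a \emph{full} Dehn twist (equivalently, the difference between the two half-twists $\rho$ and $\brho$) that realizes $\varsigma$. The paper derives the $\iota_\otimes$ formula from a second, separate commutative diagram using $\eta$, $\rho$, and $\brho$ (not $t$ and $sw$), reduces the answer to $\brho \circ F_{W, \eta(\mathcal{F})} \circ (\brho \otimes \brho) \simeq F_{W,sw(\tau_W(\mathcal{F}))}$, then applies the already-established bypass identity and the $\rho$/$\brho$ discrepancy. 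This computation also requires Lemma~\ref{lem:sarkarproduct} (commuting $\varsigma_\#$ past $F_{W,\mathcal{F}}$), which you do not mention, and the paper separately has to verify that $F_{W,\mathcal{F}}$ is a homotopy equivalence by relating $\mathcal{F}$ to the $G_1$-type decoration $\mathcal{F}'$ via a quarter- and half-Dehn twist—another step your outline omits.
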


\begin{proof}
As in the proof of Theorem~\ref{thm:connectedsum}, we consider the pair-of-pants cobordism $(W, \tau_W, \Sigma)$ from $K \sqcup \tau K^{r}$ to $K \# \tau K^{r}$. Decorate $\Sigma$ with the set $\mathcal{F}$ of dividing curves depicted in Figure~\ref{fig:44}. The involution $\tau_W$ on this cobordism restricts to $\tau$ on the incoming end and $\tau_{sw}$ on the outgoing end. As in Figure~\ref{fig:43}, this is given by reflection across the vertical axis.

\begin{figure}[h!]
\includegraphics[scale = 1]{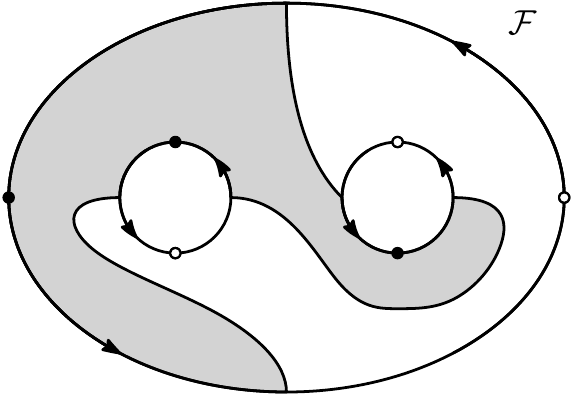}
\caption{The decoration $\mathcal{F}$ on $\Sigma$.}\label{fig:44}
\end{figure}

Now, the decoration $\mathcal{F}$ is not equivariant with respect to $\tau_W$. Nevertheless, we have the following homotopy-commutative diagram:
\[\begin{tikzcd}
	{\mathcal{CFK}(K,w,z) \otimes \mathcal{CFK}(\tau K^{r},\tau z,\tau w)} && {\mathcal{CFK}(K \# \tau K^{r}, w^{\prime}, z^{\prime})} \\
	\\
	{\mathcal{CFK}(K^{r},z,w) \otimes \mathcal{CFK}(\tau K,\tau w,\tau z)} && {\mathcal{CFK}(K^{r} \# \tau K,  z^{\prime}, w^{\prime})} \\
	\\
	{\mathcal{CFK}(K,w,z) \otimes \mathcal{CFK}(\tau K^{r},\tau z,\tau w)} && {\mathcal{CFK}(K \# \tau K^{r}, w^{\prime}, z^{\prime})}
	\arrow["t"', from=1-1, to=3-1]
	\arrow["{sw \otimes sw}"', from=3-1, to=5-1]
	\arrow["{F_{W,\mathcal{F}}}", from=1-1, to=1-3]
	\arrow["t", from=1-3, to=3-3]
	\arrow["{F_{W, \tau_W (\mathcal{F})}}", from=3-1, to=3-3]
	\arrow["sw", from=3-3, to=5-3]
	\arrow["{F_{W,sw(\tau_W (\mathcal{F}))}}", from=5-1, to=5-3]
\end{tikzcd}\]
Here, the decoration $sw(\tau_W (\mathcal{F}))$ is obtained by switching the designation of white and black regions in $\tau_W(\mathcal{F})$ and reversing orientation. Hence we obtain
\[
\tau_{sw} \circ F_{W,\mathcal{F}} \simeq  F_{W,sw(\tau_W (\mathcal{F}))} \circ \tau_{exch}.
\]
We now claim that
\begin{equation}\label{eq:swapequation}
F_{W,sw(\tau_W (\mathcal{F}))} \simeq F_{W, \mathcal{F}} \circ (\id \otimes \id + \Psi \otimes \Phi).
\end{equation}
To see this, we use the bypass relation for link cobordism maps established in \cite[Lemma 1.4]{Zemkeconnected}. A schematic outline of the bypass relation is given in Figure~\ref{fig:bypass}. 
\begin{figure}[h!]
\includegraphics[scale = 1]{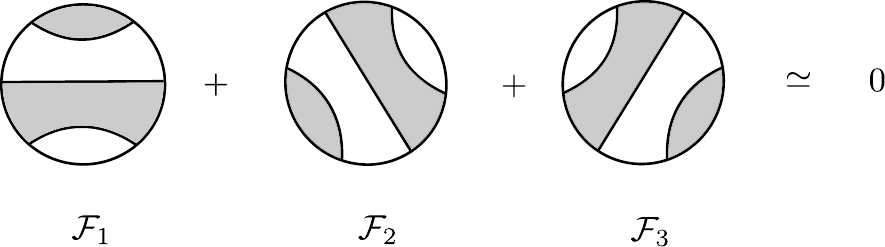}
\caption{The bypass relation, taken from \cite[Figure 1.2]{Zemkeconnected}. See \cite[Section 1.3]{Zemkeconnected} for discussion.}\label{fig:bypass}
\end{figure}

In our case, we apply the bypass relation to the dotted disk in the top-left of Figure~\ref{fig:45}. The effect of applying the bypass relation is also depicted in Figure~\ref{fig:45} and yields the claim. It follows that $F_{W, \mathcal{F}}$ intertwines $\tau_{sw}$ with $(\id \otimes \id + \Psi \otimes \Phi) \circ \tau_{exch}$.
\begin{figure}[h!]
\includegraphics[scale = 0.85]{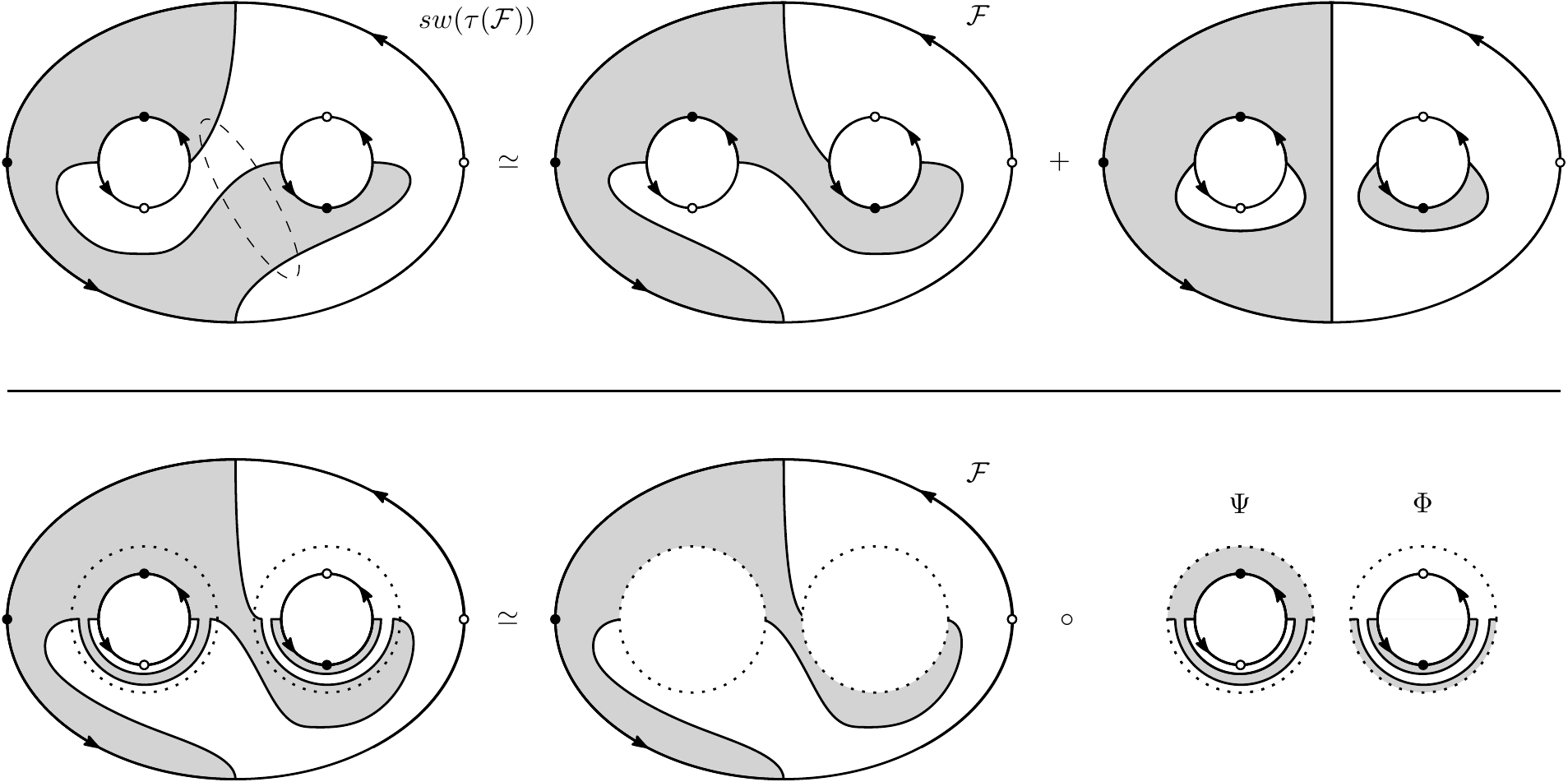}
\caption{Above: applying the bypass relation to the decoration $sw(\tau_W(\mathcal{F}))$. Below: the map induced by the rightmost decoration in the first line is homotopic to the composition $F_{W, \mathcal{F}} \circ (\Psi \otimes \Phi)$.}\label{fig:45}
\end{figure}

We now consider the behavior of $F_{W, \mathcal{F}}$ with respect to $\iota_K$. Note that $F_{W, \mathcal{F}}$ is not the same as the map used in the connected sum formula of Theorem~\ref{thm:connectedsum}, and thus does not necessarily intertwine $\iota_{K \# \tau K^r}$ and $(\id \otimes \id + \Phi \otimes \Psi) \circ (\iota_K \otimes \iota_{\tau K^r})$. Instead, we have the following commutative diagram:
\[\begin{tikzcd}
	{\mathcal{CFK}(K,w,z) \otimes \mathcal{CFK}(\tau K^{r},\tau z,\tau w)\quad\quad} && {\quad\quad\mathcal{CFK}(K \# \tau K^{r}, w^{\prime}, z^{\prime})} \\
	\\
	{\mathcal{CFK}(K,z,w) \otimes \mathcal{CFK}(\tau K^{r},\tau w,\tau z)\quad\quad} && {\quad\quad\mathcal{CFK}(K \# \tau K^{r},  z^{\prime}, w^{\prime})} \\
	\\
	{\mathcal{CFK}(K,w,z) \otimes \mathcal{CFK}(\tau K^{r},\tau z,\tau w)\quad\quad} && {\quad\quad\mathcal{CFK}(K \# \tau K^{r}, w^{\prime}, z^{\prime})}
	\arrow["\eta \otimes \eta"', from=1-1, to=3-1]
	\arrow["{\rho \otimes \rho}"', from=3-1, to=5-1]
	\arrow["{F_{W,\mathcal{F}}}", from=1-1, to=1-3]
	\arrow["\eta", from=1-3, to=3-3]
	\arrow["{F_{W, \eta(\mathcal{F})}}", from=3-1, to=3-3]
	\arrow["\rho", from=3-3, to=5-3]
	\arrow[inner ysep=0.85ex, "\rho \circ {F_{W, \eta(\mathcal{F})}} \circ (\brho \otimes \brho)", from=5-1, to=5-3]
\end{tikzcd}\]
Here, $\eta(\mathcal{F})$ is obtained from $\mathcal{F}$ by interchanging the roles of the white and black regions of $\mathcal{F}$, but not reversing orientation. Hence we obtain
\[
\iota_{K \# \tau K^r} \circ F_{W,\mathcal{F}} \simeq \left(\rho \circ {F_{W, \eta(\mathcal{F})}} \circ (\brho \otimes \brho)\right) \circ (\iota_{K} \otimes \iota_{\tau K^r}).
\]
We now claim that
\[
\rho \circ {F_{W, \eta(\mathcal{F})}} \circ (\brho \otimes \brho) \simeq F_{W, \mathcal{F}} \circ \varsigma_\otimes \circ (\id \otimes \id + \Psi \otimes \Phi).
\]
Indeed, the reader should check that applying an oppositely-oriented half-Dehn twist to each end of $\eta(\mathcal{F})$ gives a decoration isotopic to $sw(\tau_W (\mathcal{F}))$. Hence 
\[
\brho \circ {F_{W, \eta(\mathcal{F})}} \circ (\brho \otimes \brho) \simeq F_{W,sw(\tau_W (\mathcal{F}))}. 
\]
Applying formula (\ref{eq:swapequation}) for $F_{W,sw(\tau_W (\mathcal{F}))}$ and using the fact that $\rho$ and $\brho$ differ by an application of the Sarkar map, we obtain
\[
\rho \circ {F_{W, \eta(\mathcal{F})}} \circ (\brho \otimes \brho) \simeq \varsigma_{\#} \circ F_{W, \mathcal{F}} \circ (\id \otimes \id + \Psi \otimes \Phi).
\]
Here, $\varsigma_\#$ is the Sarkar map on the connected sum $K \# \tau K^r$. The fact that the Sarkar map can be computed algebraically shows that $\varsigma_\# \circ  F_{W, \mathcal{F}} \simeq F_{W, \mathcal{F}} \circ \varsigma_{\otimes}$, since $F_{W, \mathcal{F}}$ is an explicit homotopy equivalence which identifies $\CFK(K \# \tau K^r)$ with the tensor product $\CFK(K) \otimes \CFK(\tau K^r)$. For completeness, however, we include a more concrete topological proof in Lemma~\ref{lem:sarkarproduct} below. The desired claim follows. 

Finally, we show that turning $F_{W, \mathcal{F}}$ around constitutes a homotopy inverse to $F_{W, \mathcal{F}}$. To see that these are homotopy inverses, note that 
\[
F_{W,\mathcal{F}} \simeq q_K \circ F_{W, \mathcal{F}^{\prime}} \circ (\id \otimes \rho).
\]
Here, $q$ is a quarter-Dehn twist and $\mathcal{F}'$ is the decoration in Figure~\ref{fig:46}. 

\begin{figure}[h!]
\includegraphics[scale = 1]{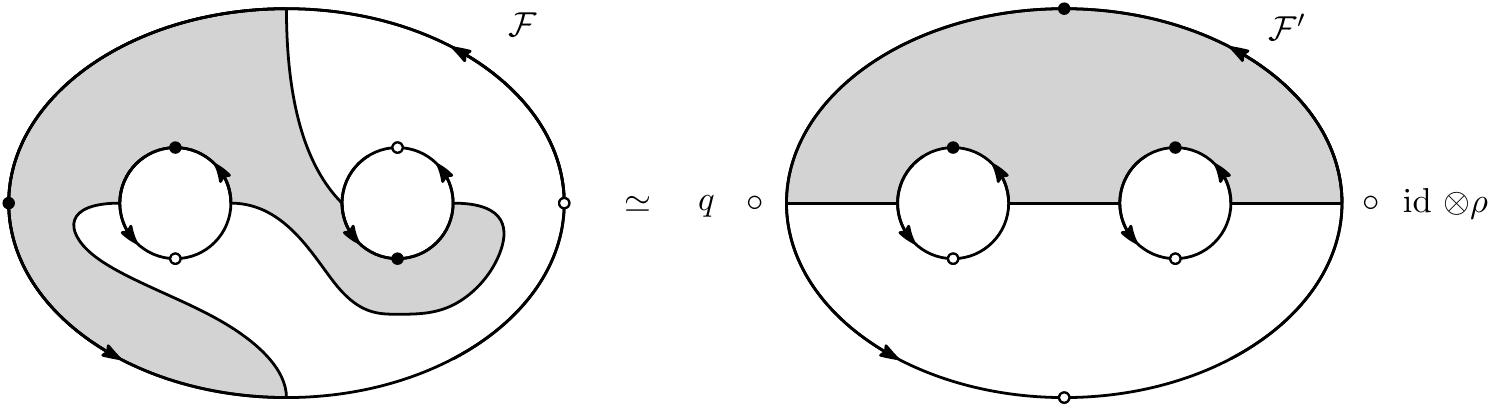}
\caption{Writing $\mathcal{F}$ in terms of $\mathcal{F}'$.}\label{fig:46}
\end{figure}

Writing $\bar{F}_{W,\mathcal{F}}$ for the reverse of $F_{W, \mathcal{F}}$, we thus have
\[
\bar{F}_{W,\mathcal{F}} \simeq (\id \otimes \brho) \circ \bar{F}_{W, \mathcal{F}^\prime} \circ \bar{q}_K.
\]
As in the proof of Theorem~\ref{thm:connectedsum}, $F_{W, \mathcal{F}^\prime}$ and $\bar{F}_{W, \mathcal{F}^\prime}$ are homotopy inverses. The claim follows.
\end{proof}

\begin{lemma}\label{lem:sarkarproduct}
With $\mathcal{F}$ as in the proof of Theorem~\ref{thm:swapping}, we have $\varsigma_{\#} \circ F_{W,\mathcal{F}} \simeq F_{W,\mathcal{F}} \circ \varsigma_{\otimes}$.
\end{lemma}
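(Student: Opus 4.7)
The plan is to avoid any topological analysis of how a Dehn twist on $K \# \tau K^r$ decomposes under the pair-of-pants cobordism, and instead exploit the purely algebraic description of the Sarkar map. Recall from Section~\ref{sec:2.2} that for any abstract knot complex $C$, the maps $\Phi$ and $\Psi$ are obtained by formally differentiating $\partial$ with respect to $\cU$ and $\cV$, and that by \cite[Corollary 2.9]{Zemkeconnected}, the homotopy classes of $\Phi$ and $\Psi$ are well-defined independent of a choice of basis. In particular, any graded, $\cR$-equivariant chain homotopy equivalence between two abstract knot complexes intertwines both $\Phi$ and $\Psi$ up to $\cR$-equivariant homotopy.

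Applying this to the present setting: by construction, $F_{W,\mathcal{F}}$ is a graded, $\cR$-equivariant chain map, and from the last step of the proof of Theorem~\ref{thm:swapping} we know $F_{W,\mathcal{F}}$ is a homotopy equivalence with explicit inverse $\bar F_{W,\mathcal{F}}$. Hence
\[
F_{W,\mathcal{F}} \circ \Phi_\otimes \simeq \Phi_\# \circ F_{W,\mathcal{F}} \quad \text{and} \quad F_{W,\mathcal{F}} \circ \Psi_\otimes \simeq \Psi_\# \circ F_{W,\mathcal{F}},
\]
where the subscript $\#$ refers to the formal derivatives computed on $\CFK(K \# \tau K^r)$ and the subscript $\otimes$ to those computed on $\CFK(K) \otimes \CFK(\tau K^r)$. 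Composing these two relations and using $\varsigma = \id + \Phi\Psi$ yields
\[
\varsigma_\# \circ F_{W,\mathcal{F}} \simeq (\id + \Phi_\# \Psi_\#) \circ F_{W,\mathcal{F}} \simeq F_{W,\mathcal{F}} \circ (\id + \Phi_\otimes \Psi_\otimes) \simeq F_{W,\mathcal{F}} \circ \varsigma_\otimes,
\]
which is the desired conclusion.

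The only potential subtlety is whether one prefers to replace this with a topological proof: realize $\varsigma_\#$ as the basepoint-pushing map $\rho_\#^2$ along $K \# \tau K^r$, isotope the isotopy that realizes $\rho_\#^2$ through the pair-of-pants cobordism $W$ so that on the incoming end it restricts to $\rho_K^2$ and $\rho_{\tau K^r}^2$ on each component, and correct for the portion of the isotopy that crosses the band using the bypass relation (exactly as in equation~(\ref{eq:swapequation}) of the proof of Theorem~\ref{thm:swapping}). However, since $F_{W,\mathcal{F}}$ is already established to be a concrete homotopy equivalence of abstract knot complexes, the algebraic argument above is both shorter and cleaner, and the ``hard part'' (checking that $\Phi$ and $\Psi$ are intertwined by any such equivalence) is already contained in \cite[Corollary 2.9]{Zemkeconnected}. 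I would simply record the algebraic derivation as the proof.
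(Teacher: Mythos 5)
Your argument is correct, but it is genuinely a different route from the one the paper takes for this lemma. You prove the statement purely algebraically: any graded, $\cR$-equivariant chain map intertwines $\Phi$ and $\Psi$ up to homotopy (this follows by formally differentiating $F\partial_1 = \partial_2 F$ with respect to $\cU$ and $\cV$; note that no homotopy-equivalence hypothesis is needed, and the precise reference is Zemke's naturality lemma for $\Phi$ and $\Psi$ rather than the basis-independence statement of \cite[Corollary 2.9]{Zemkeconnected}), and then $\varsigma = \id + \Phi\Psi$ on both sides gives $\varsigma_\# \circ F_{W,\mathcal{F}} \simeq F_{W,\mathcal{F}} \circ \varsigma_\otimes$. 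The one input you are implicitly using is that the $\varsigma_\#$ appearing in the proof of Theorem~\ref{thm:swapping} is the topological basepoint-moving map (the discrepancy between $\rho$ and $\brho$), so your argument needs Zemke's theorem identifying that map with the algebraic formula $\id + \Phi_\#\Psi_\#$; the paper acknowledges exactly this shortcut in the sentence preceding the lemma (``the fact that the Sarkar map can be computed algebraically shows that\ldots''), and states the lemma in order to give a ``more concrete topological proof for completeness.'' The paper's own proof instead stays inside the decorated link-cobordism TQFT: it reduces to the decoration $\mathcal{F}'$, interprets $\Psi\Phi \circ F_{W,\mathcal{F}'}$ as a map associated to an explicit decoration, and applies three bypass relations (Figures~\ref{fig:47}--\ref{fig:49}) to expand it as $F_{W,\mathcal{F}'}$ precomposed with $\id\otimes\Phi\Psi + \Phi\Psi\otimes\id + \Phi\otimes\Psi + \Psi\otimes\Phi$, matching the expansion of $\varsigma_\otimes$. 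Your approach is shorter and more general (it applies to any equivariant chain map, not just this cobordism map); the paper's approach has the virtue of being self-contained within the graphical calculus already being used throughout the proof of Theorem~\ref{thm:swapping}, rather than leaning on the algebraic identification of the basepoint-moving map. Either is acceptable; just fix the citation and drop the unnecessary appeal to $F_{W,\mathcal{F}}$ being a homotopy equivalence.
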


\begin{proof}
As in the proof of Theorem~\ref{thm:swapping}, write $F_{W,\mathcal{F}} \simeq q \circ F_{W,\mathcal{F^{\prime}}} \circ (\mathrm{id} \otimes \rho)$. Using the fact that $q$ and $\rho$ are induced by orientation-preserving diffeomorphisms, it is straightforward to check that $q \circ \varsigma_\# \simeq \varsigma_\# \circ q$ and $(\mathrm{id} \otimes \rho) \circ \varsigma_{\otimes} \simeq \varsigma_{\otimes} \circ (\mathrm{id} \otimes \rho)$. (For the latter, simply note that $\Phi$ and $\Psi$ commute with all such pushforward maps.) It thus suffices to prove the lemma with the decoration $\mathcal{F}'$ in place of $\mathcal{F}$. Applying the definition of $\varsigma_{\otimes}$, this reduces to showing
\begin{gather*}\label{sarkar_relation_2}
\Psi \Phi \circ F_{W,\mathcal{F}^{\prime}} \simeq F_{W,\mathcal{F}^{\prime}} \circ (\mathrm{id} \otimes \Phi\Psi + \Phi \Psi \otimes \mathrm{id} + \Phi \otimes \Psi + \Psi \otimes \Phi).
\end{gather*}

We repeatedly apply suitable bypass relations. First note that $\Psi \Phi \circ F_{W,\mathcal{F}^{\prime}}$ is the map associated to the decoration on the left in Figure~\ref{fig:47}. Applying the bypass relation to the disk on the left-hand side gives the two decorations shown on the right. We denote these by $\mathcal{F}_1$ and $\mathcal{F}_2$, respectively.

\begin{figure}[h!]
\includegraphics[scale = 0.35]{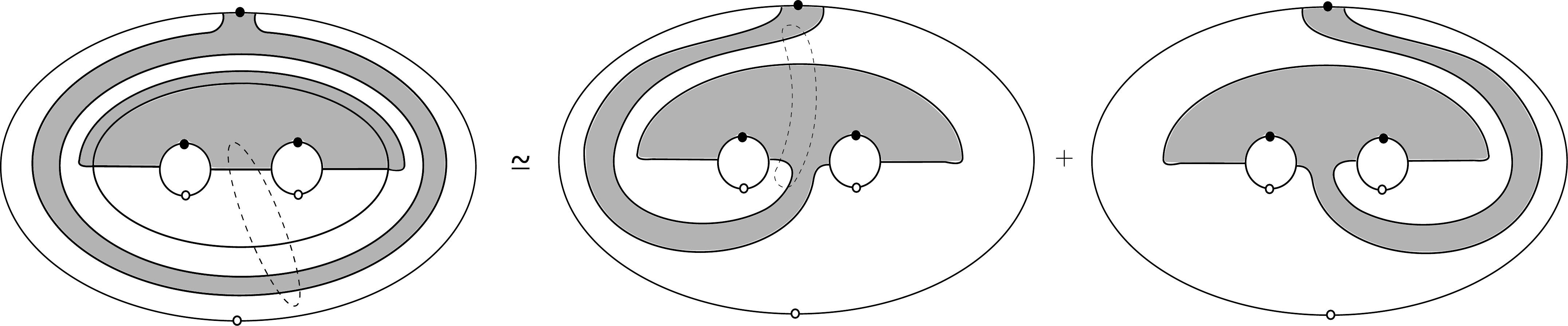}
\caption{Applying a bypass relation to the decoration associated to $\Psi \Phi \circ F_{W, \mathcal{F}^{\prime}}$ gives $\mathcal{F}_1$ (left) and $\mathcal{F}_2$ (right).}\label{fig:47}
\end{figure}

We then further apply a bypass relation to $\mathcal{F}_1$. Doing this for the disk on the right-hand side of Figure~\ref{fig:47} gives the two decorations shown in Figure~\ref{fig:48}, which we denote by $\mathcal{F}_3$ and $\mathcal{F}_4$. Note that $F_{W,\mathcal{F}_4} \simeq F_{W, \mathcal{F}^{\prime}} \circ (\Psi \Phi \otimes \mathrm{id})$. 

\begin{figure}[h!]
\includegraphics[scale = 0.4]{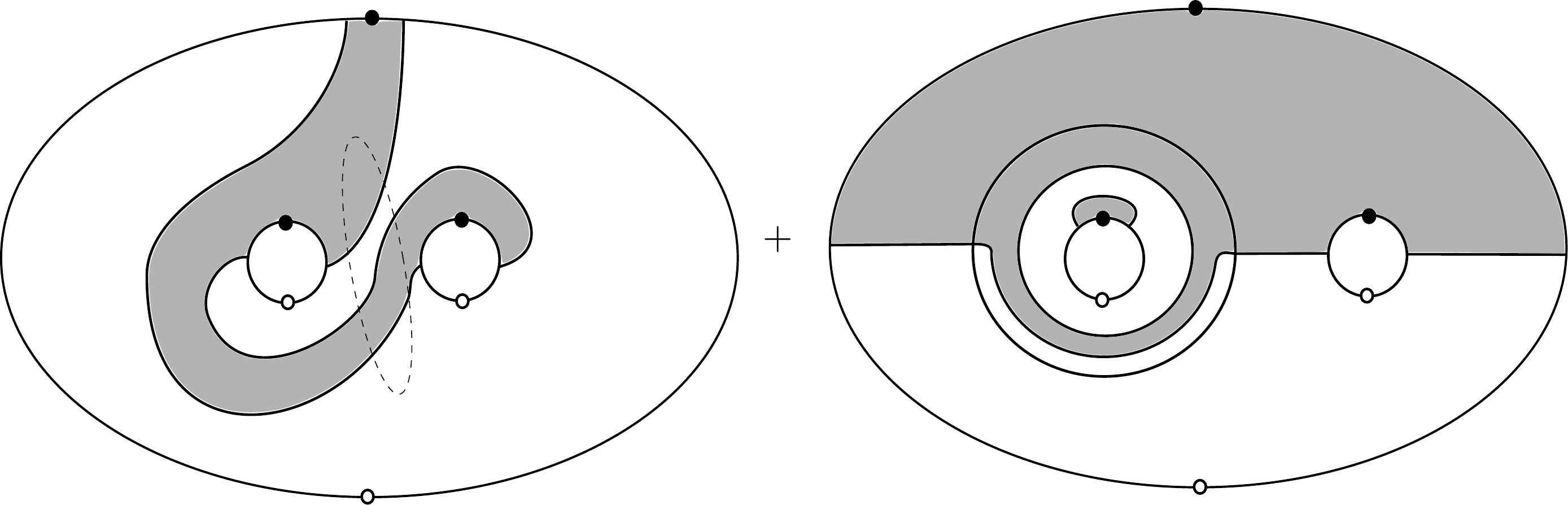}
\caption{Applying a bypass relation to $\mathcal{F}_1$ gives $\mathcal{F}_3$ (left) and $\mathcal{F}_4$ (right).}\label{fig:48}
\end{figure}

We now apply a final bypass relation to $\mathcal{F}_3$. Doing this for the disk indicated in Figure~\ref{fig:48} gives the two decorations shown in Figure~\ref{fig:49}, which we denote by $\mathcal{F}_5$ and $\mathcal{F}_6$. Note that $F_{W,\mathcal{F}_5} \simeq F_{W,\mathcal{F}^{\prime}} \circ (\Psi \otimes \Phi)$, while $\mathcal{F}_6$ is just $\mathcal{F}'$.

\begin{figure}[h!]
\includegraphics[scale = 0.4]{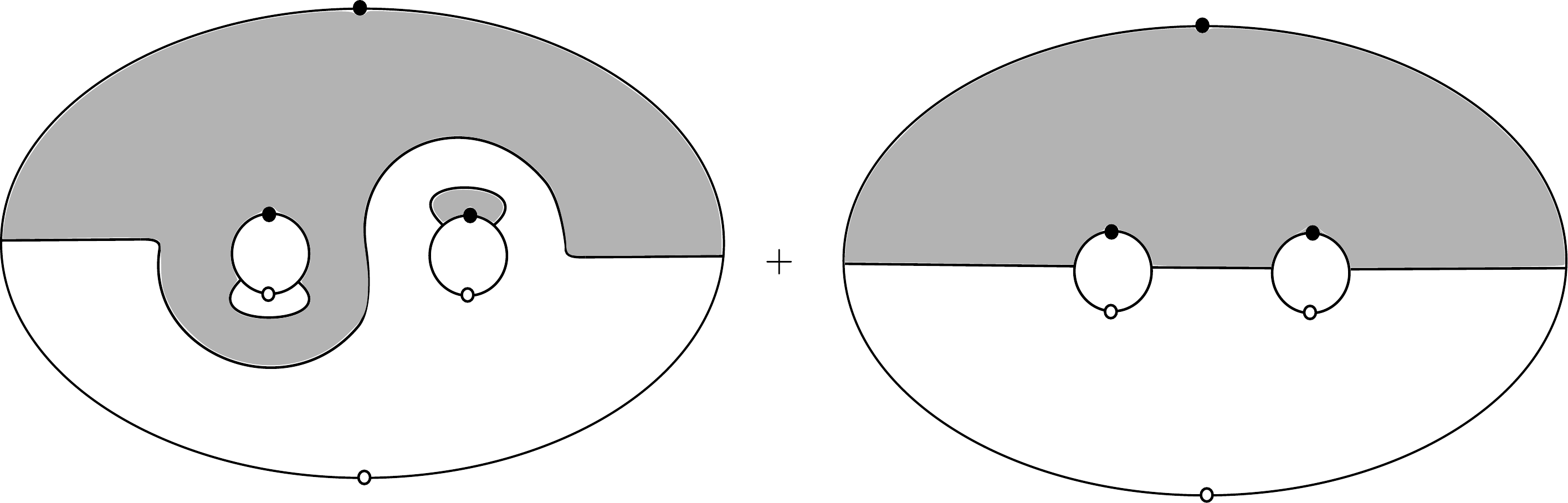}
\caption{Applying a bypass relation to $\mathcal{F}_3$ gives $\mathcal{F}_5$ (left) and $\mathcal{F}_6$ (right).}\label{fig:49}
\end{figure}

Putting the results of Figures~\ref{fig:48} and \ref{fig:49} together, we have that
\begin{gather*}\label{sarkar_relation_3}
F_{W,\mathcal{F}_1} \simeq  F_{W,\mathcal{F}^{\prime}} \circ (\Psi \otimes \Phi) + F_{W,\mathcal{F}^{\prime}} + F_{W, \mathcal{F}^{\prime}} \circ (\Psi \Phi \otimes \mathrm{id}).
\end{gather*}
Now, note that in Figure~\ref{fig:47}, the decorations $F_{W,\mathcal{F}_1}$ and $F_{W,\mathcal{F}_2}$ are related by reflection across the vertical axis. By applying similar bypass relations to $\mathcal{F}_2$ (using the reflections of the disks for $\mathcal{F}_1$) we obtain
\begin{gather*}\label{sarkar_relation_4}
F_{W,\mathcal{F}_2} \simeq  F_{W,\mathcal{F}^{\prime}} \circ (\Phi \otimes \Psi)  + F_{W,\mathcal{F}^{\prime}} + F_{W, \mathcal{F}^{\prime}} \circ ( \mathrm{id} \otimes \Psi \Phi).
\end{gather*}
Adding these two relations together and using the fact that $\Psi$ and $\Phi$ homotopy commute gives the desired result.
\end{proof}


\section{Equivariant slice genus bounds}\label{sec:5}

We now prove Theorem~\ref{thm:1.2}. This closely follows \cite[Theorem 1.7]{JZgenus}.

\begin{proof}[Proof of Theorem~\ref{thm:1.2}]
Let $(K, \tau)$ be a strongly invertible knot, which may be neither directed nor decorated. Let $\Sigma$ be an isotopy-equivariant slice surface for $(K, \tau)$ in some homology ball $(W, \tau_W)$. We may assume $\tau_W$ acts as $\tau \times \id$ on some collar neighborhood $(\partial W) \times I$. We may furthermore assume that the isotopy from $\tau_W (\Sigma)$ to $\Sigma$ does not move this collar neighborhood of $\partial W$ and that $\Sigma$ is exactly equivariant near $\partial W$. Hence we can puncture $\Sigma$ at some fixed point of $\tau_W$ near $\partial W$ and treat $\Sigma$ and $\tau_W(\Sigma)$ as isotopy-equivariant knot cobordisms from the unknot (with the obvious strong inversion) to $(K, \tau)$. See Figure~\ref{fig:51}.

For reasons that will be clear presently, it will be convenient for us to stabilize $\Sigma$ a certain number of times. If the genus of $\Sigma$ is even, then we stabilize $\Sigma$ twice; if the genus of $\Sigma$ is odd, then we stabilize $\Sigma$ once. We denote the stabilized surface by $\Sigma'$; note that the genus of $\Sigma'$ is even. We carry out the stabilization equivariantly near $\partial W$, so that $\Sigma'$ is still isotopic to $\tau_W(\Sigma')$ rel $K$. See Figure~\ref{fig:51}.

\begin{figure}[h!]
\includegraphics[scale = 0.8]{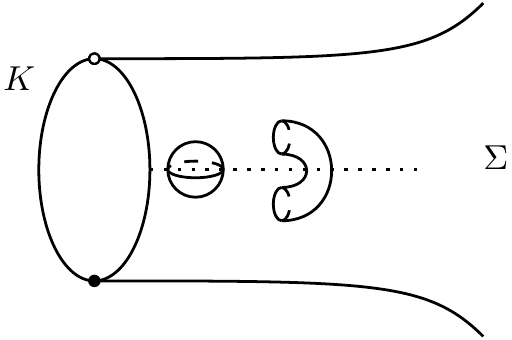}
\caption{We may assume that $\Sigma$ is exactly equivariant near $\partial \Sigma = K \subseteq \partial W$. We have chosen an arc of fixed points lying on $\Sigma$; this is represented by the dotted line. Puncturing $W$ at a point on this dotted line gives an isotopy-equivariant knot cobordism from the unknot to $K$. This is schematically represented by cutting out the sphere indicated in the figure. Stabilizing $\Sigma$ is represented by the 1-handle with feet near the dotted line.}\label{fig:51}
\end{figure}

Now fix any pair of dividing arcs $\mathcal{F}$ on $\Sigma'$ such that the resulting black and white regions have equal genus. Note that this implicitly fixes a decoration on the ends of $\Sigma'$, but due to Lemma~\ref{lem:twistnochange} this choice of decoration on $K$ does not affect the statement of the theorem. Consider the knot cobordism map $F_{W, \mathcal{F}}$. We have the usual commutative diagram
\[\begin{tikzcd}
	{\mathcal{CFK}(U) } && {\mathcal{CFK}(K)} \\
	\\
	{\mathcal{CFK}(U^r) } && {\mathcal{CFK}(K^r)} \\
	\\
	{\mathcal{CFK}(U) } && {\mathcal{CFK}(K)}
	\arrow["{F_{W,\mathcal{F}}}", from=1-1, to=1-3]
	\arrow["{t}"', from=1-1, to=3-1]
	\arrow["{t}", from=1-3, to=3-3]
	\arrow["{F_{W,\tau_W(\mathcal{F})}}", from=3-1, to=3-3]
	\arrow["sw"', from=3-1, to=5-1]
	\arrow["sw", from=3-3, to=5-3]
	\arrow["{F_{W,sw(\tau_W(\mathcal{F}))}}", from=5-1, to=5-3]
\end{tikzcd}\]
where we have suppressed the choice of basepoints. Importantly, we have \textit{not} assumed that $\Sigma$ (or $\Sigma'$) is isotopy-equivariant in the decorated sense. Hence although $\tau_W(\Sigma')$ is isotopic to $\Sigma'$, it is not true that the image of the decoration $\tau_W(\mathcal{F})$ under this isotopy must coincide with the decoration $sw(\mathcal{F})$. Indeed, in general we might obtain a completely different decoration on $\Sigma'$. We thus instead invoke \cite[Proposition 5.5]{JZgenus}. This states that if $\Sigma'$ is any stabilized surface and $\mathcal{F}^A$ and $\mathcal{F}^B$ are any two sets of dividing curves (each consisting of a pair of dividing arcs) on $\Sigma'$ with $\chi(\mathcal{F}^A_w) = \chi(\mathcal{F}^B_w)$ and $\chi(\mathcal{F}^A_z) = \chi(\mathcal{F}^B_z)$, then
\[
[F_{W, \mathcal{F}^A}(1)] = [F_{W, \mathcal{F}^B}(1)].
\]
Hence in our case $F_{W, \mathcal{F}}$ and $F_{W,sw(\tau_W(\mathcal{F}))}$ are chain homotopic. This shows that $F_{W, \mathcal{F}}$ induces a $\tau_K$-equivariant map from the trivial complex of the unknot to $\CFK(K)$. Our argument here is almost identical to that of \cite[Theorem 1.7]{JZgenus}; there, the authors show that $F_{W, \mathcal{F}}$ is $\iota_K$-equivariant (up to homotopy).

The map $F_{W, \mathcal{F}}$ has grading shift $(-g(\Sigma'), -g(\Sigma'))$. We thus obtain a map from the trivial complex $\F[U]$ to the large surgery complex $\Co$ of $\CFK(K)$. This lowers grading by $g(\Sigma')$ and is a homotopy equivalence after inverting $U$. It follows that 
\[
\underline{d}_{\tau}(\Co) \geq -g(\Sigma').
\]
This gives the inequality 
\[
\Vtl(K) \leq \left \lceil{\frac{1+ g(\Sigma)}{2}} \right \rceil
\]
keeping in mind the number of stabilizations relating $\Sigma$ to $\Sigma'$. The same argument, together with the fact that $F_{W, \mathcal{F}}$ also homotopy commutes with $\iota_K$, gives the desired inequality for $\Vitl(K)$. The other claims of the theorem are obtained by turning the cobordism around and reversing orientation.
\end{proof}

\begin{remark}\label{rem:weakerbound}
The reader may be confused as to why Theorem~\ref{thm:1.2} is weaker than Theorem~\ref{thm:1.1} in the genus-zero case. This is because in the proof of Theorem~\ref{thm:1.2}, we stabilize in order to deal with the possible non-equivariance of the dividing curves. 
However, in the genus-zero case, no stabilizations are actually necessary. Following the more specialized proof of Theorem~\ref{thm:1.2} in this situation gives the same conclusion as from Theorem~\ref{thm:1.1}.
\end{remark}

\section{Torus knots}\label{sec:6}

We now bound the invariants $\Vtu$ and $\Vtl$ for the strongly invertible knots $(K_n, \tau_n)$ from the introduction. Recall that $K_n$ is constructed by taking the connected sum of $(T_{2n, 2n+1} \# T_{2n, 2n+1}, \tau_\#)$ with the mirror of $(T_{2n, 2n+1} \# T_{2n, 2n+1}, \tau_{sw})$. We deal with each one of these two factors in turn. Throughout, let $n$ be odd.

\subsection{The connected sum involution} Using the connected sum formula, we first compute the $\tau_K$-complex of $(T_{2n, 2n+1} \# T_{2n, 2n+1}, \tau_\#)$. For this, we need to know the $\tau_K$-complex of $(T_{2n, 2n+1}, \tau)$, where $\tau$ is the unique strong inversion on $T_{2n, 2n+1}$. 


\begin{definition}\label{def:Cn}
Let $\cC_n$ be the staircase complex associated to the parameter sequence
\[
(c_{-2n+1},c_{-2n+2},\dots, c_{2n-2},c_{2n-1}) = (1, 2n-1, 2, 2n-2, \dots, 2n-2, 2, 2n-1, 1).  
\]
This is displayed in Figure~\ref{fig:Cn}. Explicitly, $\cC_n$ is generated by the elements
\begin{align*}
    & x_k \quad \text{for } -2n+2 \leq k \leq 2n-2 \text{ and } k \text{ even; and} \\
    & y_\ell \quad \text{for } -2n+1 \leq \ell \leq 2n-1 \text{ and } \ell \text{ odd}
\end{align*}
and has nonzero differentials given by
\[
\partial x_{k} = \scV^{c_{k-1}}y_{k-1} + \scU^{c_{k+1}}y_{k+1}.
\]

\begin{figure}[h!]
	\begin{tikzcd}[column sep={1cm,between origins}, labels=description, row sep=1cm]
	y_{-2n+1}&& y_{-2n+3}&& \cdots&& y_{-1}&& y_{1}&&\cdots&& y_{2n-3}&&y_{2n-1}\\
	&x_{-2n+2}\ar[ul, "\scV"]\ar[ur, "\scU^{2n-1}"]&&x_{-2n+4}\ar[ul, "\scV^2"] \ar[ur, "\scU^{2n-2}"]&& \cdots\ar[ur] \ar[ul]  && x_{0} \ar[ul, "\scV^{n}"] \ar[ur, "\scU^{n}"]&& \cdots \ar[ur] \ar[ul]  && x_{2n-4} \ar[ul, "\scV^{2n-2}"] \ar[ur, "\scU^{2}"]&& x_{2n-2} \ar[ul, "\scV^{2n-1}"] \ar[ur, "\scU"]
	\end{tikzcd}
	\caption{The complex $\cC_n$. See \cite[Figure 3.1]{HHSZ2}.}\label{fig:Cn}
\end{figure}
\noindent
Together with the definition of $\partial$, the convention that $\grV(y_{-2n+1}) = \grU(y_{2n-1}) = 0$ determines the gradings of all of the generators of $\cC_n$. It will be helpful for us to explicitly record:
\begin{align}\label{eq:gradingsI}
\begin{split}
\grU(y_{2n - 1 - 2i}) &= - 2(1 + 2 + \cdots + i) \\
\grV(y_{-2n + 1 + 2i}) &= - 2(1 + 2 + \cdots + i).
\end{split}
\end{align}
There is a unique skew-graded homotopy involution on $\cC_n$, which is given by
\begin{align*}
    &\tau(x_k) = x_{-k} \\
    &\tau(y_\ell) = y_{-\ell}.
\end{align*}
\end{definition}

In \cite[Proposition 3.1]{HHSZ2}, it is shown that the knot Floer complex of $T_{2n, 2n+1}$ is homotopy equivalent to $\cC_n$. By Theorem~\ref{thm:1.8}, we know that $\tau_K$ is a skew-graded homotopy involution on $\CFK(T_{2n, 2n+1})$. Thus the $\tau_K$-complex of $(T_{2n, 2n+1}, \tau)$ is given by $(\cC_n, \tau)$, with $\tau$ as above. Applying Theorem~\ref{thm:connectedsum}, we conclude that the $\tau_K$-complex of $(T_{2n, 2n+1} \# T_{2n, 2n+1}, \tau_{\#})$ is homotopy equivalent to $(\cC_n \otimes \cC_n, \tau \otimes \tau)$. The goal for this subsection will be to extract a usable representative of this local equivalence class. Our computations here are similar to those of \cite[Section 3]{HHSZ2}. 

\begin{definition}\label{def:Dn}
Let $\cD_n$ be the staircase complex associated to the parameter sequence
\begin{align*}
&(d_{-4n+2}, d_{-4n+3}, \ldots,d_{4n-3}, d_{4n-2}) = \\ 
&\quad (1,2n-1,1,2n-1,2,2n-2,2,2n-2,3,\dots,2n-2,2,2n-2,2,2n-1,1,2n-1,1).
\end{align*}
This is displayed in the upper half of Figure~\ref{fig:En}. Explicitly, $\cD_n$ is generated by the elements
\begin{align*}
    & w_k \quad \text{for } -4n+3 \leq k \leq 4n-3 \text{ and } k \text{ odd; and} \\
    & z_\ell \quad \text{for } -4n+2 \leq \ell \leq 4n-2 \text{ and } \ell \text{ even}
\end{align*}
and has nonzero differentials given by
\[
\partial w_k = \scV^{d_{k-1}}z_{k-1}+\scU^{d_{k+1}}z_{k+1}.
\]
Like $\cC_n$, the complex $\cD_n$ has a unique skew-graded homotopy involution, which we again denote by $\tau$. It will also be useful to consider the square complex $\cS_n$, which is displayed in the lower half of Figure~\ref{fig:En}. 

\begin{figure}[h]
	\[
	\begin{tikzcd}[column sep={1.05cm,between origins}, labels=description, row sep=1cm]
	z_{-4n+2} && z_{-4n+4}&& z_{-4n+6}&& z_{-4n+8}&& \cdots&& z_{-4}&& z_{-2}&&\,\\
	& w_{-4n+3} \ar[ul, "\scV"]\ar[ur, "\scU^{2n-1}"]&& w_{-4n+5}\ar[ul, "\scV"]\ar[ur, "\scU^{2n-1}"]&& w_{-4n+7} \ar[ul, "\scV^2"]\ar[ur, "\scU^{2n-2}"]&& \cdots \ar[ur] \ar[ul] && w_{-5}\ar[ul]\ar[ur, "\scU^{n+1}"] && w_{-3} \ar[ul, "\scV^{n-1}"]\ar[ur, "\scU^{n+1}"] && w_{-1} \ar[ul, "\scV^n"] \ar[ur, "\scU^n"]
	\end{tikzcd} \cdots
	\]
	\[
	\begin{tikzcd}[column sep={1cm,between origins}, labels=description, row sep=1cm]
	z_0&& z_2&& z_4&& z_6&& \cdots&& z_{4n-4}&& z_{4n-2}&&\,\\
	& w_{1} \ar[ul, "\scV^n"]\ar[ur, "\scU^n"]&& w_{3}\ar[ul, "\scV^{n+1}"]\ar[ur, "\scU^{n-1}"]&&w_{5}  \ar[ul, "\scV^{n+1}"]\ar[ur, "\scU^{n-1}"]&& \cdots \ar[ul] \ar[ur] && w_{4n-5}\ar[ul]\ar[ur, "\scU"] && w_{4n-3} \ar[ul, "\scV^{2n-1}"] \ar[ur, "\scU"]
	\end{tikzcd}
	\]
	\[
	\begin{tikzcd}[column sep={1.55cm,between origins}, labels=description, row sep=1cm]
	&t&\,\\
	r_{-1} \ar[ur, "\scU^n"]&& r_1 \ar[ul, "\scV^n"]\\
	& r_0\ar[ul, "\scV^n"]\ar[ur, "\scU^n"]
	\end{tikzcd}
	\]
	\caption{The complex $\cD_n \oplus \cS_n$.} \label{fig:En}
\end{figure}
\noindent
This is generated by $\{r_0,r_{-1},r_1,t\}$, with differential
\[
\partial r_0=\scV^n r_{-1} +\scU^n r_1, \quad \partial r_{-1}=\scU^n t, \quad \partial r_1=\scV^n t ,\quad \text{and} \quad \partial t=0.
\]
Using the fact that $\grV(z_{-4n+2}) = \grU(z_{4n-2}) = 0$, we again have (for example)
\begin{align}\label{eq:gradingsII}
\begin{split}
&\grU(z_{4n - 2 - 2i}) = 
\begin{cases}
- 4(1 + 2 + \cdots + i/2) \text{ for } i  \text{ even} \\
- 4(1 + 2 + \cdots + (i-1)/2) - (i+1) \text{ for } i \text{ odd} 
\end{cases}\\
&\grV(z_{-4n + 2 + 2i}) = 
\begin{cases}
- 4(1 + 2 + \cdots + i/2) \text{ for } i  \text{ even} \\
- 4(1 + 2 + \cdots + (i-1)/2) - (i+1) \text{ for } i \text{ odd}.
\end{cases}
\end{split}
\end{align}
The grading on $\cS_n$ is such that $\gr(t) = \gr(z_0)$. Note that $\gr(r_{-1}) = \gr(w_{-1})$ and $\gr(r_1) = \gr(w_1)$. 
\end{definition}

\begin{definition}\label{def:En}
Let $\cE_n = \cD_n \oplus \cS_n$. Define an involution $\tau$ on $\cE_n$ as follows. On $\cD_n$, we define $\tau$ to be almost the same as in Definition~\ref{def:Dn}, but slightly different on $w_{-1}$, $w_1$, and $z_0$. On $\cS_n$, we define $\tau$ to be the obvious reflection map.
\begin{equation*}
\begin{split}
&\tau(w_k) = w_{-k} \quad \text{for } k \neq -1, 1 \\
&\tau(w_{-1}) = w_1 + r_1 \\
&\tau(w_1) = w_{-1} + r_{-1} \\
&\tau(z_\ell) = z_{-\ell} \quad \text{for } \ell \neq 0 \\ 
&\tau(z_0) = z_0 + t.
\end{split}
\quad\quad\quad
\begin{split}
&\tau(r_i) = r_{-i} \\
&\tau(t) = t.
\end{split}
\end{equation*}
Roughly speaking, $\tau$ acts as reflection on the staircase and the square, but additionally maps some of the staircase generators to (sums of staircase generators with) square generators. Unlike the action of $\iota_K$ in \cite[Section 3.2.1]{HHSZ2}, however, none of the square generators are mapped to staircase generators.
\end{definition}

The main claim of this subsection is that $(\cE_n, \tau)$ is locally equivalent to $(\cC_n \otimes \cC_n, \tau \otimes \tau)$. We show this by constructing local maps in both directions. The forward direction is straightforward from the work of \cite{HHSZ2}. In what follows, we write $\leq$ to indicate the presence of a local map from one $\tau_K$-complex to another; see also the discussion of Section~\ref{sec:7.2}.


\begin{lemma}\label{lem:CnCnI}
We have $(\cE_n, \tau) \leq (\cC_n \otimes \cC_n, \tau \otimes \tau)$.
\end{lemma}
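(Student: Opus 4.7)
The plan is to construct an explicit local map $f \colon \cE_n \to \cC_n \otimes \cC_n$ that intertwines the involutions (up to an $\mathcal{R}$-skew-equivariant homotopy) and, via the work of \cite{HHSZ2}, is a quasi-isomorphism after localization. First I would unpack the tensor product $\cC_n \otimes \cC_n$: its generators are $x_k \otimes x_{k'}$, $x_k \otimes y_\ell$, $y_\ell \otimes x_{k'}$, and $y_\ell \otimes y_{\ell'}$, with differential $\partial(a \otimes b) = (\partial a)\otimes b + a \otimes (\partial b)$. The involution $\tau\otimes\tau$ acts by simultaneous reflection in both indices. Inside this complex I would locate the ``diagonal'' subcomplex obtained by following the outermost staircase path $y_{-2n+1}\otimes y_{-2n+1} \to x_{-2n+2} \otimes y_{-2n+1} \to \cdots \to y_{2n-1}\otimes y_{2n-1}$; this diagonal already has the shape of $\cD_n$, with the parameter sequence $d_k$ produced by alternately recording $\cU$- and $\cV$-multiplications in the two factors. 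The central $2\times 2$ block containing $x_0 \otimes x_0$ and the four edges emanating from it encodes the square generators.

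Second, I would define $f$ by sending each $z_\ell$ and $w_k$ of $\cD_n$ to the corresponding bead on the diagonal staircase (with an appropriate sign / correction once one reaches the central region), and sending the square generators via $f(r_0) = x_0 \otimes x_0$, $f(r_{\pm 1})$ equal to the two evident neighbors of $x_0 \otimes x_0$ lying on the anti-diagonal, and $f(t)$ equal to a suitable linear combination of generators of the form $y_{\pm 1} \otimes y_{\mp 1}$ in the appropriate grading. One then verifies directly that $f$ is a chain map, is $\mathcal{R}$-equivariant, preserves gradings (using the explicit formulas \eqref{eq:gradingsI} and \eqref{eq:gradingsII}), and induces a quasi-isomorphism on the localized complexes. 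The grading check should be essentially automatic from the fact that $\cD_n$ was rigged up with parameter sequence $d_k$ matching the staircase path; the localization statement reduces to observing that $f$ carries the top generator to a generator representing a unit in $\cC_n \otimes \cC_n \otimes \F[\cU,\cV,\cU^{-1},\cV^{-1}]$.

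The main obstacle, and the reason $\cE_n$ is defined with the nonstandard $\tau$ of Definition~\ref{def:En} rather than the naive reflection, is verifying equivariance. On the bulk of the staircase $\cD_n$ the relation $(\tau \otimes \tau) \circ f = f \circ \tau$ holds strictly, by the symmetry of the diagonal under the swap $k \leftrightarrow -k$; likewise on $\cS_n$, since the chosen images of $r_0, r_{\pm 1}, t$ sit in a symmetric pattern around $x_0 \otimes x_0$. The genuine discrepancy occurs near the center of the staircase: $(\tau \otimes \tau) f(w_{\pm 1})$ and $f(w_{\mp 1})$ differ by the ``opposite-diagonal'' element, which is exactly $f(r_{\mp 1})$; similarly $(\tau \otimes \tau) f(z_0)$ and $f(z_0)$ differ by $f(t)$. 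These discrepancies are precisely absorbed by the extra terms in $\tau$ on $\cE_n$, which shows that $f$ commutes with the involutions on the nose (or, if one prefers a cleaner presentation, after adding a small explicit homotopy supported on the central generators). The verification is a finite, local computation involving only the few generators at the center of the tensor-product staircase, and should conclude the proof.
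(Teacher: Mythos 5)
Your proposal follows the same strategy as the paper's proof: locate a subcomplex of $\cC_n \otimes \cC_n$ abstractly isomorphic to $\cE_n = \cD_n \oplus \cS_n$, map $\cE_n$ isomorphically onto it, and observe that the correction terms in the nonstandard $\tau$ on $\cE_n$ (Definition~\ref{def:En}) exactly match the failure of $\tau\otimes\tau$ to act by pure reflection on this subcomplex at the three central generators. The paper (following \cite{HHSZ2}) denotes this subcomplex $\cY_n$, writes the map out explicitly, and shows that the intertwining relation holds strictly, with no homotopy needed — consistent with what you conclude.

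One point where you should be more careful: the images of the square generators $r_{\pm 1}$ and $t$ cannot be single monomials such as $x_0\otimes y_{\pm 1}$. They must be the symmetrized sums $y_{\mp 1}x_0 + x_0 y_{\mp 1}$ and $y_1y_{-1} + y_{-1}y_1$, since
\[
\partial(x_0\otimes x_0) = \scV^n\bigl(y_{-1}\otimes x_0 + x_0\otimes y_{-1}\bigr) + \scU^n\bigl(y_1\otimes x_0 + x_0\otimes y_1\bigr),
\]
and only these symmetrized combinations span an actual square subcomplex closed under $\partial$. Your phrase ``the two evident neighbors of $x_0\otimes x_0$ lying on the anti-diagonal'' is ambiguous on this score; if read as single monomials, the chain-map property fails at $r_0$. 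You already allow $f(t)$ to be ``a suitable linear combination,'' which is right — the same caveat applies to $f(r_{\pm 1})$. With that fixed, the remainder of your outline (strict equivariance on the bulk of the staircase, correction terms at $w_{\pm 1}$ and $z_0$ absorbed by $\tau$ on $\cE_n$) matches the paper.
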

\begin{proof}
In \cite[Section 3.2.1]{HHSZ2}, it is shown that $\cC_n \otimes \cC_n$ admits the subcomplex $\cY_n$ displayed in Figure~\ref{fig:Yn}. Explicitly, $\cY_n$ is spanned by 
\[
\{y_{i}y_{i}\}\cup \{y_{i}y_{i+2}\}_{i\leq -3}\cup \{y_{i+2}y_{i}\}_{i\geq -1},
\]
together with
\[
\{y_ix_{i+1}\}_{i\leq -3} \cup \{ x_iy_{i+1}\}_{i\leq -2} \cup \{x_iy_{i-1}\}_{i\geq 0} \cup \{ y_ix_{i-1}\}_{i\geq 1},
\]
and
\[
y_1y_{-1}+y_{-1}y_1, \quad y_{-1}x_0+x_0y_{-1}, \quad y_1x_0+x_0y_1,\quad \text{and} \quad x_0x_0.
\]
The first two collections of generators span a staircase complex, while the last four generators span a square complex.

\begin{figure}[h]
	\[
	\begin{tikzcd}[column sep={1.05cm,between origins}, labels=description, row sep=1cm]
	y_{1-2n}y_{1-2n}&& y_{1-2n}y_{3-2n}&& y_{3-2n}y_{3-2n}&& y_{3-2n}y_{5-2n}&& \cdots&& y_{-3}y_{-1}&& y_{-1} y_{-1}&&\,\\
	& y_{1-2n} x_{2-2n} \ar[ul, "\scV"]\ar[ur, "\scU^{2n-1}"]&& x_{2-2n}y_{3-2n}\ar[ul, "\scV"]\ar[ur, "\scU^{2n-1}"]&& y_{3-2n}x_{4-2n} \ar[ul, "\scV^2"]\ar[ur, "\scU^{2n-2}"]&& \cdots \ar[ur] \ar[ul] && y_{-3}x_{-2}\ar[ul]\ar[ur, "\scU^{n+1}"] && x_{-2} y_{-1} \ar[ul, "\scV^{n-1}"]\ar[ur, "\scU^{n+1}"] && x_{0}y_{-1} \ar[ul, "\scV^n"] \ar[ur, "\scU^n"]
	\end{tikzcd} \cdots
	\]
	\[
	\begin{tikzcd}[column sep={1.05cm,between origins}, labels=description, row sep=1cm]
	y_{1}y_{-1}&& y_{1}y_{1}&& y_{3}y_{1}&& y_{3}y_{3}&& \cdots&& y_{2n-1}y_{2n-3}&& y_{2n-1}y_{2n-1}&&\,\\
	& y_{1}x_{0} \ar[ul, "\scV^n"]\ar[ur, "\scU^n"]&& x_{2}y_{1}\ar[ul, "\scV^{n+1}"]\ar[ur, "\scU^{n-1}"]&&y_{3}x_{2}  \ar[ul, "\scV^{n+1}"]\ar[ur, "\scU^{n-1}"]&& \cdots \ar[ul] \ar[ur] && x_{2n-2}y_{2n-3}\ar[ul]\ar[ur, "\scU"] && y_{2n-1}x_{2n-2} \ar[ul, "\scV^{2n-1}"] \ar[ur, "\scU"]
	\end{tikzcd}
	\]
	\[
	\begin{tikzcd}[column sep={1.55cm,between origins}, labels=description, row sep=1cm]
	&y_{1}y_{-1}+y_{-1}y_1&\,\\
	y_{-1}x_{0}+x_{0}y_{-1} \ar[ur, "\scU^n"]&& y_1x_{0}+x_{0}y_1 \ar[ul, "\scV^n"]\\
	& x_{0}x_{0}\ar[ul, "\scV^n"]\ar[ur, "\scU^n"]
	\end{tikzcd}
	\]
	\caption{The subcomplex $\cY_n$. Note that the top two rows form a staircase complex, such that $\partial(x_0y_{-1})=\scV^n y_{-1}y_{-1}+\scU^n y_1y_{-1}$. See \cite[Figure 3.3]{HHSZ2}.} \label{fig:Yn}
\end{figure}

There is an obvious map $\varphi\colon \cE_n\to \cC_n\otimes \cC_n$ given by mapping $\cE_n$ isomorphically onto $\cY_n$; compare Figures~\ref{fig:En} and \ref{fig:Yn}. It is straightforward to check that this has the requisite behavior under localization: observe that $y_{1-2n}y_{1-2n}$ is nontorsion. To check equivariance, recall that $\tau(x_k) = x_{-k}$ and $\tau(y_\ell) = y_{-\ell}$. An examination of Figure~\ref{fig:Yn} shows that $\tau \otimes \tau$ acts as reflection on $\cY_n$, except at the generators
\begin{align*}
&(\tau \otimes \tau)(x_0y_{-1}) = x_0y_1 = y_1x_0 + (y_1x_0 + x_0y_1) \\
&(\tau \otimes \tau)(y_1x_0) = y_{-1}x_0 = x_0y_{-1} + (y_{-1}x_0 + x_0y_{-1}) \\
&(\tau \otimes \tau)(y_1y_{-1}) = y_{-1}y_1 = y_1y_{-1} + (y_1y_{-1} + y_{-1}y_1).
\end{align*}
This coincides exactly with the action of $\tau$ on $\cE_n$.
\end{proof}	

We now construct a map from $\cE_n^\vee$ to $\cC_n^\vee \otimes \cC_n^\vee$. It will be helpful for us to first discuss some auxiliary lemmas regarding the dual staircase complexes $\cC_n^\vee$ and $\cD_n^\vee$. Our first lemma concerns elements in $\cC_n^\vee \otimes \cC_n^\vee$ of the form $x_p^\vee \otimes y_q^\vee$. Roughly speaking, we claim that if the value of $p + q$ is fixed, then the grading of $x_p^\vee \otimes y_q^\vee$ is minimized when the difference $|p - q|$ is minimized. Similar statements hold for elements of the form $y_p^\vee \otimes y_q^\vee$. We make this more explicit by introducing the following terminology:
\begin{definition}\label{def:minimizing}
Let $k$ be odd and let $p + q = k$ with $p$ even and $q$ odd. We call $(p, q)$ \textit{difference-minimizing} in the following situations:
\begin{enumerate}
\item $k \equiv 1 \bmod 4$: we require $p = (k-1)/2$ and $q = (k+1)/2$.
\item $k \equiv 3 \bmod 4$: we require $p = (k+1)/2$ and $q = (k-1)/2$.
\end{enumerate}
Let $\ell$ be even and let $p + q = \ell$ with $p$ and $q$ both odd. We call $(p, q)$ \textit{difference-minimizing} in the following situations:
\begin{enumerate}
\item $\ell \equiv 2 \bmod 4$: we require $p = q = \ell/2$.
\item $\ell \equiv 0 \bmod 4$: we require $p = (\ell-1)/2$ and $q = (\ell+1)/2$, or vice-versa.
\end{enumerate}
In each case, note that the difference $|p -q|$ is minimized, subject to the constraints on the parity of $p$ and $q$ and the condition that the value of $p + q$ is fixed. The distinction between $k$ and $\ell$ is due to our choice of notation for the generators of $\cD_n$, and will become clear presently.
\end{definition}

\begin{lemma}\label{lem:minimizing}
Let $k$ be odd. Then
\[
\min_{\substack{p + q = k \\ p \text{ even, }q \text{ odd}}}\{\grU(x^\vee_p \otimes y^\vee_q)\} \quad \text{and} \quad \min_{\substack{p + q = k \\ p \text{ even, }q \text{ odd}}}\{\grV(x^\vee_p \otimes y^\vee_q)\}
\]
both occur when $(p, q)$ is difference-minimizing. Similarly, let $\ell$ be even. Then 
\[
\min_{\substack{p + q = \ell \\ p \text{ odd, }q \text{ odd}}}\{\grU(y^\vee_p \otimes y^\vee_q)\} \quad \text{and} \quad \min_{\substack{p + q = \ell \\ p \text{ odd, }q \text{ odd}}}\{\grV(y^\vee_p \otimes y^\vee_q)\}
\]
both occur when $(p, q)$ is difference-minimizing.
\end{lemma}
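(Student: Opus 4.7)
My plan is to reduce the claim to an explicit convexity statement about one-variable quadratic functions. Using (6.1) together with the staircase differential $\partial x_k = \scV^{c_{k-1}} y_{k-1} + \scU^{c_{k+1}} y_{k+1}$ and the grading shifts $\gr(\partial) = (-1,-1)$, $\gr(\scU) = (-2,0)$, $\gr(\scV) = (0,-2)$, I will first derive closed-form expressions
\[
\grU(y_\ell) = -\tfrac14(2n-1-\ell)(2n+1-\ell), \qquad \grV(y_\ell) = -\tfrac14(2n+2n-1+\ell)(2n+1+\ell)
\]
and analogous (piecewise quadratic) formulas for $\grU(x_p)$ and $\grV(x_p)$ obtained by reading off $c_{p\pm 1}$ from the parameter sequence and shifting. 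Taking duals negates all gradings, so on $\cC_n^\vee$ the functions $p \mapsto \grU(x_p^\vee)$, $q \mapsto \grU(y_q^\vee)$, $p \mapsto \grV(x_p^\vee)$, $q \mapsto \grV(y_q^\vee)$ become (piecewise) convex quadratic functions of their indices.

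Next, I will fix the value of $p+q$ and rewrite $\grU(x_p^\vee \otimes y_q^\vee) = \grU(x_p^\vee) + \grU(y_q^\vee)$ as a function of the single variable $p$ (with $q = k - p$), and similarly for the other cases. Since the sum of two (piecewise) convex quadratics is convex, the minimum over the admissible values of $p$ is attained as close as possible to the continuous minimizer. The parity constraints $p$ even / $q$ odd in the first statement (resp.\ $p,q$ both odd in the second) then force the minimizing pair to be precisely the difference-minimizing choice from Definition~\ref{def:minimizing}; the two sub-cases $k \equiv 1, 3 \pmod 4$ (resp.\ $\ell \equiv 0, 2 \pmod 4$) correspond to whether $(k-1)/2$ or $(k+1)/2$ is even.

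To avoid redundant computation, I will invoke the palindromic symmetry of the parameter sequence of $\cC_n$: the reflection $\ell \mapsto -\ell$ induces an identification intertwining $\grU$ and $\grV$, so the $\grV$-minimization claim follows from the $\grU$-minimization claim by reindexing (and similarly passes between $x^\vee_p \otimes y^\vee_q$ and $y^\vee_q \otimes x^\vee_p$). Thus it suffices to verify one case of each of the two bullets.

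The main obstacle will be bookkeeping around the non-smoothness of the grading functions: because the $c_j$ alternate between "small" and "large" values, the closed-form for $\grU(x_p^\vee)$ as a function of $p$ has a small discrete correction depending on which side of the staircase the index sits on, and one must also handle the corner behavior near $|p|, |q| = 2n-1$ carefully. I expect these to be innocuous — the corrections change the quadratic by a bounded amount that cannot disturb strict convexity between neighboring admissible pairs — but will need to be checked by comparing the grading at the difference-minimizing pair with the gradings at $(p\pm 2, q \mp 2)$ directly, using only the explicit values of $c_j$.
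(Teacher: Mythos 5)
Your core idea — convexity of the grading along the anti-diagonal $p+q=\text{const}$, with parity constraints forcing the difference-minimizing pair to be the discrete minimizer — is exactly the idea the paper uses, so the approaches are fundamentally the same. The paper's implementation is more economical: rather than deriving closed-form quadratics for $\grU(x^\vee_p)$, $\grU(y^\vee_q)$ and then checking convexity of the sum, it computes the single finite difference
\[
\grU(x^\vee_{p+2} \otimes y^\vee_{q - 2}) - \grU(x^\vee_{p} \otimes y^\vee_{q}) = p - q + 1,
\qquad
\grV(x^\vee_{p+2} \otimes y^\vee_{q - 2}) - \grV(x^\vee_{p} \otimes y^\vee_{q}) = p - q + 3,
\]
from the four linear step identities $\grU(x_{i+1}^\vee)-\grU(x_{i-1}^\vee)=-2n-1+i$, etc., observes that the second difference is identically $+4$, and reads off where the first difference lands in $[0,4)$.

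Two cautions on your write-up. First, the "corner/piecewise" concern you flag for $\grU(x_p^\vee)$ is not actually present: the two expressions for $\grU(x_p)$ coming from the $\scV$-arrow and the $\scU$-arrow of $\partial x_p$ agree and give $\grU(x_p) = \grU(y_{p-1}) + 1$ for all even $p$ (similarly $\grV(x_p)=\grV(y_{p+1})+1$), which is a single smooth quadratic with no discrete correction; the alternation of small and large $c_j$ is already built into the consistency of those two arrows. Your proposal thus leaves open a "will need to be checked" step that, on investigation, dissolves — but you should not present it as a genuine obstacle. Second, your closed form for $\grV(y_\ell)$ has a typo: it should read $\grV(y_\ell) = -\tfrac14(2n-1+\ell)(2n+1+\ell)$, not $-\tfrac14(2n+2n-1+\ell)(2n+1+\ell)$. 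Fixing these, the plan goes through, but the finite-difference route avoids deriving the quadratics altogether and is the cleaner way to write it.
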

\begin{proof}
First note that for any $i$, we have:
	\begin{align*}
	\grU(x_{i+1}^\vee)-\grU(x_{i-1}^\vee)&=-2n-1+i\\
		\grV(x^\vee_{i+1})-\grV(x^\vee_{i-1})&=2n+1+i,\end{align*}
	and
	\begin{align*}
	\grU(y_{i+2}^\vee)-\grU(y^\vee_i)&=-2n+1+i\\
\grV(y_{i+2}^\vee)-\grV(y^\vee_i)&=2n+1+i.
\end{align*}
These claims are verified using the differentials in the definition of $\cC_n$; the reader may find it helpful to consult Figure~\ref{fig:Cn}.

Consider the first claim of the lemma. Observe
\begin{align*}
\grU(x^\vee_{p+2} \otimes y^\vee_{q - 2}) &= \grU(x^\vee_{p} \otimes y^\vee_{q}) + (-2n-1+ (p+1)) - (-2n + 1 + (q-2)) \\
&= \grU(x^\vee_{p} \otimes y^\vee_{q}) + p - q + 1.
\end{align*}
Similarly, we have
\begin{align*}
\grV(x^\vee_{p+2} \otimes y^\vee_{q - 2}) &= \grV(x^\vee_{p} \otimes y^\vee_{q}) + (2n+1+ (p+1)) - (2n + 1 + (q-2)) \\
&= \grU(x^\vee_{p} \otimes y^\vee_{q}) + p - q + 3.
\end{align*}
Note that due to the parity constraints on $p$ and $q$ and the fact that $p + q = k$, the value of $p - q$ is fixed modulo $4$. Treating both of the above as finite-difference equations, it is clear that to minimize both $\grU(x^\vee_{p} \otimes y^\vee_{q})$ and $\grV(x^\vee_{p} \otimes y^\vee_{q})$ we are searching for $(p, q)$ such that $p - q + 1$ and $p - q + 3$ are both in $[0, 4]$. An examination of Definition~\ref{def:minimizing} gives the claim for $k$ odd. The claim for $\ell$ even is established in an analogous manner.
\end{proof}

The following lemma relates the gradings of elements of $\cD_n^\vee$ and elements of $\cC_n^\vee \otimes \cC_n^\vee$, and will be important for constructing a map from the former into the latter.

\begin{lemma}\label{lem:samegrading}
Let $k$ be odd and let $p + q = k$ with $p$ even and $q$ odd. If $(p, q)$ is difference-minimizing, then
\[
\gr(w^\vee_k) = \gr(x^\vee_p \otimes y^\vee_q) = \gr(y^\vee_q \otimes x^\vee_p).
\]
Let $\ell$ be even and let $p + q = \ell$ with $p$ and $q$ both odd. If $(p, q)$ is difference-minimizing, then
\[
\gr(z^\vee_\ell) = \gr(y^\vee_p \otimes y^\vee_q) = \gr(y^\vee_q \otimes y^\vee_p).
\]
\end{lemma}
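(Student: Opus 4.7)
The plan is to verify both identities by direct computation from the grading formulas in equations (\ref{eq:gradingsI}) and (\ref{eq:gradingsII}). Since dualizing negates the bigrading, it suffices to show $\gr(w_k) = \gr(x_p \otimes y_q)$ and $\gr(z_\ell) = \gr(y_p \otimes y_q)$ when $(p,q)$ is difference-minimizing. In each line, the second equality (with the tensor factors swapped) is immediate from the commutativity of $\otimes$ on bigraded modules, since $\gr(x^\vee_p \otimes y^\vee_q) = \gr(x^\vee_p) + \gr(y^\vee_q) = \gr(y^\vee_q \otimes x^\vee_p)$; so we are reduced to verifying the first equality in each case.

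First I would extend (\ref{eq:gradingsI}) and (\ref{eq:gradingsII}) to obtain both components of the bigrading for every generator. The missing components are obtained either from the reflection symmetry on each staircase, which swaps $\grU \leftrightarrow \grV$ and sends $y_\ell \mapsto y_{-\ell}$, $x_k \mapsto x_{-k}$, $z_\ell \mapsto z_{-\ell}$, $w_k \mapsto w_{-k}$, or by walking the $\scU, \scV$-differentials across the staircase one step at a time. For example, from $\partial x_k = \scV^{c_{k-1}} y_{k-1} + \scU^{c_{k+1}} y_{k+1}$ one reads off the bigrading of $x_k$ in terms of that of $y_{k\pm 1}$, and analogously for the $\cD_n$ differential $\partial w_k = \scV^{d_{k-1}} z_{k-1} + \scU^{d_{k+1}} z_{k+1}$.

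Next I would substitute into the claimed identities. Using the reflection symmetry I can restrict attention to $k \geq 0$ (respectively $\ell \geq 0$). The two cases $k \equiv 1$ versus $k \equiv 3 \pmod 4$ (and similarly $\ell \equiv 0$ versus $\ell \equiv 2 \pmod 4$) correspond to different positions on the $\cC_n \otimes \cC_n$ staircase and are handled separately. In each case, the verification reduces to an arithmetic identity expressing a single triangular sum arising from the $\cD_n$ parameters as a sum of two triangular numbers coming from the two tensor factors of $\cC_n$. These identities are routine, and indeed the difference-minimizing condition of Definition~\ref{def:minimizing} is precisely engineered so that the two sums match.

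The main obstacle is just the bookkeeping demanded by the case split; no conceptual difficulty arises. Lemma~\ref{lem:minimizing} has already identified the difference-minimizing pair as the unique one minimizing the bigrading of $x^\vee_p \otimes y^\vee_q$ (respectively $y^\vee_p \otimes y^\vee_q$) among all $(p,q)$ with $p+q = k$ of the prescribed parity, and the present lemma simply records that this minimum coincides with $\gr(w^\vee_k)$ or $\gr(z^\vee_\ell)$. From a more conceptual standpoint, this coincidence can be seen as a reflection of the embedding $\cY_n \hookrightarrow \cC_n \otimes \cC_n$ used in Lemma~\ref{lem:CnCnI}: under this embedding, each staircase generator of $\cD_n \subseteq \cE_n \cong \cY_n$ is sent to exactly one of the $x_p \otimes y_q$ or $y_p \otimes y_q$ tuples appearing in the present lemma, with indices obtained from the difference-minimizing rule.
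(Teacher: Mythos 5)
Your primary route is exactly the paper's: substitute the triangular-sum grading formulas (\ref{eq:gradingsI}) and (\ref{eq:gradingsII}), split according to the residue of $k$ or $\ell$ modulo $4$, and check that the difference-minimizing choice makes the sums agree. The paper writes out only the $z_\ell$ case and leaves the $w_k$ case to the reader; your plan to recover the $x_k$ and $w_k$ bigradings by walking the differentials (or by the reflection symmetry on each staircase) is the correct way to fill in that half.

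Your closing remark via $\cY_n$ is a genuinely different argument that the paper does not record, and it is arguably the cleaner one. The map $\varphi \colon \cE_n \to \cC_n \otimes \cC_n$ from Lemma~\ref{lem:CnCnI} is a grading-preserving isomorphism onto $\cY_n$, and inspecting Figure~\ref{fig:Yn} shows it carries each $w_k$ to a single $x_p y_q$ (or $y_q x_p$) with $|p-q| = 1$ and each $z_\ell$ to a single $y_p y_q$ with $|p-q| \in \{0,2\}$; these are exactly the difference-minimizing indices of Definition~\ref{def:minimizing}. Since gradings are additive under $\otimes$ (hence symmetric in the factors) and dualizing negates them, both claimed equalities drop out with no arithmetic. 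There is no circularity, as Lemma~\ref{lem:CnCnI} is established independently and the present lemma feeds only into Lemma~\ref{lem:CnCnII}. The trade-off is that the paper's computation is self-contained, whereas the $\cY_n$ route asks the reader to match indices against the figure and verify by inspection that they are difference-minimizing.
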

\begin{proof}
We prove the second claim and leave the first to the reader. Assume $(p, q)$ is difference-minimizing. Write $\ell = 4n - 2 - 2i$, $p = 2n - 1 - 2r$, and $q = 2n - 1- 2s$; note that $r + s = i$. From (\ref{eq:gradingsI}) and (\ref{eq:gradingsII}), we have
\[
\grU(z^\vee_\ell) = - \grU(z_\ell)
\begin{cases}
4(1 + 2 + \cdots + i/2) \text{ for } i  \text{ even} \\
4(1 + 2 + \cdots + (i-1)/2) + (i+1) \text{ for } i \text{ odd} 
\end{cases}
\]
and
\[
\grU(y_p^\vee) = - \grU(y_p) = 2(1 + 2 + \cdots + r) \quad \text{and} \quad \grU(y_q^\vee) = - \grU(y_q) = 2(1 + 2 + \cdots + s).
\]
Suppose $\ell \equiv 2 \bmod 4$. Then $i$ is even, and an examination of Definition~\ref{def:minimizing} shows $r = s = i/2$. In this case, we clearly have $\grU(z^\vee_\ell) = \grU(y_p^\vee) + \grU(y_q^\vee) = \grU(y_p^\vee \otimes y_q^\vee)$. Suppose $\ell \equiv 0 \bmod 4$. Then $i$ is odd, and we have $r = (i+1)/2$ and $s = (i-1)/2$ (or vice-versa). An inspection of the equalities above once again gives the claim. An analogous argument for $\grV$ completes the proof.
\end{proof}

We now establish the major claim of this subsection:

\begin{lemma}\label{lem:CnCnII}
We have $(\cE_n^\vee, \tau^\vee) \leq (\cC_n^\vee \otimes \cC_n^\vee, \tau^\vee \otimes \tau^\vee)$.
\end{lemma}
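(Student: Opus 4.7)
The plan is to construct an explicit local map $\psi \colon \cE_n^\vee \to \cC_n^\vee \otimes \cC_n^\vee$ directly. Note that dualizing the map of Lemma~\ref{lem:CnCnI} reverses direction, and so would instead produce a local map $(\cC_n \otimes \cC_n)^\vee \to \cE_n^\vee$, which is the wrong way; thus $\psi$ must be built by hand. The technology for this is already in place in Lemmas~\ref{lem:minimizing} and \ref{lem:samegrading}: for each generator of $\cD_n^\vee$, those lemmas identify a distinguished collection of ``difference-minimizing'' tensor products in $\cC_n^\vee \otimes \cC_n^\vee$ in the correct bigrading.

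On the staircase part $\cD_n^\vee \subseteq \cE_n^\vee$, I would define
\[
\psi(w_k^\vee) = \sum_{(p,q)} \bigl(x_p^\vee \otimes y_q^\vee + y_q^\vee \otimes x_p^\vee\bigr)
\quad\text{and}\quad
\psi(z_\ell^\vee) = \sum_{(p,q)} y_p^\vee \otimes y_q^\vee,
\]
where in each case $(p,q)$ ranges over difference-minimizing pairs with $p+q = k$ (respectively $\ell$), and the $z_\ell^\vee$-sum is suitably symmetrized so as to be invariant under $(p,q) \leftrightarrow (q,p)$. By Lemma~\ref{lem:samegrading} these images are homogeneous in the correct bigrading. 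The symmetric form of the sums is chosen precisely so that $\psi$ intertwines the reflection parts of $\tau^\vee$ on either side, since $(\tau^\vee \otimes \tau^\vee)$ sends $x_p^\vee \otimes y_q^\vee$ to $x_{-p}^\vee \otimes y_{-q}^\vee$, and difference-minimizing pairs are stable under $(p,q) \mapsto (-p,-q)$.

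It remains to extend $\psi$ to the square summand $\cS_n^\vee$. Because $\tau|_{\cE_n}$ is not literal reflection near the origin, but satisfies $\tau(w_{-1}) = w_1 + r_1$, $\tau(w_1) = w_{-1} + r_{-1}$, and $\tau(z_0) = z_0 + t$, the dual map $\tau^\vee$ produces correction terms on $r_{\pm 1}^\vee$ and $t^\vee$ that must be absorbed. The plan is to send $r_0^\vee$, $r_{\pm 1}^\vee$, and $t^\vee$ to suitable elements in the square subcomplex of $\cC_n^\vee \otimes \cC_n^\vee$ (the duals of the square generators in Figure~\ref{fig:Yn}, namely $y_1^\vee y_{-1}^\vee + y_{-1}^\vee y_1^\vee$, $x_0^\vee y_{\mp 1}^\vee + y_{\mp 1}^\vee x_0^\vee$, and $x_0^\vee x_0^\vee$). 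One then checks that this choice makes $\psi$ respect the perturbed $\tau^\vee$-action exactly.

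The verifications then split into four routine but tedious checks: $\psi$ is (i) an $\mathcal{R}$-equivariant chain map, (ii) grading-preserving, (iii) $\tau^\vee$-equivariant, and (iv) a local equivalence after localization. Check (ii) follows directly from Lemma~\ref{lem:samegrading}; check (iv) is immediate since $\psi(z_{4n-2}^\vee)$ contains the nontorsion top generator $y_{2n-1}^\vee \otimes y_{2n-1}^\vee$; check (iii) follows from the symmetrization of the sums together with the corrections introduced on $\cS_n^\vee$. I expect the main obstacle to be (i), the chain-map verification: this requires careful tracking of how the dual staircase differentials of $\cC_n^\vee$ (with exponents $c_k$) combine under the Leibniz rule in $\cC_n^\vee \otimes \cC_n^\vee$ and are matched against the dual differentials in $\cD_n^\vee$ (with exponents $d_k$). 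The matching will require explicit casework around the indices where the parameter sequences $d_k$ change behavior (most critically near $k = \pm 1, \pm 2$, where $\cS_n^\vee$ is active), but the arithmetic of the $d_k$'s as rearrangements of the $c_k$'s is precisely what makes the calculation go through.
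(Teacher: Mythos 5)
Your overall strategy — build $\psi \colon \cE_n^\vee \to \cC_n^\vee \otimes \cC_n^\vee$ by hand, with Lemmas~\ref{lem:minimizing} and \ref{lem:samegrading} controlling the gradings — is the same as the paper's, and you are right that dualizing Lemma~\ref{lem:CnCnI} goes the wrong way. However, there is a genuine gap: the formula you propose for $\psi$ is \emph{not} a chain map. You restrict the sums defining $\psi(w_k^\vee)$ and $\psi(z_\ell^\vee)$ to difference-minimizing pairs only, and the differential then fails to close. For instance, take $\ell \equiv 0 \bmod 4$ and set $\scU = \scV = 1$ for brevity. Your $\psi(z_\ell^\vee) = y_{\ell/2-1}^\vee \otimes y_{\ell/2+1}^\vee + y_{\ell/2+1}^\vee \otimes y_{\ell/2-1}^\vee$ has differential containing the terms $x_{\ell/2-2}^\vee \otimes y_{\ell/2+1}^\vee$ and $y_{\ell/2+1}^\vee \otimes x_{\ell/2-2}^\vee$, whose index pair $(\ell/2-2,\ \ell/2+1)$ sums to $k=\ell-1$ but is not difference-minimizing (the DM pair for $k\equiv 3 \bmod 4$ is $(\ell/2, \ell/2-1)$); these terms therefore do not appear in $\psi(\partial z_\ell^\vee) = \psi(w_{\ell-1}^\vee) + \psi(w_{\ell+1}^\vee)$, and nothing cancels them. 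The correct formula, and the one the paper uses, sums over \emph{all} pairs $(i,j)$ with $i+j=\ell$ (resp.\ all pairs with $i+j=k$, together with transpositions, for $w_k^\vee$), with each term multiplied by the unique powers $\scU^*\scV^*$ that bring its bigrading into agreement with $\gr(z_\ell^\vee)$ (resp.\ $\gr(w_k^\vee)$). The role of Lemmas~\ref{lem:minimizing} and \ref{lem:samegrading} is precisely to guarantee that such nonnegative powers exist, by identifying the difference-minimizing term as the one attaining the minimal bigrading and showing that this minimum equals $\gr(z_\ell^\vee)$; those lemmas do \emph{not} single out which index pairs enter $\psi$, which is where your reading goes astray.

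Your proposed treatment of $\cS_n^\vee$ has a second problem. Dualizing the inclusion $\cY_n \hookrightarrow \cC_n \otimes \cC_n$ gives a \emph{quotient} map $\cC_n^\vee \otimes \cC_n^\vee \twoheadrightarrow \cY_n^\vee$, so phrases like ``the dual of $y_1 y_{-1} + y_{-1}y_1$'' do not pick out well-defined elements of $\cC_n^\vee \otimes \cC_n^\vee$. The paper instead applies the same full-sum mechanism on $\cS_n^\vee$: $\psi(r_0^\vee) = x_0^\vee \otimes x_0^\vee$, while $\psi(r_{\pm 1}^\vee)$ and $\psi(t^\vee)$ are long sums over $i+j=0$, $i<j$, of shifted terms in the tensor product, again with grading-correcting powers $\scU^*\scV^*$. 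The chain-map check $\partial\psi(r_{\pm 1}^\vee) = \psi(\partial r_{\pm 1}^\vee)$ then goes through by a telescoping cancellation that collapses these long sums to $\scU^*\scV^* x_0^\vee \otimes x_0^\vee$, and this telescoping requires \emph{all} index pairs to be present; truncating to difference-minimizing pairs destroys it.
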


\begin{proof}
We define a grading-preserving map $\psi\colon \cE^\vee_n \to \cC_n^\vee \otimes \cC_n^\vee$ as follows. For any $\ell$ even, let
\[
\psi(z_\ell^\vee) = \sum_{i+j=\ell} \scU^* \scV^* y_i^\vee\otimes y_j^\vee.
\]
Here, the right-hand side is formed by considering all possible products $y_i^\vee \otimes y_j^\vee$ with $i$ and $j$ odd and $i + j = \ell$. Each term is multiplied by powers of $\scU$ and $\scV$ so that the resulting grading is equal to that of $z_\ell^\vee$. Note that this is possible due to Lemmas~\ref{lem:minimizing} and \ref{lem:samegrading}. Indeed, by Lemma~\ref{lem:minimizing}, $\gr(y_i^\vee \otimes y_j^\vee)$ is minimized when $(i, j)$ is difference-minimizing. We then multiply every other term on the right-hand side by powers of $\scU$ and $\scV$ so as to have grading equal to this minimal grading. But by Lemma~\ref{lem:samegrading}, the minimal grading is none other than $\gr(z_\ell^\vee)$. 

We similarly define:
	\begin{align*}
	&\psi(w_k^\vee) = 
	\sum_{i+j=k} \scU^*\scV^*(x_i^\vee\otimes y_j^\vee+y_j^\vee \otimes x_i^\vee)\\
	&\psi(r_{-1}^\vee) = \sum_{i+j=0, \ i<j}\scU^*\scV^* x_{i-1}^\vee \otimes y_j^\vee + \scU^* \scV^* y_i^\vee \otimes x_{j-1}^\vee\\
	&\psi(r_{1}^\vee) = \sum_{i+j=0,\ i<j} \scU^*\scV^* x_{i+1}^\vee\otimes y_j^\vee+ \scU^* \scV^* y_i^\vee\otimes x_{j+1}^\vee\\
	&\psi(r_0^\vee) = \vphantom{\sum_{i+j=0,\ i<j}} x_0^\vee\otimes x_0^\vee \\
	&\psi(t^\vee) = \sum_{i+j=0,\ i<j} \scU^* \scV^* y_i^\vee\otimes y_j^\vee
	\end{align*}

\noindent
As before, Lemmas~\ref{lem:minimizing} and \ref{lem:samegrading} guarantee that in each of the above equations, there exist unique powers of $\scU$ and $\scV$ which make $\psi$ grading-preserving. Note that $\gr(t^\vee) = \gr(z_0^\vee)$, while $\gr(r_{-1}^\vee) = \gr(w_{-1}^\vee)$ and $\gr(r_{1}^\vee) = \gr(w_{1}^\vee)$.

	We claim that $\psi$ is a chain map. Because both sides of the equation $\partial \psi = \psi \partial$ are homogenous, it suffices to prove this in the quotient where we set $\scU = \scV = 1$. (The reader who is unconvinced of this fact may consult \cite[Section 2.4]{DaiStoffregen}, in which an analogous situation is discussed.) The claim is then straightforward from the definitions; the only subtle cases are to verify $\partial \psi = \psi \partial$ on $r_{-1}^\vee$ and $r_1^\vee$. For the former, we have
\begin{align*}
\partial \psi(r_{-1}^\vee) &= \sum_{i+j=0, \ i<j} \partial( x_{i-1}^\vee \otimes y_j^\vee ) + \partial( y_i^\vee \otimes x_{j-1}^\vee)\\
&= \sum_{i+j=0, \ i<j} x_{i-1}^\vee \otimes x_{j-1}^\vee + x_{i-1}^\vee \otimes x_{j+1}^\vee + x_{i-1}^\vee \otimes x_{j-1}^\vee + x_{i+1}^\vee \otimes x_{j-1}^\vee \\
&= \sum_{i+j=0, \ i<j} x_{i-1}^\vee \otimes x_{j+1}^\vee + x_{i+1}^\vee \otimes x_{j-1}^\vee.
\end{align*}
Identifying this as a telescoping series shows that it is equal to $x_0^\vee \otimes x_0^\vee$ (note that $x_i^\vee = 0$ for $i < -2n + 1$). A similar computation holds for $\partial \psi(r_1^\vee)$. 

Checking that $\psi$ has the requisite behavior under localization and is $\tau$-equivariant is straightforward; the only subtle cases are (again setting $\scU = \scV = 1$):
\[
\tau \psi(t^\vee) = \sum_{i + j = 0, \ i<j} y_{-i}^\vee \otimes y_{-j}^\vee = \psi(t^\vee) + \psi(z_0^\vee)
\]
together with
\[
\tau \psi(r_{-1}^\vee) = \sum_{i+j=0, \ i<j} x_{-i+1}^\vee \otimes y_{-j}^\vee + y_{-i}^\vee \otimes x_{-j+1}^\vee = \psi(r_1^\vee) + \psi(w_1^\vee)
\]
and
\[
\tau \psi(r_{1}^\vee) = \sum_{i+j=0, \ i<j} x_{-i-1}^\vee \otimes y_{-j}^\vee + y_{-i}^\vee \otimes x_{-j-1}^\vee = \psi(r_{-1}^\vee) + \psi(w_{-1}^\vee).
\]
This completes the proof.	
\end{proof}

We thus obtain the overall computation:
\begin{lemma}\label{lem:tau-sum-torus}
For $n$ odd, we have $(\CFK(T_{2n, 2n+1} \# T_{2n, 2n+1}), \tau_{\#}) \sim (\cE_n, \tau)$.
\end{lemma}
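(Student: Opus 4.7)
The proof is essentially an assembly of the pieces built up over the previous two subsections; the substantive work has already been done in Lemmas~\ref{lem:CnCnI} and \ref{lem:CnCnII}. The plan is to first reduce to a statement purely about $\cC_n\otimes \cC_n$, and then to chain the two previously established local maps together.

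First, I would invoke Theorem~\ref{thm:connectedsum} to identify
\[
(\CFK(T_{2n,2n+1}\# T_{2n,2n+1}),\tau_\#) \simeq (\CFK(T_{2n,2n+1})\otimes \CFK(T_{2n,2n+1}),\tau_{K_1}\otimes \tau_{K_2}),
\]
noting that the $\iota_K$-twisting in the tensor product action does not affect the underlying $\tau_K$-complex. Next, by \cite[Proposition 3.1]{HHSZ2}, $\CFK(T_{2n,2n+1})\simeq \cC_n$ as abstract knot complexes. By Theorem~\ref{thm:1.8}, the transferred involution is a skew-graded, $\cR$-skew-equivariant homotopy involution; since (as noted in Definition~\ref{def:Cn}) $\cC_n$ admits a unique such automorphism up to homotopy, namely the reflection $\tau$ of Definition~\ref{def:Cn}, we conclude
\[
(\CFK(T_{2n,2n+1}),\tau_K) \simeq (\cC_n,\tau).
\]
Combining these reduces the lemma to showing $(\cC_n\otimes \cC_n,\tau\otimes \tau) \sim (\cE_n,\tau)$.

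For local equivalence, we must exhibit local maps in both directions. Lemma~\ref{lem:CnCnI} supplies a local map $\varphi\colon (\cE_n,\tau)\to (\cC_n\otimes \cC_n,\tau\otimes\tau)$ directly. For the reverse direction, I would apply the duality framework of Section~\ref{sec:2.2} to Lemma~\ref{lem:CnCnII}. That lemma provides a local map $\psi\colon (\cE_n^\vee,\tau^\vee)\to (\cC_n^\vee\otimes \cC_n^\vee,\tau^\vee\otimes \tau^\vee)$. Dualizing $\psi$ (and using the canonical identification $(C^\vee)^\vee\simeq C$, together with the fact that $(\cC_n\otimes \cC_n)^\vee \simeq \cC_n^\vee \otimes \cC_n^\vee$ as $\tau_K$-complexes) yields a local map $\psi^\vee\colon (\cC_n\otimes \cC_n,\tau\otimes \tau)\to (\cE_n,\tau)$. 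The existence of local maps in both directions is the definition of local equivalence (Definition~\ref{def:localequivalence}), which completes the proof.

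The only subtlety is bookkeeping: one must check that dualization commutes appropriately with the tensor product and the skew-equivariance condition, so that the dual of $\psi$ really does land in the correct category of $\tau_K$-complexes with the correct actions. This is routine once one recalls that $\tau\otimes \tau$ is self-dual under the trace/cotrace maps used in Lemma~\ref{lem:gpB}, and imposes no obstruction beyond the straightforward compatibility checks already implicit in the definition of $\K_\tau$.
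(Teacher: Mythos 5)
Your proof is correct and takes essentially the same route as the paper: the paper's proof of Lemma~\ref{lem:tau-sum-torus} is the one-line "Follows from Lemmas~\ref{lem:CnCnI} and \ref{lem:CnCnII}," and the reduction to $(\cC_n\otimes\cC_n,\tau\otimes\tau)$ via Theorem~\ref{thm:connectedsum} and the uniqueness of the skew-graded involution on $\cC_n$ is stated in the surrounding text exactly as you describe, with the reverse local map obtained by dualizing Lemma~\ref{lem:CnCnII} as you indicate.
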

\begin{proof}
Follows from Lemmas~\ref{lem:CnCnI} and \ref{lem:CnCnII}.
\end{proof}

\begin{remark}
In \cite[Section 3.2.1]{HHSZ2}, the local equivalence class of $(T_{2n, 2n+1} \# T_{2n, 2n+1}, \iota_{\#})$ was similarly identified with $(\cE_n, \iota_K)$ for a certain involution $\iota_K$ on $\cE_n$. In fact, the map of Lemma~\ref{lem:CnCnI} is both $\tau$- and $\iota_K$-equivariant. However, the map of Lemma~\ref{lem:CnCnII} is \textit{not} $\iota_K$-equivariant. We thus do not determine the $(\tau_K, \iota_K)$-class of $T_{2n, 2n+1} \# T_{2n, 2n+1}$ in this paper; only the $\tau_K$-class.
\end{remark}

\subsection{The swapping involution}
We now turn to the $\tau_K$-class of $(T_{2n, 2n+1} \#T_{2n, 2n+1}, \tau_{sw})$. Although the full local equivalence class turns out to be difficult to compute, for our purposes it will suffice to establish an inequality. Let $\cD_n$ be the staircase complex equipped with the unique skew-graded involution of Definition~\ref{def:Dn}. Then we claim:

\begin{lemma}\label{lem:tau-swap-torus}
We have $(\cD_n, \tau) \leq (\CFK(T_{2n, 2n+1} \#T_{2n, 2n+1}), \tau_{sw})$.
\end{lemma}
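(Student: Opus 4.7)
The plan is to reduce to a computation in $\cC_n \otimes \cC_n$ via Theorem~\ref{thm:swapping}, and then to construct an explicit local map from $\cD_n$ into the resulting complex. Since $T_{2n,2n+1}$ admits an orientation-reversing symmetry, we identify $\CFK(T_{2n,2n+1}^r) \simeq \cC_n$. Under this identification, $\tau_{exch}$ on $\cC_n \otimes \cC_n$ becomes the transposition of factors $T$, and Theorem~\ref{thm:swapping} asserts that the $\tau_K$-complex of $(T_{2n,2n+1} \# T_{2n,2n+1}, \tau_{sw})$ is homotopy equivalent to $(\cC_n \otimes \cC_n, \tau_\otimes)$ with $\tau_\otimes = (\id \otimes \id + \Psi \otimes \Phi) \circ T$. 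So it suffices to exhibit a local map $\varphi\colon (\cD_n, \tau) \to (\cC_n \otimes \cC_n, \tau_\otimes)$.

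I would first record the maps $\Phi$ and $\Psi$ on $\cC_n$. Since $\partial y_\ell = 0$, both $\Phi$ and $\Psi$ vanish on the $y$-generators; on $x_k$ they produce multiples of $y_{k+1}$ and $y_{k-1}$ respectively. The crucial consequence is that the correction term $\Psi \otimes \Phi$ in $\tau_\otimes$ is identically zero on the submodule of $\cC_n \otimes \cC_n$ spanned by elements of type $y \otimes y$, $x \otimes y$, and $y \otimes x$. In other words, \emph{on this submodule $\tau_\otimes$ acts by the bare transposition $T$}. This reduces the equivariance check for $\varphi$ to showing that the image of $\varphi$ lies in this submodule and that $\varphi$ converts the reflection $\tau$ on $\cD_n$ into the transposition $T$.

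I would then define $\varphi$ by mapping $z$-generators of $\cD_n$ into $y \otimes y$ elements of $\cC_n \otimes \cC_n$ and $w$-generators into symmetric combinations $\scU^{*}\scV^{*}(x_i \otimes y_j + y_j \otimes x_i)$. Concretely, using the pattern suggested by Figures~\ref{fig:Cn} and \ref{fig:En}, the ``diagonal'' generators $z_{-4n+2+4i}$ map to $y_{-2n+1+2i} \otimes y_{-2n+1+2i}$, while the ``off-diagonal'' $z_{-4n+4+4i}$ map to a suitable combination involving $y_{-2n+1+2i} \otimes y_{-2n+3+2i}$ and its swap (the combinatorial choice being forced by the grading data in \eqref{eq:gradingsI}--\eqref{eq:gradingsII}). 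The appropriate powers of $\scU, \scV$ are uniquely determined by requiring $\varphi$ to be grading-preserving; these powers can be read off by comparing the parameter sequences $(c_k)$ defining $\cC_n$ and $(d_k)$ defining $\cD_n$. Invariance under the reflection $\tau$ of $\cD_n$ is built in by hand: corresponding pairs $z_{\ell}, z_{-\ell}$ and $w_k, w_{-k}$ are sent to elements that are exchanged by $T$. Once this is done, the image lies in the ``no $x \otimes x$'' submodule and the equivariance with $\tau_\otimes$ reduces to invariance under $T$.

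The verifications are then: (i) $\varphi$ is a chain map, which I would check by matching $\partial \varphi(w_k)$ against $\scV^{d_{k-1}}\varphi(z_{k-1}) + \scU^{d_{k+1}}\varphi(z_{k+1})$ using the computation of $\partial$ in $\cC_n$ and the grading-balancing $\scU,\scV$-exponents; (ii) $\varphi$ is a homotopy equivalence after localization, which is immediate from the fact that $\varphi(z_{-4n+2}) = y_{-2n+1} \otimes y_{-2n+1}$ generates the localization of $\cC_n \otimes \cC_n$; (iii) $\tau$-equivariance, which follows from the reduction to $T$ described above. The main obstacle will be the bookkeeping of $\scU$- and $\scV$-exponents: matching the parameter sequence $(d_k)$ of $\cD_n$ (in which each entry appears twice) against $(c_k)$ of $\cC_n$ forces one to carefully choose which of the two adjacent $y\otimes y$ targets each $z_\ell$ lands in, and similarly whether $\varphi(w_k)$ should use an $x \otimes y$ term on the left or right of the symmetric combination. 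Once these choices are made consistently (and compatibly with the reflection), the chain map and equivariance identities hold simultaneously, yielding the desired local map.
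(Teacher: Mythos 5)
The high-level strategy you describe (invoke Theorem~\ref{thm:swapping}, observe that $\Psi\otimes\Phi$ vanishes on the span of $x\otimes y$, $y\otimes x$, $y\otimes y$, and then build an explicit local map $\cD_n\to\cC_n\otimes\cC_n$) is exactly the one the paper pursues. But there are two concrete problems in your execution, one small and one fatal.

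The small one: $\tau_{exch}$ is not the bare transposition $T\colon a\otimes b\mapsto b\otimes a$. Unwinding $\tau_{exch}=(sw\otimes sw)\circ t$ shows that on generators it acts by
\[
\tau_{exch}(x_i y_j) = y_{-j}x_{-i},\qquad \tau_{exch}(y_i x_j)=x_{-j}y_{-i},\qquad \tau_{exch}(y_i y_j)=y_{-j}y_{-i},
\]
i.e.\ it is the transposition composed with the reflection $x_i\mapsto x_{-i}$, $y_\ell\mapsto y_{-\ell}$ in each factor. This is fixable, but it does change the bookkeeping for equivariance.

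The fatal one: sending $w$-generators to \emph{symmetric} elements $\scU^{*}\scV^{*}(x_i\otimes y_j+y_j\otimes x_i)$ cannot give a chain map. In the staircase $\cW_n$ that one actually needs, the $w$-generators land on tensors where the $x$- and $y$-indices differ by exactly one (e.g.\ $y_{1-2n}x_{2-2n}$, $x_{2-2n}y_{3-2n}$, \ldots), i.e.\ $|i-j|=1$ always. But if $i=j+1$ then
\[
\partial\bigl(x_{j+1}\otimes y_j + y_j\otimes x_{j+1}\bigr)
= \scV^{c_j}\bigl(y_j\otimes y_j + y_j\otimes y_j\bigr) + \scU^{c_{j+2}}\bigl(y_{j+2}\otimes y_j + y_j\otimes y_{j+2}\bigr)
= \scU^{c_{j+2}}\bigl(y_{j+2}\otimes y_j + y_j\otimes y_{j+2}\bigr),
\]
so the $\scV$-term cancels over $\F_2$ (and symmetrically the $\scU$-term cancels when $i=j-1$). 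Meanwhile $\partial w_k$ in $\cD_n$ always contributes \emph{both} a $\scV^{d_{k-1}}z_{k-1}$ and a $\scU^{d_{k+1}}z_{k+1}$ summand, with nonzero $\varphi(z_{k\pm1})$ required for a local map. Since $\scU$ and $\scV$ are independent variables, you cannot absorb a missing $\scV$-term into the $\scU$-term, and $\partial\varphi=\varphi\partial$ fails on every $w_k$. What makes the paper's proof work is that it does \emph{not} symmetrize: it sends each $w_k$ to a single generator of the form $y_ix_j$ or $x_iy_j$, producing the asymmetric subcomplex $\cW_n$ of Figure~\ref{fig:Wn}. On that subcomplex $\Psi\otimes\Phi$ still vanishes (no $x\otimes x$ terms appear), $\tau_{exch}$ preserves it and acts by the obvious reflection, and $\cD_n\to\cW_n$ is an isomorphism, giving the desired local map. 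So the intuition about where $\Psi\otimes\Phi$ vanishes is right, but the target submodule must be chosen asymmetrically.
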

\begin{proof}
Consider the subcomplex $\cW_n$ of $\CFK(T_{2n, 2n+1} \#T_{2n, 2n+1}) \simeq \cC_n \otimes \cC_n$ displayed in Figure~\ref{fig:Wn}. This is similar to the upper half of Figure~\ref{fig:Yn}, but it is not quite the same: the second of the two rows has many of the tensor products occurring with transposed factors.

\begin{figure}[h]
	\[
	\begin{tikzcd}[column sep={1.05cm,between origins}, labels=description, row sep=1cm]
	y_{1-2n}y_{1-2n}&& y_{1-2n}y_{3-2n}&& y_{3-2n}y_{3-2n}&& y_{3-2n}y_{5-2n}&& \cdots&& y_{-3}y_{-1}&& y_{-1} y_{-1}&&\,\\
	& y_{1-2n} x_{2-2n} \ar[ul, "\scV"]\ar[ur, "\scU^{2n-1}"]&& x_{2-2n}y_{3-2n}\ar[ul, "\scV"]\ar[ur, "\scU^{2n-1}"]&& y_{3-2n}x_{4-2n} \ar[ul, "\scV^2"]\ar[ur, "\scU^{2n-2}"]&& \cdots \ar[ur] \ar[ul] && y_{-3}x_{-2}\ar[ul]\ar[ur, "\scU^{n+1}"] && x_{-2} y_{-1} \ar[ul, "\scV^{n-1}"]\ar[ur, "\scU^{n+1}"] && x_{0}y_{-1} \ar[ul, "\scV^n"] \ar[ur, "\scU^n"]
	\end{tikzcd} \cdots
	\]
	\[
	\begin{tikzcd}[column sep={1.05cm,between origins}, labels=description, row sep=1cm]
	y_{1}y_{-1}&& y_{1}y_{1}&& y_{1}y_{3}&& y_{3}y_{3}&& \cdots&& y_{2n-3}y_{2n-1}&& y_{2n-1}y_{2n-1}&&\,\\
	& y_{1}x_{0} \ar[ul, "\scV^n"]\ar[ur, "\scU^n"]&& y_{1}x_{2}\ar[ul, "\scV^{n+1}"]\ar[ur, "\scU^{n-1}"]&&x_{2}y_{3}  \ar[ul, "\scV^{n+1}"]\ar[ur, "\scU^{n-1}"]&& \cdots \ar[ul] \ar[ur] && y_{2n-3}x_{2n-2}\ar[ul]\ar[ur, "\scU"] && x_{2n-2}y_{2n-1} \ar[ul, "\scV^{2n-1}"] \ar[ur, "\scU"]
	\end{tikzcd}
	\]
	\caption{The subcomplex $\cW_n$. Note that the top two rows form a staircase complex, such that $\partial(x_0y_{-1})=\scV^n y_{-1}y_{-1}+\scU^n y_1y_{-1}$.}\label{fig:Wn}
\end{figure}

We claim that $\tau_{sw}$ preserves this subcomplex and acts as the obvious reflection map. To see this, consider the exchange involution $\tau_{exch}$ defined in Section~\ref{sec:4.2}. This sends
\[
\tau_{exch}(x_iy_j) = y_{-j}x_{-i}, \quad \tau_{exch}(y_ix_j) = x_{-j}y_{-i}, \quad \text{and} \quad \tau_{exch}(y_iy_j) = y_{-j}y_{-i}.
\]
Moreover, it is clear from the definition of $\partial$ on $\cC_n$ that $\Psi\otimes \Phi$ vanishes on generators of the form $x_i y_j, y_i x_j,$ and $y_i y_j$. Applying Theorem~\ref{thm:swapping} then gives the desired computation of $\tau_{sw}$. Mapping $(\cD_n, \tau)$ isomorphically onto $(\cW_n, \tau_{sw})$ completes the proof.
\end{proof}

\subsection{The involution on $K_n$}
We now finally turn to the $\tau_K$-class of $(K_n, \tau_n)$. Our first step will be to understand the complex $\cE_n\otimes \cD_n^{\vee}$. This follows \cite[Section 3.2.2]{HHSZ2}.

\begin{definition}
Let $\cB_n$ be the box complex displayed on the left in Figure~\ref{fig:Bn}. This has five generators $v$, $r_0$, $r_{-1}$, $r_1$, and $t$, with differential
\[
\partial v = 0, \quad \partial r_0=\scV^n r_{-1} +\scU^n r_1, \quad \partial r_{-1}=\scU^n t, \quad \partial r_1=\scV^n t ,\quad \text{and} \quad \partial t=0.
\]

\begin{figure}[h!]
	\begin{minipage}{.2\textwidth}
		\[
		v \quad \begin{tikzcd}[row sep=1.1cm, column sep=1.1cm, labels=description] r_{-1}\ar[r, "\scU^n"] & t\\
		r_0\ar[u,"\scV^n"] \ar[r, "\scU^n"]& r_1 \ar[u, "\scV^n"]
		\end{tikzcd}
		\]\end{minipage}\hspace{2cm}\begin{minipage}{.2\textwidth}
		\[
		v^\vee \quad \begin{tikzcd}[row sep=1.1cm, column sep=1.1cm, labels=description] r_{-1}^\vee \ar[d,"\scV^n"] & t^\vee\ar[l, "\scU^n"] \ar[d, "\scV^n"]\\
		r^\vee_0 & r^v_1 \ar[l, "\scU^n"] 
		\end{tikzcd}
		\]\end{minipage}
	\caption{The box complex $\cB_n$ and its dual $\cB_n^{\vee}$. See \cite[Figure 3.4]{HHSZ2}.}\label{fig:Bn}
\end{figure}
\noindent
The gradings of these generators are such that $\gr(v) = \gr(t) = (0,0)$. Define an involution $\tau$ on $\cB_n$ by setting
\begin{align*}
\tau(v)&=v+t\\
\tau(r_0)&=r_0\\
\tau(r_{-1})&=r_1\\
\tau(r_1)&=r_{-1}\\
\tau(t)&=t.
\end{align*}
Note that the action of $\tau$ sends the singleton generator $v$ to (the sum of $v$ with) a square complex generator. However, unlike in \cite[Section 3.2.2]{HHSZ2}, $\tau$ does not send the opposite corner of the square back to $v$. The reader should compare the complexes $\cB_n$ and $\cS_n$.
\end{definition}

The utility of $\cB_n$ is given by the following lemma:

\begin{lemma}\label{lem:Bn}
We have $(\cB_n, \tau) \sim (\cE_n, \tau) \otimes (\cD_n, \tau)^{\vee}$.
\end{lemma}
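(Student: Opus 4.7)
The plan is to construct explicit local maps $f \colon (\cB_n, \tau) \to (\cE_n, \tau) \otimes (\cD_n, \tau)^{\vee}$ and $g$ in the reverse direction, following the template established in Lemmas~\ref{lem:CnCnI} and~\ref{lem:CnCnII}. The underlying heuristic is that $(\cD_n, \tau) \otimes (\cD_n, \tau)^{\vee}$ is locally equivalent to the trivial complex via a cotrace/trace pair (compare the proof of Lemma~\ref{lem:gpB}), so the expansion $\cE_n \otimes \cD_n^{\vee} = (\cD_n \oplus \cS_n) \otimes \cD_n^{\vee}$ should reduce at the local level to a singleton nontorsion generator (the diagonal in $\cD_n \otimes \cD_n^{\vee}$) together with a copy of the square $\cS_n$. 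This matches the underlying $\cR$-module structure of $\cB_n$.

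For the forward map $f$, I would first send the square generators $r_0, r_{\pm 1}, t$ of $\cB_n$ to the corresponding elements of $\cS_n \subseteq \cE_n$ tensored with a fixed nontorsion element of $\cD_n^{\vee}$ of appropriate bigrading. The singleton generator $v$ should be sent to a cotrace-type sum
\[
f(v) = \sum_k \scU^{*}\scV^{*}\, w_k \otimes w_k^{\vee} + \sum_\ell \scU^{*}\scV^{*}\, z_\ell \otimes z_\ell^{\vee},
\]
where the monomial prefactors are chosen, using grading considerations analogous to those of Lemmas~\ref{lem:minimizing} and~\ref{lem:samegrading}, so that $f(v)$ lies in bigrading $(0,0)$. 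Verifying that $f(v)$ is a cycle reduces to a telescoping identity of the form used in the proof of Lemma~\ref{lem:CnCnII}, and $f(v)$ is manifestly nontorsion after inverting $\scU$ and $\scV$. The reverse map $g$ should be defined dually, pairing $w_k \otimes w_k^{\vee}$ and $z_\ell \otimes z_\ell^{\vee}$ with $v$ via a trace, and sending generators in $\cS_n \otimes \cD_n^{\vee}$ into $\cS_n \subseteq \cB_n$ by pairing against the unit of $\cD_n^{\vee}$.

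The main obstacle is verifying $\tau$-equivariance, and in particular accounting for the formula $\tau(v) = v + t$ in $\cB_n$. Most of the terms in $(\tau \otimes \tau^{\vee})(f(v))$ will cancel in pairs by the reflection symmetry of $\tau$ on $\cD_n$ and its dual. However, the non-standard assignments $\tau(w_{\pm 1}) = w_{\mp 1} + r_{\mp 1}$ and $\tau(z_0) = z_0 + t$ in $\cE_n$ (from Definition~\ref{def:En}) contribute additional summands lying in $\cS_n \otimes \cD_n^{\vee}$, paired against the central generators of $\cD_n^{\vee}$. The expectation is that these extra contributions telescope down to a single term corresponding to $t \in \cS_n$ tensored with the nontorsion class of $\cD_n^{\vee}$, thereby matching the extra $t$ in $\tau(v)$. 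Carrying out this cancellation, and the analogous identities for $g$ and for the square generators, will be the bulk of the work.

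Once $\tau$-equivariance of both $f$ and $g$ is established, local equivalence follows from the observation that both maps carry the distinguished nontorsion class to a nontorsion class after localizing, so the induced maps on $C \otimes \F[\scU, \scV, \scU^{-1}, \scV^{-1}]$ are homotopy equivalences. I expect the computation to be entirely parallel to that of \cite[Section 3.2.2]{HHSZ2}, which identifies the analogous $\iota_K$-local equivalence class; the key difference is the swapping of which generators of $\cE_n$ are ``mixed'' by the involution, which forces the formula $\tau(v)=v+t$ in $\cB_n$ rather than the corresponding involutive-Floer formula.
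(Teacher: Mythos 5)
Your cotrace-based plan is a genuinely different route from the paper's. The paper does \emph{not} build maps $\cB_n \to \cE_n\otimes\cD_n^\vee$ directly; instead it constructs grading-preserving, $\tau$-equivariant chain maps $f\colon \cE_n\to \cD_n\otimes\cB_n$ and $g\colon \cE_n^\vee\to \cD_n^\vee\otimes\cB_n^\vee$, which together show $(\cE_n,\tau)\sim(\cD_n,\tau)\otimes(\cB_n,\tau)$, and then invokes the (abelian) group structure of the $\tau_K$-local equivalence group to tensor both sides by $(\cD_n,\tau)^\vee$. The payoff of the paper's route is that the explicit formulas never involve a dual staircase or an infinite sum: e.g.\ $f(w_i)=w_i\cdot v$ or $w_i\cdot(v+t)$ and $f(r_i)=z_0\cdot r_i$, with a single correction term $z_0 r_1$ in $f(w_1)$. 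Your approach, constructing the cotrace map $f\colon\cB_n\to\cE_n\otimes\cD_n^\vee$ directly with $f(v)=\sum_k w_k\otimes w_k^\vee + \sum_\ell z_\ell\otimes z_\ell^\vee$, is logically valid and the telescoping and $\tau$-equivariance checks for $f(v)$ do go through (in fact the $\scU^*\scV^*$ prefactors are all trivial since $\gr(w_k\otimes w_k^\vee)=0$ automatically).

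However, there is a concrete gap in the way you handle the square generators, and it occurs before you even reach the $\tau$-equivariance cancellation you flag as ``the bulk of the work.'' Sending $r_0,r_{\pm1},t$ to ``the corresponding elements of $\cS_n$ tensored with a fixed nontorsion element of $\cD_n^\vee$'' does not give a chain map: for example, taking $f(r_{-1})=r_{-1}\otimes z_0^\vee$ and $f(t)=t\otimes z_0^\vee$ one has
\[
\partial\bigl(r_{-1}\otimes z_0^\vee\bigr)=\scU^n\, t\otimes z_0^\vee + r_{-1}\otimes\bigl(\scV^n w_1^\vee+\scU^n w_{-1}^\vee\bigr)\ \neq\ \scU^n f(t),
\]
since $z_0^\vee$ is not a cycle in $\cD_n^\vee$. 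The correct formulas must spread across several tensor factors; one choice that is grading-preserving, a chain map, and $\tau$-equivariant is
\[
f(r_0)=r_0\otimes z_0^\vee,\qquad f(r_{\pm1})=r_{\pm1}\otimes z_0^\vee + r_0\otimes w_{\mp1}^\vee,\qquad f(t)=t\otimes z_0^\vee + r_1\otimes w_1^\vee + r_{-1}\otimes w_{-1}^\vee.
\]
This also shows that your description of the $\tau$-equivariance mismatch is not quite right: computing $(\tau\otimes\tau^\vee)(f(v))$ using $\tau(w_{\pm1})=w_{\mp1}+r_{\mp1}$ and $\tau(z_0)=z_0+t$ produces exactly the three-term element $r_1\otimes w_1^\vee + r_{-1}\otimes w_{-1}^\vee + t\otimes z_0^\vee$, not ``a single term $t$ tensored with the nontorsion class.'' That three-term element is precisely $f(t)$ as corrected above, so the equivariance $(\tau\otimes\tau^\vee)(f(v))=f(v)+f(t)=f(\tau v)$ holds, but only with the corrected formulas. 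With those repairs (and the analogous ones for the reverse map), your direct construction should go through; as written, though, the chain-map verification fails at the square.
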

\begin{proof}
This is similar to \cite[Proposition 3.5]{HHSZ2}. We construct maps
\[
f \colon \cE_n\to \cD_n\otimes \cB_n
\]
and 
\[
g \colon \cE^\vee_n\to \cD_n^{\vee}\otimes \cB_n^{\vee}
\]
as follows. The map $f$ is given by	
\begin{equation*}
  \begin{split}
	f(w_i) &= w_{i}v \mbox{ for } i\leq -1\\
	f(w_1) &= w_1(v + t) + z_0r_1\\	
	f(w_i) &= w_i(v+t) \mbox{ for } i\geq 3\\
	f(z_i) &= z_iv \mbox{ for } i\leq 0\\
	f(z_i) &= z_i(v+t) \mbox{ for } i\geq 2
  \end{split}
\quad \quad \quad
  \begin{split}
	f(r_0) &= z_0r_0\\
	f(r_{-1})&= z_0r_{-1}\\
	f(r_1) &= z_0r_1\\
	f(t)&= z_0t.
  \end{split}
\end{equation*}
It is easily checked that $f$ is a grading-preserving chain map; the only subtlety is checking that $\partial f = f \partial$ on $w_1$. We have:
\begin{align*}
\partial f(w_1) &= \partial(w_1(v + t) + z_0r_1) = (\scV^n z_0 + \scU^n z_2)(v + t) + \scV^n z_0t \\
f (\partial w_1) &= f(\scV^n z_0 + \scU^n z_2) =  \scV^n z_0v + \scU^n z_2(v + t),
\end{align*}
which are equal to each other. Checking $\tau$-equivariance is likewise straightforward; the only subtle cases are for $w_{-1}$,  $w_1$, and $z_0$. For these, we have
\begin{align*}
\tau f(w_{-1}) &= \tau(w_{-1}v) = w_1(v + t) \\
f(\tau w_{-1}) &= f(w_1 + r_1) = w_1(v + t) + z_0r_1 + z_0r_1
\end{align*}
and
\begin{align*}
\tau f(w_1) &= \tau(w_1(v+t) + z_0r_1) = w_{-1}v + z_0r_{-1} \\
f(\tau w_1) &= f(w_{-1} + r_{-1}) = w_{-1}v + z_0 r_{-1}
\end{align*}
and
\begin{align*}
\tau f(z_0) &= \tau(z_0 v) = z_0(v+t) \\
f(\tau z_0) &= f(z_0 + t) = z_0v + z_0t.
\end{align*}
This completes the verification of $f$.

The map $g$ is given by
\begin{equation*}
  \begin{split}
	g(w_i^\vee) &= w_{i}^\vee v^\vee\\
	g(z_i^\vee) &= z_i^\vee v^\vee
  \end{split}
\quad \quad \quad
  \begin{split}
	g(r_0^\vee) &= z_0^\vee r_0^\vee + w_{-1}^\vee r_1^\vee + w_1^\vee r_{-1}^\vee \\
	g(r_{-1}^\vee)&= w_{-1}^\vee t^\vee + z_0^\vee r_{-1}^\vee \\
	g(r_1^\vee) &= w_1^\vee t^\vee + z_0^\vee r_1^\vee\\
	g(t^\vee)&= z_0^\vee t^\vee.
  \end{split}
\end{equation*}
An examination of the right-hand side of Figure~\ref{fig:Bn} shows that $g$ is a grading-preserving chain map. Checking $\tau$-equivariance is likewise straightforward; the only subtle cases are for $r_{-1}^\vee$, $r_1^\vee$, and $t^\vee$. For these, we have
\begin{align*}
\tau g(r_{-1}^\vee) &= \tau(w_{-1}^\vee t^\vee + z_0^\vee r_{-1}^\vee) = w_1^\vee(t^\vee + v^\vee) + z_0^\vee r_1^\vee \\
g(\tau r_{-1}^\vee) &= g( r_1^\vee + w_{1}^\vee ) = w_1^\vee t^\vee + z_0^\vee r_1^\vee + w_{1}^\vee v^\vee
\end{align*}
and
\begin{align*}
\tau g(r_{1}^\vee) &= \tau(w_1^\vee t^\vee + z_0^\vee r_1^\vee) = w_{-1}^\vee(t^\vee + v^\vee) + z_0^\vee r_{-1}^\vee \\
g(\tau r_{1}^\vee) &= g( r_{-1}^\vee + w_{-1}^\vee ) = w_{-1}^\vee t^\vee + z_0^\vee r_{-1}^\vee + w_{-1}^\vee v^\vee
\end{align*}	
and
\begin{align*}
\tau g(t^\vee) &= \tau(z_0^\vee t^\vee) = z_0^\vee(t^\vee + v^\vee) \\
g(\tau t^\vee) &= g(t^\vee + z_0^\vee) = z_0^\vee t^\vee + z_0^\vee v^\vee.
\end{align*}
This completes the verification for $g$.
\end{proof}

We are now finally in a position to state our fundamental computation:

\begin{lemma}\label{lem:fundamentalinequality}
We have $(\CFK(K_n), \tau_n) \leq (\cB_n, \tau)$.
\end{lemma}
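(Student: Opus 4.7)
The plan is to combine the three ingredients developed in this section, using the equivariant connected sum formula to decompose $\CFK(K_n)$ as a tensor product.

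Recall $K_n = (T_{2n,2n+1} \# T_{2n,2n+1}) \# -(T_{2n,2n+1} \# T_{2n,2n+1})$ with strong inversion $\tau_n = \tau_\# \# -\tau_{sw}$. By the connected sum formula (Theorem~\ref{thm:connectedsum}), applied to the $\tau_K$-complex (forgetting $\iota_K$), we obtain
\[
(\CFK(K_n), \tau_n) \sim (\CFK(T_{2n,2n+1} \# T_{2n,2n+1}), \tau_\#) \otimes (\CFK(T_{2n,2n+1} \# T_{2n,2n+1}), \tau_{sw})^{\vee},
\]
where the dual comes from mirroring the second summand and $(\,\cdot\,)^\vee$ denotes the dual in $\K_\tau$, which exists by the group structure established in Lemma~\ref{lem:gpB}.

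Next, Lemma~\ref{lem:tau-sum-torus} gives a local equivalence $(\CFK(T_{2n,2n+1} \# T_{2n,2n+1}), \tau_\#) \sim (\cE_n, \tau)$, and Lemma~\ref{lem:tau-swap-torus} provides a local map $(\cD_n, \tau) \leq (\CFK(T_{2n,2n+1} \# T_{2n,2n+1}), \tau_{sw})$. Dualizing the latter reverses the direction of the local map, yielding $(\CFK(T_{2n,2n+1} \# T_{2n,2n+1}), \tau_{sw})^{\vee} \leq (\cD_n, \tau)^{\vee}$. Since the tensor product descends to a well-defined operation on $\K_\tau$ and respects the partial order given by local equivalence inequalities, tensoring the two relations together yields
\[
(\CFK(K_n), \tau_n) \leq (\cE_n, \tau) \otimes (\cD_n, \tau)^{\vee}.
\]

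Finally, Lemma~\ref{lem:Bn} identifies $(\cE_n, \tau) \otimes (\cD_n, \tau)^{\vee} \sim (\cB_n, \tau)$, which completes the argument. The main obstacle is purely organizational: one must be careful that the orientation-reversal in the second summand of $K_n$ corresponds precisely to dualization at the level of $\tau_K$-complexes (a standard fact for knot Floer cobordism maps), and that tensor product is a monotone operation with respect to local maps, so that stitching the inequality on one factor with an equivalence on the other yields an inequality in $\K_\tau$. No new calculations are required beyond what has already been set up.
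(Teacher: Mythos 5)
Your proposal is correct and follows essentially the same route as the paper: combine Lemma~\ref{lem:tau-sum-torus}, the dual of the local map from Lemma~\ref{lem:tau-swap-torus}, the connected sum formula of Theorem~\ref{thm:connectedsum} (with mirroring realized as dualization), and then Lemma~\ref{lem:Bn} to identify $(\cE_n,\tau)\otimes(\cD_n,\tau)^\vee$ with $(\cB_n,\tau)$. The only difference is that you spell out the monotonicity of the tensor product and the reversal of local maps under dualization, which the paper leaves implicit.
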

\begin{proof}
By Lemma~\ref{lem:tau-sum-torus}, we have
\[
(\CFK(T_{2n, 2n+1} \# T_{2n, 2n+1}), \tau_\#) \sim (\cE_n, \tau).
\]
By Lemma~\ref{lem:tau-swap-torus}, we have
\[
(\CFK(T_{2n, 2n+1} \# T_{2n, 2n+1}), \tau_{sw})^\vee \leq (\cD_n, \tau)^\vee.
\]
Tensoring these together and utilizing Theorem~\ref{thm:connectedsum}, we thus have that
\[
(\CFK(K_n), \tau_n) \leq (\cE_n, \tau) \otimes (\cD_n, \tau)^\vee \sim (\cB_n, \tau),
\]
where the final local equivalence follows from Lemma~\ref{lem:Bn}.
\end{proof}

This immediately yields the proof of Theorem~\ref{thm:1.4}:

\begin{proof}[Proof of Theorem~\ref{thm:1.4}]
It is straightforward to check that an inequality as in Lemma~\ref{lem:fundamentalinequality} implies an inequality of the large-surgery numerical invariants defined in Section~\ref{sec:2.5}:
\[
\dl_\tau(\CFK(K_n)_0) \leq \dl_\tau((\cB_n)_0) \quad \text{and} \quad \du_\tau(\CFK(K_n)_0) \leq \du_\tau((\cB_n)_0).
\]
See Section~\ref{sec:7.2} for further discussion. A direct computation shows that
\[
\dl_\tau((\cB_n)_0) = -2n \quad \text{and} \quad \du_\tau((\cB_n)_0) = 0.
\]
Applying Definition~\ref{def:numericalinvariants} completes the proof.
\end{proof}

\begin{remark}\label{rem:suminsensitive}
Throughout this section, we have only worked with the $\tau_K$-complexes of our knots. These are insensitive to the choice of direction. Moreover, as discussed in Section~\ref{sec:2.3}, the (possible) non-abelian nature of $\K_{\tau, \iota}$ does not arise unless the action of $\iota_K$ is considered simultaneously. Thus, the computations of this section hold regardless of the way the equivariant connected sum is performed, not just following the conventions of Figure~\ref{fig:11}.
\end{remark}

\section{Relation to other invariants}\label{sec:7}

We now relate the present paper to the results of \cite{DHM} and \cite{JZgenus}. 

\subsection{Equivariant large surgery}\label{sec:7.1}
We begin with a brief review of \cite{DHM}. Let $Y$ be a $\Z/2\Z$-homology sphere and let $\tau$ be an involution on $Y$. Note that $Y$ has a single spin structure $\s$ which is necessarily sent to itself by $\tau$. In \cite[Section 4]{DHM}, it is shown that $\tau$ induces a well-defined automorphism of $\CFm(Y, \s)$, which we also denote by $\tau$.

Moreover, in \cite[Section 4]{DHM} it is shown that $\tau$ is a homotopy involution. The pair $(\CFm(Y, \s), \tau)$ thus constitutes an abstract $\iota$-complex in the sense of \cite[Definition 8.1]{HMZ}. Taking the local equivalence class of this $\iota$-complex gives an element
\[
h_\tau(Y) = [(\CFm(Y, \s)[-2], \tau)]
\]
in the local equivalence group $\Inv$ of \cite[Proposition 8.8]{HMZ}. (The grading shift is a convention due to the definition of the grading on $\CFm$.) This is an invariant of equivariant $\Z/2\Z$-homology bordism. In fact, one can construct a $\Z/2\Z$-homology bordism group of involutions and show that $h_\tau$ constitutes a homomorphism from this group into $\Inv$; see \cite[Theorem 1.2]{DHM} and \cite[Section 2]{DHM}. We may also consider the map $\iota \circ \tau$ in place of $\tau$, which is similarly a homotopy involution. This gives another $\iota$-complex whose local equivalence class 
\[
h_{\iota \circ \tau}(Y) = [(\CFm(Y, \s)[-2], \iota \circ \tau)]
\]
is another (generally different) element of $\Inv$. For each of these elements, one can extract the numerical invariants $\du$ and $\dl$ following the procedure described by Hendricks and Manolescu \cite{HM}. This yields numerical invariants $\du_{\tau}$ and $\dl_{\tau}$ associated to $h_\tau$, as well as invariants $\du_{\iota \tau}$ and $\dl_{\iota \tau}$ associated to $h_{\iota \circ \tau}$. 

\begin{remark} 
The discussion of \cite{DHM} is phrased in terms of integer homology spheres, but the extension to $\Z/2\Z$-homology spheres is straightforward. Note that in this more general situation, the gradings of our complexes take values in $\Q$, and so $\du_\circ$ and $\dl_\circ$ may be $\Q$-valued.
\end{remark}

If $(K, \tau)$ is an equivariant knot, then any surgery on $K$ inherits an involution; see for example \cite[Section 5]{DHM}. In \cite{Mallick}, the second author showed the following:

\begin{theorem}\cite[Theorem 1.1]{Mallick}\label{thm:largesurgery}
Let $(K, \tau)$ be an equivariant knot and let $p \geq g_3(K)$. Then there is an absolutely graded isomorphism
\[
\left(\CFm(S^3_p(K), [0])\left[\dfrac{p-1}{4} - 2\right], \tau\right) \simeq \left(\CFK(K)_0, \tau_K\right).
\]
On the left-hand side, $S^3_p(K)$ is large surgery on $K$ and $\tau$ is the automorphism on $\CFm(S^3_p(K), [0])$ induced by the inherited 3-manifold involution. On the right-hand side, $\CFK(K)_0$ is the large surgery subcomplex of $\CFK(K)$ and $\tau_K$ is the restriction of the action defined in Section~\ref{sec:3.2}. A similar statement holds replacing $\tau$ with $\iota \circ \tau$ and $\tau_K$ with $\iota_K \circ \tau_K$. 
\end{theorem}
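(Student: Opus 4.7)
\smallskip

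\textbf{Proposal of proof.} The plan is to deduce the equivariant statement from the classical Ozsv\'ath--Szab\'o large surgery identification
\[
T \colon \CFm\bigl(S^3_p(K),[0]\bigr)\bigl[\tfrac{p-1}{4}-2\bigr] \xrightarrow{\ \simeq\ } \CFK(K)_0
\]
(valid for $p \geq g_3(K)$, and implemented by counting holomorphic triangles in a Heegaard triple subordinate to a bouquet for $K$) by showing that $T$ intertwines the two actions. First I would fix a decoration $(K,w,z)$ compatible with $\tau$ and choose Heegaard data $\H$ for the doubly-based knot. Performing $p$-surgery on the $\alpha$-bouquet in $\H$ produces Heegaard data $\H_p$ for $S^3_p(K)$, and the triangle map building $T$ is defined on the level of these diagrams. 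Both the $\tau$-action on $\CFm(S^3_p(K),[0])$ from \cite[Section 4]{DHM} and the action $\tau_K$ from Section~\ref{sec:3.2} are assembled from the same atomic pieces: a tautological pushforward by $\tau$, followed by a Heegaard naturality map; the only extra factor on the knot side is the switch map $sw$.

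The core of the argument is then a three-part naturality check for the triangle map $T$. First, since surgery commutes with diffeomorphisms, the pushforward of $\H_p$ by $\tau$ is naturally identified with $(\tau\H)_p$, so the tautological pushforwards on the two sides fit into a commutative square with $T$. Second, Heegaard naturality maps commute with triangle counts, again giving a commutative square with $T$; here one invokes the transitivity of the $\Phi(\H_1,\H_2)$ together with the independence of $T$ up to homotopy on the auxiliary triple. Third, one must handle the $sw$ factor, which has no analogue on the surgery side. For this, observe that $T$ identifies $\CFm(S^3_p(K),[0])$, as an $\F[U]$-module, with the Alexander-grading-zero subcomplex $\CFK(K)_0$ under $U \leftrightarrow \cU\cV$; on this subcomplex every generator satisfies $\grU = \grV$, and $sw$ by definition merely relabels $\cU$ and $\cV$. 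Hence $sw$ acts as the identity on $\CFK(K)_0$ viewed over $\F[U]$, and $T$ trivially intertwines $sw$ with the identity on the surgery side.

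The grading shift $\frac{p-1}{4}-2$ and the selection of the $[0]$ spin$^c$-structure follow from the usual computation in large surgery: the generator of highest grading in $\CFm(S^3_p(K),[0])$ corresponds under $T$ to the top of the $\cU,\cV$-tower in $\CFK(K)_0$, and $\tau$ preserves $[0]$ because this is the unique spin$^c$-structure on $S^3_p(K)$ fixed by the induced involution in its conjugation orbit (for $p$ odd; for general $p$ one verifies that $[0]$ is self-conjugate and $\tau$-invariant). The ``$\iota\circ\tau$'' version of the statement is then immediate by composing with the Hendricks--Manolescu--Zemke large surgery formula for $\iota$ \cite{HM}, which provides the analogous intertwining of $\iota$ with $\iota_K$ under $T$; since $T$ independently intertwines $\tau$ with $\tau_K$ and $\iota$ with $\iota_K$, it also intertwines $\iota\circ\tau$ with $\iota_K\circ\tau_K$.

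The main obstacle I anticipate is the bookkeeping in the second and third steps: one cannot in general arrange that $\H$ be literally $\tau$-invariant on the nose, so the naturality argument must be carried out via the network of $\Phi$-maps (following the template of the proof of Theorem~\ref{thm:1.8}), and one must be careful that the triangle maps used to define $T$ are themselves natural under both pushforwards and Heegaard moves. Once this is organized, the $sw$-step is essentially formal, because the identification $U = \cU\cV$ collapses the skew-$\cR$-equivariance of $\tau_K$ to ordinary $\F[U]$-equivariance on $\CFK(K)_0$.
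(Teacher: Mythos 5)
The paper does not prove this statement: it is cited verbatim as \cite[Theorem~1.1]{Mallick}, and the surrounding text (Remark~\ref{rem:absolutegrading}) only discusses grading conventions, not the argument itself. So there is no internal proof to compare against, and I can only judge the plausibility of your sketch.

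Your overall strategy is the natural one and almost certainly matches the structure of the proof in \cite{Mallick}: take the Ozsv\'ath--Szab\'o large surgery triangle map $T$, decompose both $\tau$ (on $\CFm(S^3_p(K),[0])$, as in \cite[Section~4]{DHM}) and $\tau_K$ into their atomic factors, and check naturality of $T$ against each factor. The ``pushforward commutes with surgery'' and ``triangle maps are natural under Heegaard moves and transitive $\Phi$-systems'' steps are exactly what is needed, and you are right that the last part reduces to Juh\'asz--Thurston--Zemke naturality. The $\iota\circ\tau$ version does follow formally by composing with the Hendricks--Manolescu large surgery intertwining of $\iota$ with $\iota_K$. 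One remark you should make explicit rather than parenthetical: the \cite{DHM} framework requires $S^3_p(K)$ to be a $\Z/2\Z$-homology sphere with a unique self-conjugate $\spinc$-structure, so the statement as used in this paper is really for $p$ odd, and $\tau^*[0]=[0]$ is then automatic since $[0]$ is the unique spin $\spinc$-structure.

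The one place where your sketch glosses over a genuine subtlety is the $sw$-step. You write that ``$sw$ acts as the identity on $\CFK(K)_0$ viewed over $\F[U]$,'' but $sw$ is not an endomorphism of any fixed complex --- it is a tautological isomorphism $\CFK(\H^r)\to\CFK(\H)$ between complexes associated to the \emph{same} underlying Heegaard diagram with the roles of $w$ and $z$ (hence $\cU$ and $\cV$) exchanged. The morally correct point, which you should formulate as such, is that the $A_0$-subcomplex is insensitive to the orientation of $K$ and to the labeling of basepoints: restricted to Alexander grading zero and viewed over $\F[U]$ with $U=\cU\cV$, the $\cU\leftrightarrow\cV$ relabeling implemented by $sw$ is literally the identity on the underlying $\F[U]$-module. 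What then needs to be proved, and is not ``essentially formal,'' is that the large surgery triple subordinate to a bouquet for $K$ and the triple subordinate to a bouquet for $K^r$ (obtained by pushforward under $\tau$) give \emph{the same} identification $T$ of $\CFm(S^3_p(K),[0])$ with $A_0$ after this relabeling. That compatibility is the heart of the computation; your proposal names it but defers it, so it is the piece you would actually need to carry out to complete the proof.
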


\begin{remark}\label{rem:absolutegrading}
There are several confusing conventions regarding absolute gradings. For the sake of being explicit, we give an explanation of these for the reader.
\begin{enumerate}
\item Note that the ``trivial complex" $\CFm(S^3)$ consists of a single $\F[U]$-tower starting in Maslov grading $-2$. However, when discussing Floer complexes in the abstract, it is generally preferable to treat this complex as starting in Maslov grading zero. This explains the shift by $-2$ in the definition of $h_\tau$ and $h_{\iota \circ \tau}$.
\item Similarly, if $K$ is the unknot, then the large surgery complex $\Co$ (as defined in Section~\ref{sec:2.5}) consists of a single $\F[U]$-tower starting in Maslov grading zero. This explains the extra $-2$ in the isomorphism of Theorem~\ref{thm:largesurgery}.
\item In Section~\ref{sec:2.5}, we have defined $\du_\circ(\Co)$ and $\dl_\circ(\Co)$ in such a way so that the shift by $-2$ is already taken into account. Indeed, note that $\du_\circ(\Co) = \dl_\circ(\Co) = 0$ for the large surgery complex of the unknot. However, when defining $\du_\circ$ and $\dl_\circ$ in terms of an actual 3-manifold complex $\CFm(Y)$, it is necessary to add two to each of the definitions.
\end{enumerate}
Taking into account the grading shift, Definition~\ref{def:numericalinvariants} and Theorem~\ref{thm:largesurgery} immediately imply the relations referenced in Section~\ref{sec:1.4}:
\[
-2\Vsu(K) + \dfrac{p-1}{4} = \du_\circ(S^3_p(K), [0])
\]
and
\[
-2\Vsl(K) + \dfrac{p-1}{4} = \dl_\circ(S^3_p(K), [0]).
\]
for $\circ \in \{\tau, \ita\}$.
\end{remark}

\subsection{Inequalities}\label{sec:7.2}
A key property of the local equivalence group $\Inv$ is that it is partially ordered; see for example \cite[Definition 3.6]{DHM}. This partial order is consistent with the numerical invariants $\du$ and $\dl$, in the sense that if $[(C_1, \iota_1)] \leq [(C_2, \iota_2)]$, then
\[
\du(C_1) \leq \du(C_2) \quad \text{and} \quad \dl(C_1) \leq \dl(C_2).
\]
In \cite[Theorem 1.5]{DHM}, it was shown that if $(Y_1, \tau_1)$ and $(Y_2, \tau_2)$ are two homology spheres with involutions and $W$ is an equivariant negative-definite cobordism from $Y_1$ to $Y_2$, then under certain circumstances we obtain inequalities 
\[
[(\CFm(Y_1), \tau_1)] \leq [(\CFm(Y_2), \tau_2)]
\]
and/or
\[
[(\CFm(Y_1), \iota \circ \tau_1)] \leq [(\CFm(Y_2), \iota \circ \tau_2)].
\]
It is thus possible to bound the numerical invariants of $(Y_1, \tau_1)$ by topologically constructing equivariant negative-definite cobordisms into other manifolds $(Y_2, \tau_2)$. See \cite[Section 5]{DHM} and \cite[Section 7]{DHM} for further discussion and examples.

For convenience, we briefly review these results here, generalizing them slightly in the case of $\Z/2\Z$-homology spheres. Let $Y_1$ and $Y_2$ be two $\Z/2\Z$-homology spheres equipped with involutions $\tau_1$ and $\tau_2$. Let $W$ be a cobordism from $Y_1$ to $Y_2$ equipped with a self-diffeomorphism $f \colon W \rightarrow W$ that restricts to $\tau_i$ on $Y_i$. In what follows, we will be interested in $\spinc$-structures $\s$ on $W$ such that the Heegaard Floer grading shift
\[
\Delta(W, \s) = \dfrac{c_1(\s)^2 - 2 \chi(W) - 3\sigma(W)}{4}
\]
is zero. 

\begin{theorem}\cite[Proposition 4.10]{DHM}\label{thm:inequality}
Let $W$ be a negative-definite cobordism as above with $b_1(W) = 0$. Suppose $W$ admits a $\spinc$-structure $\s$ such that $\Delta(W, \s) = 0$ and $\s$ restricts to the unique spin structure on $\partial W$. Then:
\begin{enumerate}
\item If $f_* \s = \s$, we have $h_{\tau_1}(Y_1) \leq h_{\tau_2}(Y_2)$.
\item If $f_* \s = \bs$, we have $h_{\iota \circ \tau_1}(Y_1) \leq h_{\iota \circ \tau_2}(Y_2)$.
\end{enumerate}
\end{theorem}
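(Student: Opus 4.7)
The plan is to realize the cobordism map $F_{W, \s} \colon \CFm(Y_1, \s_1) \to \CFm(Y_2, \s_2)$ as the local map witnessing the desired inequality, following the approach of \cite[Proposition 4.10]{DHM}. First, the hypothesis $\Delta(W, \s) = 0$ ensures that $F_{W, \s}$ is grading-preserving. Since $W$ is negative-definite with $b_1(W) = 0$ and $\s$ restricts to the (unique) spin structure on each end, the standard argument of Ozsv\'ath-Szab\'o shows that $F_{W, \s}$ induces an isomorphism on $\HFinf$. Hence $F_{W, \s}$ meets the localization requirement for a local map of abstract $\iota$-complexes in the sense of \cite[Definition 8.5]{HMZ}.

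The substantive step is to verify the intertwining with the relevant involutions. By the naturality of Heegaard Floer cobordism maps with respect to diffeomorphisms, the self-diffeomorphism $f$ of $W$ (which restricts to $\tau_i$ on each end and pushes $\s$ forward to $f_*\s$) yields a homotopy
\[
\tau_2 \circ F_{W, \s} \simeq F_{W, f_*\s} \circ \tau_1,
\]
where $\tau_i$ is, up to naturality maps, the pushforward of Heegaard data by $f|_{Y_i}$. In case (1), $f_*\s = \s$ and the right-hand side is homotopic to $F_{W, \s} \circ \tau_1$, giving exactly the desired intertwining for $\tau$. In case (2), $f_*\s = \bs$, and the Hendricks-Manolescu conjugation $\iota$ satisfies $F_{W, \bs} \simeq \iota \circ F_{W, \s} \circ \iota^{-1}$, since $\iota$ acts on cobordism maps by conjugating the $\spinc$-structure (cf.\ the proof of \cite[Theorem 1.6]{HM}). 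Substituting this into the previous homotopy and composing on the left with $\iota$ yields
\[
(\iota \circ \tau_2) \circ F_{W, \s} \simeq F_{W, \s} \circ (\iota \circ \tau_1),
\]
which is the intertwining for $\iota \circ \tau$.

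The main technical obstacle is the naturality bookkeeping at the chain level: one must carefully identify $F_{W, f_*\s}$ with the conjugation of $F_{W, \s}$ by the pushforward maps, keep track of all intermediate naturality maps between Heegaard diagrams, and ensure that every intervening homotopy is $\F[U]$-equivariant. This is essentially carried out in the proof of \cite[Proposition 4.10]{DHM} for integer homology spheres. The extension to $\Z/2\Z$-homology spheres poses no new difficulty: each of $Y_1, Y_2$ carries a unique spin structure (hence automatically $\tau_i$-invariant), the $\spinc$ grading-shift formula is unchanged apart from being $\Q$-valued, and the negative-definite/$b_1 = 0$ hypotheses together with $\s|_{\partial W} = \s_1 \sqcup \s_2$ isolate $F_{W, \s}$ as a grading-preserving summand realizing the local equivalence.
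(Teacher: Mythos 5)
Your proposal is correct and follows essentially the same route as the paper, which itself only cites \cite[Proposition 4.10]{DHM} and remarks that the proof "proceeds by considering the Heegaard Floer cobordism map associated to $(W, \s)$." You correctly identify $F_{W,\s}$ as the local map, correctly split the argument according to whether $f_*\s = \s$ or $f_*\s = \bar{\s}$, and correctly invoke conjugation-invariance of cobordism maps to pass to $\iota\circ\tau$ in the second case (using $\iota^{-1}\simeq\iota$ on the $3$-manifold side); your remarks on the $\Z/2\Z$-homology sphere generalization also match the paper's own Remark preceding the theorem.
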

\begin{proof}
The proof is the same as that of \cite[Proposition 4.10]{DHM} and proceeds by considering the Heegaard Floer cobordism map associated to $(W, \s)$.
\end{proof}

We consider a particularly important family of such cobordisms, constructed as follows. Let $(K, \tau)$ be an equivariant knot and let $Y_1 = S^3_n(K)$ be large, odd surgery on $K$. Define an equivariant cobordism $W$ by symmetrically attaching $(-1)$-framed 2-handles along unknots that have linking number zero with $K$ (as well as with each other):

\begin{definition}\label{def:eqcobordisms}
Three important instances of this construction are given in the top row of Figure~\ref{fig:71}. We categorize these as follows:
\begin{enumerate}
\item Type Ia: Attach a single $(-1)$-framed 2-handle along an equivariant unknot which has no fixed points along the axis of $\tau$.
\item Type Ib: Attach a single $(-1)$-framed 2-handle along an equivariant unknot which has two fixed points along the axis of $\tau$.
\item Type II: Attach a pair of $(-1)$-framed 2-handles which are interchanged by $\tau$.
\end{enumerate}
\end{definition}

\begin{figure}[h!]
\includegraphics[scale = 0.8]{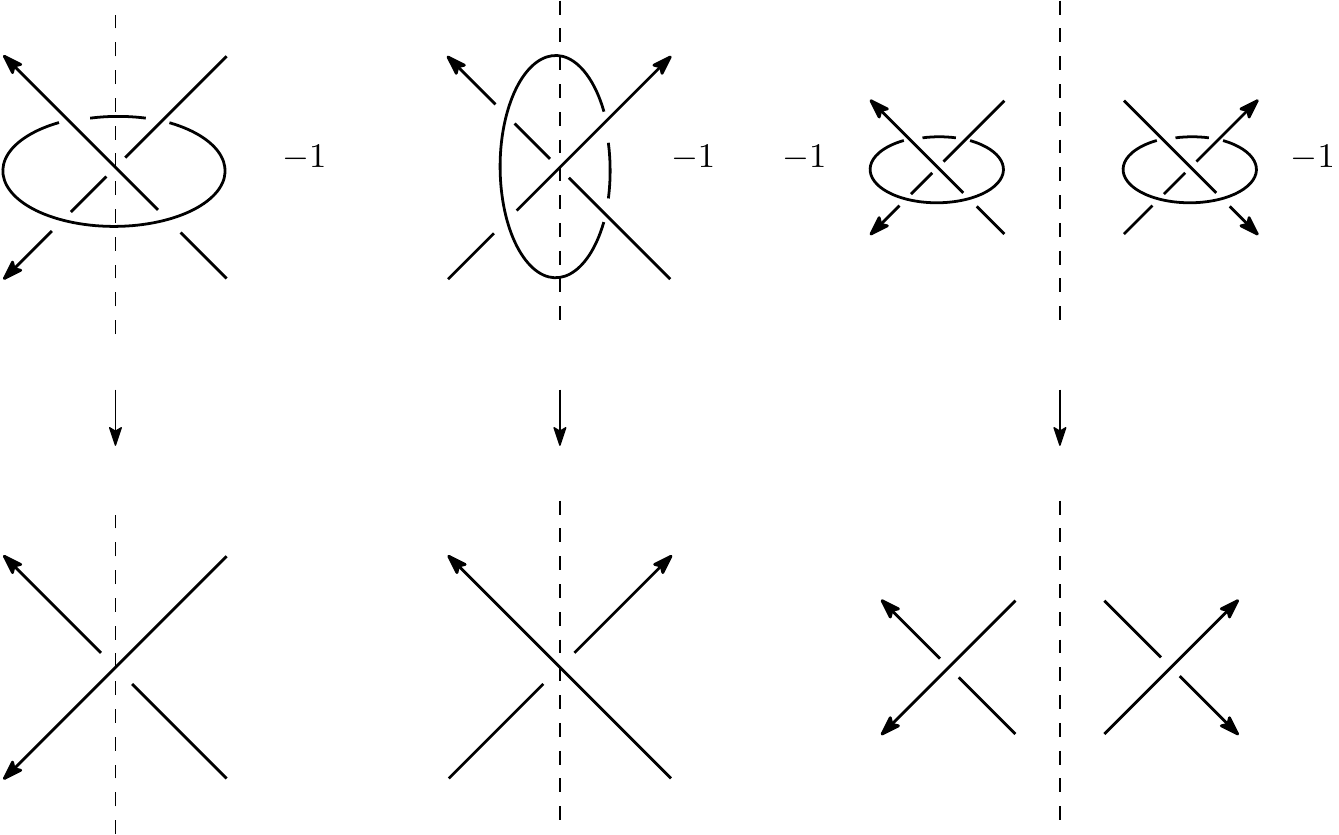}
\caption{Left: handle attachment/crossing change of Type Ia. Middle: handle attachment/crossing change of Type Ib. Right: handle attachment/crossing change of Type II.}\label{fig:71}
\end{figure}

Consider a handle attachment of Type Ia. Let $x$ be the element of $H_2(W, \partial W; \Z)$ corresponding to the core of the attached 2-handle and let $\s$ be the $\spinc$-structure on $W$ corresponding to the dual of $x$. This restricts to the unique spin structure on the ends of $W$, as can be seen from the fact that the map $H^2(W; \Z) \rightarrow H^2(\partial W; \Z)$ corresponds to the map $H_2(W, \partial W; \Z) \rightarrow H_1(\partial W; \Z)$ under Poincar\'e duality. Since $x$ has self-intersection $-1$, we moreover have $\Delta(\s) = 0$. Finally, it is easily checked that $f_* \s = \s$. Handle attachments of Type Ib are similar, except that $f_* \s = \bs$. To understand handle attachments of Type II, let $x$ and $y$ be the elements of $H_2(W, \partial W; \Z)$ represented by the cores of the attached 2-handles. Then $W$ admits both a $\spinc$-structure with $f_*\s = \s$ (corresponding to the dual of $x + y$) and a $\spinc$-structure with $f_*\s = \bs$ (corresponding to the dual of $x - y$). Once again, these both restrict to the unique spin structure on the ends of $W$ and have $\Delta(\s) = 0$. See \cite[Section 5.2]{DHM} for further discussion.

In the context of equivariant knots, this immediately gives a set of crossing change inequalities for $\Vsu$ and $\Vsl$. As in Figure~\ref{fig:71}, define:

\begin{definition}\label{def:eqcrossingchanges}
Let $K$ be an equivariant knot. We categorize equivariant positive-to-negative crossing changes as follows:
\begin{enumerate}
\item Type Ia: The crossing change occurs along the axis of symmetry and the two strands of the crossing point in opposite directions along the axis. (Figure~\ref{fig:71}, top left.)
\item Type Ib: The crossing change occurs along the axis of symmetry and the two strands of the crossing point in the same direction along the axis. (Figure~\ref{fig:71}, top middle.)
\item Type II: We perform a symmetric pair of crossing changes. (Figure~\ref{fig:71}, top right.)
\end{enumerate}
\end{definition}

\begin{proof}[Proof of Theorem~\ref{thm:1.10}]
Passing to large surgery, each of the possible crossing changes is mediated by a 2-handle attachment of the corresponding type. By Theorem~\ref{thm:inequality}, we thus have
\[
\du_\circ (S^3_p(K),[0]) \leq \du_\circ(S^3_p(K'),[0]) \quad \text{and} \quad \dl_\circ (S^3_p(K),[0]) \leq \dl_\circ(S^3_p(K'),[0])
\]
for $\circ \in \{\tau, \ita\}$. Applying the relation (\ref{eq:knotto3manifold}) immediately gives the desired conclusion.
\end{proof}

\begin{remark}
In Definition~\ref{def:eqcobordisms}, attaching our 2-handles along unknots is not essential. However, we emphasize the unknot case due to its ease of use and connection with the current paper.
\end{remark}

\subsection{Exotic slice disks}\label{sec:7.3}

We now turn to the proofs of Theorems~\ref{thm:1.7} and \ref{thm:knotfloermaps}. We establish the former by showing that at least one of $\Vtl(J)$ and $\Vitl(J)$ is greater than zero.

\begin{proof}[Proof of Theorem~\ref{thm:1.7}]
We exhibit an equivariant cobordism (of Type II) from $(+1)$-surgery on $J$ to $(-1)$-surgery on the knot $6_2$, where the latter is equipped with a certain involution $\tau$. This is done in Figures~\ref{fig:72} and \ref{fig:73}; compare \cite[Section 7.5]{DHM}. Note that the first picture in Figure~\ref{fig:72} certainly constitutes an equivariant cobordism from $S^3_{+1}(J)$ to \textit{some} homology sphere with involution; we identify this homology sphere as $S^3_{-1}(6_2)$. However, we will not bother to make this identification equivariant, although this can be done. That is, it turns out (rather surprisingly) that we will not need to explicitly identify the involution $\tau$ on $S^3_{-1}(6_2)$.

\begin{figure}[h!]
\includegraphics[scale = 0.62]{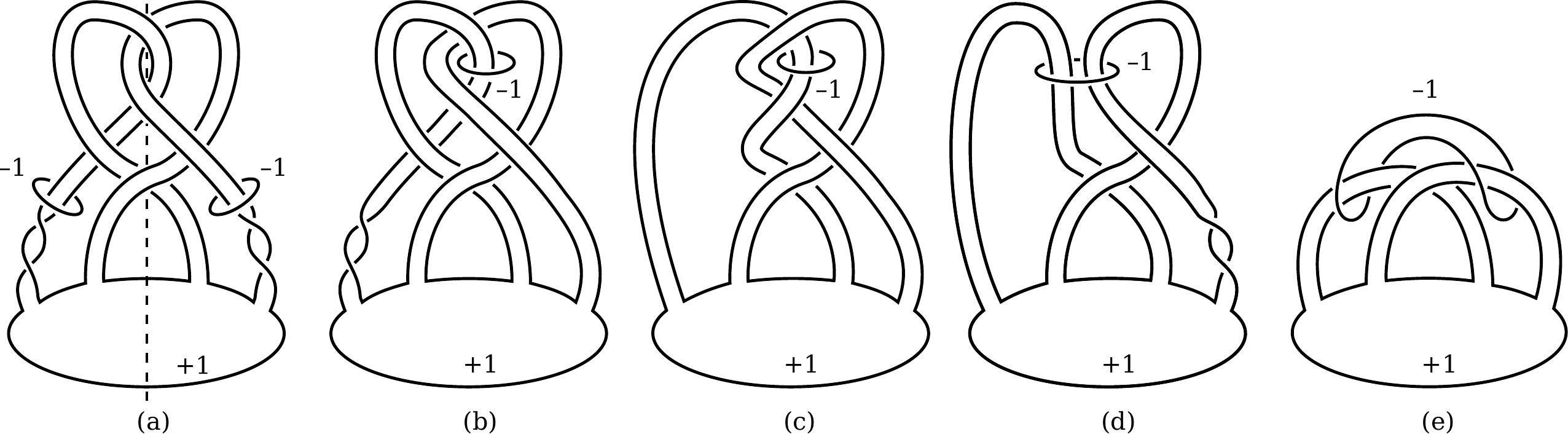}
\caption{Attaching a pair of equivariant 2-handles to $(+1)$-surgery on $J$. In (a), we perform the equivariant handle attachment. In (b), we blow down the right-hand unknot and slide the left-hand unknot partway along its band. In (c), we untwist the left-hand band. In (d), we slide the right-hand band over the unknot. In (e), we untwist the right-hand band. Manipulations are continued in Figure~\ref{fig:73}.}\label{fig:72}
\end{figure}

\begin{figure}[h!]
\includegraphics[scale = 0.8]{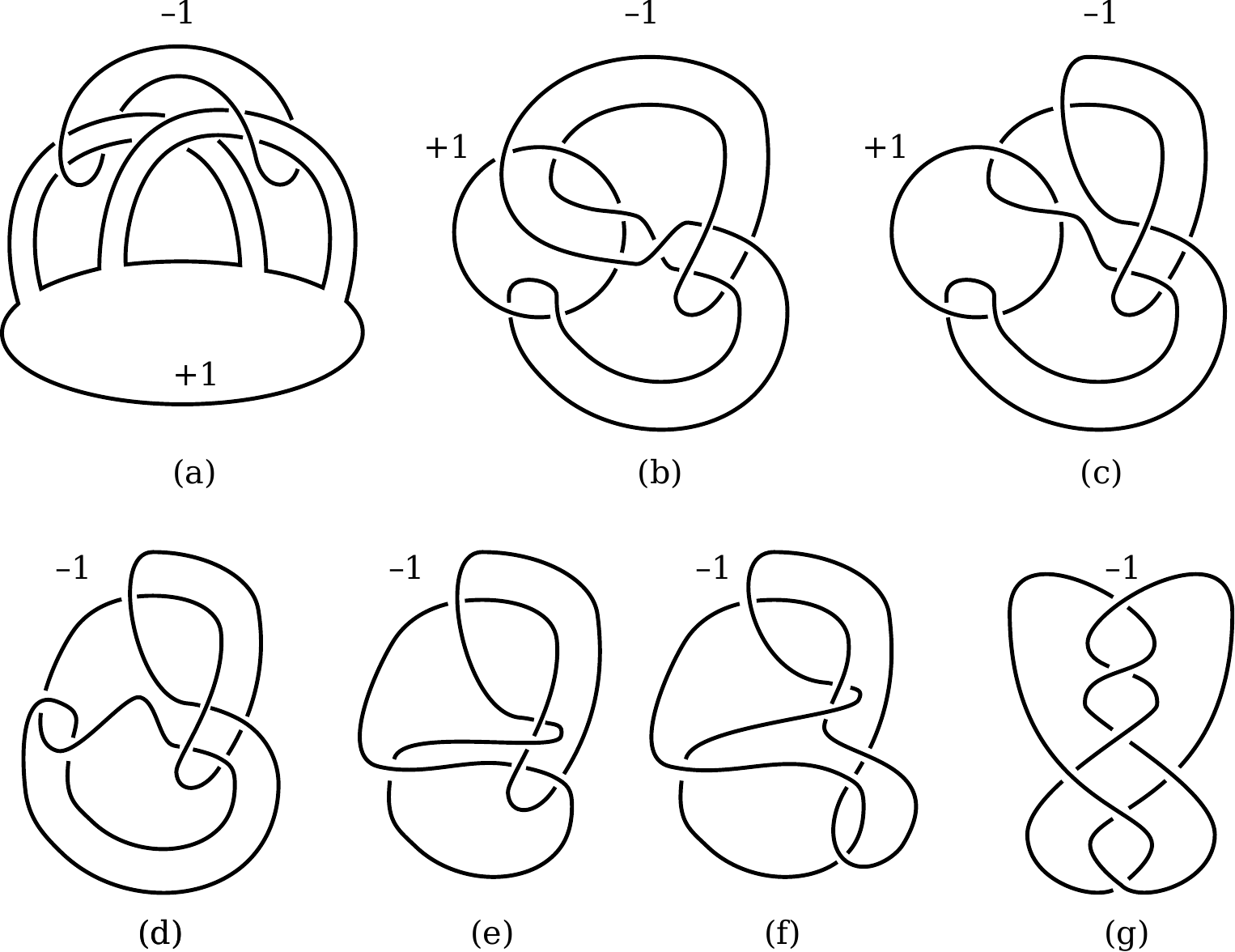}
\caption{Identifying the result of the equivariant handle attachment. Figure (a) is a copy of (e) from Figure~\ref{fig:72}; note that both knots in the figure are unknots. In (b), we retract the right-hand band of the $(+1)$-curve along itself to clearly make it into an unknot. In (c), we move a strand of the $(-1)$-curve slightly to make the blowdown more apparent. In (d) we blow down the $(+1)$-curve. In (e) through (g), we isotope the result to look like the end result of \cite[Figure 39]{DHM}. The reader may check that this is $6_2$.}\label{fig:73}
\end{figure}

Since the cobordism is of Type II, by Theorem~\ref{thm:inequality} it suffices to show that at least one of 
\[
\dl_\tau(S^3_{-1}(6_2)) \quad \text{and} \quad \dl_{\iota \tau}(S^3_{-1}(6_2))
\]
is strictly less than zero. Indeed, we would then have that at least one of $\dl_\tau(S^3_{+1}(J))$ and $\dl_{\iota \tau}(S^3_{+1}(J))$ is strictly less than zero. Since $J$ is genus one, $(+1)$-surgery constitutes large surgery, so this implies that one of $\Vtl(J)$ and $\Vitl(J)$ is strictly greater than zero. (Note that the grading shift $(p-1)/4$ in the relation (\ref{eq:knotto3manifold}) in this case is zero.)

The desired claim is established in \cite[Section 7.5]{DHM}, but for the sake of completeness we outline the argument here. Firstly, the knot Floer homology of $6_2$ is easily calculated from the Alexander polynomial of $6_2$. The Floer homology $\HFm(S^3_{-1}(6_2))$ can then be calculated using the surgery formula; the action of the Hendricks-Manolescu involution $\iota$ on $\HFm(S^3_{-1}(6_2))$ can also be calculated (see for example \cite[Section 7.5]{DHM}). The result is displayed in Figure~\ref{fig:74}. 

\begin{figure}[h!]
\includegraphics[scale = 1]{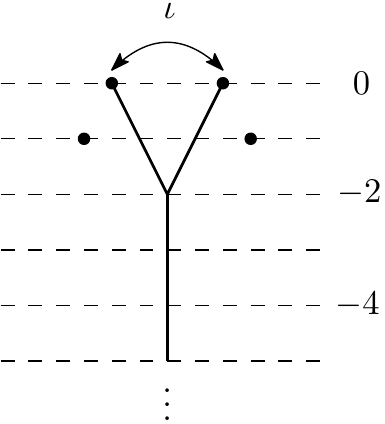}
\caption{Floer homology $\HFm(S^3_{-1}(6_2))$ with action of $\iota$.}\label{fig:74}
\end{figure}

Now, either $\tau$ acts on $\HFm(S^3_{-1}(6_2))$ by fixing the central $Y$-shape, or it acts on the central $Y$-shape by reflection. By direct calculation, in the former case we have 
\[
\dl_\tau(S^3_{-1}(6_2)) = 0 \quad \text{and} \quad \dl_{\iota \tau}(S^3_{-1}(6_2)) = -2
\]
while in the latter, we have
\[
\dl_\tau(S^3_{-1}(6_2)) = -2 \quad \text{and} \quad \dl_{\iota \tau}(S^3_{-1}(6_2)) = 0.
\]
This completes the proof.
\end{proof}

\begin{remark}\label{rem:immediateproof}
It possible to provide an immediate proof of Theorem~\ref{thm:1.7} as a topological corollary to \cite[Theorem 1.15]{DHM}, as follows. Let $\Sigma$ and $\tau_W(\Sigma)$ be any pair of symmetric slice disks for $J$ in some homology ball $W$. One can show that $(+1)$-surgery on $J$ is diffeomorphic to $Y = \partial W_0$, where $W_0$ is the positron cork of Akbulut-Matveyev \cite{AkbulutMat}. Moreover, under this diffeomorphism, the induced action of $\tau$ on $S^3_{+1}(J)$ is the usual cork involution on $Y$. Extend the $(+1)$-surgery on $J$ along the disks $\Sigma$ and $\tau_W(\Sigma)$ to obtain two homology balls $B_1$ and $B_2$, each with boundary $Y$. (Here, $\partial B_1$ and $\partial B_2$ are identified via the obvious identity map.) Using $\tau_W$, we obtain a diffeomorphism $f \colon B_1 \rightarrow B_2$ which restricts to the cork involution on $Y$. (Note that $f$ restricts to $\tau_W$ on the complement of a tubular neighborhood of $\Sigma$.) If $\Sigma$ and $\tau_W(\Sigma)$ were isotopic rel boundary, then we would have that $(W, \Sigma)$ and $(W, \tau_W(\Sigma))$ were diffeomorphic rel boundary. This would imply the existence of a diffeomorphism $g \colon B_1 \rightarrow B_2$ restricting to the identity on $Y$. Then $g^{-1} \circ f$ is a self-diffeomorphism of $B_1$ restricting to the cork involution on $Y = \partial B_1$. However, in \cite[Theorem 1.15]{DHM}, it is shown that no such extension exists.
\end{remark}

We now explain why the proof of Theorem~\ref{thm:1.7} implies Theorem~\ref{thm:knotfloermaps}. We begin with the following:


\begin{lemma}\label{lem:tautological}
Let $(K, \tau)$ be a strongly invertible knot in $S^3$. Let $W$ be any (smooth) homology ball with $\partial W = S^3$ and let $\tau_W$ be any extension of $\tau$ over $W$. 
Let $\Sigma$ and $\tau_W(\Sigma)$ be a pair of symmetric slice disks for $K$. Then $\tau_K([F_{W, \Sigma}(1)]) = [F_{W, \tau_W(\Sigma)}(1)]$ as elements of $H_*(\CFK(K))$. 
\end{lemma}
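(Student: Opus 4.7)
The plan is to unwind the explicit definition $\tau_\H = sw \circ \Phi \circ t$ from Section~\ref{sec:3.2} and then invoke the diffeomorphism invariance of Zemke's link cobordism maps. Fix a decoration $\mathcal{F}$ on $\Sigma$ compatible with $(K, w, z)$ and with a chosen pair of $\tau$-equivariant basepoints on the unknot at the opposite end. Then $F_{W, \Sigma}$ is represented (up to homotopy) by the decorated cobordism map $F_{W, \Sigma, \mathcal{F}} \colon \CFK(U) \to \CFK(\H)$, while $\tau_W(\Sigma, \mathcal{F})$ carries the tautologically pushed-forward decoration. In particular, the basepoints on the outgoing end are interchanged and the orientation on $K$ is reversed, so that the decorated cobordism map $F_{W, \tau_W(\Sigma, \mathcal{F})}$ lands in $\CFK(\tau \H)$, which is a choice of Heegaard data for $(K^r, z, w)$.

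The key input is the diffeomorphism invariance of link cobordism maps~\cite{Zemkelinkcobord}. Applied to the pushforward by $\tau_W$, this yields
\[
t \circ F_{W, \Sigma, \mathcal{F}} = F_{W, \tau_W(\Sigma, \mathcal{F})} \circ t_U,
\]
where $t_U$ is the tautological pushforward on the unknot complex. Since the unknot basepoints are chosen $\tau$-equivariantly, $t_U$ sends $1$ to $1$. Evaluating this identity at $1$ and composing with $sw \circ \Phi$ then gives
\[
\tau_K([F_{W, \Sigma}(1)]) = [(sw \circ \Phi)(F_{W, \tau_W(\Sigma, \mathcal{F})}(1))] = [F_{W, \tau_W(\Sigma)}(1)],
\]
where the last equality uses that $sw \circ \Phi$ is the preferred identification between $\CFK(\tau \H)$ and $\CFK(\H)$ that sends the pushed-forward decoration $\tau_W(\mathcal{F})$ back to the original decoration used to define $F_{W, \tau_W(\Sigma)}$.

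The main obstacle is the decoration bookkeeping in this last identification: one must verify that $sw \circ \Phi$ undoes both the orientation reversal and the basepoint swap \emph{on the nose}, and not merely up to composition with some power of the Sarkar map or another basepoint-moving diffeomorphism. This is essentially automatic from the construction of $\tau_\H$ in Section~\ref{sec:3.2}, which was designed precisely so that $sw \circ \Phi \circ t$ realizes the geometric involution $\tau$ on the decorated Heegaard data $(K, w, z)$. Since every factor appears in its canonical form, no spurious Sarkar twist is introduced, and the lemma follows directly.
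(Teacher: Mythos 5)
Your overall architecture matches the paper's proof: unwind $\tau_\H = sw \circ \Phi \circ t$, use diffeomorphism invariance of Zemke's link cobordism maps to commute the pushforward $t$ past $F_{W,\Sigma,\mathcal{F}}$, note the unknot end contributes trivially, and then identify the resulting map with $F_{W,\tau_W(\Sigma)}$. However, the final identification is exactly where your justification fails. The composition $(sw \circ \Phi) \circ F_{W,\tau_W(\Sigma,\mathcal{F})}$ is the cobordism map $F_{W, sw(\tau_W(\mathcal{F}))}$ for the disk $\tau_W(\Sigma)$ equipped with the decoration $sw(\tau_W(\mathcal{F}))$ (colors exchanged, orientation reversed). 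It is not true that $sw \circ \Phi$ ``sends the pushed-forward decoration back to the original decoration'': $\tau_W(\Sigma)$ is a different disk from $\Sigma$ (that is the whole point of the lemma), and the decoration $sw(\tau_W(\mathcal{F}))$ has no reason to agree with any preferred decoration one would use to define $F_{W,\tau_W(\Sigma)}$. Your appeal to the ``canonical form'' of the construction of $\tau_\H$ does not address this: the identical composition in the concordance setting (Theorem~\ref{thm:3.3B}) genuinely produces powers of the Sarkar map precisely because the transported decoration need not match the given one, so canonicity of $\tau_\H$ cannot be what rules out the twist here.

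The missing ingredient is the disk-specific fact the paper relies on: since $\Sigma$ and $\tau_W(\Sigma)$ are disks, the cobordism map is independent of the choice of decoration up to chain homotopy, so $F_{W, sw(\tau_W(\mathcal{F}))} \simeq F_{W,\tau_W(\Sigma)}$ regardless of how the decorations compare. (Concretely, any two decorations by a pair of arcs on the punctured disk differ by Dehn twists, and such a twist can be isotoped to the unknot end, where it contributes the Sarkar map of the unknot, which is homotopic to the identity; one cannot instead absorb a twist at the $K$ end, since $\varsigma_K$ need not fix $[F_{W,\Sigma}(1)]$ in $H_*(\CFK(K))$ for general $K$.) With this substitution your argument closes and coincides with the proof in the paper, which invokes exactly this point when it says that for disks ``no additional subtlety involving the decoration arises'' and the maps along the top and bottom rows are unique up to chain homotopy.
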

\begin{proof}
The proof is the same as that of Theorem~\ref{thm:1.2}. Let $\mathcal{F}$ be a decoration on $\Sigma$. We again have the commutative diagram 
\[\begin{tikzcd}
	{\mathcal{CFK}(U) } && {\mathcal{CFK}(K)} \\
	\\
	{\mathcal{CFK}(U^r) } && {\mathcal{CFK}(K^r)} \\
	\\
	{\mathcal{CFK}(U) } && {\mathcal{CFK}(K)}
	\arrow["{F_{W,\mathcal{F}}}", from=1-1, to=1-3]
	\arrow["{t}"', from=1-1, to=3-1]
	\arrow["{t}", from=1-3, to=3-3]
	\arrow["{F_{W,\tau_W(\mathcal{F})}}", from=3-1, to=3-3]
	\arrow["sw"', from=3-1, to=5-1]
	\arrow["sw", from=3-3, to=5-3]
	\arrow["{F_{W,sw(\tau_W(\mathcal{F}))}}", from=5-1, to=5-3]
\end{tikzcd}\]
Note that the decoration $sw(\tau_W(\mathcal{F}))$ is associated to the disk $\tau_W(\Sigma)$, which is not necessarily isotopic to $\Sigma$. Since $\Sigma$ and $\tau_W(\Sigma)$ are disks, no additional subtlety involving the decoration arises, and we may instead write $F_{W, \Sigma}$ and $F_{W, \tau_W(\Sigma)}$ along the top and bottom rows in the above diagram, respectively, to represent that these maps are unique up to chain homotopy. Using the fact that $\tau_K$ acts trivially on $\CFK(U)$ immediately gives the claim.
\end{proof}

The nontriviality of our numerical invariants then easily obstructs $F_{W, \Sigma}(1)$ and $F_{W, \tau(\Sigma)}(1)$ from being homologous:

\begin{proof}[Proof of Theorem~\ref{thm:knotfloermaps}]
Let $\Sigma$ and $\tau_W(\Sigma)$ be a pair of symmetric slice disks for $J$ and suppose that $[F_{W, \Sigma}(1)] = [F_{W, \tau_W(\Sigma)}(1)]$ as elements of $H_*(\CFK(J))$. Lemma~\ref{lem:tautological} then implies that $[F_{W, \Sigma}(1)]$ is a $\tau_K$-invariant element in $H_*(\CFK(J))$. Using this (and the fact that $F_{W, \Sigma}$ has zero grading shift), it is straightforward to construct an absolutely graded, $\tau_K$-equivariant local map from the trivial complex into $\CFK(J)$. This shows that $0 \leq \dl_\tau(\CFK(J)_0)$ and thus that $\Vtl(J) \leq 0$. Moreover, since $F_{W, \Sigma}$ homotopy commutes with $\iota_K$, we also know that $[F_{W, \Sigma}(1)]$ is $\iota_K$-equivariant. Hence $[F_{W, \Sigma}(1)]$ is in fact $\iota_K \circ \tau_K$-equivariant. This likewise shows that $0 \leq \dl_{\ita}(\CFK(J)_0)$ and thus that $\Vitl(J) \leq 0$, contradicting the proof of Theorem~\ref{thm:1.7}.

We now verify that $[F_{W, \Sigma}(1)] \neq [F_{W, \tau_W(\Sigma)}(1)]$ as elements of $\HFKhat(J) \cong H_*(\CFK(J)/(U,V))$. This follows algebraically from the previous paragraph and an analysis of $\CFK(J)$. We first calculate the ranks of $\smash{\HFKhat(J)}$ in each Alexander and Maslov grading. This can be done using the knot Floer calculator implemented in SnapPy \cite{SnapPy}; the results are displayed in Figure~\ref{fig:hfkj}.

\begin{figure}[h!]
\begin{tabular}{ c | c | c }
Alexander & Maslov & Rank of $\HFKhat(J)$ \\
\hline
$-1$ & $-2$ & $2$ \\
$-1$ & $-1$ & $2$ \\
\hline
$0$ & $-1$ & $4$ \\
$0$ & $0$ & $5$ \\
\hline
$1$ & $0$ & $2$ \\ 
$1$ & $1$ & $2$
\end{tabular}
\caption{Rank of $\HFKhat(J)$ in each Alexander and Maslov grading.}\label{fig:hfkj}
\end{figure}

Note that this computation of $\HFKhat(J)$ uses the conventions of Ozsv\'ath-Szab\'o. Although $J$ is not thin, a similar analysis as in (for example) \cite{Petkovacables} allows us to determine the full knot Floer complex from the hat version. Translating into the conventions used by Zemke gives the complex displayed in Figure~\ref{fig:cfkj}. (For a discussion of this procedure, see for example \cite[Section 2]{DHSTconcord}.) This consists of a singleton generator $v$, together with four squares. Two of these squares are spanned by $\scU$- or $\scV$-powers of $\{a_i, b_i, c_i, d_i\}$ $(i = 1, 2)$ and have a corner in $(\grU, \grV)$-bigrading $(0, 0)$. The other two are spanned by  by $\scU$- or $\scV$-powers of $\{e_i, f_i, g_i, h_i\}$ $(i = 1, 2)$ and have a corner in $(\grU, \grV)$-bigrading $(-1, -1)$.
\begin{figure}[h!]
\begin{minipage}{.2\textwidth}
\[
v \quad \ \begin{tikzcd}[row sep=1.1cm, column sep=1.1cm, labels=description] c_i\ar[r, "\scU"] & d_i\\
a_i\ar[u,"\scV"] \ar[r, "\scU"]& b_i \ar[u, "\scV"]
\end{tikzcd}
\]
\end{minipage}
\begin{minipage}{.2\textwidth}
\[
\begin{tikzcd}[row sep=1.1cm, column sep=1.1cm, labels=description] g_i\ar[r, "\scU"] & h_i\\
e_i\ar[u,"\scV"] \ar[r, "\scU"]& f_i \ar[u, "\scV"]
\end{tikzcd}
\]
\end{minipage}
\begin{minipage}{.2\textwidth}
\[
\begin{tabular}{ c | c | c }
\text{Generator} & $\grU$ & $\grV$ \\
\hline
$v$ & 0 & 0 \\
\hline
$a_i$ & 0 & 0 \\
$b_i$ & $1$ & $-1$ \\
$c_i$ & $-1$ & $1$ \\
$d_i$ & 0 & 0 \\
\hline
$e_i$ & $-1$ & $-1$ \\
$f_i$ & 0 & $-2$ \\
$g_i$ & $-2$ & 0 \\
$h_i$ & $-1$ & $-1$
\end{tabular}
\]
\end{minipage}

\caption{The complex $\CFK(J)$, spanned by $v$ together with $\{a_i, b_i, c_i, d_i\}$ and $\{e_i, f_i, g_i, h_i\}$ for $i = 1, 2$. Bigradings of generators are given on the right.}\label{fig:cfkj}
\end{figure}
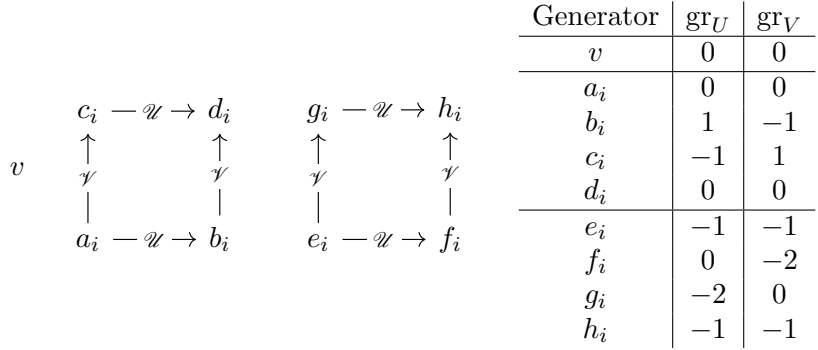

Now, $x = F_{W, \Sigma}(1)$ and $\tau_K x = F_{W, \tau_W(\Sigma)}(1)$ are cycles in $\CFK(J)$ which we know are not homologous. In the quotient $\CFK(J)/(U,V)$, the images of $x$ and $\tau_K x$ remain cycles. The only way for these images to become homologous in $\CFK(J)/(U,V)$ would be for $x - \tau_K x$ to be (homologous to) a nonzero element of $\CFK(J)$ lying in the image of $(\scU, \scV)$. However, an examination of Figure~\ref{fig:cfkj} shows that there are no elements of $\CFK(J)$ with $\grU = \grV = 0$ which lie in the image of $(\scU, \scV)$, a contradiction.
\end{proof}

We now show that taking the $n$-fold connected sum of $J$ with itself gives a slice knot with $2^n$ distinct exotic slice disks, distinguished by their concordance maps on $\smash{\HFKhat}$. For a similar construction, see \cite[Corollary 6.6]{SundbergSwann}.

\begin{theorem}\label{thm:2ndisks}
The (equivariant) connected sum $\#_n J$ admits $2^n$ distinct exotic slice disks, distinguished by their concordance maps on $\smash{\HFKhat}$.
\end{theorem}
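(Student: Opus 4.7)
The plan is to construct $2^n$ slice disks for $\#_n J$ by taking iterated boundary connected sums of the building blocks $\Sigma$ and $\tau_W(\Sigma)$ from Theorem~\ref{thm:knotfloermaps}, and then distinguish them by computing their images under the concordance map and applying a simple linear algebra argument. Explicitly, for each $\epsilon = (\epsilon_1, \ldots, \epsilon_n) \in \{0,1\}^n$, define
\[
\Sigma_\epsilon = \Sigma_{\epsilon_1} \natural \cdots \natural \Sigma_{\epsilon_n},
\]
where $\Sigma_0 = \Sigma$ and $\Sigma_1 = \tau_W(\Sigma)$, and $\natural$ denotes boundary connected sum of slice disks (performed so as to realize the equivariant connected sum on the boundary). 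This gives a family of $2^n$ slice disks for $\#_n J$.

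Next, I would compute the images $[F_{\Sigma_\epsilon}(1)]$ under the connected sum isomorphism
\[
\widehat{\HFK}(\#_n J) \ \cong \ \widehat{\HFK}(J)^{\otimes n}.
\]
A boundary connected sum of slice disks is obtained by first splitting the unknot into an $n$-component unlink, applying each $\Sigma_{\epsilon_i}$ to the $i$th component to obtain $\sqcup_n J$, and then attaching the standard connected-sum bands. Using Zemke's connected sum formula for link cobordisms (the same formula underlying Theorem~\ref{thm:connectedsum}), this identifies $[F_{\Sigma_\epsilon}(1)]$ with $x_{\epsilon_1} \otimes \cdots \otimes x_{\epsilon_n}$, where $x_0 = [F_{W, \Sigma}(1)]$ and $x_1 = [F_{W, \tau_W(\Sigma)}(1)]$ are viewed in $\widehat{\HFK}(J)$.

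Finally, I would argue that these $2^n$ tensor products are pairwise distinct. By Theorem~\ref{thm:knotfloermaps}, $x_0 \neq x_1$, and both are nonzero (if $x_0 = 0$ then $x_1 = \tau_K x_0 = 0$ by Lemma~\ref{lem:tautological}, contradicting $x_0 \neq x_1$). Over $\F = \F_2$, the elements $x_0$ and $y := x_0 + x_1$ are therefore linearly independent and extend to a basis of $\widehat{\HFK}(J)$. Writing $x_1 = x_0 + y$, we have
\[
x_{\epsilon_1} \otimes \cdots \otimes x_{\epsilon_n} \ = \ \sum_{S \subseteq \{i : \epsilon_i = 1\}} z_S,
\]
where $z_S$ denotes the tensor product with $y$ in the positions belonging to $S$ and $x_0$ elsewhere. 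The $z_S$ are distinct basis elements of $\widehat{\HFK}(J)^{\otimes n}$, and the indexing set $\{S : S \subseteq \mathrm{supp}(\epsilon)\}$ (the power set of $\mathrm{supp}(\epsilon)$) determines $\epsilon$. Hence the $2^n$ images are pairwise distinct, and so the $\Sigma_\epsilon$ are pairwise non-isotopic rel boundary.

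The main obstacle I anticipate is carefully verifying the decomposition of the boundary-connected-sum concordance map into the Zemke tensor-product form, ensuring that no Sarkar-twist corrections are introduced by the basepoint conventions on $\Sigma_\epsilon$. Unlike in Theorem~\ref{thm:connectedsum}, the relevant compatibility is purely at the level of the concordance maps rather than with $\iota_K$, so the Sarkar map does not appear; nevertheless, a careful choice of decorations on each $\Sigma_{\epsilon_i}$ (matched to the compatible decorations on the summands of $\#_n J$) is needed to apply the connected sum formula directly.
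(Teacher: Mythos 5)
Your proposal takes essentially the same route as the paper: construct $2^n$ disks by iterated boundary sums of two distinguished disks, apply Zemke's connected sum formula to identify the concordance class with a tensor product $x_{\epsilon_1}\otimes\cdots\otimes x_{\epsilon_n}$ in $\HFKhat(J)^{\otimes n}$, and observe that distinct binary strings give distinct tensor products. Your explicit $\F_2$-linear-algebra step (passing to the basis $\{x_0, x_0+x_1\}$ and expanding $x_1 = x_0 + y$) is in fact more careful than the paper, which simply asserts ``It follows that the $[F_{B^4, D_s}(1)]$ are different for different strings.'' That verification is a genuine improvement in rigor.

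There is, however, a gap. The statement asserts the $2^n$ disks are \emph{exotic}, i.e.\ pairwise topologically isotopic but not smoothly isotopic rel boundary. Your argument only establishes the latter. Moreover, your choice of building blocks is too general: you propose to use an arbitrary symmetric pair $\Sigma$, $\tau_W(\Sigma)$ in an arbitrary homology ball $W$, as in Theorem~\ref{thm:knotfloermaps}. But Theorem~\ref{thm:knotfloermaps} gives no guarantee that $\Sigma$ and $\tau_W(\Sigma)$ are topologically isotopic, so the boundary sums $\Sigma_\epsilon$ need not be topologically isotopic either (indeed they need not even live in $B^4$ if $W \neq B^4$). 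The paper avoids this by specializing to Hayden's specific pair $D$, $D'$ in $B^4$, whose complements have $\pi_1 \cong \Z$, so that $D$ and $D'$ are topologically isotopic rel boundary by Conway--Powell; the boundary sums $D_s$ are then all topologically isotopic. You should likewise fix $\Sigma_0 = D$, $\Sigma_1 = D'$, $W = B^4$, and cite the topological isotopy of $D$, $D'$ before constructing the $\Sigma_\epsilon$. The remainder of your argument then goes through.
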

\begin{proof}
As in Figure~\ref{fig:12}, let $D$ and $D'$ be the pair of exotic slice disks for $J$ from \cite[Section 2.1]{Hayden}. For each binary string $s$ of length $n$, there is an obvious slice disk $D_s$ for $\#_n J$ constructed by taking the boundary sum of copies of $D$ and $D'$. Explicitly, each index in $s$ with a $0$ contributes a copy of $D$, while each index with a $1$ contributes a copy of $D'$; see Figure~\ref{fig:2ndisks}. The fact that $D$ and $D'$ are topologically isotopic easily shows that the $2^n$ disks constructed in this manner are topologically isotopic rel boundary. 
\begin{figure}[h!]
\includegraphics[scale = 3.5]{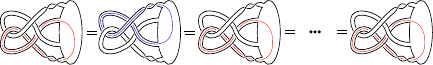}
\caption{Schematic depiction of $\#_nJ$. Compressing along the indicated curves gives the slice disk $D_s$ corresponding to the binary string $s = 010 \cdots 0$.}\label{fig:2ndisks}
\end{figure}

Now, we may identify
\[
\HFKhat(\#_nJ) = \bigotimes_n \HFKhat(J).
\]
Under this identification, $[F_{B^4, D_s}(1)]$ is the tensor product of copies of $[F_{B^4, D}(1)]$ and $[F_{B^4, D'}(1)]$, each in the appropriate index. But $[F_{B^4, D}(1)] \neq [F_{B^4, D'}(1)]$ as elements of the vector space $\smash{\HFKhat(J)}$. It follows that the $[F_{B^4, D_s}(1)]$ are different for different strings $s$. This completes the proof.
\end{proof}

Finally, we generalize Theorem~\ref{thm:1.7} to an infinite family of knots $J_n$ with exotic pairs of slice disks, considered by Hayden in \cite[Section 2.3]{Hayden}. These are displayed on the left in Figure~\ref{fig:exoticfamily} and are obtained from the knot $J$ of Theorem~\ref{thm:1.7} by adding pairs of (negative) full twists, as indicated. We have an obvious pair of slice disks for $J_n$ given by compressing along the displayed red and blue curves. In \cite[Figure 9]{Hayden}, Hayden constructs a handle diagram for the complement of these disks; it is immediate from \cite[Figure 9]{Hayden} that the disk exteriors have fundamental group $\Z$ and thus that the disks are topologically isotopic. Here, we show that knot Floer homology obstructs any two symmetric pair of disks for $J_n$ from being smoothly isotopic.

\begin{figure}[h!]
\includegraphics[scale = 0.7]{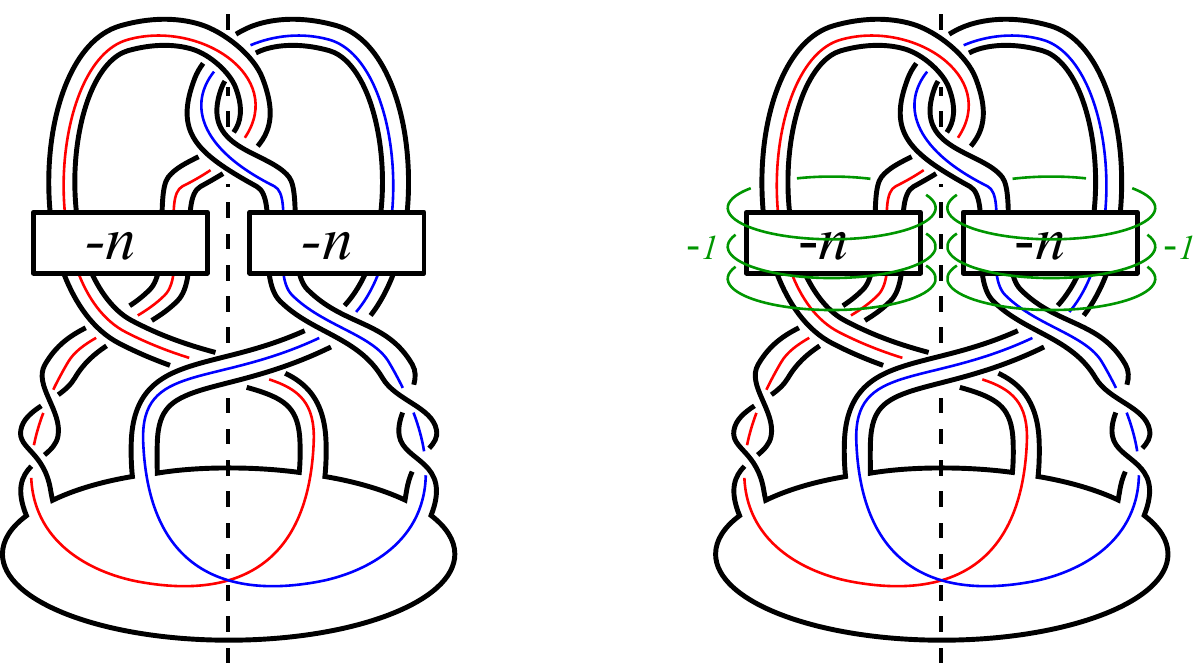}
\caption{An infinite family of knots $J_n$ admitting exotic pairs of slice disks. See \cite[Theorem B]{Hayden} and \cite[Section 2.3]{Hayden}.}\label{fig:exoticfamily}
\end{figure}

\begin{theorem}\label{thm:exoticfamily}
Let $J_n$ (for $n \geq 0$) be as in Figure~\ref{fig:exoticfamily}. Then $\ieg(J_n) > 0$. In particular, no pair of symmetric slice disks $\Sigma$ and $\tau_W(\Sigma)$ are (smoothly) isotopic rel $J_n$. This holds for any (smooth) homology ball $W$ with $\partial W = S^3$ and any extension $\tau_W$ of $\tau$ over $W$.
\end{theorem}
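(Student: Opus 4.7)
The strategy is to follow the proof of Theorem~\ref{thm:1.7}: for each $n \geq 0$, I will show that at least one of $\Vtl(J_n)$ or $\Vitl(J_n)$ is strictly positive, whereupon Theorem~\ref{thm:1.1} (via the stronger genus-zero bound mentioned in Remark~\ref{rem:weakerbound}) gives $\ieg(J_n) > 0$, and hence no symmetric pair of slice disks is smoothly isotopic rel $J_n$. This reduces the problem to a three-manifold Floer calculation via the large surgery identification~\eqref{eq:knotto3manifold}.

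The key observation is that $J_n$ is obtained from $J = J_0$ by symmetrically inserting $n$ pairs of negative full twists near the axis of symmetry. Each such pair of twists can be equivariantly removed by attaching a Type~II pair of $(-1)$-framed 2-handles along symmetric meridians of the twist region: blowing these down introduces compensating positive full twists, cancelling the added negative ones. Stacking $n$ such equivariant Type~II handle attachments produces a negative-definite equivariant cobordism from $S^3_p(J_n)$ to $S^3_p(J)$, for any framing $p$. Composing this with the cobordism constructed in the proof of Theorem~\ref{thm:1.7} (after any auxiliary handle attachments needed to arrange the surgery coefficient so that large surgery applies to $J_n$) yields an equivariant negative-definite cobordism $W_n$ from $S^3_p(J_n)$, where $p\ge g_3(J_n)$, to $S^3_{-1}(6_2)$ equipped with the involution $\tau$ from the proof of Theorem~\ref{thm:1.7}.

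Because every constituent handle attachment is of Type~II, the cobordism $W_n$ admits spin$^c$ structures $\s$ with $\Delta(W_n,\s) = 0$ restricting to the spin structure on each end and with either $f_* \s = \s$ or $f_* \s = \bs$. Applying Theorem~\ref{thm:inequality} in both cases yields
\[
\dl_\tau(S^3_p(J_n)) \leq \dl_\tau(S^3_{-1}(6_2)) \quad \text{and} \quad \dl_{\iota\tau}(S^3_p(J_n)) \leq \dl_{\iota\tau}(S^3_{-1}(6_2)).
\]
The proof of Theorem~\ref{thm:1.7} establishes that at least one of the right-hand sides equals $-2$. Combined with~\eqref{eq:knotto3manifold} applied at coefficient $p$, this gives $\Vtl(J_n) \geq 1$ or $\Vitl(J_n) \geq 1$, completing the argument.

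The main obstacle is carrying out the Kirby-calculus reduction cleanly. One must verify that the meridian 2-handles undoing the twists can be chosen equivariantly so as to form genuine Type~II pairs (i.e.~not Type~I), that they can be attached without interference with the handles used in the Theorem~\ref{thm:1.7} cobordism so the blowdowns may be carried out independently, and that the resulting intersection form of $W_n$ is a direct sum of $(-1)$-blocks supporting the required spin$^c$ structures. One must also confirm that any surgery-coefficient adjustment needed to reach the large-surgery regime for $J_n$ preserves the negative-definite and equivariant structure. These checks are essentially routine but require a diagram-by-diagram verification analogous to Figures~\ref{fig:72}--\ref{fig:73}, while tracking the additional twist regions defining $J_n$.
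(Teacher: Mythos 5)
Your proposal is correct and is essentially the paper's argument: an equivariant negative-definite cobordism undoing the symmetric twist regions (a stack of Type~II $(-1)$-framed 2-handle pairs), fed into Theorem~\ref{thm:inequality} and the bounds already obtained for $J$. Two small points where the paper's version is cleaner and where your write-up hedges unnecessarily. First, the coefficient issue you defer (``auxiliary handle attachments to arrange the surgery coefficient'') evaporates: each $J_n$ has Seifert genus one, so $(+1)$-surgery is already large surgery for every $n$, the twist-removing cobordism runs directly from $S^3_{+1}(J_n)$ to $S^3_{+1}(J)$, and no adjustment is needed (and it is not clear such an adjustment could be made equivariantly, negative-definitely, and with $\Delta(W,\s)=0$, so it is good that it isn't needed). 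Second, rather than gluing the two cobordisms and asserting the composite carries the required $\spinc$ structures with $f_*\s=\s$ and $f_*\s=\bs$ (the conjugation behavior of a glued structure is not automatic from the pieces), the paper applies the inequality only to the twist-removal piece to get $\Vtu(J_n)\geq\Vtu(J)$, $\Vtl(J_n)\geq\Vtl(J)$, $\Vitu(J_n)\geq\Vitu(J)$, $\Vitl(J_n)\geq\Vitl(J)$, and then quotes the already-proved positivity of one of $\Vtl(J),\Vitl(J)$; equivalently, you can apply Theorem~\ref{thm:inequality} piecewise and use transitivity of the partial order, which sidesteps the gluing question entirely.
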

\begin{proof}
Clearly, $(+1)$-surgery on $J_n$ admits a negative-definite equivariant cobordism to $(+1)$-surgery on $J$, given by attaching $(-1)$-framed $2$-handles along the green curves indicated on the right in Figure~\ref{fig:exoticfamily}. Noting that each $J_n$ has Seifert genus one, it follows from Theorem~\ref{thm:inequality} that
\[
\Vtu(J_n) \geq \Vtu(J) \quad \text{and} \quad \Vtl(J_n) \geq \Vtl(J)
\]
and
\[
\Vitu(J_n) \geq \Vitu(J) \quad \text{and} \quad \Vitl(J_n) \geq \Vitl(J).
\]
The claim then immediately follows from our bounds on the invariants of $J$.
\end{proof}

\subsection{Secondary invariants}\label{sec:7.4}
We now discuss the secondary invariant $V_0(\Sigma, \Sigma')$ of \cite[Section 4.5]{JZstabilization}. This is defined as follows.  Let $K$ be a knot in $S^3$. For simplicity, let $W$ be a homology ball with boundary $S^3$ and let $\Sigma$ and $\Sigma'$ be two slice disks for $K$ in $W$. Consider the elements $F_{W, \Sigma}(1)$ and $F_{W, \Sigma'}(1)$ in $\CFK(K)$. (Note that since $\Sigma$ and $\Sigma'$ are disks, no choice of decoration is needed; the more general definition of $V_0(\Sigma, \Sigma')$ in \cite[Section 4.5]{JZstabilization} requires a discussion of a specific set of dividing curves.) Viewing $F_{W, \Sigma}(1)$ and $F_{W, \Sigma'}(1)$ as elements of the large surgery subcomplex $\CFK(K)_0$, define
\[
V_0(\Sigma, \Sigma') = \min \{n\in \Z^{\geq 0} \ \vert \ U^n \cdot [F_{W, \Sigma}(1)] = U^n \cdot [F_{W, \Sigma'}(1)] \text{ in } H_*(\CFK(K)_0)\}.
\]
In \cite[Theorem 1.1]{JZstabilization}, it is shown that $V_0(\Sigma, \Sigma')$ bounds the stabilization distance between $\Sigma$ and $\Sigma'$ from below:
\[
V_0(\Sigma, \Sigma') \leq \left \lceil \dfrac{\must(\Sigma, \Sigma')}{2} \right \rceil.
\]

The following is straightforward:

\begin{proof}[Proof of Theorem~\ref{thm:relativeV0}]
Let $\Sigma$ be any slice disk for $K$ in $W$, and suppose that $V_0(\Sigma, \tau_W(\Sigma)) = n$. By Lemma~\ref{lem:tautological}, we have 
\[
\tau_K ([F_{W, \Sigma}(1)]) = [F_{W, \tau_W(\Sigma)}(1)]. 
\]
Multiplying both sides by $U^n$, we obtain a $\tau_K$-invariant element in the homology of $\CFK(K)_0$ with grading $-2n$. This implies $\dl_\tau(\CFK(K)_0) \geq -2n$ and thus that $\Vtl(K) \leq n$. Moreover, since $F_{W, \Sigma}$ homotopy commutes with $\iota_K$, we also know that $[F_{W, \Sigma}(1)]$ is $\iota_K$-equivariant. Hence we obtain an $\iota_K \circ \tau_K$-invariant element in the homology of $\CFK(K)_0$ with grading $-2n$. This similarly shows that $\Vitl(K) \leq n$.
\end{proof}

\section{Periodic knots}\label{sec:8}
We close this paper by discussing a similar family of results in the periodic setting. As in the strongly invertible case, it is possible to define an action of $\tau_K$ associated to a $2$-periodic knot $K$ and consider the notion of a \textit{periodic $(\tau_K, \iota_K)$-complex}. The same subtlety as in Section~\ref{sec:3.4} arises, in that this is only an invariant of equivariant concordance in the decorated category. Nevertheless, once again we may define numerical invariants $\Vsu$ and $\Vsl$, and these give bounds for the equivariant slice genus. While much of the formalism is thus the same, the authors have not yet been able to find many interesting calculations of periodic invariants. One key difference is that in the periodic case, there is no natural notion of equivariant connected sum. Correspondingly, it turns out that the set of periodic $(\tau_K, \iota_K)$-complexes (up to local equivalence) does not seem to admit a natural group structure.

\subsection{Construction of $\tau_K$}\label{sec:8.1}
We begin with the construction of $\tau_K$. Let $(K, \tau)$ be a $2$-periodic knot. In contrast to the strongly invertible case, it is natural to assume that $K$ is oriented (since $K$ may not come with an orientation-reversing diffeomorphism). Let $w$ and $z$ be a pair of basepoints on $K$ which are interchanged by $\tau$, and let $\mathcal{H}$ be any choice of compatible Heegaard data for $(K, w, z)$. Taking the pushforward under $\tau$ gives a tautological isomorphism
 \[
 t \colon \CFK(\mathcal{H}) \rightarrow \CFK(\tau \mathcal{H}).
  \]
The latter complex represents the same knot, but now the roles of the basepoints have been interchanged. We now apply a half-Dehn twist which moves $w$ into $z$ and $z$ into $w$:
\[
\rho \colon \CFK(\mathcal{\tau H}) \rightarrow \CFK(\rho \tau \mathcal{H}).
\]
Finally, we have the naturality map
 \[
 \Phi(\rho \tau \mathcal{H}, \mathcal{H}) \colon \CFK(\rho \tau \mathcal{H}) \rightarrow \CFK(\H).
 \]
We thus define $\tau_\H$ to be
\[
\tau_{\H}: \CFK(\mathcal{H}) \xrightarrow{t} \CFK(\tau \mathcal{H}) \xrightarrow{\rho} \CFK(\rho \tau \mathcal{H}) \xrightarrow{\Phi} \CFK(\mathcal{H}).
\]
As before, $\tau_\H$ is independent of the choice of Heegaard data for $(K, w, z)$. 

The proof of the following is analogous to that of Theorem~\ref{thm:1.8}:
\begin{theorem}\label{thm:8.1}
Let $(K, \tau)$ be an (oriented) $2$-periodic knot and fix a pair of symmetric basepoints $(w, z)$ on $K$. Let $\H$ be any choice of Heegaard data compatible with $(K, w, z)$. Then $\tau$ induces an automorphism
\[
\tau_\H \colon \CFK(\H) \rightarrow \CFK(\H)
\]
with the following properties:
\begin{enumerate}
\item $\tau_\H$ is filtered and $\F[\cU, \cV]$-equivariant
\item $\tau_\H^2 \simeq \varsigma_\H$
\item $\tau_\H \circ \iota_\H \simeq \iota_\H \circ \tau_\H$
\end{enumerate}
Here, $\iota_\H$ is the Hendricks-Manolescu knot Floer involution on $\CFK(\H)$ and $\varsigma_\H$ is the Sarkar map. Moreover, the homotopy type of the triple $(\CFK(\H), \tau_\H, \iota_\H)$ is independent of the choice of Heegaard data $\H$ for the doubly-based knot $(K, w, z)$.
\end{theorem}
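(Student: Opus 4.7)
The plan is to mirror the proof of Theorem~\ref{thm:1.8} closely, adapting only the ingredients that differ between the strongly invertible and $2$-periodic settings. The crucial topological difference is that $\tau$ now preserves the orientation on $K$, so the switch map $sw$ plays no role; instead, the only cosmetic mismatch after applying the tautological pushforward $t$ is that $w$ and $z$ have been exchanged, and this is corrected by the half Dehn twist $\rho$ along the oriented knot. Thus our three-step composition becomes
\[
\tau_\H \colon \CFK(\H) \xrightarrow{t} \CFK(\tau\H) \xrightarrow{\rho} \CFK(\rho\tau\H) \xrightarrow{\Phi} \CFK(\H),
\]
and every factor is filtered and $\cR$-equivariant (as opposed to skew-filtered and skew-equivariant), which immediately yields (1).

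For claim (2), I would compute $\tau_\H^2$ by moving the naturality maps to the left, then using the commutation relations from Lemmas~\ref{lem:mapset1} and \ref{lem:mapset2}. The key facts are that $\tau$ (an orientation-preserving diffeomorphism of $S^3$ which preserves the oriented $K$) commutes up to isotopy with $\rho$, so that the pushforwards satisfy $t \circ \rho \simeq \rho \circ t$; together with $t^2 \simeq \id$ (since $\tau$ is an involution) and transitivity of $\Phi$, this reduces $\tau_\H^2$ to
\[
\Phi(\rho^2 \H, \H) \circ \rho^2,
\]
which is precisely $\varsigma_\H$ by the convention for the Sarkar map used in the proof of Theorem~\ref{thm:1.8}. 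For claim (3), I would expand $\tau_\H \circ \iota_\H$ and $\iota_\H \circ \tau_\H$ in the analogous way. Using that $\eta$, $\rho$, and $t$ all mutually commute up to homotopy in the periodic case (here the orientation-preservation of $\tau$ is again essential, as is the general commutativity of $\eta$ with $\rho$ from Lemma~\ref{lem:mapset2}), both products collapse after a routine manipulation to
\[
\Phi(\rho^2 \tau \bar\H, \H) \circ \rho^2 \circ t \circ \eta,
\]
and Lemma~\ref{lem:isotopicmaps} identifies these. The last statement of the theorem (independence of the choice of $\H$) is essentially formal: each of $t$, $\rho$, and the internal components of $\iota_\H$ commute with naturality maps, so $\Phi(\H, \H')$ intertwines $(\tau_\H, \iota_\H)$ with $(\tau_{\H'}, \iota_{\H'})$ up to homotopy.

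I do not expect any serious obstacle in the proof itself, since the whole argument is a straightforward dualization of the strongly invertible case, with $sw$ removed and $\rho$ inserted. The one point deserving genuine care is the verification that $t$, $\rho$, and $\eta$ all commute pairwise up to homotopy in the periodic setting; this is where the hypothesis that $\tau$ preserves the orientation on $K$ is used crucially (without this, one would pick up a $\brho$ versus $\rho$ discrepancy, exactly as in Lemma~\ref{lem:mapset2}), and this is also what is ultimately responsible for the sign change between the identities $\tau_\H^2 \simeq \id$ in the strongly invertible case and $\tau_\H^2 \simeq \varsigma_\H$ here, as well as for the swap between the commutation and skew-commutation relations with $\iota_\H$.
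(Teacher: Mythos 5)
Your proposal is correct and follows exactly the route the paper intends: the paper's own proof is the one-line instruction ``Left to the reader; analogous to Theorem~\ref{thm:1.8},'' and your argument is precisely that analogy carried out, with the right identification of where the periodic and strongly invertible cases diverge. In particular, you correctly locate the crux in the fact that the orientation-preservation of $\tau$ on $K$ makes $t$ commute with $\rho$ (rather than exchanging $\rho$ and $\brho$ as in Lemma~\ref{lem:mapset2}), which is simultaneously responsible for $\tau_\H^2 \simeq \varsigma_\H$ instead of $\simeq\id$ and for the honest (rather than $\varsigma$-twisted) commutation with $\iota_\H$.
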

\begin{proof}
Left to the reader; analogous to Theorem~\ref{thm:1.8}.
\end{proof}

Note the difference in all three properties with Theorem~\ref{thm:1.8}.

\begin{remark}
It turns out that the analogous subtlety to Section~\ref{sec:3.3} does not arise at this stage: the homotopy class of $(\CFK(\H), \tau_\H, \iota_\H)$ is independent of the choice of symmetric basepoints $w$ and $z$. This is because any two pairs $(w, z)$ and $(w', z')$ on $K$ are related by a $\tau$-equivariant basepoint-pushing diffeomorphism along $K$. Unlike in the strongly invertible case, the associated pushforward map commutes with all the components of $\tau_\H$. Combined with the fact that $K$ comes with an orientation, this shows that we may unambiguously refer to the $(\tau_K, \iota_K)$-complex of $(K, \tau)$, without specifying any additional data. 
\end{remark}

\subsection{Periodic $(\tau_K, \iota_K)$-complexes} \label{sec:8.2}
Given Theorem~\ref{thm:8.1}, it is natural to define a $(\tau_K, \iota_K)$-complex formalism in the periodic setting:

\begin{definition}\label{def:periodicticomplex}
A \textit{periodic $(\tau_K, \iota_K)$-complex} is a triple $(C, \tau_K, \iota_K)$ such that:
\begin{enumerate}
\item $C$ is an abstract knot complex
\item $\iota_K \colon C \rightarrow C$ is a skew-graded, $\mathcal{R}$-skew-equivariant chain map such that
\[
\iota_K^2 \simeq \varsigma_K
\]
\item $\tau_K$ is a graded, $\mathcal{R}$-equivariant chain map such that
\[
\tau_K^2 \simeq \varsigma_K \quad \text{and} \quad \tau_K \circ \iota_K \simeq \iota_K \circ \tau_K.
\]
\end{enumerate}
\end{definition}
\noindent
The notions of homotopy equivalence and local equivalence carry over without change. We may also define the notion of a twist by $\varsigma_K$ as before. However, it should be noted that there is no analogue of Lemma~\ref{lem:onetwist} in the periodic setting.

\begin{remark}
The principal difference between local equivalence in the periodic and strongly invertible settings is the absence of a natural group structure in the former. Indeed, the reader can check that trying a product law such as
\[
\tau_\otimes = \tau_1 \otimes \tau_2 
\]
or even
\[
\tau_\otimes = (\id \otimes \id + \Phi \otimes \Psi) \circ (\tau_1 \otimes \tau_2)
\]
does not satisfy $\tau_\otimes^2 \simeq \varsigma_\otimes$.
\end{remark}

The algebraic procedure of Section~\ref{sec:2.5} also carries over without change to define numerical invariants $\Vtu(K), \Vtl(K), \Vitu(K)$, and $\Vitl(K)$. In \cite{Mallick}, the second author established a large surgery formula for periodic knots. Thus, $\Vsu$ and $\Vsl$ again have the interpretation as invariants associated to large surgeries.

\subsection{Equivariant concordance and cobordism} \label{sec:8.3}
As in Section~\ref{sec:2.1}, we may define the notion of an isotopy-equivariant homology concordance between two periodic knots $(K_1, \tau_1)$ and $(K_2, \tau_2)$. The subtlety of Section~\ref{sec:3.4} again arises: even if $\Sigma$ is equivariant or isotopy-equivariant, it is unclear whether an equivariant or isotopy-equivariant pair of arcs on $\Sigma$ can be chosen. We thus have:

\begin{theorem}
Let $(K_1, \tau_1)$ and $(K_2, \tau_2)$ be two periodic knots in $S^3$. Suppose that we have an isotopy-equivariant homology concordance between $(K_1, \tau_1)$ and $(K_2, \tau_2)$. Then $(\CFK(K_1), \tau_{K_1}, \iota_{K_1})$ is locally equivalent to either $(\CFK(K_2), \tau_{K_2}, \iota_{K_2})$ or $(\CFK(K_2), \varsigma_{K_2} \circ \tau_{K_2}, \iota_{K_2})$. Hence $\Vsu(K)$ and $\Vsl(K)$ are invariant under isotopy-equivariant homology concordance.
\end{theorem}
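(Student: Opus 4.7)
The proof will follow the strategy of Theorem~\ref{thm:3.3B} in the strongly invertible case, with the half Dehn twist $\rho$ replacing the switch map $sw$ that appears in the periodic definition of $\tau_K$ from Section~\ref{sec:8.1}. Given an isotopy-equivariant homology concordance $(W, \tau_W, \Sigma)$ between $(K_1, \tau_1)$ and $(K_2, \tau_2)$, first choose any pair of symmetric basepoints on each end, and choose any pair of properly embedded arcs $\gamma_1, \gamma_2 \subseteq \Sigma$ joining the fixed points on the two boundary components so that they divide $\Sigma$ into two rectangular regions, one containing both $w$-basepoints and the other containing both $z$-basepoints. Let $\mathcal{F}$ denote the resulting decoration. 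The Zemke link cobordism map $F_{W, \mathcal{F}} \colon \CFK(K_1) \to \CFK(K_2)$ is grading-preserving, is a homotopy equivalence after inverting $\cU$ and $\cV$, and homotopy commutes with $\iota_K$ by the arguments of \cite{HM} and \cite{Zemkeconnected}; turning the cobordism around provides the candidate local map in the opposite direction.

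The essential step is verifying $\tau_K$-equivariance up to a possible twist. Writing out the naturality diagram analogous to that in the proof of Theorem~\ref{thm:3.3A}, but using the tautological pushforward $t$ composed with the half Dehn twist $\rho$ in place of $sw$, one finds that $\tau_{K_2} \circ F_{W, \mathcal{F}}$ is homotopic to $F_{W, \rho(\tau_W(\mathcal{F}))} \circ \tau_{K_1}$ with appropriate naturality maps inserted. Now, because the concordance is only isotopy-equivariant and not equivariant in the decorated sense, the decoration $\rho(\tau_W(\mathcal{F}))$ is isotopic rel boundary to the decoration obtained from $\mathcal{F}$ by applying some integer number of Dehn twists along the boundary components. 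By the basepoint-moving machinery of \cite{Zemkelinkcobord}, each such Dehn twist translates into precomposition with the Sarkar map $\varsigma_{K_1}$, yielding
\[
\tau_{K_2} \circ F_{W, \mathcal{F}} \simeq F_{W, \mathcal{F}} \circ (\varsigma_{K_1}^n \circ \tau_{K_1})
\]
for some $n \in \Z$. Since $\varsigma_K^2 \simeq \id$, the parity of $n$ determines whether $F_{W, \mathcal{F}}$ is a local map to $(\CFK(K_2), \tau_{K_2}, \iota_{K_2})$ or to the twist $(\CFK(K_2), \varsigma_{K_2} \circ \tau_{K_2}, \iota_{K_2})$.

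The main obstacle will be carefully tracking how the Dehn-twist ambiguity arises from the isotopy between $\tau_W(\Sigma)$ and $\Sigma$, and verifying that it precisely introduces a power of the Sarkar map on the $\tau_K$-side without disturbing the $\iota_K$-side (which is why only $\tau_K$, and not $\iota_K$, gets twisted in the conclusion). For the invariance of $\Vsu$ and $\Vsl$, Lemma~\ref{lem:twistnochange} applies verbatim in the periodic setting, as its proof depends only on the algebraic fact that $\varsigma_K$ is homotopic to the identity on the large surgery subcomplex $\Co$; combined with the local equivalence of triples up to twist established above, this immediately yields the invariance of the numerical invariants under isotopy-equivariant homology concordance.
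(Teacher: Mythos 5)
Your proposal follows exactly the route the paper intends (its proof is literally ``left to the reader; analogous to Theorem~\ref{thm:3.3B}''): run the naturality diagram for the decorated Zemke cobordism map with the pushforward $t$ and the half Dehn twist $\rho$ replacing $sw$, absorb the discrepancy between $\rho(\tau_W(\mathcal{F}))$ and $\mathcal{F}$ into powers of the Sarkar map on the $\tau_K$-side only, and conclude invariance of $\Vsu$ and $\Vsl$ via Lemma~\ref{lem:twistnochange}. One small correction: a $2$-periodic involution has no fixed points on $K_i$, so your dividing arcs cannot ``join the fixed points''; no such condition is needed anyway---any pair of properly embedded arcs separating the $w$- and $z$-basepoints suffices, precisely because the argument never requires the decoration itself to be equivariant.
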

\begin{proof}
Left to the reader; analogous to Theorem~\ref{thm:3.3B}.
\end{proof}

Finally, we formally record that the results of Theorem~\ref{thm:1.2} and Theorem~\ref{thm:relativeV0} hold in the periodic setting:

\begin{theorem}
Let $(K, \tau)$ be a $2$-periodic knot in $S^3$. Then for $\circ \in \{\tau, \iota\tau\}$,
\[
- \left \lceil{\frac{1+ \ieg(K)}{2}} \right \rceil \leq \Vsu(K) \leq \Vsl(K) \leq \left \lceil{\frac{1+ \ieg(K)}{2}} \right \rceil.
\]
\end{theorem}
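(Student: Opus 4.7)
The plan is to mirror the proof of Theorem~\ref{thm:1.2}, with two adjustments to account for the periodic setting: $\rho$ will replace the switch map $sw$ throughout, and a mild preliminary stabilization will be used to create fixed points of $\tau_W$ on $\Sigma$. Starting from an isotopy-equivariant slice surface $\Sigma \subset (W,\tau_W)$, I will first arrange $\tau_W$ and $\Sigma$ to be exactly equivariant on a collar of $\partial W$. The key new wrinkle is that since $\tau$ acts freely on $K$, the surface $\Sigma$ has no fixed points of $\tau_W$ near $\partial W$; I would remedy this by choosing a small $\tau_W$-invariant $2$-sphere $\Sigma_0$ around a fixed point of $\tau|_{S^3}$ in the collar neighborhood (so that $\Sigma_0$ carries two poles fixed by $\tau_W$) and joining it to $\Sigma$ by a $\tau_W$-symmetric pair of tubes. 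This costs exactly $1$ in genus and yields fixed points on the resulting surface. After at most one further equivariant $1$-handle stabilization to restore even parity, we obtain an isotopy-equivariant surface $\Sigma'$ of even genus with $g(\Sigma') \leq g(\Sigma) + 2$ containing a fixed point $p^*$ of $\tau_W$. Puncturing both $W$ and $\Sigma'$ at $p^*$ then produces an equivariant cobordism from the doubly-based unknot $U$ (with the inherited $2$-periodic involution) to $(K,\tau)$.

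From this point the argument is a formal translation of Section~\ref{sec:5}. Decorate $\Sigma'$ by a pair of dividing arcs $\mathcal{F}$ whose black and white regions have equal genus. The induced link cobordism map $F_{W,\mathcal{F}}\colon \CFK(U)\to \CFK(K)$ is grading-preserving up to the shift by $-g(\Sigma')$, has the correct localization behavior, and is $\iota_K$-equivariant up to homotopy by \cite[Section 4.5]{HM} and \cite[Theorem 1.5]{Zemkeconnected}. To verify $\tau_K$-equivariance, I would repeat the diagram chase in the proof of Theorem~\ref{thm:1.2}, but with the half Dehn twist $\rho$ replacing $sw$, since in the periodic setting $\tau_K = \Phi \circ \rho \circ t$ by Section~\ref{sec:8.1}. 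Naturality then yields $\tau_K \circ F_{W,\mathcal{F}} \simeq F_{W,\rho(\tau_W(\mathcal{F}))} \circ \tau_U$, with $\tau_U$ acting as the identity on $\CFK(U)$ since the latter is a single $\cR$-tower. The decoration $\rho(\tau_W(\mathcal{F}))$ need not be isotopic to $\mathcal{F}$, but it has the same Euler-characteristic data, so \cite[Proposition 5.5]{JZgenus} forces the associated cobordism maps on homology to agree up to a power of the Sarkar map $\varsigma_K$; since $\varsigma_K$ is homotopic to the identity on the large-surgery subcomplex $\Co$ by \cite[Lemma 3.16]{HHSZ}, the map $F_{W,\mathcal{F}}$ descends to a genuine grading-preserving local map $(\F[U], \id) \to (\Co, \tau_K)$ of Maslov shift $-g(\Sigma')$. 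Running the same argument with $\iota_K \circ \tau_K$ in place of $\tau_K$, and reversing the cobordism, produces the matching inequalities for all four numerical invariants, giving the claimed ceiling bounds on $\Vsu(K)$ and $\Vsl(K)$.

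The hard part will be the geometric setup in the first paragraph: verifying that the equivariant connect-sum with $\Sigma_0$ and the subsequent parity correction can genuinely be carried out inside the given $(W, \tau_W)$ without incurring more than the two extra handles of the strongly invertible count, so that $g(\Sigma') \leq g(\Sigma) + 2$ in every case. Once this geometric lemma is in hand, the remainder of the argument is a direct formal adaptation of Section~\ref{sec:5} using the periodic version of $\tau_K$ from Section~\ref{sec:8.1}.
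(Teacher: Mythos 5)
Your proof is correct, and it supplies the one genuinely new geometric input that the paper's terse ``analogous to Theorem~\ref{thm:1.2}'' leaves implicit. You correctly identify the obstruction: since $\tau$ acts freely on $K$, the slice surface $\Sigma$ has no fixed points of $\tau_W$ in the collar region where it is exactly equivariant, so the puncture point used in the strongly invertible proof simply does not exist. Equivariantly tubing on a small $\tau_W$-invariant sphere $\Sigma_0$ --- which is topologically just an equivariant $1$-handle stabilization whose core passes through a $\tau_W$-fixed point in the interior of the collar --- is the right fix. It costs one genus, supplies the needed puncture point, and doubles as the ``first stabilization'' required by \cite[Proposition 5.5]{JZgenus}, so that after at most one further equivariant stabilization for parity you land at $g(\Sigma') \in \{g(\Sigma)+1,\, g(\Sigma)+2\}$, exactly matching the strongly invertible count and hence recovering $\lceil (1+\ieg(K))/2 \rceil$.

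Two small points worth tightening. First, you should justify that $\Sigma_0$ can be chosen so that $\tau_W|_{\Sigma_0}$ is a $\pi$-rotation with two fixed poles (rather than a reflection with a fixed circle): near a $\tau_W$-fixed point the involution linearizes to a $\pi$-rotation in two coordinates, and a small round sphere in the appropriate $\tau_W$-invariant $3$-plane does the job. This choice also guarantees that the inherited involution on the link of the puncture point in $\Sigma'$ is free, so that the source $U$ really is a $2$-periodic unknot, on whose complex $\tau_K$ acts trivially. Second, your appeal to the Sarkar map is unnecessary: \cite[Proposition 5.5]{JZgenus} gives $[F_{W,\mathcal{F}}(1)] = [F_{W,\rho(\tau_W(\mathcal{F}))}(1)]$ on the nose, and since the source complex is $\CFK(U) = \cR$ with vanishing differential, this equality of homology classes promotes to a chain homotopy of the maps, exactly as in the proof of Theorem~\ref{thm:1.2}. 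The detour through $\Co$ and the triviality of $\varsigma_K$ there is harmless but not needed.
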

\begin{proof}
Left to the reader; analogous to Theorem~\ref{thm:1.2}.
\end{proof}

\begin{theorem}
Let $(K, \tau)$ be any $2$-periodic knot in $S^3$. Let $W$ be any (smooth) homology ball with boundary $S^3$, and let $\tau_W$ be any extension of $\tau$ over $W$. If $\Sigma$ is any slice disk for $K$ in $W$, then
\[
\max\{\Vtl(K), \Vitl(K)\} \leq V_0(\Sigma, \tau_W(\Sigma)).
\]
\end{theorem}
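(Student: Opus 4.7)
The plan is to mirror the strongly invertible proof of Theorem~\ref{thm:relativeV0}, using the periodic analogs of its two main ingredients. The first ingredient is the periodic analog of Lemma~\ref{lem:tautological}, namely the tautological identity
\[
\tau_K([F_{W,\Sigma}(1)]) = [F_{W,\tau_W(\Sigma)}(1)] \quad \text{in } H_*(\CFK(K)).
\]
The second ingredient is the $\iota_K$-equivariance of slice disk cobordism maps (from \cite{HM, Zemkeconnected}) together with the definition of $V_0$.

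To establish the tautological identity, I would fix a symmetric pair of basepoints $(w,z)$ on $K$ and decompose $\tau_K = \Phi \circ \rho \circ t$ as in Section~\ref{sec:8.1}. The pushforward $t$ extends over $W$ as the genuine diffeomorphism $\tau_W$, so $t \circ F_{W,\Sigma} \simeq F_{W,\tau_W(\Sigma)} \circ t$ by diffeomorphism invariance of link cobordism maps. The half Dehn twist $\rho$ is a basepoint-pushing diffeomorphism of $S^3$; since $\Sigma$ is a disk no choice of dividing curves is needed, so $\rho$ commutes with the disk cobordism map in the sense that $\rho \circ F_{W,\tau_W(\Sigma)} \simeq F_{W,\rho(\tau_W(\Sigma))} \circ \rho$, where $\rho(\tau_W(\Sigma))$ is represented (after naturality $\Phi$) by the same disk back in our original Heegaard data. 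Finally, $\tau_K$ acts as the identity on $\CFK(U)$, so the composition along the ``input'' side of the resulting square fixes $1 \in \CFK(U)$. This yields the identity.

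Given this, the remainder of the argument is immediate. If $V_0(\Sigma,\tau_W(\Sigma)) = n$, then by definition of $V_0$ we have $U^n[F_{W,\Sigma}(1)] = U^n[F_{W,\tau_W(\Sigma)}(1)]$ in $H_*(\CFK(K)_0)$. Combined with the tautological identity, $U^n[F_{W,\Sigma}(1)]$ is then a $\tau_K$-invariant element in $H_*(\CFK(K)_0)$ at Maslov grading $-2n$, all of whose $U$-powers are nonzero (since $F_{W,\Sigma}$ is a $U$-equivariant local map from the trivial complex of the unknot). By the definition of $\dl_\tau$ in Section~\ref{sec:2.5}, this forces $\dl_\tau(\CFK(K)_0) \geq -2n$, hence $\Vtl(K) \leq n$. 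The same element is also $\iota_K$-invariant because $F_{W,\Sigma}$ commutes with $\iota_K$ up to homotopy, hence is $\iota_K \circ \tau_K$-invariant, so the analogous argument yields $\Vitl(K) \leq n$.

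The main subtlety I expect is verifying the commutativity of the square involving $\rho$. Unlike the pushforward $t$, which extends over $W$ as a diffeomorphism, $\rho$ is only a basepoint-pushing diffeomorphism of the boundary. One must either extend $\rho$ to an ambient isotopy of $W$ supported near $\partial W$, or argue directly via Zemke's decoration-independence for slice disks; either way the argument should closely parallel the treatment of $sw$ in Lemma~\ref{lem:tautological}, with $\rho$ playing the role there played by $sw$.
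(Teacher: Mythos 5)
Your proposal is correct and is exactly the argument the paper intends: the paper leaves this proof to the reader as "analogous to Theorem~\ref{thm:relativeV0}," and you carry out precisely that analogy, proving the periodic version of Lemma~\ref{lem:tautological} via the decomposition $\tau_K = \Phi \circ \rho \circ t$ (with the half Dehn twist $\rho$ handled by decoration-independence for disks, playing the role that $sw$ plays in the strongly invertible case) and then running the same $V_0$/$\dl_\tau$ argument. No gaps; the grading and $\iota_K$-equivariance steps match the paper's proof of Theorem~\ref{thm:relativeV0} verbatim.
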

\begin{proof}
Left to the reader; analogous to Theorem~\ref{thm:relativeV0}.
\end{proof}

\bibliographystyle{amsalpha}
\bibliography{bib}

\end{document}